\newcommand{\dd}{\hspace{0ex}\text{d}}
\newcommand{\R}{\mathbb{R}}
\newcommand{\uh}{(a,\mathbf{h})}
\newcommand{\dcut}{\dd\Pi_2(\textbf{f})\phi_n(Q|Y^{-\ell}_{1:n})\dd Q}
\newcommand{\diag}{\text{diag}}
\DeclareMathOperator*{\argmax}{arg\,max}
\DeclareMathOperator*{\argmin}{arg\,min}
\theoremstyle{plain}
\newtheorem{theorem}{Theorem}
\newtheorem{lemma}{Lemma}
\newtheorem{cor}{Corollary}
\newtheorem{prop}{Proposition}
\theoremstyle{definition}
\newtheorem{assume}{Assumption}
\newtheorem{defn}{Definition}
\newtheorem{rk}{Remark}
\newcommand*{\addFileDependency}[1]{
  \typeout{(#1)}
  \@addtofilelist{#1}
  \IfFileExists{#1}{}{\typeout{No file #1.}}
}
\newcommand*{\myexternaldocument}[1]{%
    \externaldocument{#1}%
    \addFileDependency{#1.tex}%
    \addFileDependency{#1.aux}%
}
\begin{document}
\begin{frontmatter}
\title{Efficient Bayesian estimation and use of cut posterior in semiparametric hidden Markov models}
\runauthor{Moss \& Rousseau}
\runtitle{Efficiency and cut posterior in semiparametric HMMs}

\begin{aug}
\author[A]{\fnms{Daniel} \snm{Moss}\ead[label=e1]{daniel.moss@stats.ox.ac.uk}},
\author[A]{\fnms{Judith} \snm{Rousseau}\ead[label=e2]{judith.rousseau@stats.ox.ac.uk}}
\address[A]{Department of Statistics,
University of Oxford, United Kingdom.
\printead{e1,e2}}
\end{aug}

\begin{abstract}
We consider the problem of estimation in Hidden Markov models with finite state space and nonparametric emission distributions. Efficient estimators for the transition matrix are exhibited, and a semiparametric Bernstein-von Mises result is deduced. Following from this, we propose a modular approach using the cut posterior to jointly estimate the transition matrix and the emission densities. We first derive a general theorem on contraction rates for this approach. We then show how this result may be applied to obtain a contraction rate result for the emission densities in our setting; a key intermediate step is an inversion inequality relating $L^1$ distance between the marginal densities to $L^1$ distance between the emissions. Finally, a contraction result for the smoothing probabilities is shown, which avoids the common approach of sample splitting. Simulations are provided which demonstrate both the theory and the ease of its implementation.
\end{abstract}

\begin{keyword}
\kwd{Bernstein-von Mises}
\kwd{Contraction rates}
\kwd{Cut posterior}
\kwd{Efficiency}
\kwd{Hidden Markov Models}
\kwd{Inversion inequality}
\kwd{Semiparametric estimation}
\end{keyword}

\end{frontmatter}

\section{Introduction}\label{sec:intro}
Hidden Markov models (HMMs) are a broad and widely used class of statistical models, enjoying applications in a variety of settings where observed data is linked to some ordered process, for which an assumption of independently distributed data would be both inappropriate and uninformative. Specific applications include modelling of weather \cite{ailliot2009space,hughes1999non}, circadian rhythms \cite{huang2018hidden}, animal behaviour \cite{langrock2015nonparametric,10.1214/16-AOAS1008}, finance \cite{mamon2007hidden}, information retrieval \cite{teh2006hierarchical,miller1999hidden}, biomolecular dynamics \cite{horenko2008likelihood}, genomics \cite{yau2011bayesian} and speech recognition\cite{rabiner1989tutorial}.

\vspace{1ex}
In this paper, we consider inference in finite state space HMMs. Such models are characterised by an unobserved (latent) Markov chain $(X_t)_{t\geq 1}$ taking values in $[R]=\{1,\dots, R\}$ with $R<\infty$, evolving according to a transition matrix $Q$. Conditionally on $X_t=j$, $Y_t\sim F_j$ where $F_j$ is the \textit{emission distribution} with associated density $f_j$. These models generalise independent mixture models, which are obtained as a special case when the $X_t$ are independent and identically distributed. Here we assume that $R$ is known so that the parameters in such models are then $Q$ and $\mathbf F  = (F_1, \dots, F_R)$. 

\vspace{1ex}
Most of the work on HMMs considers parametric models, where the emissions are assumed to admit densities in a parametric class $\{f_\theta:\theta\in\Theta\}$ where $\Theta\subset\mathbb{R}^d$ for some $d<\infty$, see for instance \cite{douc2014nonlinear,fruhwirth2006finite,cappe2009inference}.  However such an assumption leads to inference which is strongly influenced by the choice of  the parametric family $\{f_\theta:\theta\in\Theta\}$. This problem has been often discussed in the literature, see for instance \cite{rabiner1989tutorial}, \cite{fox2007sticky} or \cite{yau2011bayesian}, especially but not solely in relation to clustering or segmentation. In the seminal paper \cite{gassiat2016inference}, the authors show identifiability under weak assumptions on the emissions, provided the transition matrix is of full rank, paving the way for estimation of semiparametric models. In \cite{alexandrovich2016nonparametric}, it is shown further that the number $R$ of hidden states may be identified. In \cite{hsu2012spectral,Anandkumar2014} and \cite{de2017consistent}, frequentist estimators of $Q$ and $\mathbf{f} = (f_1, \dots, f_R)$ respectively have been proposed using spectral methods, showing in particular that $Q$ can be estimated at the rate $1/\sqrt{n}$. Frequentist estimation of the emission densities has also been addressed using penalised least squares approaches \cite{de2016minimax} and spectral methods \cite{de2017consistent}. However, no results exist on the asymptotic distribution of frequentist estimators for $Q$, nor on efficient estimation for $Q$.

\vspace{1ex}
Although Bayesian nonparametric estimation methods have been considered in practice in Hidden Markov models, see for instance \cite{yau2011bayesian} or \cite{fox2007sticky}, little is known about their theoretical properties. While \cite{vernet2015conc,vernet2015posterior} established posterior consistency  under general conditions on the prior and refined the analysis to derive posterior concentration rates on the marginal density of $\ell$ successive observations, $g^\ell_{Q, \mathbf F}$, no results exist regarding the properties of Bayesian procedures when seeking to recover the parameters $Q$ and $\mathbf F$, or
other functionals of $Q,\mathbf F$ which are often of interests in HMMs. For instance, when interests lie in clustering or segmentation, the quantities of interest are the smoothing probabilities, being the conditional distribution of the latent states given the observations. In \cite{de2017consistent}, the authors obtain rates of convergence for frequentist estimators of the smoothing probabilities, but their result requires either a sup-norm convergence for the estimator $\hat{\mathbf f}$ or  splitting the data into two parts, with estimation of $\mathbf f$ based on one part of the data and estimation of the smoothing probabilities based on the other part of the observations. In this paper we intend to bridge this gap, concentrating on Bayesian semiparametric methods while also exhibiting non-Bayesian, semiparametric efficient estimators of $Q$. 

\vspace{1ex}

\vspace{1ex}
We first construct a family of priors $\Pi_1$ on $(Q,\mathbf{F})$ which we show in Theorem \ref{BvMM} leads to an asymptotically normal posterior distribution for $\sqrt{n}(Q-\hat{Q})$, of variance $V$ detailed therein. Here $\hat{Q}$ is a freqentist estimator, exhibited in Theorem \ref{MLEM}, for which $\sqrt{n}(\hat Q - Q^*)$ converges in distribution to $N(0,V)$, with $Q^*$ being the the frequentist true parameter under which the data is assumed to be generated. Consequently, Bayesian estimates associated to such posteriors such as the posterior mean, enjoy parametric convergence rates to $Q^*$, and importantly credible regions for $Q$ are asymptotically confidence intervals. We then refine this construction to obtain a scheme for which $V$ is the optimal variance for semiparametric estimation of $Q$, being the inverse efficient Fisher information, in Theorem \ref{BvM}. Semiparametric Bernstein-von Mises properties are highly non trivial results and correspondingly sparse in the literature, moreover in \cite{freedman1999wald,castillo2015bernstein,castillo2012semiparametric,rivoirard2012bernstein} a number of counterexamples are exhibited, where semiparametric Bernstein-von Mises results do not hold. Since this property is crucial to ensure that credible regions are also confidence regions and thus robust to the choice of the prior distribution, it is important to study and verify it.

\vspace{1ex}
Our approach to obtaining these results follows the ideas of \cite{Gassiat2018}, extending their work on mixtures to the more complex HMM setting. In particular, the construction of  $\hat Q $ (together with the prior on $Q, \mathbf F$) is based on a  parametric approximation  to the nonparametric emission model, with the property that estimation in this model with appropriately `coarsened' data leads to a well-specified parametric model for estimation of $Q$. Once we reduce to such a parametric model, we have asymptotic normality of the corresponding MLE as in \cite{bickel1998asymptotic} as well as Bernstein-von Mises for the posterior as in \cite{DeGunst2008}, although the intuition that our `coarsened' data is less informative translates to an inefficient asymptotic variance $V$. We then show that we can construct efficient estimators of $Q$ by choosing a  type of coarsening, similar to the approach of \cite{Gassiat2018}. The proof techniques in our context are however significantly more complex since the notions of Fisher information and score functions are much less tractable in hidden Markov models (see for instance \cite{Douc2004}).



\vspace{1ex}
The prior distributions $\Pi_1(\dd Q,\dd\mathbf{F})$ considered above, which lead to the Bernstein-von Mises property of the posterior on $Q$, rely on a crude modelling of the emission densities and are therefore not well behaved as far as the estimation of $\mathbf F$ is concerned. To overcome this problem, we adapt in Section \ref{contraction} the cut posterior approach (see \cite{lunn2000winbugs,bennett2001errors,plummer2015cuts,jacob2017better,carmona2020semi}) to our semiparametric setting. Cut posteriors were originally proposed in the context of modular approaches to modelling, where a model is assembled from a number of constituent models, each with its own parameters $\theta_i$ and data $Y_i$. In the usual construction, the cut posterior has the effect of `cutting feedback' of one of the (less reliable) data sources $Y_i$ on the other  parameters (associated to more reliable data).

\vspace{1ex}
Our approach, though also modular, departs from this setting in that we use a single data source but wish to choose different priors for different parts of the parameter. This means that we consider a prior $\Pi_2(\dd\mathbf{F}|Q)$, then combine $\Pi_1(\dd Q|Y_{1:n})$ with the conditional posterior $\Pi_2(\dd \mathbf{F} | Q,Y_{1:n})$ to produce a joint distribution. In this way, we construct a distribution over the parameters which is not a proper Bayesian posterior, but which simultaneously satisfies a Bernstein-von Mises theorem for the posterior marginal on $Q$, and is well concentrated on the emission distributions and other functionals such as the smoothing probabilities. Through this construction, we manage to combine the ``best of both worlds" in the estimation of the parametric and nonparametric parts. We believe that this idea could be used more generally in other semiparametric models. 

\vspace{1ex}
As previously mentioned, the existing posterior concentration result in the semiparametric HMMs covered only the marginal density $g^{\ell}_{Q, \mathbf F}$ of a fixed number $\ell$ of consecutive observations, see \cite{vernet2015conc}. A key step in obtaining posterior contraction rates on the emission distributions $\mathbf F$ is an inversion inequality allowing us to deduce $L^1$ concentration of the posterior distribution of the emission distributions from concentration (at the same rate) of the marginal distribution of the observations. This is  established in Theorem \ref{inversion}, from which we derive contraction rates for the cut-posterior on $\mathbf f$ in  Theorem \ref{L1contractemission}.   This inversion inequality is of independent interest and can be used outside the framework of Bayesian inference.  We finally show in Theorem \ref{smoothingcontract} that these results lead to posterior concentration of the smoothing distributions, which are the conditional distributions of the hidden states given the observations, building on \cite{de2017consistent} but refining the analysis so that we require neither sup-norm contraction rate on the emissions $\mathbf f$, nor a splitting of the data set in two parts.

\paragraph*{Organisation of the paper}
The paper is organised as follows. In Section \ref{efficiency}, we introduce the model and the notions involved, together with a general strategy for inference on $Q$ and $\mathbf{f}$ based on the cut posterior approach. In Section \ref{sec:Pi1}, we study the estimation of $Q$, proving asymptotic normality of the posterior and asymptotic efficiency. In Section \ref{contraction}, the cut posterior approach is studied, posterior contraction rates for $\mathbf{f}$ are derived together with posterior contraction rates for the smoothing probabilities. Theoretical results from both sections are then illustrated in Section \ref{simulation}. The most novel proofs for both of these sections are presented to Section \ref{proofs}, with more standard arguments, along with further details of simulations, deferred to the supplementary material \cite{supplement}. All sections, theorems, propositions, lemmas, assumptions, algorithms and equations presented in the supplement are designed with a prefix S.

\section{Inference in semiparametric finite state space HMMs}\label{efficiency}
In this section we present  our strategy to estimate $Q$ and $\mathbf F = (F_1, \dots , F_R)$, based on a modular approach which consists of first constructing the marginal posterior distribution of $Q$ based on a first prior $\Pi_1 $ on $(Q, \mathbf F)$, and then combining it with the conditional posterior distribution of $\mathbf F$ given $Q$ and $ Y_{1:n}$ based on a different prior $\Pi_2(\dd\mathbf{F}|Q)$. 
\subsection{Model and notation}\label{sec:notation}
Hidden Markov models (HMMs) are latent variable models where one observes $Y_{1:n} = (Y_t)_{1\leq t\leq n}$ whose distribution is modelled via latent (non observed) variables $X_{1:n} = (X_t)_{1\leq t\leq n} \in [R]^n$ which form a Markov chain. In this work we consider finite state space HMMs:
\begin{equation}\label{model}
Y_t | X^n \sim F_{X_t} , \quad (X_t)_{t\leq n} \sim \text{MC}(Q), \quad \mathbb{P}_{Q,\mathbf{F}}(X_t=r|X_{t-1}=s) = Q_{rs}, \quad r,s \leq R
\end{equation} 
and the number of states $R$ is assumed to be known throughout the paper. 
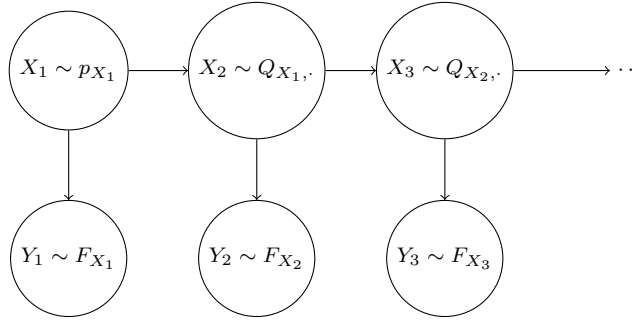
\begin{figure}[h]
\centering
\begin{tikzpicture}[scale=2.5][node distance=2cm]
\draw (0,0) node[circle,draw] (A) {$X_1\sim p_{X_1}$}  
(1,0) node[circle,draw] (B) {$X_2\sim Q_{X_1,\cdot}$}
(2,0) node[circle,draw] (C) {$X_3\sim Q_{X_2,\cdot}$}
(0,-1) node[circle,draw] (D) {$Y_1\sim F_{X_1}$}
(1,-1) node[circle,draw] (E) {$Y_2\sim F_{X_2}$}
(2,-1) node[circle,draw] (F) {$Y_3\sim F_{X_3}$}
(3,0) node[] (G) {$\cdots$};

\draw[->]
  (A) edge (B) (B) edge (C) (A) edge (D) (B) edge (E) (C) edge (F) (C) edge (G);
\end{tikzpicture}
\caption{Visual representation of data generating process of HMM}
\end{figure}
\vspace{1ex}

The parameters are then the transition matrix $Q = (Q_{rs})_{r,s \leq R}$ of the Markov chain  and the emission distributions $\mathbf F=\{F_r\}_{r\leq R}$, which represent the conditional distribution of the observations given the latent states. For a transition matrix $Q$ we denote by $p_Q$ its invariant distribution (when it exists).

\vspace{1ex}
Throughout the paper we denote by $\mathbf F^* = \{F_r^*\}_{r\leq R}$ and $Q^*$ respectively the true emission distributions and the true transition matrix. The aim is to make inference on $\mathbf F^*, Q^*$, and some functionals of these parameters, using likelihood based methods and in particular Bayesian methods. We assume  that the distributions $F_j^*$, $j=1, \dots, R$ are absolutely continuous with respect to some measure $\lambda$ on $\mathcal Y\subset \mathbb R^d$, $d\geq 1$  and we  denote by $f_1^*, \dots, f_R^* $ their corresponding densities. 
 
\vspace{1ex} 
When the latent states $(X_t)_{t\geq 1}$ are independent and identically distributed on $[R]$, the parameters are not identifiable unless some strong assumptions are made on the $F_j$'s, see \cite{Allman2009}. However, in  \cite{gassiat2016inference}, it is proved that under weak assumptions on the data generating process, both $Q$ and $\mathbf F$ are identifiable up to label swapping (or label switching). More precisely, let $\mathbb P^{(\ell)}_{Q, \mathbf F} $ be the marginal distribution of $\ell$ consecutive observations from model \ref{model} with parameters $Q$ and $\mathbf F$, so that
\begin{equation}\label{ell-marginal}
\mathbb P^{(\ell)}_{Q, \mathbf F}(dy_{1:\ell}) = \sum_{i_1, \dots, i_\ell=1}^R p_{Q}(i_1)\prod_{t=1}^{\ell-1}\left( Q_{i_t, i_{t+1}} F_{i_t}(dy_t)\right) F_{i_\ell}(dy_\ell).
\end{equation} 
Under assumptions of ergodicity $p_Q$ exists and is unique. For example, this holds under the assumption $Q_{ij}>0$ for all $i,j\in [R]$. see for instance equation (1) in \cite{gassiat2014posterior}. Denote by $\mathcal{Q}=\Delta_{R}^R\subset [0,1]^{R\times R}$ the $R-$fold product of the $R-1$ dimensional simplex and let $\mathcal P$ be the set of probability measures on $ \mathbb R^d$.
Consider the following assumption:
\begin{assume}\label{identifiability}
\begin{itemize}
\item (i) The latent chain $(X_t)_{t\geq 1}$ has true transition matrix $Q^*=(Q^*_{ij})$ satisfying $\det Q^* \neq 0$.

\item (ii) The true emission distributions $\{F_r^*\}_{r=1}^R$ are linearly independent.
\end{itemize} 
\end{assume}

Then from \cite{gassiat2016inference}, if \textbf{Assumption} \ref{identifiability} holds, then  for any $Q \in \mathcal Q$ and any $F_1, \dots, F_R \in \mathcal P $, if $\mathbb P^{(3)}_{Q, \mathbf F}  =\mathbb P^{(3)}_{Q^*, \mathbf F^*} $ then  $Q=Q^*$ and $\mathbf F = \mathbf F^*$ up to label swapping. By ``up to label swapping" we precisely mean that there exists a permutation $\tau$ of $[R]$ such that ${}^\tau Q=Q^*$  and  ${}^\tau\mathbf F = \mathbf F^*$, where ${}^\tau Q = (Q_{\tau(r), \tau(s)}, r,s\in [R])$ and ${}^\tau \mathbf F = (F_{\tau(1)},\dots,F_{\tau(R)})$. The requirement for such a permutation $\tau$ is unavoidable, since the labelling of the hidden states is fundamentally arbitrary. Correspondingly, the results which follow will always be given up to label swapping. In a slight abuse of notation, we will sometimes interchange $\mathbf F$ and $\mathbf{f}= (f_1, \dots, f_R)$, the latter being the densities of $\mathbf{F}$ with respect to some measure $\lambda$ in a dominated model.

\vspace{1ex}
The likelihood associated to model \eqref{model}, when $(F_r)_{r\in [R]}$ are dominated by a measure $\lambda$, is then given by
\begin{equation}\label{likelihood}
L_n(Q,\mathbf F)=g^{(n)}(Q, \mathbf F)(y_1, \dots, y_n) = \sum_{i_1, \dots, i_n=1}^R p_{Q}(i_1)\prod_{t=1}^{n-1} Q_{i_t, i_{t+1}} f_{i_t}(y_t) f_{i_n}(y_n).
\end{equation}
Extension to initial distributions different from the stationary one is straightforward, under the exponential forgetfulness of the Markov chain, which holds under our assumptions below, see Section \ref{sup:sec:forgetfulness}.

\vspace{1ex}
If $\Pi$ is a prior on $\mathcal Q \times \mathcal{F}^R$, with $\mathcal{F} = \{f : \mathbb R^d \rightarrow \mathbb R_+: \int f(y)d\lambda(y)  = 1\} $ is the set of densities on $\mathbb R^d$ with respect to $\lambda$,  then the Bayesian posterior $\Pi(\cdot|Y_{1:n})$ is defined as follows: for any Borel subset $A$ of $\mathcal{Q}\times\mathcal{F}^R$, we have
\begin{equation}\label{Posterior}
    \Pi(A|Y_{1:n})=\dfrac{\int_A L_n(Q,\mathbf F)\dd\Pi(Q,\mathbf F)}{\int_{\mathcal{Q}\times\mathcal{F}^R} L_n(Q,\mathbf F)\dd\Pi(Q,\mathbf F)}.
\end{equation}
This posterior is well defined as soon as almost surely with respect to the prior $p_Q$ exists, which holds for instance when $\Pi(\min_{i,j}Q_{ij}>0)=1$, which implies that the transiton matrix is ergodic $\Pi-$almost surely by the earlier remarks. Throughout the paper we consider the parametrization of $Q$ given by $\tilde Q = (Q_{rs}, r\in [R], s \in [R-1]) $  so that for each $r$ we have $Q_{rR}=1-\sum_{s<R}Q_{rs}$. Hence, specification of the matrix $Q$ amounts to specification of the $(R)\times (R-1)$ matrix $\tilde{Q}\in\tilde{\mathcal{Q}}$ for which $\sum_{s=1}^{R-1}Q_{rs}\leq 1$ for all $r$, and we will identify $Q$ with $\tilde{Q}$ (and $\mathcal{Q}$ with $\tilde{\mathcal{Q}}$) when making statements about asymptotic distributions.
\vspace{1ex}

It will be helpful to consider that the stationary HMM $(X_t,Y_t)_{t\geq 1}$ is defined on $(X_t,Y_t)_{t\in\mathbb{Z}}$ where the $(X_t)_{t\in\mathbb{Z}}$ and the $(Y_t)_{t\leq 0}$ are not observed. This is possible to define by considering the reversal of the latent chain. 

\vspace{1ex}
We  use $\mathbb{P}_*$ to denote the joint law of the variables $(X_t,Y_t)_{t\in\mathbb{Z}}$ under the  stationary distribution associated with the parameters $Q^*,\mathbf F^*$. Estimators are then understood to be random variables which are measurable with respect to $\sigma((Y_t)_{1\leq t\leq n})$.

\subsection{Cut posterior inference: A general strategy for joint inference on $Q$ and $\mathbf{f}$} \label{sec:cut:gene}

In Section \ref{sec:normaQ} below we construct a family of prior distributions $\Pi_1$ on $\mathcal Q \times \mathcal{F}^R$ such that the associated marginal posterior distribution of $Q$ follows a Bernstein-von Mises theorem  centred at an asymptotically normal regular estimator $\hat{Q}$,  i.e. with probability going to 1 under $\mathbb{P}_*$,
\begin{equation}\label{BvMdef}
\lVert \Pi_1( \min_{\tau \in \mathcal S_R}  \sqrt{n}({}^\tau Q-\hat{Q}) \in (\cdot) | Y_{1:n}) -N(0,V)\rVert_{TV} = o_{\mathbb{P}_*}(1)
\end{equation} 
and
 \begin{equation} \label{asymptoticnormaldef}
 \min_{\tau \in \mathcal S_R}\sqrt{n}({}^\tau\hat{Q}-Q^*)\Rightarrow^* N(0,V), 
 \end{equation}
where ${\Rightarrow}^*$ denotes convergence in distribution under $\mathbb{P}_*$ and where $ \mathcal S_R $ is the set of  permutations of $[R]$.
 

\vspace{1ex}

However, the choice of  $\Pi_1$ which we require for this control over $Q$ leads to a posterior distribution on $\mathbf F$ which is badly behaved, see Section \ref{sec:Pi1}. In order to jointly estimate well  $Q$ and $\mathbf F$, we propose the following cut posterior approach.  Consider a second conditional prior distribution $\Pi_2( d\mathbf F| Q) $ which leads to a conditional posterior distribution of $\mathbf F$ given $ Q, Y_{1:n}$ of the form 
$$ 
    \Pi_2(d\mathbf F|Q, Y_{1:n})=\dfrac{ L_n(Q,\mathbf F)\dd\Pi_2(\mathbf F|Q)}{\int_{\mathcal{F}^R} L_n(Q,\mathbf F)\dd\Pi_2(\mathbf F|Q) }.$$ 
    
The cut posterior is then defined as the probability distribution on $\mathcal Q \times \mathcal F^R$ by :
\begin{equation}\label{cutposterior}
\dd\Pi_{cut} ((Q ,\mathbf F )|  Y_{1:n}) = \dd\Pi_1(Q | Y_{1:n} ) \dd \Pi_2 (\mathbf F|Q, Y_{1:n}).
\end{equation}
Note that if $\Pi_2 (d\mathbf F| Q) = \Pi_1 (d\mathbf F| Q) $ then $\dd\Pi_{cut} ((Q ,\mathbf F )|  Y_{1:n}) $ is a proper posterior distribution and is equal to $ \dd \Pi_1((Q ,\mathbf F )|  Y_{1:n}) $. The motivation behind the use of the cut posterior $\dd\Pi_{cut} ((Q ,\mathbf F )|  Y_{1:n}) $ is to keep the good behaviour $\Pi_1(Q| Y_{1:n})$ while being flexible in the modelling of  $\mathbf F$ to ensure that the posterior distribution over both $\mathbf F$ and $Q$ (and functionals of the parameters) are well behaved. 
\vspace{1ex}

Adapting the proof techniques from \cite{ghosal2007convergence} to posterior concentration rates for cut posteriors, we derive in Section \ref{contraction} contraction rates for cut posteriors in terms of the $L^1$ norm of $g^{(3)}_{Q,\mathbf F} -g^{(3)}_{Q^*,\mathbf F^*} $, where $g^{(3)}_{Q,\mathbf F}$ is the density of $\mathbb P^{(3)}_{Q,\mathbf F} $ with respect to the dominating measure $\lambda$. That is, we show that under suitable conditions and choice of $\epsilon_n=o(1)$, $\Pi_{cut} (\|g^{(3)}_{Q,\mathbf F} -g^{(3)}_{Q^*,\mathbf F^*}\|_1\leq \epsilon_n |  Y_{1:n})  = 1 + o_{\mathbb{P}_*}(1)$.

\vspace{1ex}
To derive cut posterior contraction rates in terms of the $L^1$ norm of $\mathbf f - \mathbf f^*$, we prove  in Section \ref{sec:emissions} an inversion inequality in the form 
$$ \sum_{r=1}^R \|f_r - f_r^*\|_1 \lesssim \|Q - Q^*\| + \|g^{(3)}_{Q,\mathbf F} -g^{(3)}_{Q^*,\mathbf F^*}\|_{L^1},$$
 which is of independent interest.
 
\vspace{1ex}
We also derive cut-posterior contraction rates for the smoothing probabilities 
$(p_{Q, \mathbf F}(X_i = \cdot | Y_{1:n}))_{i=1,\dots,n}$ in Section \ref{sec:smoothing}. In contrast with \cite{de2017consistent}, concentration rates for $(p_{Q, \mathbf F}(X_i = \cdot | Y_{1:n}))_{i=1,\dots,n}$ do not require to split the data into 2 groups nor do they require to to have a control of $f_r - f_r^*$ in sup-norm. We can avoid these difficulties thanks to the Bayesian approach as is explained in Section \ref{sec:smoothing}.

\vspace{1ex}
In our implementation of the cut posterior, we adopt a nested MCMC approach of the kind detailed in \cite{carmona2020semi} and \cite{jacob2017better}, see Section \ref{simulation} for details. 

\vspace{1ex}
In the following section we present $\Pi_1$ and show that the associated marginal posterior distribution $\Pi_1 (Q| Y_{1:n})$ is asymptotically normal in the sense of Equation (\ref{BvMdef}).

\section{ Semi - parametric estimation of $Q$: Bernstein-von Mises property and efficient estimation }\label{sec:Pi1}

The prior  $\Pi_1 $ is based on a simple histogram prior on the $f_1, \dots , f_R$. For the sake of simplicity we present the case of univariate data; the multivariate case can be treated similarly. Without loss of generality we assume that  the observations belong to $[0,1]$, and note that if $\mathcal Y= \mathbb R$, we can transform the data to $[0,1]$ via a $C^1$ diffeomorphism (such as $\phi(x)=\frac{\exp(x)}{1+\exp(x)}$) prior to the analysis. The prior relies on a partition of the space $[0,1]$ into finitely many intervals $\{I_1,I_2,\dots\}$, and transforming the data is equivalent to constructing a prior based on the corresponding partition $\{\phi^{-1}(I_1),\phi^{-1}(I_2),\dots\}$ of $\mathbb{R}$. The construction of $\Pi_1$ is very similar to the construction considered in \cite{Gassiat2018}.

\vspace{1ex}
Let $M \in \mathbb N$ with $M \geq R$ and consider $\mathcal I_M = (I_m^{(M)}, m \leq {\kappa_M})$ a partition of $[0,1]$ into $ \kappa_M$ bins, with  $\kappa_{(\cdot)} :\mathbb{N}\rightarrow\mathbb{N}$ a strictly increasing sequence. Given $\mathcal I_M $, we consider the 
model of piecewise constant densities as the set, $\mathcal F_M$, of densities  with respect to Lebesgue measure, in the form:
\begin{equation}\label{hist}
    f_{\omega}=\sum_{m=1}^{\kappa_M} \dfrac{\omega_m}{\lvert I_m^{(M)} \rvert}\mathds{1}_{I_m^{(M)}}, \quad \min_m \omega_m \geq 0, \quad \sum_{m=1}^{\kappa_M} \omega_m = 1 ,
\end{equation}
where $\lvert I_m^{(M)} \rvert$ denotes the length of interval $I_m^{(M)}$. We could for instance consider a sequence of dyadic partitions with $\kappa_M=2^M$, such partitions are admissible for sufficiently large $M$ in the sense we detail below, and are used for the empirical investigation of Section \ref{simulation}.

\vspace{1ex}

The parameters for this model are then $Q$ and $\underline{\omega}_{(M)}=(\omega_{mr})_{m=1,\dots,\kappa_M}^{r=1,\dots, R}$, the latter of which varies in the set $$\Omega_M= \{\underline{\omega}_{(M)} \in [0,1]^{\kappa_M\times R} \hspace{1ex}; \sum_{m=1}^{\kappa_M}\omega_{mr} = 1, \forall r\in [R] \}.$$ Through (\ref{hist}), we identify each $\underline{\omega}_{(M)}\in\Omega_M$ with a  vector of emission densities  $\mathbf{f}_{\underline{\omega}_{(M)}} \in\mathcal{F}_M^R$, and thus a prior distribution $\Pi_M$ over the parameter space $\mathcal{Q}\times\Omega_M$ is identified with a prior distribution $\Pi$ over $\mathcal{Q}\times\mathcal{F}_M^R$. The corresponding posterior distribution is denoted $\Pi_M(\cdot |Y_{1:n})$ and is defined through (\ref{Posterior}).

\vspace{1ex}
Throughout this section we write $\underline{\omega}_{r} := ( \omega_{mr}, m \leq \kappa_M )$ and we denote by $\mathcal M(\mathcal I_M)$ the hidden Markov model associated with densities of the form \eqref{hist}. Note that for all $M>0$, $\mathcal M(\mathcal I_M)$ is of dimension $(\kappa_M-1)^R R(R-1)<\infty$. 

A key argument used in \cite{gassiat2016inference} to identify $Q^*$ from  $g^{(3)}_{Q^*, \mathbf f^*}$, is to find a partition $\mathcal I_M$ for some $M>0$ such  that the matrix 
$$ \mathbf{F}^{*}(\mathcal I_M) := \left( \begin{array}{ccc}
 F^{*}_{1} (I_1^{(M)}),&  \cdots ,&  F^{*}_{1} (I_{\kappa_M}^{(M)}) \\
  F^{*}_{2} (I_1^{(M)}),&  \cdots ,&  F^{*}_{2} (I_{\kappa_M}^{(M)})\\
  \vdots & \cdots\cdot&\quad \vdots \\
   F^{*}_{R} (I_1^{(M)}),&  \cdots ,&  F^{*}_{R} (I_{\kappa_M}^{(M)}) \end{array}\right)$$
   has full rank. We call such a partition an \textit{admissible} partition. 
\begin{defn}\label{admispartition}
A partition $\mathcal I_M$ is said to be \textit{admissible} for $f_1^*, \dots, f_R^{*}$ if the rank of $\mathbf{F}^{*}(\mathcal I_M)$ is equal to $R$. 
\end{defn}

\begin{rk}\label{datareduction}
Note that although we are using piecewise constant functions to model the emission densities, we do not assume that the $f_r^*$, $r \in [R]$ are piecewise constant and the simplified models $M$ are not meant to lead to good approximation of the emissions densities. However, interestingly, as far as the parameter $Q$ is concerned, the likelihood induced by such a model is not mis-specified but corresponds rather to a coarsening of the data.  Indeed it corresponds to the likelihood associated to observations of $Y_i^{(M)} = (\mathds{1}_{I_1^{(M)}}(Y_i), \cdots,\mathds{1}_{I_{\kappa_M}^{(M)}}(Y_i))$ and for such observations the simplified model leads to a well -specified likelihood. Note also that although we are modelling densities with respect to Lebesgue measure we do not require $F_r^*$ to have density with respect to Lebesgue measure, since the quantity of importance are the probabilities $  F^{*}_{r} (I_m^{M)})$ , $r\in [R] , \, m\in [\kappa_M]$. 

\vspace{1ex}
This particular coarsening was introduced in the context of mixtures by \cite{Gassiat2018}. It is not at all obvious how other types of coarsening can be found in order to use \textit{ valid} parametric models in the sense that they are well specified models for the coarsened data and the parameter of interest $Q$. 
\end{rk}

\subsection{Asymptotic normality and Bernstein-von Mises} \label{sec:normaQ}

In this section we study the asymptotic behaviour of the marginal posterior distribution of $Q$ under model $\mathcal M(\mathcal I_M)$, together with the asymptotic normality of the maximum likelihood estimator $\hat{Q}_{n,M}$ in this model. As mentioned in Remark 
\ref{datareduction}, the likelihood associated to model $\mathcal M(\mathcal I_M)$ is given by 
\begin{equation} \label{reducedLikelihood}
L_n(Q, \omega_{(M)} )= \sum_{i_1, \dots, i_n=1}^R  p_Q(i_1) \left(\prod_{t=1}^{n-1}Q_{i_{t},i_{t+1}}f_{\underline{\omega}_{i_t}}(Y_t^{(M)})\right)f_{\underline{\omega}_{i_n}}(Y_n^{(M)})
\end{equation}
with the abuse of notation 
$f_{\underline{\omega}_{i_t}}(Y_t^{(M)}) = \sum_{m=1}^{\kappa_M}\frac{\omega_{mi_t}Y_t^{(M)}(m) }{ |I_m^{(M)}|} $ for $t=1,\dots,n$.

In other words, our likelihood becomes one of a hidden Markov model with finite state space and multinomial emission distributions, and under $\mathbb{P}_*$, $(Y_1^{(M)}, \cdots, Y_n^{(M)})$ arise from a hidden Markov model with multinomial emission distributions with parameters $\underline{\omega}_{(M)}^*$ and transition matrix $Q^{*}$, with $\omega_{mr}^* = F^*_r(I_m^{(M)})$. We write $\theta=\theta^{(M)}:=(Q,\underline{\omega}_{(M)})$ for the parameter of $\mathcal{M}(\mathcal{I}_M)$ and suppress the superscript on $\theta$ when the dependence on $M$ is clear.

\vspace{1ex}
Asymptotic normality of the maximum likelihood estimator (MLE) of parametric finite state space hidden Markov models was considered for instance in \cite{bickel1998asymptotic}, who showed that the MLE was asymptotically normal with covariance matrix given by the inverse of the Fisher information matrix, which is given by the limiting covariance matrix of the score statistics, see  Lemma 1 of \cite{bickel1998asymptotic}.
\vspace{1ex}

Let $J_M(\theta)$ be the Fisher information matrix  associated to the likelihood \eqref{reducedLikelihood}:
$$ J_M( \theta^*) = \lim_{n \rightarrow \infty} \frac{ \mathbb E_* \left[ - D^2 \log L_n( Q^*, \underline{\omega}_{(M)}^*) \right]}{n}, $$
see \cite{bickel1998asymptotic}. 
Also let $J_M[Q,Q], J_M[\omega,\omega]$,  and $J_M[Q,\omega]$ denote the submatrices corresponding to the second derivatives with respect to $(Q,Q)$, $(\omega,\omega)$ and $(Q,\omega)$ respectively. 

\vspace{1ex}

The following theorem demonstrates that asymptotically normal, parametric-rate estimators of the transition matrix exist, although such estimators may not have optimal asymptotic variance.

\begin{theorem}\label{MLEM} 
Let $M>0$ and $\mathcal I_M$ be an admissible partition for $\mathbf F^*$, and let $\hat{Q}_{n,M}=\hat{Q}_n^{(M)}(Y_{1:n})$, be the MLE in model $\mathcal M(\mathcal I_M)$, given observations $Y_{1:n}$. Assume that  (i) of \textbf{ Assumption} \ref{identifiability} is satisfied that the transition matrix $Q^*$ is irreducible and aperiodic and that
 $ \omega_{mr}^* > 0 $ for all $m \in [\kappa_M]$ and all $r \in [R]$.  We then have (up to label-swapping)
$$\sqrt{n}(\hat{Q}_{n,M}-Q^*)\Rightarrow^* N(0,\tilde{J}_{M}^{-1}),$$
where $\tilde{J}_{M}^{-1}$ is  positive definite and
$$\tilde{J}_{M}^{-1} = (J_{M}^{-1})[Q,Q] = J_M[Q,Q] - J_M[Q,\omega]J_M[\omega,\omega]^{-1}J_M[\omega,Q].$$
\end{theorem}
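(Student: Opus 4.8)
The plan is to recognise the coarsened model $\mathcal M(\mathcal I_M)$ as a \emph{well-specified} parametric finite state space HMM and then to invoke the asymptotic normality theory for the MLE in such models due to \cite{bickel1998asymptotic}. As recorded in Remark \ref{datareduction}, under $\mathbb{P}_*$ the coarsened observations $Y_t^{(M)}$ are genuinely generated by a finite state HMM with multinomial emission distributions, transition matrix $Q^*$ and emission parameters $\omega_{mr}^* = F_r^*(I_m^{(M)})$. Hence $\hat\theta_{n,M}=(\hat Q_{n,M},\hat{\underline\omega}_{(M)})$ is the MLE in a correctly specified parametric model with true value $\theta^*=(Q^*,\underline\omega_{(M)}^*)$, and the first task is to verify the hypotheses of \cite{bickel1998asymptotic} (consistency following from compactness of $\mathcal Q\times\Omega_M$, continuity of the contrast, and a well-separated maximiser of the Kullback--Leibler rate) so as to conclude $\sqrt n(\hat\theta_{n,M}-\theta^*)\Rightarrow^* N(0,J_M(\theta^*)^{-1})$ up to label swapping.

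Checking these hypotheses has two routine ingredients and one hard one. The emission probabilities $f_{\underline\omega_r}(Y^{(M)})$ are linear, hence $C^\infty$, in $\omega$, and since $\omega_{mr}^*>0$ they are strictly positive at $\theta^*$, so the smoothness and positivity conditions hold on a neighbourhood of $\theta^*$; irreducibility and aperiodicity of $Q^*$ are assumed outright. For identifiability, admissibility of $\mathcal I_M$ means that $\mathbf F^*(\mathcal I_M)$ has rank $R$, i.e. the coarsened emission distributions are linearly independent; this plays the role of condition (ii) of \textbf{Assumption} \ref{identifiability} for the reduced model, and together with $\det Q^*\neq 0$ it lets us apply \cite{gassiat2016inference} to obtain identifiability up to label swapping. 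We therefore fix a representative labelling and work on a small neighbourhood of $\theta^*$ on which the parametrisation is injective, which is precisely why the conclusion is stated up to label swapping.

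The main obstacle is to show that the limiting Fisher information $J_M(\theta^*)$ is nonsingular, in fact positive definite. Unlike the i.i.d.\ case, here the score is a limit of conditional score increments and $J_M$ is the associated long-run variance, so degeneracy cannot be read off directly. I would argue by contradiction: if $v^\top J_M(\theta^*)v=0$ for some $v\neq0$, then the stationary directional-score process in direction $v$ has vanishing asymptotic variance, which forces the derivative of $\theta\mapsto g^{(3)}_{Q,\mathbf F}$ at $\theta^*$ to vanish in direction $v$; since the map $\theta\mapsto \mathbb P^{(3)}_{Q,\mathbf F}$ is locally injective near $\theta^*$ (by the previous paragraph, via \cite{gassiat2016inference}), this contradicts first-order local injectivity of that polynomial map. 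Making precise the link between the long-run score variance of the HMM and the derivative of the finite-dimensional marginals is the delicate point, and is where the stationary score representations of \cite{bickel1998asymptotic} and \cite{Douc2004} are needed.

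Finally, the marginal statement for $Q$ and the form of the variance follow by linear algebra. Projecting $\sqrt n(\hat\theta_{n,M}-\theta^*)\Rightarrow^* N(0,J_M(\theta^*)^{-1})$ onto the $Q$-coordinates gives $\sqrt n(\hat Q_{n,M}-Q^*)\Rightarrow^* N(0,(J_M^{-1})[Q,Q])$ by the continuous mapping theorem, and the block-inversion (Schur complement) identity gives
$$ (J_M^{-1})[Q,Q]=\big(J_M[Q,Q]-J_M[Q,\omega]J_M[\omega,\omega]^{-1}J_M[\omega,Q]\big)^{-1}=\tilde J_M^{-1}, $$
where $\tilde J_M:=J_M[Q,Q]-J_M[Q,\omega]J_M[\omega,\omega]^{-1}J_M[\omega,Q]$ is the Schur complement. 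Positive definiteness of $\tilde J_M^{-1}$ is then immediate from $J_M(\theta^*)\succ0$, since both a principal block of the inverse of a positive definite matrix and the Schur complement of such a block are positive definite.
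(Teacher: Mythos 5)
Your outer skeleton is exactly the paper's: the coarsened model $\mathcal M(\mathcal I_M)$ is a well-specified parametric HMM with multinomial emissions, Theorem 1 of \cite{bickel1998asymptotic} gives $\sqrt n(\hat\theta_{n,M}-\theta^*)\Rightarrow^* N(0,J_M^{-1})$ up to label swapping once $J_M$ is invertible, and projecting onto the $Q$-coordinates together with the block-inverse (Schur complement) identity yields the stated variance and its positive definiteness. The paper itself identifies invertibility of $J_M$ as the main difficulty of the theorem, and this is precisely where your argument has a genuine gap.

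Your contradiction step claims that $v^\top J_M v=0$ forces the derivative of $\theta\mapsto g^{(3)}_{Q,\mathbf F}$ at $\theta^*$ to vanish in direction $v$. This implication does not follow; the relevant information inequalities point the wrong way. Writing $J_n$ for the Fisher information of $Y_{1:n}$, data processing gives $J_3\preceq J_n$, whereas $v^\top J_M v=0$ only says $v^\top J_n v=o(n)$; these two facts are perfectly compatible with $v^\top J_3 v>0$, since the directional information can saturate at a bounded level (so the per-observation information vanishes asymptotically while every finite marginal retains positive information). Indeed, almost sure vanishing of the stationary conditional score $v^\top\Delta_{0,\infty}$ yields, via the forgetting bounds of \cite{Douc2004}, only that $v^\top J_n v$ stays bounded in $n$ --- not that the score of the three-observation marginal is zero. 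Moreover, even granting the vanishing derivative, your contradiction invokes ``first-order local injectivity'' of $\theta\mapsto\mathbb P^{(3)}_{Q,\mathbf F}$, but what \cite{gassiat2016inference} provides is set-theoretic injectivity (up to label swapping), and injectivity of a smooth, even polynomial, map does not imply that its differential is injective (consider $x\mapsto x^3$ at $0$). The paper closes the gap by an entirely different, quantitative route (Section \ref{invertibleFI}): it constructs spectral estimators via the tensor power method of \cite{Anandkumar2014} with locally uniform $L^2$ risk of order $1/n$, and then applies the van Trees inequality following \cite{gassiat2013revisiting}, so that a singular $J_M$ would force the local asymptotic minimax risk to diverge, contradicting the existence of those estimators. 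Some quantitative strengthening of identifiability of this kind appears unavoidable; bare injectivity of the marginal map is not enough to rule out a degenerate information matrix.
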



The main difficulty in the proof of Theorem \ref{MLEM} is showing that the Fisher information matrix $J_M $ is invertible. 
\begin{proof}
The model $\mathcal M(\mathcal I_M)$ is a regular parametric HMM, 
 hence using Theorem 1 of \cite{bickel1998asymptotic}, we establish asymptotic normality of the MLE for the parameter $\theta=(Q,\underline{\omega}_{(M)})$ as soon as $J_M $ is invertible. We prove invertibility of $J_M $  in Section \ref{invertibleFI}, then
  projecting onto the $Q$ co-ordinates gives the result.
\end{proof}

To derive the Bernstein-von Mises Theorem associated to model $\mathcal M(\mathcal I_M)$, we need  the following assumption on the prior distributions on $Q$ and $\underline{\omega}_{(M)}$: 
\begin{assume} \label{ass:priorpi1}
\begin{itemize} 
\item The prior $\Pi_Q$  on $Q$ has positive and continuous density on $\mathcal Q$.
\item The prior on $\underline{\omega}_{(M)}$ has positive and continuous density with respect to Lebesgue measure. 
\end{itemize}
\end{assume}

\begin{theorem}\label{BvMM} 
Let $M>0$, $\mathcal I_M$ be an admissible partition for $\mathbf f^*$, and let $\Pi_M$ be a prior satisfying \textbf{Assumption} \ref{ass:priorpi1}.  Assume that  (i) of \textbf{Assumption} \ref{identifiability} is satisfied that the transition matrix $Q^*$ is irreducible and aperiodic and that
 $\omega_{mr}^* >0$ for all $m \in [\kappa_M]$ and all $r \in [R]$.  We then have (up to label-swapping)
$$\|\Pi_M( \sqrt{n}(Q - \hat{Q}_{n,M}) \in (\cdot ) | Y_{1:n}) -  N(0,\tilde{J}_{M}^{-1})\|_{TV} = o_{\mathbb{P}_*}(1).$$
\end{theorem}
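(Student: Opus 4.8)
The plan is to deduce the marginal statement from a joint Bernstein-von Mises theorem for the full parameter $\theta=(Q,\underline\omega_{(M)})$ and then marginalize onto the $Q$-coordinates. Since $\mathcal M(\mathcal I_M)$ is a regular parametric HMM with finite state space and multinomial emissions, I would first invoke a parametric HMM Bernstein-von Mises result such as that of \cite{DeGunst2008} to obtain, up to label-swapping and with probability tending to one under $\mathbb P_*$,
$$\|\Pi_M(\sqrt{n}(\theta-\hat\theta_{n,M})\in(\cdot)\,|\,Y_{1:n}) - N(0,J_M^{-1})\|_{TV}=o_{\mathbb P_*}(1),$$
where $\hat\theta_{n,M}=(\hat Q_{n,M},\hat{\underline\omega}_{n,M})$ is the joint MLE whose $Q$-component is exactly the estimator of Theorem \ref{MLEM}.

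To apply this result I would verify its hypotheses. Regularity (smoothness of the per-observation log-likelihood and the local asymptotic normality expansion) is immediate because the multinomial emission family is a smooth exponential family and, under the assumptions that $Q^*$ is irreducible and aperiodic with $\omega_{mr}^*>0$, the chain $(X_t,Y_t^{(M)})$ mixes exponentially so that the score and observed information behave as in \cite{bickel1998asymptotic}; invertibility of the limiting Fisher information $J_M$ is precisely what is established in Section \ref{invertibleFI} in the course of proving Theorem \ref{MLEM}. The prior conditions are supplied by Assumption \ref{ass:priorpi1}: the joint prior on $(Q,\underline\omega_{(M)})$ has a positive, continuous density in a neighbourhood of $\theta^*$, which is the standard requirement for the prior to be asymptotically flat on the $\sqrt n$-scale.

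The remaining structural issue is label-switching: the likelihood is invariant under the $R!$ relabellings $\tau$, so the posterior exhibits $R!$ symmetric modes and no \emph{global} BvM can hold. Under Assumption \ref{identifiability} and admissibility of $\mathcal I_M$, the coarsened parameters $(Q^*,\underline\omega_{(M)}^*)$ are identifiable up to permutation and the $R!$ images of $\theta^*$ are distinct and separated. I would therefore localize: restrict the posterior to a small fixed neighbourhood of a single representative $\theta^*$ (a fundamental domain for the permutation action), argue that the excluded mass is negligible — a consequence of posterior consistency in this finite-dimensional regular and identifiable model — and apply the local LAN-based BvM there. The qualifier ``up to label-swapping'' then records that the same conclusion holds verbatim at each of the $R!$ modes.

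Finally I would marginalize. Writing $\pi_Q$ for the coordinate projection $\theta\mapsto Q$, the displayed marginal law $\Pi_M(\sqrt n(Q-\hat Q_{n,M})\in(\cdot)\,|\,Y_{1:n})$ is the pushforward under $\pi_Q$ of the joint law appearing above. Since total variation does not increase under a measurable pushforward, the marginal TV distance is bounded by the joint TV distance and is therefore $o_{\mathbb P_*}(1)$; moreover the pushforward of $N(0,J_M^{-1})$ under $\pi_Q$ is the centered Gaussian with covariance $(J_M^{-1})[Q,Q]=\tilde J_M^{-1}$, by the Schur-complement identity recorded in Theorem \ref{MLEM}. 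I expect the main obstacle to be the rigorous handling of label-switching, together with confirming that the centering used by the reference BvM agrees with $\hat Q_{n,M}$ (any efficient centering differs from the MLE by $o_{\mathbb P_*}(1/\sqrt n)$, which leaves the rescaled Gaussian limit unchanged); the marginalization step itself is routine.
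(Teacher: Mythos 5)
Your proposal is correct and follows essentially the same route as the paper: both invoke the parametric Bernstein--von Mises theorem of \cite{DeGunst2008} for the regular model $\mathcal M(\mathcal I_M)$ (valid once invertibility of $J_M$ is established in Section \ref{invertibleFI}), reconcile the score-based centering $T_n$ of that result with the MLE via the asymptotic linearity expansion from \cite{bickel1998asymptotic}, and then marginalize onto the $Q$-coordinates, where the pushforward of $N(0,J_M^{-1})$ has covariance $(J_M^{-1})[Q,Q]=\tilde J_M^{-1}$. Your additional explicit treatment of label-switching by localizing to a fundamental domain is a reasonable elaboration of what the paper leaves implicit in the phrase ``up to label-swapping.''
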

As with the proof of Theorem \ref{MLEM}, parametric results apply as soon as the Fisher information matrix is seen to be invertible.
\begin{proof}
Inspecting the proof of Theorem 1 of \cite{bickel1998asymptotic}, we see (for $\theta=(Q,\underline{\omega}_{(M)})$, $J_M=J_M(\theta^*)$ the Fisher information and $l_n=\log g_n$ the log-likelihood) that
$$\sqrt{n}(\hat{\theta}-\theta^*) = n^{-\frac{1}{2}}J_M^{-1}\nabla_\theta l_n(\theta^*) +o_{\mathbb{P}_*}(1), $$
which up to $o_{\mathbb{P}_*}(1)$ is equal to the $T_n$ of \cite{DeGunst2008} at which their Bernstein-von Mises result (Theorem 2.1) is centred. Then this result implies the total variation convergence (in probability) of the posterior to the given normal distribution, by considering the marginal posterior on (the free entries of) $Q$. We remark further that the MLE $\hat{\theta}$ is regular, which follows from the characterisations of Fisher information in Lemmas 1 and 2 of \cite{bickel1998asymptotic}, the expansion of the MLE above, and an application of Le Cam's third lemma (Example 6.7 of \cite{VanderVaart1998}) along the lines of the proof of Lemma 8.14 in \cite{VanderVaart1998}.
\end{proof}

\vspace{1ex}

An interesting feature of Theorems \ref{MLEM} and \ref{BvMM} is that they essentially only require that $\mathcal I_M$ is an admissible partition of $\mathbf F^*$. For a given partition, this is an assumption on $\mathbf F^*$, and indeed the choice of the partition is important. Note however, that under Assumption \ref{identifiability} and for instance if the (Lebesgue) densities $f_r^*$ are positive and continuous, then for all sequences of embedded partitions $(\mathcal I_M)_M$ with radius going to 0 there exists an $M$ such that $\mathcal I_M$ is admissible for $\mathbf F^*$. More discussion is provided in Section \ref{simulation}.

\vspace{1ex}
Combining Theorems \ref{MLEM} and \ref{BvMM}, we see that credible regions for $Q$ based on the posterior associated to $\Pi_M$ are also asymptotic confidence regions. Their size may not be optimal however, even asymptotically. To ensure that such credible regions have optimal size while being asymptotic confidence regions, we would require that  $\tilde{J}_{M}^{-1}$ is the best possible (asymptotic) variance, but this is not true in general; while $\tilde{J}_{M}^{-1}$ is the efficient covariance matrix for the estimation of $Q$ in model $ \mathcal M(\mathcal I_M)$, it is not necessarily the semiparametric efficient covariance matrix for the estimation of $Q$ in model \eqref{likelihood}.  The existence of an efficient estimator of $Q$ in the semiparametric hidden Markov model, with likelihood (\ref{likelihood}) has not been established, although the fact that $\sqrt{n}$-convergent estimates of $Q$ exist in the literature (see Section \ref{invertibleFI}) indicate that semiparametric  efficient estimation of $Q$ should be possible.

\vspace{1ex}
In the following section we construct an efficient estimator for $Q$ and a prior leading to an efficient Bernstein-von Mises theorem. 

\vspace{1ex}



\subsection{Efficient estimation} \label{sec:efficiency}

In \cite{Gassiat2018}, in the context of semiparametric mixture models with finite state space,  the authors also consider the prior  model \eqref{hist} for the emission distributions. They derive for fixed $M$ a Bernstein-von Mises theorem similar to Theorem \ref{BvMM}, and they show that if we let $M=M_n$ go to infinity sufficiently slowly as $n\rightarrow\infty$, and if the corresponding partitions $\mathcal I_{M_n}$ are embedded with radius going to 0, then $\tilde J_{M_n} $ converges to the efficient semiparametric Fisher matrix for $Q$ and the estimator $\hat{Q}_{n,M_n}$ is efficient.
\vspace{1ex}

In this section we prove a similar result, however the proof is significantly more involved than in the case of mixture models. To study the theory on efficient semiparametric estimation  of $Q$ in the semiparametric HMM models, we follow the approach of \cite{mcneney2000application}. We first prove the LAN expansion in local submodels, which allows us to describe the tangent space, and then we prove that $\tilde J_M$ converges to $\tilde J  $ as $M$ goes to infinity where $\tilde J $ is the efficient Fisher information matrix for $Q$. Throughout Section \ref{sec:efficiency} we assume that the $F_r^* $ have density $f_r^*$ with respect to Lebesgue measure on $[0,1]$ and that the following holds:

\begin{assume}\label{ass:Qbound}
    For all  \, $i,j \in [R], \quad Q^*_{ij}>0 .$
\end{assume}

\subsubsection{ Scores and tangent space in the semiparametric model}\label{subsec:tangentspace}

We begin by exhibiting the LAN expansion for our model, following the framework of \cite{mcneney2000application}. As is usual in semiparametric efficiency arguments (see also Chapter 25 of \cite{VanderVaart1998}), this involves identifying the score functions and LAN norm along one-dimensional submodels passing through the true parameter. Since these submodels are themselves parametric, this identification can be made by following the framework of \cite{Douc2004}, who considered asymptotic normality in the context of parametric HMMs. A more thorough treatment, alongside a recollection of the relevant definitions and results in \cite{mcneney2000application}, can be found in Section \ref{sec:suppconv} of the supplementary material - in what follows, we aim to give an overview.

\vspace{1ex}
The first step is to exhibit a LAN expansion for the semiparametric model we consider. The parameter space is $\Theta =\mathcal{Q}\times\mathcal{F}^R$ and we consider the tangent space, in the sense of Definition 1 of \cite{mcneney2000application}, at the parameter $(Q^*,\mathbf{f}^*)$ as $$\mathcal{H}=\{(a,\mathbf{h}), a\in\mathbb{R}^{R\times (R-1)}, \quad \mathbf{h}=(h_r)_{r=1}^R, \quad h_r\in\mathcal{H}_r\}$$ where $\mathcal{H}_r=\mathcal{H}_r(f_r^*) = \{h_r\in L^2(f_r^*) : \int h_rf_r^* = 1, \lVert h_r\rVert_{\infty} <\infty \}$ and where we use the parametrization $\tilde Q$ defined in Section \ref{sec:notation}. Write $$\theta_n(a,\mathbf{h})=\left(Q^*+\frac{a}{\sqrt{n}}, f_1^*(1+\frac{h_1}{\sqrt{n}}),\dots, f_R^*(1+\frac{h_R}{\sqrt{n}})\right ).$$ Then $\theta_n(a,\mathbf{h})$ is a perturbation of the parameter $\theta^*=(Q^*,\mathbf{f}^*)$ along the path characterised by a given element $(a,\mathbf{h})\in \mathcal{H}$. Consider the submodel
$$\Theta_{a, \mathbf{h}} = \{ \theta_t = \left(Q^* + t a , ( f_r^*( 1 + t h_r) : r \in [R])\right), \, \lvert t \rvert \leq  \epsilon \}, $$
for some $\epsilon$ (depending on $a,\mathbf{h}$) sufficiently small that $\Theta_{a,\mathbf{h}}\subset\Theta$. Then for given $a,\mathbf{h}$ and for sufficiently large $n$, the perturbed parameters $\theta_n(a,\mathbf{h})$ are elements of $\Theta_{a,\mathbf{h}}$. This means that, for each $a,\mathbf{h}$, we can make an asymptotic expansion of the log-likelihood ratio between $\theta_n(a,\mathbf{h})$ and $\theta^*$ by analysing this ratio in the sub model $\Theta_{a,\mathbf{h}}$. To this end, we expand the gradient of the log-likelihood $\ell_n^{(a,\mathbf{h})}(t)$, in $\Theta_{a,\mathbf{h}}$ at $t_0=0$ as
$$\frac{1}{\sqrt{n}}\nabla_t\ell_n^{(a,\mathbf{h})}(t_0) = \frac{1}{\sqrt{n}}\sum_{k=1}^n\Delta_{k,\infty}^{(a,\mathbf{h})} + o_{L^2(\mathbb{P}_*)}(1),$$
where

\begin{align}\label{eq:submodscore}
    \Delta_{k,\infty}^{(a,\mathbf{h})} &= \Delta_{k,\infty}^{(a,\mathbf{h})}(Y_{-\infty:k} ) = \mathbb{E}_*[\phi(X_{k-1},X_k,Y_k)|Y_{-\infty:k}] \nonumber \\ &+ \sum_{i=-\infty}^{k-1}\left( \mathbb{E}_*[\phi(X_{i-1},X_i,Y_i)|Y_{-\infty:k}] - \mathbb{E}_*[\phi(X_{i-1},X_i,Y_i)|Y_{-\infty:k-1}] \right),
\end{align}
and where, writing $a_{rR}=-\sum_sa_{rs}$ and $Q_{r,R}=1-\sum_sQ_{rs}$, \begin{equation}\label{eq:phiscore}
    \phi(X_{k-1},X_k,Y_k) = \sum_{r=1}^R\sum_{s=1}^R 1\{X_{k-1}=r,X_k=s\}\frac{a_{r,s}}{Q^*_{r,s}} + \sum_{s=1}^R 1\{X_k=s\} h_s(Y_k).
\end{equation} These formulae arise as an application of the results of Section 6.1 of \cite{Douc2004} to the (parametric) model $\Theta_{a,\mathbf{h}}$.

\vspace{1ex}
We note that the contribution of the first term of the right hand side of \eqref{eq:phiscore}, which may be rewritten as $$ \sum_{r=1}^R\sum_{s=1}^{R-1} \left(\frac{1\{X_{k-1}=r,X_k=s\}}{Q^*_{rs}}-\frac{1\{X_{k-1}=r,X_k=R\}}{Q^*_{rR}}\right)a_{rs},$$to the expression defined in \eqref{eq:submodscore}, is precisely the score function for estimation in the model $\Theta_{a,\mathbf{0}}$ in which the emission densities are fixed and known. We see that this is equal to $a^TS_{Q^*}(Y_{-\infty:k})$, for $S_{Q^*}$ the score at $Q^*$ in the $R\times(R-1)$-dimensional parametric model with known emissions and unknown $Q$, which takes the form

\begin{equation}\label{ScoreQ}
\begin{split} 
S_{Q^*}(r, s) &(Y^{}_{-\infty : k}) = 
 \dfrac{\mathbb{P}_*({X_{k-1}=r},{X_k=s}|Y^{}_{-\infty : k}) }{Q^*_{rs}}-\dfrac{\mathbb{P}_*({X_{k-1}=r},{X_k=R}|Y^{}_{-\infty : k})}{Q^*_{rR}} \\
    &+\sum_{i=-\infty}^{k-1}\Bigg\{ \dfrac{\mathbb{P}_*({X_{i-1}=r},{X_i=s}|Y^{}_{-\infty : k})
    -\mathbb{P}_*({X_{i-1}=r},{X_i=s}|Y^{}_{-\infty : k-1})}{Q^*_{rs}}
    \\
    &\hspace{6ex}-\dfrac{\mathbb{P}_*({X_{i-1}=r},{X_i=R}|Y^{}_{-\infty : k})-\mathbb{P}_*({X_{i-1}=r},{X_i=R}|Y^{}_{-\infty : k-1})}{Q^*_{rR}}\Bigg\},
    \end{split}
\end{equation}
where we index with $(r,s)\in[R]\times[R-1]$ for convenience sake, but consider $S_{Q^*}$ as a vector of length $R(R-1)$.

\vspace{1ex}
We then rewrite \eqref{eq:submodscore}, substituting also the expression in \eqref{eq:phiscore} as
\begin{align}\label{eq:npscores}
    \Delta_{k,\infty}^{(a,\mathbf{h})} = a^TS_{Q^*} +  \sum_{r=1}^R \Bigg\{& \mathbb{P}_*(X_k=r|Y_{-\infty:k})h_r(Y_k) \nonumber \\ &+ \sum_{i=-\infty}^{k-1} (\mathbb{P}_*(X_i=r|Y_{-\infty:k}) -  \mathbb{P}_*(X_i=r|Y_{-\infty:k-1}))h_r(Y_i) \Bigg\}.
\end{align}

\vspace{1ex}
To set notation for what follows, we will denote the $r^{th}$ summand of the above display, for a given direction $h_r$, as
\begin{align}\label{Scoreh}
    H_r(h_r)(Y_{-\infty:k}): = & \hspace{1ex} \mathbb{P}(X_k=r|Y_{-\infty:k})h_r(Y_k) \nonumber \\ & + \sum_{i=-\infty}^{k-1} (\mathbb{P}(X_i=r|Y_{-\infty:k}) -  \mathbb{P}(X_i=r|Y_{-\infty:k-1}))h_r(Y_i).
\end{align}

The above discussion means that, through a Taylor expansion, we can write
\begin{align*}
    \ell_n(\theta_n((a,\mathbf{h}))) - \ell_n(\theta_*) &= \frac{1}{\sqrt{n}}\sum_{k=1}^\infty \Delta_{k,\infty}^{(a,\mathbf{h})} - \frac{1}{2n}\left(-\dfrac{\dd^2}{\dd t^2}\rvert_{t=t_1}\ell_n^{(a,\mathbf{h})}(t) \right) \quad  \\
    & = \Delta_{n,(a,\mathbf{h})} - \frac{1}{2}\lVert (a,\mathbf{h}) \rVert_{\mathcal{H}}^2 + o_{\mathbb{P}_*}(1).
\end{align*}
with $t_1 \in (0,n^{-1/2})$ and by choosing $\lVert (a,\mathbf{h}) \rVert_{\mathcal{H}}^2$ the Fisher information at $t=0$ in the model $\Theta_{a,\mathbf{h}}$, which is defined in \cite{Douc2004} as the $L^2$ norm\footnote{The $L^2$ norms of all $\Delta^{(a,\mathbf{h})}_{k,\infty}$ coincide by stationarity.} of $\Delta^{(a,\mathbf{h})}_{0,\infty}$.
We have $\Delta_{n,(a,\mathbf{h})} = \frac{1}{\sqrt{n}}\sum_{k=1}^\infty \Delta_{k,\infty}^{(a,\mathbf{h})} $,  is linear in $(a,\mathbf{h})$ and satisfies $\Delta_{n,(a,\mathbf{h})}\rightarrow N(0, \lVert (a,\mathbf{h}) \rVert_{\mathcal{H}}^2)$, by the discussion directly preceding Theorem 2 in \cite{Douc2004}. We also used above the local uniform convergence of the second derivative of the score to the Fisher information matrix at $t=0$ in $\Theta_{a,\mathbf{h}}$, which is guaranteed by Theorem 3 of \cite{Douc2004}.

\vspace{2ex}
The preceding discussion shows that our model is LAN, which is the first step in understanding efficient estimators of a parameter of interest. In our case, the parameter of interest will be $v_n(P_{n,\theta_n\uh})=Q$, which has `derivative' $\dot{v}\uh= a \in \mathbb{R}^p$ in the sense of \cite{Douc2004}, with $p=R(R-1)$.

\vspace{1ex}
Following what precedes, we  apply the convolution theorem also described in \cite{mcneney2000application}, and originally proven in \cite{van1991differentiable}. This theorem essentially states that the limiting law of a regular estimator is lower bounded by that of a Gaussian random variable, whose covariance matrix is the covariance of the \textit{efficient influence function}, which is itself characterised by the tangent space and the parameter `derivative' $\dot{v}$.

\vspace{1ex}
In Section \ref{sec:suppconv}, we detail the application of this theorem, the arguments are similar to those used in the iid setting, see for instance Section 25 of \cite{VanderVaart1998}. This essentially involves identifying influence functions $\dot{v}^T_b$ for estimation of one-dimensional functions $b^TQ$ at $b^TQ^*$, as we vary $b\in\mathbb{R}^p$. By considering the elements for which $a=0$, we first find that $\dot{v}^T_{b}$ is in the orthogonal complement (with respect to the LAN norm on $\mathcal{H}$) of the linear span of the scores in the models $\Theta_{0,\mathbf{h}}$, which is the span of the $H_r(h_r)$ as we vary $r$ and $h_r\in\mathcal{H}_r$. Write $\mathcal{A}$ for the projection onto this space, and write
\begin{equation}\label{eq:eff_score}
    \tilde{S}_Q = S_Q - \mathcal{A}S_Q
\end{equation}
for the projection of the score function $S_Q$ onto the orthogonal complement of this space, which we call the efficient score. By making further standard arguments, again detailed in Section \ref{sec:suppconv}, we find that the influence function $\dot{v}_b^T$ has variance $b^T\Tilde{J}^{-1}b$, where $\Tilde{J}$ is the covariance matrix of the efficient score $\Tilde{S}_{Q^*}$, which is the efficient information matrix. By varying $b$, this characterises the optimal limiting covariance matrix as $\tilde{J}^{-1}$, the inverse efficient information matrix.

\vspace{1ex}

We show in the following section that $\tilde J_M $ converges to $\tilde J$ when $M$ goes to infinity and that, if $M_n$ is a sequence going to infinity slowly enough, $\hat Q_{M_n}$ is asymptotically efficient and the posterior distributions  $\Pi_{M_n} ( dQ | Y_{1:n})$ satisfy the efficient Bernstein-von Mises theorem. 

\subsubsection{ Approximation by the models $\mathcal{M}(\mathcal{I}_M)$}

To prove  that $\tilde J_M $ converges to $\tilde J$ we need some additional assumptions on the true generating process and the partitions:
\begin{assume}\label{densityratio}
For all $r=1,\dots,R$, the densities $f^*_r$ are continuous, (Lebesgue) almost-everywhere positive, linearly independent  and
$$\forall r,s\leq R \hspace{2ex}0<\inf\limits_x\dfrac{f^*_r(x)}{f^*_s(x)}<\sup\limits_x\dfrac{f^*_r(x)}{f^*_s(x)}<\infty.$$
\end{assume}
We also consider a sequence of partitions with vanishing radius.
\begin{assume}\label{binsrefine}
The sequence $(\mathcal I_M)_{M \geq R} $ of partitions is embedded, and 
$$\max\limits_{m\leq \kappa_M}\text{Vol}(I_m^{(M)})\rightarrow 0, \quad \text{ as } M\rightarrow\infty,$$
where $\text{Vol}(A)$ denotes the Lebesgue measure of $A$.
\end{assume}
\textbf{Assumption} \ref{binsrefine} is directly under the control of the practitioner, and is verified for instance by considering nested dyadic partitions, as is done in Section \ref{simulation}. \textbf{Assumption} \ref{densityratio} is an assumption on the data generating process, although is very common in the context of semiparametric efficiency. Note also that, under \textbf{Assumptions} \ref{densityratio} and \ref{binsrefine}, there exists $M_0>0$ for which  $\mathcal I_{M_0}$ is admissible for $\mathbf f^*$.

\vspace{1ex}
In Proposition \ref{scoreconvergence}, we show that the efficient Fisher information matrix is precisely the limit of $\tilde{J}_{M}$. To do this, we first define the efficient scores $\tilde{S}_{Q^*}^{(M)}$ for the histogram models, whose covariance matrix will be $\tilde{J}_M$, analogously to what was done in Section \ref{subsec:tangentspace} in the context of the full semiparametric model. Recall from Remark \ref{datareduction} that the likelihood in model  corresponds to a likelihood of the hidden Markov model with multimomial emission distributions and observations $Y_i^{(M)} = (\mathds{1}_{I_1^{(M)}}(Y_i), \cdots,\mathds{1}_{I_{\kappa_M}^{(M)}}(Y_i))$, and so we emphasize in our notation that the scores in $\mathcal{M}(\mathcal{I}_M)$ depend only on these summaries. A straightforward adaptation of the earlier presentation then leads us to define the score functions for $Q$ in $\mathcal{M}(\mathcal{I}_M)$ by

\begin{equation}\label{ScoreQM}
\begin{split} 
S_{Q^*}^{(M)}(r, s) &(Y^{(M)}_{-\infty : k}) = 
 \dfrac{\mathbb{P}_*({X_{k-1}=r},{X_k=s}|Y^{(M)}_{-\infty : k}) }{Q^*_{rs}}-\dfrac{\mathbb{P}_*({X_{k-1}=r},{X_k=R}|Y^{(M)}_{-\infty : k})}{Q^*_{rR}} \\
    &+\sum_{i=-\infty}^{k-1}\Bigg\{ \dfrac{\mathbb{P}_*({X_{i-1}=r},{X_i=s}|Y^{(M)}_{-\infty : k})
    -\mathbb{P}_*({X_{i-1}=r},{X_i=s}|Y^{(M)}_{-\infty : k-1})}{Q^*_{rs}}
    \\
    &\hspace{6ex}-\dfrac{\mathbb{P}_*({X_{i-1}=r},{X_i=R}|Y^{(M)}_{-\infty : k})-\mathbb{P}_*({X_{i-1}=r},{X_i=R}|Y^{(M)}_{-\infty : k-1})}{Q^*_{rR}}\Bigg\}.
    \end{split}
\end{equation}
for $(r,s)\in[R]\times[R-1]$. For the model $\mathcal{I}_M$, the perturbations on the emissions vary in $$\mathcal{H}_{r,M}:=\{ h  = \sum_{m=1}^{\kappa_M}  \alpha_m \mathds{1}_{I_m}(y), \, \alpha_m \in \mathbb R, \, \int h f_{r,M}^*(y)dy =0 \} .$$  and we then define, for $h_r \in \mathcal H_{r,M}$, 
\begin{equation}\label{ScorehM}
\begin{split} 
    H_{r,M}(h_r)(Y^{(M)}_{-\infty : k}) &=\mathbb{P}_*(X_j=r|Y^{(M)}_{-\infty:k})h_r(Y_k) \\&+\sum_{i=-\infty}^{k-1}\left(\mathbb{P}_*(X_i=r|Y^{(M)}_{-\infty:k})-\mathbb{P}_*(X_i=r|Y^{(M)}_{-\infty:k-1})\right)h_r(Y_i).
   \end{split}
\end{equation}
Following again the arguments of Section \ref{subsec:tangentspace}, we write $\mathcal{A}_M$ for the projection onto the space spanned by the $H_{r,M}(h_r)$, and finally define

$$ \tilde{S}^{(M)}_{Q^*} = S^{(M)}_{Q^*} - \mathcal{A}_MS^{(M)}_{Q^*},$$
whose covariance matrix is $\tilde{J}_M$, the efficient Fisher information matrix for the model $\mathcal{M}(\mathcal{I}_M)$.

\vspace{1ex}
 The convergence of $\tilde{S}^{(M)}_{Q^*}$ to $\tilde{S}_{Q^*}$ is then established by a martingale argument. To show convergence of the projection operators $\mathcal{A}_M$ to $\mathcal{A}$ is more involved; we use a deconvolution argument which shows that boundedness in the space of the nuisance scores $H_r(h_r)$ implies boundedness of the $h_r$ in the index space $\mathcal{H}_r$ (and likewise for $H_{r,M}(h_r),\mathcal{H}_{r,M}$). These intermediate arguments, and the proof of the following results, are in Section \ref{proofs}.

\begin{prop}\label{scoreconvergence}
Grant \textbf{Assumptions} \ref{identifiability},  \ref{ass:Qbound}-\ref{binsrefine}. Then
$$\Tilde{S}^{(M)}_{Q^*}\rightarrow\Tilde{S}_{Q^*}, \quad  M \rightarrow \infty $$
where the convergence is in $L^2(\mathbb{P}_*)$. Moreover, we have
$\tilde{J}_M\rightarrow \tilde{J}$ as $M \rightarrow \infty $
where $\tilde{J}\geq \tilde{J}_M$ is the efficient information for estimating $Q$ in the full data model, and  is invertible.
\end{prop}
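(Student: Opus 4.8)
The plan is to deduce every assertion from the single $L^2(\mathbb{P}_*)$ convergence $\tilde{S}^{(M)}_{Q^*}\to\tilde{S}_{Q^*}$, since the entries of $\tilde{J}_M=\mathrm{Cov}(\tilde{S}^{(M)}_{Q^*})$ and $\tilde{J}=\mathrm{Cov}(\tilde{S}_{Q^*})$ are inner products in $L^2(\mathbb{P}_*)$ and hence converge as soon as the vectors do, by Cauchy--Schwarz. Using the definitions $\tilde{S}^{(M)}_{Q^*}=S^{(M)}_{Q^*}-\mathcal{A}_MS^{(M)}_{Q^*}$ and $\tilde{S}_{Q^*}=S_{Q^*}-\mathcal{A}S_{Q^*}$, I would split the problem through
$$\mathcal{A}_MS^{(M)}_{Q^*}-\mathcal{A}S_{Q^*} = \mathcal{A}_M\big(S^{(M)}_{Q^*}-S_{Q^*}\big) + \big(\mathcal{A}_M-\mathcal{A}\big)S_{Q^*},$$
so that, $\mathcal{A}_M$ being a contraction, the first term is controlled by $\|S^{(M)}_{Q^*}-S_{Q^*}\|$ while the second demands strong convergence of the projections applied to the fixed vector $S_{Q^*}$.

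First I would prove $S^{(M)}_{Q^*}\to S_{Q^*}$ in $L^2$. Comparing \eqref{ScoreQ} and \eqref{ScoreQM}, the two scores differ only in that the smoothing and predictive probabilities are conditioned on $Y^{(M)}_{-\infty:k}$ rather than on $Y_{-\infty:k}$. Under \textbf{Assumption} \ref{binsrefine} the fields $\mathcal{F}^{(M)}_k=\sigma(Y^{(M)}_{-\infty:k})$ increase to $\sigma(Y_{-\infty:k})$, so Lévy's upward martingale convergence theorem yields termwise $L^2$ convergence of each conditional probability. The only real difficulty is the infinite sum over $i$: here I would invoke the exponential forgetting of the chain (Section \ref{sup:sec:forgetfulness}), valid under \textbf{Assumption} \ref{ass:Qbound}, which bounds each increment $\mathbb{P}_*(X_i=\cdot|Y_{-\infty:k})-\mathbb{P}_*(X_i=\cdot|Y_{-\infty:k-1})$ geometrically in $k-i$ \emph{uniformly in $M$}, giving the domination needed to exchange limit and sum.

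The heart of the argument is the strong convergence $\mathcal{A}_M\to\mathcal{A}$, equivalently convergence of the nuisance tangent spaces $\mathcal{T}_M=\overline{\mathrm{span}}\{H_{r,M}(h_r)\}$ to $\mathcal{T}=\overline{\mathrm{span}}\{H_r(h_r)\}$. Strong convergence of the orthogonal projections holds provided that (i) every $H_r(h_r)$ is an $L^2$-limit of elements of $\mathcal{T}_M$, and (ii) any $L^2$-limit of a sequence $H_{r,M}(h_{r,M})$ lies in $\mathcal{T}$. Part (i) follows from density of piecewise-constant directions in $\mathcal{H}_r$ together with the martingale argument above applied to the conditioning in \eqref{ScorehM}. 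Part (ii) is where the main obstacle lies: a priori a bounded sequence of scores $H_{r,M}(h_{r,M})$ could arise from indices $h_{r,M}$ that blow up, so that the limit escapes $\mathcal{T}$. To exclude this I would establish the coercivity (``deconvolution'') bound $\|h_r\|_{L^2(f_r^*)}\lesssim\|H_r(h_r)\|_{L^2(\mathbb{P}_*)}$, together with its uniform-in-$M$ analogue for $H_{r,M}$, proved by relating $H_r(h_r)$ to the perturbation it induces on the marginal density $g^{(3)}_{Q^*,\mathbf{f}^*}$ and inverting the mixing map via the full-rank and linear-independence conditions of \textbf{Assumptions} \ref{identifiability} and \ref{densityratio} --- the same mechanism underlying the inversion inequality of Theorem \ref{inversion}. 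Granting the bound, the $h_{r,M}$ stay bounded, a weakly convergent subsequence $h_{r,M}\rightharpoonup h_r$ can be extracted, and the limit of $H_{r,M}(h_{r,M})$ is identified as $H_r(h_r)\in\mathcal{T}$. I expect this coercivity estimate, and especially its uniformity in $M$, to be the most delicate step.

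Finally, for the ordering and invertibility, the convergence already gives $\tilde{J}_M\to\tilde{J}$. For $\tilde{J}\ge\tilde{J}_M$ I would argue by comparison of experiments rather than by manipulating projections, since coarsening simultaneously shrinks the score and the nuisance space and a direct projection chain is inconclusive. By Remark \ref{datareduction} the coarsened likelihood is well specified for $Q$, so the estimator $\hat{Q}_{n,M}$ of Theorem \ref{MLEM}, which attains asymptotic variance $\tilde{J}_M^{-1}$, is a \emph{regular} estimator of $Q$ in the full semiparametric model; the convolution theorem recalled in Section \ref{subsec:tangentspace} then forces $\tilde{J}_M^{-1}\ge\tilde{J}^{-1}$, i.e. $\tilde{J}\ge\tilde{J}_M$. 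Invertibility of $\tilde{J}$ is then immediate: picking an admissible $M_0$ (which exists under \textbf{Assumptions} \ref{densityratio}--\ref{binsrefine}, with $\omega^*_{mr}=F^*_r(I^{(M_0)}_m)>0$), Theorem \ref{MLEM} gives $\tilde{J}_{M_0}\succ 0$, whence $\tilde{J}\ge\tilde{J}_{M_0}\succ 0$.
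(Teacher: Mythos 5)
Your proposal is correct in substance and rests on the same two technical pillars as the paper's proof, but it organizes the crucial projection step differently. You share with the paper the martingale-plus-exponential-forgetting argument for $S^{(M)}_{Q^*}\to S_{Q^*}$ (the paper's Lemma \ref{knownnuisanceconv}), the coercivity/deconvolution estimate controlling the index $h$ by the score (the paper's Lemma \ref{Deconvolution}), and the same triangle-inequality decomposition exploiting that $\mathcal{A}_M$ is a contraction. Where you diverge is on the convergence $\mathcal{A}_M\to\mathcal{A}$: the paper's central difficulty is exactly the one you identify (the spaces $\mathcal{P}_M$ spanned by the binned nuisance scores are \emph{not} nested), and it resolves this by introducing the intermediate spaces $\tilde{\mathcal{P}}_M=\{H(h_M)\}$ of full-model scores at histogram directions, which \emph{are} nested; it proves $\mathcal{A}_MS-\tilde{\mathcal{A}}_MS=o_{L^2}(1)$ (Lemma \ref{projectionconv}), obtains Cauchy-ness of $\tilde{\mathcal{A}}_MS$ from nestedness as in \cite{Gassiat2018}, and identifies the limiting projection with $\mathcal{A}$ (Lemma \ref{lemma:Ptildeconv}). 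You instead verify a Mosco-type criterion for $\mathcal{P}_M\to\mathcal{P}$ directly, via approximability of each $H_r(h_r)$ from $\mathcal{P}_M$ plus closedness of subsequential limits, using coercivity and weak compactness of the indices $h_{r,M}$. This is a legitimate alternative that dispenses with the intermediate spaces; what the paper's route buys is that it needs only norm estimates and Cauchy sequences, never weak-convergence arguments. Two points you should make explicit for your route to close: (a) your condition (ii) must be stated for \emph{weak} limits of sequences in $\mathcal{P}_M$ — strong-limit closedness alone cannot control $\mathcal{A}_Mx$ for $x\perp\mathcal{P}$, whereas with the weak version the conclusion follows from the identity $\lVert\mathcal{A}_Mx\rVert^2=\langle x,\mathcal{A}_Mx\rangle$; and (b) the identification of the weak limit of $H_{r,M}(h_{r,M})$ as $H_r(h_r)$ when $h_{r,M}\rightharpoonup h_r$ requires the comparison $(H_{r,M}-H_r)(h_{r,M})\to 0$ in $L^2$ for $L^2$-bounded sequences, which is a separate ingredient you use implicitly and which the paper isolates as Lemma \ref{lemma:HmH}. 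Finally, your treatment of $\tilde{J}\ge\tilde{J}_M$ and invertibility — regularity of $\hat{Q}_{n,M}$ as an estimator in the full semiparametric model, the convolution theorem, and positive definiteness of $\tilde{J}_{M_0}$ from Theorem \ref{MLEM} at an admissible partition — is essentially the mechanism the paper itself uses (the lower bound by the efficient variance in the proof of Theorem \ref{BvM}, with monotonicity in $M$ supplied by Lemma \ref{infogrows}, itself proved by a regularity/van Trees comparison of the embedded models), so on that part the two arguments coincide.
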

From  Proposition \ref{scoreconvergence} we deduce the following result. 

\begin{theorem}\label{BvM}
Let $\hat{Q}_n=\hat{Q}_n^{(M_n)}$ where $M_n\rightarrow\infty$ sufficiently slowly. Then under \textbf{Assumptions} \ref{identifiability},  \ref{ass:Qbound}-\ref{binsrefine}, $\hat{Q}_n$ is a $\mathbb{P}_*$-regular estimator and satisfies (up to label-swapping)
$$\sqrt{n}(\hat{Q}_n-Q^*)\rightarrow N(0,\tilde{J}^{-1})$$
where $\tilde{J}$ is the variance of the efficient score function, as defined in Proposition \ref{scoreconvergence}.
In particular, $\hat{Q}_n$ is an efficient estimator of $Q$ in the full semiparametric model. Moreover, for $\Pi_n$ a sequence of priors  placing mass on models $\mathcal M(\mathcal{I}_{M_n})$ respectively, and satisfying \textbf{Assumption} \ref{ass:priorpi1},
we have (up to label-swapping) that
$$\left\lVert \Pi_n(\sqrt{n}(Q-\hat{Q}_n) | Y_{1:n}) - \mathcal{N}(0,\Tilde{J}^{-1}) \right\rVert_{TV} = o_{\mathbb{P}_*}(1).$$
\end{theorem}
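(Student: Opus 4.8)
The plan is to bootstrap the fixed-$M$ results of Theorems \ref{MLEM} and \ref{BvMM} up to the regime $M=M_n\to\infty$, using Proposition \ref{scoreconvergence} as the bridge (all statements being understood up to label-swapping, as in those theorems). Two elementary facts drive everything. First, since $\tilde J_M\to\tilde J$ with $\tilde J$ invertible, the non-degenerate Gaussians converge in total variation, $\|N(0,\tilde J_M^{-1})-N(0,\tilde J^{-1})\|_{TV}\to 0$, by Scheffé's lemma applied to the Gaussian densities, which are continuous in a positive-definite covariance. Second, for each fixed $M$ both Theorem \ref{MLEM} and Theorem \ref{BvMM} give convergence as $n\to\infty$ to objects built from $\tilde J_M^{-1}$. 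The idea is then a diagonalisation: I would choose $M_n\to\infty$ slowly enough that these $n$-asymptotic statements still hold along the diagonal $M=M_n$.

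Concretely, I would fix a metric $\rho$ metrising weak convergence and set
$$a_{n,M}=\rho\big(\mathcal L(\sqrt n(\hat Q_{n,M}-Q^*)),\,N(0,\tilde J^{-1})\big),\qquad c_{n,M}=\mathbb E_*\big[\,\|\Pi_M(\sqrt n(Q-\hat Q_{n,M})\in\cdot\,|Y_{1:n})-N(0,\tilde J^{-1})\|_{TV}\wedge 1\,\big].$$
By Theorem \ref{MLEM}, the triangle inequality, and the first fact, $\limsup_n a_{n,M}\le \rho(N(0,\tilde J_M^{-1}),N(0,\tilde J^{-1}))\to 0$ as $M\to\infty$; by Theorem \ref{BvMM}, bounded convergence, and the first fact, $\limsup_n c_{n,M}\le\|N(0,\tilde J_M^{-1})-N(0,\tilde J^{-1})\|_{TV}\to 0$ as $M\to\infty$. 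A standard diagonal extraction applied to $a_{n,M}+c_{n,M}$ then produces $M_n\to\infty$ with $a_{n,M_n}\to 0$ and $c_{n,M_n}\to 0$, which are exactly the claimed weak convergence $\sqrt n(\hat Q_n-Q^*)\Rightarrow N(0,\tilde J^{-1})$ and the Bernstein--von Mises statement.

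It remains to upgrade this to regularity and efficiency of $\hat Q_n$. Here I would use the asymptotically linear representation of the MLE: projecting the expansion $\sqrt n(\hat\theta_M-\theta^*)=J_M^{-1}n^{-1/2}\nabla_\theta\ell_n(\theta^*)+o_{\mathbb P_*}(1)$ (from the proof of Theorem \ref{MLEM}) onto the $Q$-coordinates and using the block-inversion (Schur complement) identity relating the $Q$-block of $J_M^{-1}$ to the efficient information $\tilde J_M$, one rewrites it as $\sqrt n(\hat Q_{n,M}-Q^*)=\tilde J_M^{-1}n^{-1/2}\sum_{k}\tilde S^{(M)}_{Q^*}+o_{\mathbb P_*}(1)$. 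Splitting $\tilde J_{M_n}^{-1}\tilde S^{(M_n)}_{Q^*}-\tilde J^{-1}\tilde S_{Q^*}$ into a term governed by $\tilde J_{M_n}^{-1}-\tilde J^{-1}\to0$ (times a tight partial sum) and a term $\tilde J^{-1}n^{-1/2}\sum_k(\tilde S^{(M_n)}_{Q^*}-\tilde S_{Q^*})$, one obtains, for $M_n$ slow enough, the efficient linear expansion $\sqrt n(\hat Q_n-Q^*)=\tilde J^{-1}n^{-1/2}\sum_k\tilde S_{Q^*}+o_{\mathbb P_*}(1)$. Since $\tilde J^{-1}\tilde S_{Q^*}$ is the efficient influence function identified in Section \ref{subsec:tangentspace}, the standard characterisation of regular efficient estimators yields regularity and efficiency at once.

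The main obstacle is precisely the passage from fixed $M$ to $M=M_n\to\infty$, i.e.\ controlling the two sources of error uniformly in $M$. The $o_{\mathbb P_*}(1)$ remainders in the MLE and posterior expansions degrade as the dimension $(\kappa_M-1)^RR(R-1)$ of $\mathcal M(\mathcal I_M)$ grows, so $M_n$ may increase only slowly; the diagonal argument encapsulates this without an explicit rate, which is the content of ``sufficiently slowly''. For the score-difference sum one needs a bound $\mathrm{Var}\big(n^{-1/2}\sum_{k=1}^n(\tilde S^{(M)}_{Q^*}-\tilde S_{Q^*})\big)\le C\,g(M)$ with $C$ independent of $n$ and $g(M)\to0$, which I would derive from the uniform exponential forgetting of the HMM (valid since $Q^*$ is fixed, cf.\ Section \ref{sup:sec:forgetfulness}), bounding the long-run variance by a constant multiple of $\|\tilde S^{(M)}_{Q^*}-\tilde S_{Q^*}\|_{L^2(\mathbb P_*)}^2$, which tends to $0$ by Proposition \ref{scoreconvergence}. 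Establishing this uniform forgetting and variance control is the crux of making the argument rigorous.
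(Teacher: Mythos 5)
Your diagonalisation argument for the two convergence claims is essentially the paper's own proof: the paper likewise takes the fixed-$M$ statements of Theorems \ref{MLEM} and \ref{BvMM}, lets $M_n\rightarrow\infty$ slowly enough that the corresponding remainders $R_{M_n,n}$ and $\tilde{R}_{M_n,n}$ still vanish, and concludes by combining this with convergence of $N(0,\tilde{J}_{M_n}^{-1})$ to $N(0,\tilde{J}^{-1})$ (the paper works with the bounded-Lipschitz metric and uses the monotonicity Lemma \ref{infogrows} together with Proposition \ref{scoreconvergence} to identify the limit of $\tilde{J}_{M_n}$; your direct appeal to Proposition \ref{scoreconvergence} plus Scheff\'e is equivalent and, if anything, slightly cleaner). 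Where you genuinely depart from the paper is in the regularity/efficiency claim. The paper obtains it almost for free: the fixed-$M$ MLE is regular in the \emph{full semiparametric model}, i.e.\ the convergence to $\mu_M=N(0,\tilde{J}_M^{-1})$ holds under every local alternative $\mathbb{P}_{\theta^*(a,\mathbf{h})}$ (this is the Le Cam third lemma remark closing the proof of Theorem \ref{BvMM}, re-used in Lemma \ref{infogrows}), so running the very same diagonalisation under local alternatives immediately shows that $\hat{Q}_n$ is regular with limit $N(0,\tilde{J}^{-1})$, and efficiency follows because $\tilde{J}^{-1}$ is the convolution-theorem bound of Section \ref{subsec:tangentspace}. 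You instead rebuild the efficient linear expansion $\sqrt{n}(\hat{Q}_n-Q^*)=\tilde{J}^{-1}n^{-1/2}\sum_k\tilde{S}_{Q^*}+o_{\mathbb{P}_*}(1)$; this buys an explicit influence-function representation, but at the cost of a uniform-in-$M$ score comparison that the paper never needs.

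On that comparison there is a caveat you should address: the bound you claim, namely that the long-run variance of $n^{-1/2}\sum_{k}(\tilde{S}^{(M)}_{Q^*}-\tilde{S}_{Q^*})$ is a constant multiple of $\lVert\tilde{S}^{(M)}_{Q^*}-\tilde{S}_{Q^*}\rVert_{L^2(\mathbb{P}_*)}^2$, is not correct as stated, because the difference is not a martingale-difference array for any single filtration: $\tilde{S}^{(M)}_{Q^*}$ is a martingale increment for the binned filtration $\mathcal{G}^k_M$ but \emph{not} for $\mathcal{G}^k$ (conditionally on the full past, the binned score has non-vanishing conditional mean). Consequently the cross-covariances $\mathbb{E}_*[\tilde{S}_{Q^*}(Y_{-\infty:0})\,\tilde{S}^{(M)}_{Q^*}(Y^{(M)}_{-\infty:j})]$ for $j\geq 1$ do not vanish and must be summed. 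They can be controlled: each is at most $\lVert\tilde{S}_{Q^*}\rVert_{L^2}\lVert\tilde{S}^{(M)}_{Q^*}-\tilde{S}_{Q^*}\rVert_{L^2}$ by Cauchy--Schwarz (using that $\mathbb{E}_*[\tilde{S}_{Q^*,0}\tilde{S}_{Q^*,j}]=0$), and at most $C\rho^j$ uniformly in $M$ by geometric forgetting, so splitting the sum at a lag $J(M)\rightarrow\infty$ chosen slowly gives a bound tending to $0$ uniformly in $n$, which is all you actually need. But the resulting rate is of order $\lVert D_M\rVert_{L^2}\log(1/\lVert D_M\rVert_{L^2})$ with $D_M=\tilde{S}^{(M)}_{Q^*}-\tilde{S}_{Q^*}$, not $\lVert D_M\rVert_{L^2}^2$, and establishing the uniform-in-$M$ mixing estimate for these non-adapted binned scores is precisely the hard step your sketch defers; the paper's transfer-of-regularity argument sidesteps it entirely.
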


\section{Cut posterior contraction}\label{contraction}
In what follows, we study contraction rates for the cut posterior $\Pi_{cut}(\cdot|Y_{1:n})$ defined in Section \ref{efficiency}.

\subsection{Concentration of marginal densities $g^{(\ell)}_{Q,\mathbf{f}}$}
In this section, we present Proposition \ref{L1contractmarginal} which controls $\Pi_{cut}(\lVert g^{(\ell)}_{Q,\mathbf{f}} - g^{(\ell)}_{Q^*,\mathbf{f}^*}\rVert \leq \epsilon_n|Y_{1:n})$. This result follows from Theorem \ref{cutcontract}, which is an adaptation of the general approach of \cite{ghosal2007convergence} and is of independent interest, see Section \ref{sup:sec:cutcontract} for full details.

\vspace{1ex}
For the sake of simplicity we  consider a prior in the form  $\Pi_2( d\mathbf f|Q)= \Pi_2( d\mathbf f)$; extension to the case where the prior $\Pi_2$ depends on $Q$ is straightforward from Section \ref{sup:sec:cutcontract}. Note however that the conditional posterior on $\mathbf f$ given $Q$ depends on $Q$ through the likelihood. 

\vspace{1ex}
Hence, similarly to  \cite{vernet2015conc} we consider the following assumptions used to verify the Kulback-Leibler condition.
\begin{assume}\label{emissioncontractcondition}
Let $\epsilon_{n},\tilde{\epsilon}_{n}>0$ denote two sequences such that $\tilde{\epsilon}_{n}\leq \epsilon_{n} = o(1)$ and $n\tilde{\epsilon}_{n}^2\rightarrow\infty$ . Assume that the prior $\Pi_2$ on $\mathcal{F}^R$ satisfies the following conditions:

\textbf{A.} There exists $C_{\Pi_2}>0$ depending on the choice of prior $\Pi_2$ and a sequence $S_{n}\subset\mathcal{F}^{R}$ such that
$
\Pi_2\left(S_{n}\right) \gtrsim \exp \left(-C_{\Pi_2} n \tilde{\epsilon}_{n}^{2}\right)
$
and such that for all $
\mathbf f \in S_{n}$, there exists a set $S \subset \mathcal{Y}$ and functions $\tilde{f}_{1}, \ldots , \tilde{f}_{R}$ satisfying, for all $1\leq i\leq R$ and for $C_{R,Q}=4+\log(\frac{2R}{\min_{ij}Q^*_{ij}})+10^4\frac{R^2}{(\min_{ij}Q^*_{ij})^5}$,
\begin{align*}
    \int_{S}\left[ \frac{\left|f_{i}^{*}(y)-f_{i}(y)\right|^{2}}{f_{i}^{*}(y)} + \frac{\left|f_{i}^{*}(y)-\tilde{f}_{i}(y)\right|^{2}}{\tilde{f}_{i}(y)} \right] \lambda(\dd y)  &\leq \tilde{\epsilon}_{n}^{2}; \hspace{2ex}\int_{S} f_{i}^{*}(y) \max\limits_{1 \leq j \leq R} \log \left(\frac{\tilde{f}_{j}(y)}{f_{j}(y)}\right) \lambda(\dd y)  \leq \tilde{\epsilon}_{n}^{2}.\\
    \int_{S^c} f_{i}^{*}(y) \max _{1 \leq j \leq R} \log \left(\frac{f^*_{j}(y)}{f_{j}(y)}\right) \lambda(\dd y) & \leq C_{R,Q}\tilde{\epsilon}_{n}^{2};\hspace{2ex}\int_{S^c} [\tilde{f}_{i}(y)+f_{i}^{*}(y)] \lambda(\dd y)  \leq \tilde{\epsilon}_{n}^{2}. 
\end{align*}

\textbf{B.} For all constants $C>0$, there exists a sequence $\left(\mathcal{F}_{n}\right)_{n \geq 1}$ of subsets of $\mathcal{F}^{R}$ and a constant $C^\prime>0$ such that
$$
\Pi_2\left(\mathcal{F}_{n}^{c}\right)=o\left(\exp \left(-C n \tilde{\epsilon}_{n}^{2}\right)\right)
, \quad \text{and }\quad 
N\left(\frac{\epsilon_{n}}{12}, \mathcal{F}_{n}, d\right) \lesssim \exp \left(C^{\prime} n \epsilon_{n}^{2}\right),
$$
where $N\left(\frac{\epsilon_{n}}{12}, \mathcal{F}_{n}, d\right)$ is the $\frac{\epsilon_{n}}{12}$-covering number of $\mathcal{F}_n$ with respect to $d(f,\tilde{f})=\max\limits_{i=1,\dots,R}\lVert f_i-\tilde{f}_i\rVert_{L^1}$.
\end{assume}


\begin{prop}\label{L1contractmarginal}
Let $(Y_t)_{t\geq 1}$ be observations from a finite state space HMM with transition matrix $Q$ and emission densities $\mathbf f=(f_r)_{r=1,\dots,R}$. Grant \textbf{Assumptions} \ref{identifiability}-\ref{binsrefine} and consider the cut posterior obtained by choosing $\Pi_1$ as the prior $\Pi_M$ of Theorem \ref{BvMM} associated to the admissible partition $\mathcal I_M$, and $\Pi_2$ such that \textbf{Assumption} \ref{emissioncontractcondition} is verified for suitable $\epsilon_n,\tilde{\epsilon}_n$ such that $n\epsilon_n^2 \gtrsim \log n$. Then for any  $K_n\rightarrow\infty$, up to label-swapping,
$$\Pi_{cut}(\{ (Q,f) : \lVert g^{(3)}_{Q,\mathbf f} - g^{(3)}_{Q^*,\mathbf f^*} \rVert_{L_1} >  K_n\epsilon_n\}|Y_n) = o_{\mathbb{P}_*}(1).$$
\end{prop}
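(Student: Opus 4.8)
The plan is to exploit the product structure \eqref{cutposterior} of the cut posterior together with the Bernstein--von Mises control on $\Pi_1$, reducing the claim to a genuine conditional posterior contraction statement for $\Pi_2$ holding uniformly over a shrinking neighbourhood of $Q^*$, and then to verify the hypotheses of the general cut-contraction result (Theorem \ref{cutcontract}) that adapts \cite{ghosal2007convergence}. Writing $B_n^c=\{(Q,\mathbf f):\|g^{(3)}_{Q,\mathbf f}-g^{(3)}_{Q^*,\mathbf f^*}\|_{L^1}>K_n\epsilon_n\}$, definition \eqref{cutposterior} gives
$$\Pi_{cut}(B_n^c\,|\,Y_{1:n})=\int_{\mathcal Q}\Pi_2\big(\{\mathbf f:(Q,\mathbf f)\in B_n^c\}\,\big|\,Q,Y_{1:n}\big)\,\dd\Pi_1(Q\,|\,Y_{1:n}).$$
First I would fix $M_n\to\infty$ slowly enough that $M_n^2=o(n\tilde\epsilon_n^2)$ and set $U_n=\{Q:\min_{\tau\in\mathcal S_R}\|{}^\tau Q-Q^*\|\le M_n/\sqrt n\}$. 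By Theorem \ref{BvMM} (via \eqref{BvMdef}) and the tightness of $\sqrt n(\hat Q_{n,M}-Q^*)$ from Theorem \ref{MLEM}, the marginal $\Pi_1(\cdot\,|\,Y_{1:n})$ concentrates at the $\sqrt n$ scale, so $\Pi_1(U_n^c\,|\,Y_{1:n})=o_{\mathbb P_*}(1)$ (using that $\mathcal S_R$ is finite to handle label-swapping). It therefore suffices to show $\sup_{Q\in U_n}\Pi_2(\{\mathbf f:\|g^{(3)}_{Q,\mathbf f}-g^{(3)}_{Q^*,\mathbf f^*}\|_{L^1}>K_n\epsilon_n\}\,|\,Q,Y_{1:n})=o_{\mathbb P_*}(1)$.

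For fixed $Q\in U_n$ the conditional law $\Pi_2(\cdot\,|\,Q,Y_{1:n})$ is a proper posterior with likelihood $L_n(Q,\cdot)$, so I would run the standard evidence-lower-bound/test decomposition after normalising by $L_n(Q^*,\mathbf f^*)$. For the denominator I would restrict the integral to the set $S_n$ of \textbf{Assumption} \ref{emissioncontractcondition}.A and bound the $n$-observation Kullback--Leibler divergence and variance between $\mathbb P^{(n)}_{Q^*,\mathbf f^*}$ and $\mathbb P^{(n)}_{Q,\mathbf f}$ by $\lesssim n\tilde\epsilon_n^2+n\|Q-Q^*\|^2=O(n\tilde\epsilon_n^2)$: the emission contribution follows from the four integral conditions of \textbf{Assumption} \ref{emissioncontractcondition}.A together with exponential forgetting of the chain (valid under \textbf{Assumption} \ref{ass:Qbound}, cf.\ Section \ref{sup:sec:forgetfulness}), exactly as in \cite{vernet2015conc}, while the $O(n\|Q-Q^*\|^2)$ term arises from the smooth dependence of the per-step relative entropy on $Q$ and is negligible by the choice of $M_n$. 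Combined with $\Pi_2(S_n)\gtrsim e^{-C_{\Pi_2}n\tilde\epsilon_n^2}$, the usual lower-bound lemma yields, uniformly over $Q\in U_n$ and with $\mathbb P_*$-probability tending to one, $\int L_n(Q,\mathbf f)/L_n(Q^*,\mathbf f^*)\,\dd\Pi_2(\mathbf f)\gtrsim e^{-C'n\tilde\epsilon_n^2}$.

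For the numerator I would split the integral along the sieve $\mathcal F_n$ of \textbf{Assumption} \ref{emissioncontractcondition}.B. On $\mathcal F_n^c$ the identity $\mathbb E_*[L_n(Q,\mathbf f)/L_n(Q^*,\mathbf f^*)]=1$ and Fubini give an $\mathbb P_*$-expectation equal to $\Pi_2(\mathcal F_n^c)=o(e^{-Cn\tilde\epsilon_n^2})$, which beats the denominator by Markov's inequality. On the separated part $B_n^c\cap\mathcal F_n$, I would construct tests $\phi_n$ with $\mathbb E_*[\phi_n]\to 0$ and type-II error $e^{-cnK_n^2\epsilon_n^2}$: since identifiability operates through three consecutive observations, these are built from the empirical distribution of three-blocks of the sequence, the $L^1$-separation of the $3$-marginals providing a testable affinity gap and the exponential bound under dependent data coming from stationarity and forgetting as in \cite{vernet2015conc}. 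The entropy bound $N(\epsilon_n/12,\mathcal F_n,d)\lesssim e^{C'n\epsilon_n^2}$ of \textbf{Assumption} \ref{emissioncontractcondition}.B, together with the Lipschitz estimate $\|g^{(3)}_{Q,\mathbf f}-g^{(3)}_{Q,\mathbf f'}\|_{L^1}\lesssim\sum_r\|f_r-f_r'\|_{L^1}$ relating the emission metric $d$ to the $3$-marginal distance, allows the Le Cam/Birg\'e aggregation of local tests into a global test over $\mathcal F_n$. Because $K_n\to\infty$ and $\tilde\epsilon_n\le\epsilon_n$, the error $e^{-cnK_n^2\epsilon_n^2}$ dominates $e^{-C'n\tilde\epsilon_n^2}$, so the numerator-to-denominator ratio is $o_{\mathbb P_*}(1)$; the condition $n\epsilon_n^2\gtrsim\log n$ absorbs the $\log$-size of a net over $U_n$ used to make the bounds uniform in $Q$.

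The main obstacle I anticipate is the construction of the exponentially powerful tests for $L^1$-separation of the $3$-marginals under the dependent HMM data and, simultaneously, making both this and the Kullback--Leibler control uniform over the shrinking neighbourhood $U_n$. Unlike the i.i.d.\ case the blocks are dependent, so the exponential error bounds must be extracted from mixing/forgetting (this is the role of the precise forgetting-dependent constant $C_{R,Q}$ in \textbf{Assumption} \ref{emissioncontractcondition}); and since $Q$ is random under $\Pi_1(\cdot\,|\,Y_{1:n})$ rather than fixed, the evidence lower bound and the test errors must hold simultaneously for all $Q\in U_n$, which I would control by a covering argument over $U_n$ using the smoothness of $(Q,\mathbf f)\mapsto g^{(3)}_{Q,\mathbf f}$ and the $n\epsilon_n^2\gtrsim\log n$ budget. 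Most of this machinery is packaged in Theorem \ref{cutcontract} (Section \ref{sup:sec:cutcontract}), so the concrete task reduces to verifying its hypotheses from \textbf{Assumptions} \ref{identifiability}--\ref{binsrefine} and the two parts of \textbf{Assumption} \ref{emissioncontractcondition}.
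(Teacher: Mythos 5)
Your overall skeleton agrees with the paper's: the paper also proves Proposition \ref{L1contractmarginal} by verifying the hypotheses of Theorem \ref{cutcontract}, using Theorem \ref{BvMM} to control $\Pi_1(\cdot|Y_{1:n})$ and then citing Theorem 3.1 and Lemma 3.2 of \cite{vernet2015conc} for the Kullback--Leibler and testing conditions (noting that $\mathcal{T}_n\subset\{Q:Q_{ij}\geq Q^*_{ij}/2\}$ for large $n$ under \textbf{Assumption} \ref{ass:Qbound}). However, the reduction you actually develop --- bounding $\sup_{Q\in U_n}\Pi_2\big(\{\mathbf f:\|g^{(3)}_{Q,\mathbf f}-g^{(3)}_{Q^*,\mathbf f^*}\|_{L^1}>K_n\epsilon_n\}\,\big|\,Q,Y_{1:n}\big)$ with uniformity obtained from ``a covering argument over $U_n$'' --- has a genuine gap at the evidence lower bound. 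For a fixed $Q$, the lower bound on the denominator $D_n(Q)=\int e^{\ell_n(Q,\mathbf f)-\ell_n(Q^*,\mathbf f^*)}\dd\Pi_2(\mathbf f)$ obtained from \textbf{Assumption} \ref{emissioncontractcondition}.A (via Jensen plus a Markov/Chebyshev-type deviation bound, i.e.\ Lemma \ref{ELBO}) fails only with probability $O(\tilde L_n^{-1})$, which is \emph{not} exponentially, nor even polynomially, small; the weak moment conditions of \textbf{Assumption} \ref{emissioncontractcondition} do not give more. Meanwhile $Q\mapsto\ell_n(Q,\mathbf f)$ is Lipschitz only with constant $O(n)$, so any net of $U_n$ fine enough to transfer the bound between net points has polynomially many (in $n$) points, and the union bound $N_{\mathrm{net}}\cdot O(\tilde L_n^{-1})$ diverges. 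The budget $n\epsilon_n^2\gtrsim\log n$ does not rescue this; in the paper that condition is used for something else entirely, namely to guarantee $\sup_{\vartheta\in\mathcal T_n}\phi_n(\vartheta)\lesssim n^{d/2}\leq e^{Kn\epsilon_n^2}$ in \textbf{Assumption} \ref{pi1control}.

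This is precisely the difficulty that Theorem \ref{cutcontract} is engineered to avoid: instead of a supremum over $Q$, one replaces $\Pi_1(\cdot|Y_{1:n})$ by a density $\phi_n$ supported on $\mathcal{T}_n$ with $\mu(\mathcal T_n)\,\mathbb E_*\!\int_{\mathcal T_n}\phi_n^2\,\dd\mu=O(z_n)$, and bounds the ``denominator too small'' contribution by Cauchy--Schwarz,
\begin{equation*}
\mathbb E_*\!\int_{\mathcal T_n}\mathds{1}_{\{D_n(\vartheta)\leq e^{-(C+\tilde L_n)n\epsilon_n^2}\}}\phi_n(\vartheta)\,\dd\mu(\vartheta)
\leq \Big(\int_{\mathcal T_n}\mathbb P_*\big(D_n(\vartheta)\leq e^{-(C+\tilde L_n)n\epsilon_n^2}\big)\dd\mu(\vartheta)\Big)^{1/2}\Big(\mathbb E_*\!\int_{\mathcal T_n}\phi_n^2\,\dd\mu\Big)^{1/2},
\end{equation*}
which converts the probability of a supremum into an integral of pointwise probabilities, so the $O(\tilde L_n^{-1})$ pointwise rate suffices. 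This is why \textbf{Assumption} \ref{pi1control} demands a density approximation of $\Pi_1(\cdot|Y_{1:n})$ with $L^2$ and sup control, and not merely the set-concentration $\Pi_1(U_n^c|Y_{1:n})=o_{\mathbb P_*}(1)$ that you derive from Theorem \ref{BvMM}. Since your final paragraph defers to Theorem \ref{cutcontract} anyway, the repair is simple and is what the paper does: take $\phi_n$ to be the restriction to $\mathcal T_n=\{Q:\|Q-Q^*\|\leq z_n/\sqrt n\}$ of the Gaussian density centred at $\hat Q_{n,M}$ with variance $\tilde J_M^{-1}/n$ (TV-close to $\pi_1(\cdot|Y_{1:n})$ by Theorem \ref{BvMM} and tightness of $\sqrt n(\hat Q_{n,M}-Q^*)$), verify the three requirements of \textbf{Assumption} \ref{pi1control} for it, and drop the supremum/covering route; the Kullback--Leibler and testing parts of your sketch can then be kept essentially as written (or, as in the paper, delegated to \cite{vernet2015conc}).
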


\begin{rk}
Given Proposition \ref{L1contractmarginal}, we may use the results of \cite{vernet2015conc} to derive posterior contraction rates for a number of priors $\Pi_2$. We explore the case that $\Pi_2$ is a (product of) Dirichlet process mixture of normals in Section \ref{sec:DirMix}, as this is a popular choice for density estimation.
\end{rk}

\begin{rk}
We could also choose $\Pi_1=\Pi_{1,n}$ equal to $\Pi_{M_n}$ for some $M_n\rightarrow\infty$ sufficiently slowly to give the refined control over the marginal cut posterior on $Q$ as described in Theorem \ref{BvM}.
\end{rk}

\subsection{Concentration of emission distributions}\label{sec:emissions}
Proposition \ref{L1contractmarginal} applies the general contraction result of Theorem \ref{cutcontract} to obtain an estimation result for the marginal distribution of the observations. Given that we already have control over the transition matrix in this setting when using $\Pi_1$ as a histogram prior, it remains to establish concentration rates for the emission distributions. Theorem \ref{inversion} allows us to translate a rate on a marginal distribution into a corresponding rate on the emission distribution.
\begin{theorem}\label{inversion}
Let the HMM satisfy the assumptions of Proposition \ref{L1contractmarginal}, and let $g^{(3)}_{Q^*,f}$ be the marginal density for $3$ consecutive observations under the parameters $(Q^*,f)$. Then there exists a constant $C=C(f^*,Q^*)>0$ such that, for sufficiently small $\lVert g^{(3)}_{Q^*,f} - g^{(3)}_*\rVert_{L_1}$,
$$\sum_{r=1}^R\lVert f_r - f_r^* \rVert_{L^1} \leq C \lVert g^{(3)}_{Q^*,f} - g^{(3)}_*\rVert_{L_1}.$$
\end{theorem}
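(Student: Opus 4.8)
The plan is to prove this as a \emph{local} inversion around $f^*$: linearise the map $f\mapsto g^{(3)}_{Q^*,f}$, show that its derivative at $f^*$ is bounded below, and absorb the quadratic remainder once $\sum_r\|f_r-f_r^*\|_{L^1}$ is small. Writing $\delta_r=f_r-f_r^*$ (so that $\int\delta_r\,\dd\lambda=0$), I would expand the trilinear expression for $g^{(3)}_{Q^*,f}$ (cf.\ \eqref{ell-marginal}) as
\begin{equation*}
g^{(3)}_{Q^*,f}-g^{(3)}_* = L\delta + R(\delta),
\end{equation*}
where $L\delta$ gathers the three terms carrying a single $\delta$ (in the first, second or third coordinate, with $f^*$ in the remaining two) and $R(\delta)$ collects the quadratic and cubic contributions. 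Using $\|\delta_r\|_{L^1}\le2$ and the boundedness of $Q^*$ and $p_{Q^*}$ gives $\|R(\delta)\|_{L^1}\lesssim(\sum_r\|\delta_r\|_{L^1})^2$, so the theorem reduces to the \emph{linearised inversion inequality}
\begin{equation}\label{plan:lininv}
\sum_{r=1}^R\|\delta_r\|_{L^1}\lesssim \|L\delta\|_{L^1},
\end{equation}
valid for all admissible directions with $\int\delta_r\,\dd\lambda=0$: combining \eqref{plan:lininv} with the expansion and absorbing $\|R(\delta)\|_{L^1}$ on the left yields the claim, provided $\sum_r\|\delta_r\|_{L^1}$ is a priori small. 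This last point requires a preliminary qualitative step, namely that $\|g^{(3)}_{Q^*,f}-g^{(3)}_*\|_{L^1}\to0$ forces $\sum_r\|f_r-f_r^*\|_{L^1}\to0$ up to label-swapping, which I would obtain from the identifiability result of \cite{gassiat2016inference} together with a compactness/uniform-integrability argument on the relevant family of emission densities.

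To establish \eqref{plan:lininv} I would split each direction along the finite-dimensional space $V=\mathrm{span}(f_1^*,\dots,f_R^*)$, writing $\delta_r=\delta_r^{\parallel}+\delta_r^{\perp}$ with $\delta_r^{\parallel}\in V$ and $\delta_r^{\perp}\perp V$ in $L^2(\lambda)$; on the finite-dimensional $V$ the $L^1$, $L^2$ and $L^\infty$ norms are equivalent, which lets me pass freely between them. The two pieces are controlled by different marginals. For $\delta^{\perp}$ I would use the bivariate marginal $g^{(2)}_{Q^*,f}(y_1,y_2)=\mathbf f(y_1)^{\top}M\,\mathbf f(y_2)$ with $M=\diag(p_{Q^*})Q^*$, noting $\|g^{(2)}_{Q^*,f}-g^{(2)}_*\|_{L^1}\le\|g^{(3)}_{Q^*,f}-g^{(3)}_*\|_{L^1}$ after integrating out the third coordinate. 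Integrating the linear part of $g^{(2)}_{Q^*,f}-g^{(2)}_*$ against a dual basis $\{g_s^*\}\subset L^\infty$ of $V$ (so $\int g_s^*f_r^*\,\dd\lambda=\delta_{rs}$) in the second coordinate recovers $M^{\top}\delta(y_1)$ modulo $V$; projecting onto $V^{\perp}$ annihilates the $\delta^{\parallel}$ contamination, and since $\det Q^*\neq0$ with $p_{Q^*}>0$ make $M$ invertible, this yields $\sum_r\|\delta_r^{\perp}\|_{L^1}\lesssim\|g^{(3)}_{Q^*,f}-g^{(3)}_*\|_{L^1}+(\sum_r\|\delta_r\|_{L^1})^2$.

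It then remains to control the in-span part $\delta_r^{\parallel}=\sum_s\beta_{rs}f_s^*$, whose coefficient matrix $\beta$ has zero row sums (from $\int\delta_r\,\dd\lambda=0$). Having already bounded $\delta^{\perp}$, I would subtract its contribution from $L\delta$ and reduce to showing that the finite-dimensional map $\beta\mapsto L\delta^{\parallel}$ is injective on the space of zero-row-sum matrices, with the quantitative two-sided bound $\|L\delta^{\parallel}\|_{L^1}\asymp\|\beta\|\asymp\sum_r\|\delta_r^{\parallel}\|_{L^1}$ (injectivity of a linear map between finite-dimensional spaces being automatically a norm equivalence). This injectivity is the infinitesimal form of identifiability of the emissions given $Q^*$: a direction $\delta^{\parallel}$ is an infinitesimal mixing $\mathbf f=(I+t\beta)\mathbf f^*$, and equating coefficients of $f_a^*\otimes f_b^*\otimes f_c^*$ (legitimate by the linear independence in \textbf{Assumption} \ref{identifiability}) turns $L\delta^{\parallel}=0$ into a linear system whose only zero-row-sum solution is $\beta=0$, using $\det Q^*\neq0$ and the positivity of $p_{Q^*}$. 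Combining the two bounds gives \eqref{plan:lininv}, and the quadratic remainder is absorbed as above. I expect this infinitesimal identifiability step — verifying that the tensor-coefficient system forces $\beta=0$ and extracting a constant $C(f^*,Q^*)$ uniform over admissible directions — to be the main obstacle, together with the preliminary qualitative reduction ensuring that $f$ lies in the regime where the linearisation dominates.
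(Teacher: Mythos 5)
Your proposal is correct in substance, and it rests on exactly the same three pillars as the paper's own proof, just packaged differently. The paper argues by contradiction along normalized perturbations $h^n_r=(f^n_r-f^*_r)/\sum_r\|f^n_r-f^*_r\|_{L^1}$: it first invokes the qualitative consistency step (Lemma \ref{consistencyemissions}), then uses the two-dimensional marginal integrated over a full-rank bin partition to show that $h^n$ converges (subsequentially) to an in-span element $B_1\mathbf f^*$, and finally feeds this limit into the three-dimensional marginal to obtain an identity of the form \eqref{3obs}, whose algebra (from Lemma \ref{Deconvolution}) forces $B_1=0$, contradicting $\sum_r\|h^n_r\|_{L^1}=1$. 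Your version replaces the compactness/contradiction scaffolding by a direct, uniform linearised lower bound plus absorption of the quadratic remainder, with the split $\delta=\delta^{\parallel}+\delta^{\perp}$ making explicit what is implicit in the paper (the 2-marginal kills $\delta^{\perp}$, the 3-marginal kills $\delta^{\parallel}$). Two remarks on the comparison. First, your qualitative preliminary step is precisely Lemma \ref{consistencyemissions}; note that upgrading weak convergence of the emissions (which is what identifiability plus tightness gives, via \cite{vernet2015posterior}) to $L^1$ convergence is itself done there by the same bin-matrix inversion of the 2-marginal, not by uniform integrability alone. Second, your ``infinitesimal identifiability'' step must work without the self-consistency relation between $B_1$ and its bin integrals that the paper's argument exploits — and it does: in the coefficient system obtained by matching coefficients of $f^*_a\otimes f^*_b\otimes f^*_c$, the off-diagonal equations give $(Q^*\beta Q^{*-1})_{s_1r'}=-\beta_{r's_1}Q^*_{rr'}/Q^*_{rs_1}$ for every $r$, so $\beta_{r's_1}\neq 0$ would force column $r'$ of $Q^*$ to be proportional to column $s_1$, contradicting $\det Q^*\neq0$; hence $\beta$ and $Q^*\beta Q^{*-1}$ are diagonal, positivity of $Q^*$ then forces $\beta=bI$, and the diagonal equations give $3b\,p^*_rQ^*_{rr'}=0$, so $\beta=0$. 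So the kernel is trivial on \emph{all} of $\mathbb R^{R\times R}$, and injectivity on a finite-dimensional space gives your quantitative two-sided bound.

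Two details need repair, neither fatal. First, the decomposition ``$\delta_r^{\perp}\perp V$ in $L^2(\lambda)$'' is not well defined, because $\delta_r=f_r-f^*_r$ is a priori only in $L^1$. Use instead the oblique projection $P_V\phi=\sum_s\left(\int g^*_s\phi\,\dd\lambda\right)f^*_s$ built from your $L^\infty$ dual basis (which exists by Lemma \ref{lemma:partition} and \textbf{Assumption} \ref{identifiability}); $P_V$ is bounded on $L^1$, and with $\delta^{\parallel}:=P_V\delta$, $\delta^{\perp}:=(I-P_V)\delta$ the identity $(I-P_V)M^T\delta=M^T\delta^{\perp}$ that your 2-marginal step needs becomes exact. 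Second, $\beta$ does \emph{not} have zero row sums: $\sum_s\beta_{rs}=\int\delta^{\parallel}_r\,\dd\lambda=-\int\delta^{\perp}_r\,\dd\lambda$, which vanishes only up to the already-controlled $O(\|\delta^{\perp}_r\|_{L^1})$. Fortunately, as shown above, the injectivity holds without any row-sum constraint, so this slip costs nothing.
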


\begin{rk}
Theorem \ref{inversion} provides an inversion inequality from $L^1$ to $L^1$, which has interest in both Bayesian and frequentist estimation of emission densities. It is the first such result for the $L^1$ distance, with Theorem 6 of \cite{de2016minimax} establishing a similar inequality in the $L^2$ case. Given that the testing assumptions of Theorem \ref{cutcontract} (and more generally, of results based on \cite{ghosal2007convergence}) are much more straightforward to verify for the $L^1$ distance, our result has particular interest in the context of Bayesian (or pseudo-Bayesian) settings.
\end{rk}

Using Proposition \ref{L1contractmarginal} and Theorem \ref{inversion} together with Theorem \ref{BvMM}, we easily deduce the following.

\begin{theorem}\label{L1contractemission}
Let $Y_{1:n}\sim \mathbb{P}^{n}_{*}$ be distributed according to the HMM with parameters $Q^*$ and $\mathbf{f}^*$, and grant \textbf{Assumptions} \ref{identifiability}-\ref{binsrefine}. Let $\Pi_1=\Pi_M$ and let $\Pi_2$ satisfy \textbf{Assumption} \ref{emissioncontractcondition}. Then, for $\epsilon_n$ as in Proposition \ref{L1contractmarginal} and any $K_n\rightarrow\infty$, we have (up to label-swapping)
$$\Pi_{cut}(\{\lVert Q-Q^*\rVert>\frac{K_n}{\sqrt{n}}, \max_{r=1,\dots,R}\lVert f_r-f_r^*\rVert_{L^1}>K_n\epsilon_n \} | Y_{1:n} )= o_{\mathbb{P}_*}(1),$$ and 
$$\|\Pi_{cut}( \sqrt{n}(Q - \hat{Q}_{n,M}) \in (\cdot ) | Y_{1:n}) -  N(0,\tilde{J}_{M}^{-1})\|_{TV} = o_{\mathbb{P}_*}(1).$$
\end{theorem}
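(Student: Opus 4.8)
The plan is to handle the two displays separately, with the second (the Bernstein-von Mises statement for $Q$) being essentially immediate and the first (the joint contraction statement) being the substance. For the second display, I would observe that by the definition \eqref{cutposterior} of the cut posterior its $Q$-marginal is exactly $\Pi_1(Q\mid Y_{1:n})=\Pi_M(Q\mid Y_{1:n})$, the histogram-prior posterior. Since $\Pi_1=\Pi_M$ is built on an admissible partition and satisfies Assumption \ref{ass:priorpi1}, Theorem \ref{BvMM} applies verbatim and yields the claimed total-variation convergence to $N(0,\tilde J_M^{-1})$; there is nothing further to prove.

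For the first display I would read the event as the complement of the good set $\{\|Q-Q^*\|\le K_n/\sqrt n\}\cap\{\max_r\|f_r-f_r^*\|_{L^1}\le K_n\epsilon_n\}$ and establish the two marginal contractions, which combine by a union bound. Contraction of $Q$ follows from the second display together with Theorem \ref{MLEM}: writing $\|Q-Q^*\|\le\|Q-\hat Q_{n,M}\|+\|\hat Q_{n,M}-Q^*\|$, the second term is $O_{\mathbb P_*}(n^{-1/2})$ by Theorem \ref{MLEM}, while for $K_n\to\infty$ the Gaussian tail $N(0,\tilde J_M^{-1})(\|z\|>K_n/2)\to0$ controls the first through the Bernstein-von Mises approximation, giving $\Pi_{cut}(\|Q-Q^*\|>K_n/\sqrt n\mid Y_{1:n})=o_{\mathbb P_*}(1)$.

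The emission contraction is where Proposition \ref{L1contractmarginal} and Theorem \ref{inversion} must be combined. The key deterministic ingredient is a Lipschitz bound for the three-marginal in $Q$, uniform in $\mathbf f$: from $g^{(3)}_{Q,\mathbf f}(y_{1:3})=\sum_{i,j,k}p_Q(i)Q_{ij}Q_{jk}f_i(y_1)f_j(y_2)f_k(y_3)$, integrating the triangle inequality makes the factors $\|f_i\|_{L^1}=1$ drop out, leaving $\|g^{(3)}_{Q,\mathbf f}-g^{(3)}_{Q^*,\mathbf f}\|_{L^1}\le\sum_{i,j,k}|p_Q(i)Q_{ij}Q_{jk}-p_{Q^*}(i)Q^*_{ij}Q^*_{jk}|\le L\|Q-Q^*\|$, the last step using that $Q\mapsto p_Q$ is smooth on the ergodic region, whose interior contains $Q^*$ by Assumption \ref{ass:Qbound}. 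On the intersection of the good $Q$-event with the good marginal event of Proposition \ref{L1contractmarginal}, the triangle inequality then gives $\|g^{(3)}_{Q^*,\mathbf f}-g^{(3)}_*\|_{L^1}\le\|g^{(3)}_{Q,\mathbf f}-g^{(3)}_*\|_{L^1}+L\|Q-Q^*\|\lesssim K_n\epsilon_n+K_n/\sqrt n\lesssim K_n\epsilon_n$, where the final bound uses $n\epsilon_n^2\gtrsim\log n$, hence $n^{-1/2}\lesssim\epsilon_n$. Applying Theorem \ref{inversion} converts this into $\sum_r\|f_r-f_r^*\|_{L^1}\lesssim K_n\epsilon_n$ on this event, and since both events carry cut-posterior mass $1+o_{\mathbb P_*}(1)$, the emission contraction follows.

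Two points require care. First, Theorem \ref{inversion} applies only once $\|g^{(3)}_{Q^*,\mathbf f}-g^{(3)}_*\|_{L^1}$ is below a fixed threshold $\delta_0=\delta_0(f^*,Q^*)$; I would therefore first prove the bound with a fixed large constant $M$ in place of $K_n$ (so that $M\epsilon_n\to0$ eventually drops below $\delta_0$), and then recover arbitrary $K_n\to\infty$ by monotonicity, since $\{\,\cdot\,>K_n\epsilon_n\}\subseteq\{\,\cdot\,>M\epsilon_n\}$ once $K_n\ge M$. Second, the labelings from the three results must be reconciled: since $g^{(3)}_{Q,\mathbf f}$ is invariant under simultaneous relabeling of $(Q,\mathbf f)$, the (possibly random) permutation $\tau$ that the Bernstein-von Mises statement uses to align ${}^\tau Q$ with $Q^*$ also aligns ${}^\tau\mathbf f$ with $\mathbf f^*$ in the marginal, so the entire chain of inequalities holds under a single $\tau$, consistent with the ``up to label-swapping'' qualifier. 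I expect the uniform-in-$\mathbf f$ Lipschitz control of $g^{(3)}$ in $Q$, together with this relabeling bookkeeping, to be the main obstacle; once these are in place the result is assembled from the cited theorems via the triangle inequality.
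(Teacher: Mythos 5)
Your proposal is correct and follows essentially the same route as the paper, which states that the theorem is ``easily deduced'' from Proposition \ref{L1contractmarginal}, Theorem \ref{inversion} and Theorem \ref{BvMM}; your write-up simply fills in the intended details. In particular, the Lipschitz-in-$Q$ bound $\|g^{(3)}_{Q,\mathbf f}-g^{(3)}_{Q^*,\mathbf f}\|_{L^1}\lesssim\|Q-Q^*\|$ (needed because the inversion inequality is stated at $Q^*$), the small-threshold caveat for Theorem \ref{inversion}, and the single-permutation label-swapping bookkeeping are exactly the steps the paper implicitly relies on.
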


\subsection{Concentration of smoothing distributions}\label{sec:smoothing}
When clustering data, the \textit{smoothing distribution} $\mathbb{P}_*(X_k=x|Y_{1:n})$ is often of interest. Our final main result concerns recovery of these probabilities using a (cut) Bayesian approach, establishing contraction of the posterior distribution over these smoothing distributions in total variation, by combining novel arguments with the inequality given in Proposition 2.2 of \cite{de2017consistent}. We recall the notation $\theta=(Q,\mathbf{f})$.

\begin{theorem}\label{smoothingcontract}
Grant the assumptions of Theorem \ref{L1contractemission}, together with 
\begin{equation}\label{tail:frtrue}
\max_{r \in [R]} \|\sqrt{f_r^*}\|_1 <+\infty,    
\end{equation} 
and assume that \begin{equation}\label{smoothingcond}
    \Pi_2(\max_i\lVert f_i\rVert_{L_2} > e^{\gamma n\epsilon_n^2}|Q)\leq C^\prime e^{-2 \gamma n\epsilon_n^2} \textrm{ and } \max_i\lVert f_i^*\rVert_{L_2}\leq e^{\gamma n\epsilon_n^2} 
\end{equation}for some constants $\gamma, C^\prime>0$. Then for $\epsilon_n$ as in Proposition \ref{L1contractmarginal} for which $n\epsilon_n^3\rightarrow 0$ and for any $K_n\rightarrow\infty$, we have (up to label swapping)
$$\Pi_{cut}(\lVert \mathbb{P}_\theta(X_k=x|Y_{1:n}) - \mathbb{P}_*(X_k=x|Y_{1:n})\rVert_{TV}>K_n\epsilon_n|Y_{1:n}) = o_{\mathbb{P}_*}(1) .$$
\end{theorem}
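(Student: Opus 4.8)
The plan is to combine the contraction of the cut posterior on $(Q,\mathbf f)$ already furnished by Theorem \ref{L1contractemission} with the deterministic smoothing-stability bound of Proposition 2.2 of \cite{de2017consistent}, and then to handle the dependence between the (data-dependent) smoothing distributions and the (data-dependent) posterior by a localisation-and-change-of-measure device that plays the role of sample splitting without actually splitting the data. Write $D_{k,n}(\theta)=\lVert\mathbb P_\theta(X_k=\cdot|Y_{1:n})-\mathbb P_*(X_k=\cdot|Y_{1:n})\rVert_{TV}$. First I would reduce to a favourable set of parameters: by Theorem \ref{L1contractemission} the cut posterior concentrates (with $\mathbb P_*$-probability tending to one) on $A_n=\{\lVert Q-Q^*\rVert\lesssim K_n/\sqrt n,\ \max_r\lVert f_r-f_r^*\rVert_{L^1}\lesssim K_n\epsilon_n\}$, and by the first bound in \eqref{smoothingcond} (whose exponentially small prior mass survives division by the evidence lower bound) it also concentrates on $B_n=\{\max_r\lVert f_r\rVert_{L^2}\le e^{\gamma n\epsilon_n^2}\}$; this $L^2$ control is precisely what prevents the conditional posterior on $\mathbf f$ from favouring emission densities that spike at the observed points. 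It therefore suffices to bound $\Pi_{cut}(A_n\cap B_n\cap\{D_{k,n}>K_n\epsilon_n\}|Y_{1:n})$.

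On $A_n\cap B_n$, Proposition 2.2 of \cite{de2017consistent} gives, for each realisation of $Y_{1:n}$, a pointwise bound of the form $D_{k,n}(\theta)\lesssim\lVert Q-Q^*\rVert+\sum_{i=1}^n\rho^{|k-i|}b_i(\theta)$, where $\rho=\rho(Q^*)\in(0,1)$ is the forgetting rate of the chain (Section \ref{sup:sec:forgetfulness}) and $b_i(\theta)$ measures the discrepancy between the likelihood vectors $(f_r(Y_i))_r$ and $(f_r^*(Y_i))_r$ at the observation $Y_i$. The term $\lVert Q-Q^*\rVert\lesssim K_n/\sqrt n=o(\epsilon_n)$ is negligible since $n\epsilon_n^2\to\infty$, so everything reduces to controlling the weighted sum $T_k(\theta)=\sum_i\rho^{|k-i|}b_i(\theta)$ under the cut posterior.

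The crux — and the step I expect to be the main obstacle — is that $b_i(\theta)$ is evaluated at the observed $Y_i$, so $T_k(\theta)$ is correlated with $\Pi_{cut}(\cdot|Y_{1:n})$; this is exactly the correlation that forced \cite{de2017consistent} either to assume a sup-norm rate on $\mathbf f$ or to split the sample, and a naive global change of measure is useless here because it would cost a factor $e^{cn\tilde\epsilon_n^2}$. I would resolve this using two features. First, exponential forgetting localises $T_k$: up to an error $\rho^L\lesssim\epsilon_n$, which fixes the window half-width at $L\asymp\log n$, it depends only on observations in $W=[k-L,k+L]$. Second, the window variables $Y_W$ can be decoupled from the posterior by a change of measure applied only to the $Y_W$-coordinates of the likelihood; because $|W|=O(\log n)$ and $\tilde\epsilon_n^2\to0$, the cost of this \emph{local} change of measure is $e^{O(|W|\tilde\epsilon_n^2)}=n^{o(1)}\to1$ rather than the prohibitive global factor. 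This replaces, up to negligible factors, the coupled expectation $\mathbb E_*[b_i(\theta)]$ by the decoupled $\mathbb E_\theta[b_i(\theta)]$ for $\theta$ frozen in $A_n\cap B_n$, and is the Bayesian analogue of splitting the data.

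It then remains to bound the frozen moment. For $\theta\in A_n\cap B_n$ one has $\mathbb E_\theta[b_i(\theta)]=\int b_i(\theta)(y)\,g^{(1)}_\theta(y)\,\lambda(\dd y)$, which I would split at a region where $f^*$ is small. On the bulk, Assumption \ref{densityratio} makes the ratios $g^{(1)}_*/f_r^*$ bounded, giving a contribution $\lesssim\max_r\lVert f_r-f_r^*\rVert_{L^1}\lesssim K_n\epsilon_n$; on the tail region the integrand is controlled by Cauchy–Schwarz using the tail condition \eqref{tail:frtrue} (finiteness of $\lVert\sqrt{f_r^*}\rVert_1$) together with the $L^2$ bound from $B_n$, the condition $n\epsilon_n^3\to0$ being what guarantees that the resulting second-order tail remainder is $o(\epsilon_n)$. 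Summing the geometric weights gives $\sum_i\rho^{|k-i|}\mathbb E_\theta[b_i(\theta)]\lesssim(1-\rho)^{-1}K_n\epsilon_n$, whence $\mathbb E_*[\int_{A_n\cap B_n}T_k(\theta)\,\dd\Pi_{cut}(\theta|Y_{1:n})]\lesssim K_n\epsilon_n$; a Markov inequality (splitting the two reductions at rate $\sqrt{K_n}$) then yields $\Pi_{cut}(A_n\cap B_n\cap\{T_k>K_n\epsilon_n\}|Y_{1:n})=o_{\mathbb P_*}(1)$, which with the first-paragraph reduction completes the proof.
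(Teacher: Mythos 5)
Your first-paragraph reduction (Proposition 2.2 of \cite{de2017consistent}, localisation to $A_n\cap B_n$ via Theorem \ref{L1contractemission} and \eqref{smoothingcond}) matches the paper, and your instinct that the proof hinges on decoupling the posterior from the observations inside a window is also the right one. But two steps in your execution fail. First, the truncation at $L\asymp\log n$: you bound the out-of-window contribution by $\rho^L\lesssim\epsilon_n$, which tacitly assumes the summands $b_i(\theta)=\max_s|f_s(y_i)-f_s^*(y_i)|/c_*(y_i)$ are bounded. They are not: the $L^2$ bound $\max_r\lVert f_r\rVert_{L^2}\le e^{\gamma n\epsilon_n^2}$ gives no pointwise control on $f_r(y_i)$, and $1/c_*(y_i)$ is unbounded. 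The best one can do for a far term, after a change of measure and Cauchy--Schwarz (this is where \eqref{tail:frtrue} and \eqref{smoothingcond} actually enter), is a bound of order $e^{c\,n\epsilon_n^2}$, so the geometric factor only wins once $|i-k|\gtrsim n\epsilon_n^2$; since $n\epsilon_n^2\gg\log n$ in general (e.g.\ in Corollary \ref{DPMNormal}), a $\log n$ window leaves the tail uncontrolled. The paper's window $T_n$ must satisfy $T_n\log(1/\rho)\gtrsim L_n n\epsilon_n^2$ \emph{and} $T_n\epsilon_n\to 0$ (the near terms accumulate $T_n$ contributions of size $K_n\epsilon_n$), and the hypothesis $n\epsilon_n^3\to 0$ exists precisely to make these two constraints compatible --- not, as you claim, to kill a second-order tail remainder in a moment bound.

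Second, your decoupling goes in the wrong direction. You propose to replace the coupled $\mathbb{E}_*[b_i(\theta)]$ by the ``frozen'' $\mathbb{E}_\theta[b_i(\theta)]=\int b_i(\theta)\,g^{(1)}_\theta\,\dd\lambda$, but this quantity is \emph{not} $O(K_n\epsilon_n)$ on $A_n\cap B_n$: the ratio $g^{(1)}_\theta/\sum_s f_s^*$ is unbounded for posterior draws $\mathbf f$ (Assumption \ref{densityratio} bounds $g^{(1)}_*/f_r^*$, i.e.\ ratios of \emph{true} densities, which is what your bulk step would need), and neither the $L^1$ nor the $L^2$ controls tame the resulting chi-square-type integral $\int|f_s-f_s^*|^2/\sum_s f_s^*$. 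The cancellation that makes the argument work requires the expectation over $y_\ell$ to stay under $\mathbb{P}_*$: conditionally on $Y_{1:n}^{-\ell}$ the density of $y_\ell$ is bounded by $\sum_s f_s^*(y_\ell)$, which cancels $c_*(y_\ell)\ge\bar q\sum_s f_s^*(y_\ell)$ and yields $\lesssim\lVert f_r-f_r^*\rVert_{L^1}\le K_n\epsilon_n$. Accordingly, the paper does not change measure on the observations at all; it decouples the \emph{posterior} from $y_\ell$, one coordinate at a time, using the exact factorisation $L_n(\theta)=\sum_s f_s(y_\ell)L_{-\ell}(\theta,s)$ with $L_{-\ell}(\theta,s)\in[\tfrac{\bar q^2}{4}\bar L_{-\ell}(\theta),\bar L_{-\ell}(\theta)]$, together with Lemma \ref{MLEinfluence} to replace the BvM Gaussian $\phi_n(Q|Y_{1:n})$ by one depending only on $Y_{1:n}^{-\ell}$. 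Your proposed ``local change of measure at cost $e^{O(|W|\tilde\epsilon_n^2)}$'' has no justification from the available controls: membership in $A_n\cap B_n$ bounds $L^1$ distances and $L^2$ norms, not per-observation likelihood ratios or chi-square divergences, so no such exponential estimate on the Radon--Nikodym factor over the window is available.
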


\begin{rk}
The requirement that $n\epsilon_n^3\rightarrow 0$ is used in the proof, although we expect that this condition is not fundamental and it may be possible to weaken with appropriate proof techniques. It is not clear if assumption \eqref{smoothingcond} is crucial, it is however very weak and satisfied for instance as soon as $E_{\Pi_2}(\|f_r\|^2_{L^2})< \infty$ for all $r$. 
\end{rk}

\subsection{Example: Dirichlet process mixtures of Gaussians}\label{sec:DirMix}
In this section, we show that \textbf{Assumption} \ref{emissioncontractcondition} is verified when $\Pi_2$ is a Dirichlet process mixture of Gaussians. We use the results of Section 4 of \cite{vernet2015conc}, in which \textbf{Assumption} \ref{emissioncontractcondition} is verified for $\mathbf{f}^*=(f_r^*)_{r=1}^R$ with each $f_r^*$ in the class
\begin{align*}
    \mathcal{P}(\beta,L,\gamma)=\{&f\in\mathcal{F}: \log f\textrm{ is locally }\beta-\textrm{H\"older with derivatives }l_j=(\log f)^{(j)} \\
    & \textrm{and }\lvert l_{k_\beta}(y) - l_{k_\beta}(x) \rvert \leq r!L(y)\lvert y-x\rvert^{\beta-k_\beta}\textrm{ when }\lvert x-y\rvert\leq\gamma \}.
\end{align*}
and satisfying weak tail assumptions which we give in \textbf{Assumption} \ref{emissionconditionDPM}. Here $\beta>0$, $L$ is a polynomial function, $\gamma>0$ and $k_\beta = \lceil \beta \rceil - 1$ for $\lceil\cdot\rceil$ the usual ceiling operator.

\vspace{1ex}
The following result, which is a corollary of Theorem 4.3 of \cite{vernet2015conc}, shows that for prior choices satisfying \textbf{Assumption} \ref{priorcondition}, the conditions of Theorem \ref{L1contractemission} are verified with \begin{equation}\label{eq:DPMMrate}
    \epsilon_n=n^{-\frac{\beta}{2\beta+1}}\log(n)^{t}
\end{equation} where $t>t_0\geq \left(2+\frac{2}{\gamma}+\frac{1}{\beta}\right)\left(\frac{1}{\beta}+2\right)^{-1}$.

\begin{cor}\label{DPMNormal}
Let $Y_{1:n}\sim \mathbb{P}^{n}_{*}$ be distributed according to the HMM with parameters $Q^*$ and $\mathbf{f}^*$, and grant \textbf{Assumptions} \ref{identifiability}-\ref{binsrefine}. Let $\Pi_1=\Pi_M$ and let the prior $\Pi_2$ on $\mathcal{F}^R$ take the form of an $R-$fold product of Dirichlet process mixtures of Gaussians, in which the base measure $\alpha$ and variance prior $\Pi_\sigma$ satisfy \textbf{Assumption} \ref{priorcondition}. Suppose further that the true emission distributions $(f_i^*)_{i=1}^R$ satisfy \textbf{Assumption} \ref{emissionconditionDPM} and that, for each $i$, $f^*_i\in\mathcal{P}(\beta,L,\gamma)$. Then Theorems \ref{L1contractemission} and \ref{smoothingcontract} hold with $\epsilon_n$ as in \eqref{eq:DPMMrate}, where $t>t_0\geq \left(2+\frac{2}{\gamma}+\frac{1}{\beta}\right)\left(\frac{1}{\beta}+2\right)^{-1}$.
\end{cor}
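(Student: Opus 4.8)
The plan is to verify that an $R$-fold product of Dirichlet process mixtures of Gaussians satisfies \textbf{Assumption} \ref{emissioncontractcondition}, thereby placing ourselves in a position to invoke Theorems \ref{L1contractemission} and \ref{smoothingcontract} directly. Since Corollary \ref{DPMNormal} is framed as a corollary of Theorem 4.3 of \cite{vernet2015conc}, the heart of the argument is a translation between the framework of \cite{vernet2015conc}, which establishes a Kullback--Leibler prior-mass condition and an entropy/sieve condition tailored to their marginal contraction result, and the precise form of \textbf{Assumption} \ref{emissioncontractcondition} that our Proposition \ref{L1contractmarginal} requires.

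First I would recall the structure of Theorem 4.3 of \cite{vernet2015conc}: for emission densities each lying in the H\"older-type class $\mathcal{P}(\beta,L,\gamma)$ and satisfying the tail conditions of \textbf{Assumption} \ref{emissionconditionDPM}, a Dirichlet process mixture of Gaussians prior whose base measure $\alpha$ and variance prior $\Pi_\sigma$ satisfy \textbf{Assumption} \ref{priorcondition} yields the contraction rate $\epsilon_n = n^{-\beta/(2\beta+1)}\log(n)^t$ with $t > t_0$ as stated. The essential content I must extract from their analysis is twofold: that the prior places sufficient mass $\gtrsim \exp(-C_{\Pi_2} n\tilde\epsilon_n^2)$ on a suitable Kullback--Leibler-type neighbourhood of $\mathbf{f}^*$ (yielding condition \textbf{A} with appropriate approximating densities $\tilde f_i$ and set $S$), and that there exist sieves $\mathcal{F}_n$ with the required entropy bound $N(\epsilon_n/12,\mathcal{F}_n,d)\lesssim \exp(C' n\epsilon_n^2)$ and exponentially small complement probability (condition \textbf{B}). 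The rate $\epsilon_n$ of \eqref{eq:DPMMrate} is calibrated precisely so that $n\epsilon_n^2 \asymp (\log n)^{2t+1} n^{1/(2\beta+1)}$, which one checks satisfies $n\epsilon_n^2 \gtrsim \log n$ as demanded in Proposition \ref{L1contractmarginal}; I would also confirm the extra hypotheses appearing in Theorem \ref{smoothingcontract}, namely $n\epsilon_n^3 \to 0$ together with the second-moment-type bound \eqref{smoothingcond}, both of which follow from the tail assumptions on $\Pi_\sigma$ and the polynomial growth of $L$ governing the tails of the base measure.

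The main obstacle is that the assumptions of \cite{vernet2015conc} are stated in a form adapted to a single HMM posterior contraction statement, whereas \textbf{Assumption} \ref{emissioncontractcondition} is an explicitly decomposed pair of conditions \textbf{A}/\textbf{B} on the product prior $\Pi_2$ on $\mathcal{F}^R$ alone, decoupled from $Q$. I therefore expect the delicate step to be verifying that the componentwise (per-emission) approximation and prior-mass estimates of \cite{vernet2015conc} combine correctly across the $R$-fold product to yield condition \textbf{A} with the \emph{uniform} bounds involving $\max_{1\leq j\leq R}$ inside the logarithms and the explicit constant $C_{R,Q}$; the coupling of the approximating densities $\tilde f_j$ across components, and the control of the off-support integrals on $S^c$ simultaneously for all $j$, is where care is needed. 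Since the product prior is a product of independent DPM priors, the KL-neighbourhood mass factorises and the individual estimates of \cite{vernet2015conc} apply to each factor, so the uniform maxima are handled by taking $S$ and $\tilde f_j$ to be the union/tuple of the single-component constructions and absorbing the factor $R$ into constants; I would make this reduction explicit.

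Once conditions \textbf{A} and \textbf{B} are established with $\epsilon_n$ as in \eqref{eq:DPMMrate}, the conclusion is immediate: Proposition \ref{L1contractmarginal} gives marginal contraction, Theorem \ref{inversion} converts this into an $L^1$ rate on the emissions, and hence Theorem \ref{L1contractemission} holds; the supplementary moment conditions then license Theorem \ref{smoothingcontract}, completing the corollary.
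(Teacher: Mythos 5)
Your overall strategy coincides with the paper's: the published proof is a one-line citation of \cite{vernet2015conc}, who verified \textbf{Assumption} \ref{emissioncontractcondition} for the product of Dirichlet process mixtures of Gaussians with $\tilde\epsilon_n=n^{-\beta/(2\beta+1)}(\log n)^{t_0}$ and $\epsilon_n$ as in \eqref{eq:DPMMrate}, after which Theorems \ref{L1contractemission} and \ref{smoothingcontract} are invoked. The step you flag as delicate --- recombining per-component estimates into the $R$-fold product form of conditions \textbf{A}/\textbf{B} --- is not actually an obstacle: \textbf{Assumption} \ref{emissioncontractcondition} is lifted essentially verbatim from \cite{vernet2015conc}, whose conditions are already stated for the full vector $\mathbf{f}=(f_1,\dots,f_R)$ and whose Section 4 example already treats exactly this product DPM prior, so no translation between a ``single-density'' statement and the product statement is needed.

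There is, however, one genuine error in your argument. You assert that the hypothesis $n\epsilon_n^3\to 0$ of Theorem \ref{smoothingcontract} ``follows from the tail assumptions on $\Pi_\sigma$ and the polynomial growth of $L$''. It cannot: this is a condition on the rate alone, independent of any prior. With $\epsilon_n=n^{-\beta/(2\beta+1)}(\log n)^{t}$ one has
\begin{equation*}
n\epsilon_n^3 \;=\; n^{(1-\beta)/(2\beta+1)}(\log n)^{3t},
\end{equation*}
which tends to zero if and only if $\beta>1$; for $\beta\le 1$ it diverges no matter how the base measure or $\Pi_\sigma$ are chosen. So the smoothing part of the conclusion carries an implicit smoothness restriction $\beta>1$ that your proposal (and, to be fair, the paper's own terse proof, which verifies only \textbf{Assumption} \ref{emissioncontractcondition}) does not surface. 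The remaining hypotheses of Theorem \ref{smoothingcontract} do follow as you indicate: the sub-exponential envelope in \textbf{Assumption} \ref{emissionconditionDPM} gives both \eqref{tail:frtrue} and boundedness of $\|f_r^*\|_{L^2}$, and since a Gaussian mixture with bandwidth $\sigma$ satisfies $\|f\|_{L^2}\lesssim \sigma^{-1/2}$, the inverse-gamma prior on the variance makes $\Pi_2\bigl(\max_i\|f_i\|_{L^2}>e^{\gamma n\epsilon_n^2}\bigr)$ decay super-exponentially, which yields \eqref{smoothingcond}.
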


\begin{proof}
The proof follows from \cite{vernet2015conc} who showed that \textbf{Assumption} \ref{emissioncontractcondition} is verified under the stated assumptions with $\lambda(\dd y)$ the Lebesgue measure, appropriately chosen $B_n$,
$\tilde{\epsilon}_n=n^{-\frac{\beta}{2\beta+1}}\log(n)^{t_0}$ with $t_0$ as in the statement, and $\epsilon_n$ as in the statement.
\end{proof}

The rate $n^{-\frac{\beta}{2\beta+1}}$ is minimax optimal for the classes $\mathcal{P}(\beta,L,\gamma)$ under iid sampling assumptions, see \cite{maugis2013adaptive}. Although not proved here, we strongly believe the rate $\epsilon_n$ to be minimax up to $\log n$ factors, since an iid sampling assumption corresponds to estimating $f_1,\dots,f_R$ after observing $X_{1:n},Y_{1:n}$ which is easier than estimating $f_1,\dots,f_R$ from $Y_{1:n}$ only. We thank the anonymous reviewer for pointing this out to us.

\begin{rk}
It is also possible to verify the conditions of Proposition \ref{L1contractmarginal} in the case of a countable observation space, again by following the example set out in \cite{vernet2015conc}, in which case \textbf{Assumption} \ref{emissioncontractcondition} is verified with $\lambda(\dd y)$ the counting measure on $\mathbb{N}$ by using a Dirichlet process prior on the emissions whose base measure satisfies a tail condition, and under a tail assumption on the true emissions. In this case, a parametric rate up to log factors is obtained.
\end{rk}

\section{Practical considerations and simulation study}\label{simulation}
In this section, we discuss the practical implication of the method and results described in the previous sections. As described in Section \ref{sec:cut:gene} we consider a cut posterior approach where $\Pi_1$ is based on a histogram prior on the emission distributions and where $\Pi_2 $ is a Dirichlet process mixture of normals for the emission densities.  We first describe the implementation of $\Pi_1( Q| Y_{1:n})$.

\subsection{MCMC algorithm for $Q$}\label{MCMCpi1}
We first define the construction of the partition $\mathcal I_M$. In Section \ref{sec:Pi1} it is defined as a partition on $[0,1]$ (or more generally on $[0,1]^d$), but using a monotonic transform, this is easily generalized to a partition of $\mathbb R$ (or $\mathbb R^d$). In this section we restrict ourselves to dyadic paritions, i.e. $I_m^{(M)} = (G_0^{-1}((m-1) 2^{-M}), G_0^{-1}(m 2^{-M})) $, $m=0, \cdots , 2^M-1$, where $G_0$ is a continuous strictly increasing  function  from $\mathbb R$ to $[0,1]$.  

\vspace{1ex}
Recall that for Theorems \ref{MLEM} and \ref{BvMM} to be valid, we merely require that $\mathcal I^{(M)} = (I_m^{(M)}, m \leq 2^M-1)$ is admissible and that for all $r\leq R, m\leq 2^M-1$, $F_r^*(I_m^{(M)}) >0$.  Interestingly Theorem \ref{BvM} does not restrict the choice of $G_0$, but restricts the choice of $M$. 
In practice however the choice of $M$ and $G_0$ matters and more details are provided below. 

\vspace{1ex}
Once $\mathcal I_{M}$ is chosen, we consider a prior on $Q, \underline{\omega}_M$ (suppressing the $M$ henceforth) and for the sake of computational simplicity we consider the following family of Dirichlet priors: 
\begin{align*}
     \forall i\leq R, \quad & Q_{i\cdot} = (Q_{ij},j\in [R]) \sim \mathcal D( \gamma_{i1}, \cdots, \gamma_{iR}) \\  &\omega_{i \cdot} = (\omega_{im}, m \in [2^M] ) \sim \mathcal D( \beta_{i1}, \cdots, \beta_{i2^M}). 
\end{align*}
We then use a Gibbs sampler on $(Q,\underline{\omega},\mathbf{X})$ where given $\mathbf{X},Y_{1:n}$, 
\begin{align*}
Q_{i\cdot} &\stackrel{ind}{\sim} \mathcal D(\gamma_{i1}+ n_{i1}, \cdots, \gamma_{iR}+ n_{iR}), \quad n_{ij} = \sum_{t=2}^n \mathds{1}_{X_{t-1} = i, X_{t} = j} \\
\omega_{i \cdot} &\stackrel{ind}{\sim} \mathcal D(\beta_{i1}+ N^{(i)}_{0}, \cdots, \beta_{i2^M} + N^{(i)}_{2^M-1}), \quad N^{(i)}_{m} = \sum_{t=1}^n \mathds{1}_{Y_t \in I_m^{(M)}, X_t = i},
\end{align*}
and the conditional distribution of $X$ given $Q, \underline{\omega}, Y$ is derived using the forward - backward algorithm (see \cite{marin2007bayesian} or \cite{fruhwirth2006finite}). To overcome the usual label-switching issue in mixtures and HMMs, we take the approach of Chapter 6 of \cite{marin2007bayesian} which deals with MCMC in the mixtures setting, in which the authors propose relabelling relative to the posterior mode as a post-processing step (with likelihood computed also with forward-backward).
\vspace{1ex}

In our simulation study we have considered  $R=2$ hidden states  with transition matrix  and emission distributions \[Q^*=\begin{pmatrix}0.7 & 0.3\\ 0.2 & 0.8 \end{pmatrix}, \quad 
F_1^*\sim \mathcal{N}(-1,1), F_2^*\sim\mathcal{N}(1,1).\]

We have considered data of size $n=1000,2500,5000,10000$, obtained by restricting a single simulated data set of size $10000$. To study the effect of  $M$ we run our MCMC algorithm targeting $\Pi_1(\cdot|Y_{1:n})$ with $\kappa_M=2,4,8,16,64,128$ bins.

\vspace{1ex}
We took $\gamma_{ij}=\beta_{ij}=1$ so that independent uniform priors over the simplex were used for rows of the transition matrix and the histogram weights, and chose 
$$G_0(y) = \begin{cases} (1+e^{-y})^{-1} & \lvert y \rvert >3 \\ \zeta+\eta y & \lvert y\rvert \leq 3 \end{cases} $$
with $\zeta,\eta\in\mathbb{R}$ chosen so that $G_0$ is continuous. The sigmoid $y\mapsto (1+e^{-y})^{-1}$ is a popular transformation from $\mathbb{R}$ to $[0,1]$, however its steep gradient near $y=0$ means that several data points end up in the middle bins when using the partition on $\mathbb{R}$ induced by a dyadic partition in $[0,1]$, which leads to an over-coarsening of the data. The linear interpolation in the range $\lvert y \rvert \leq 3$ is intended to provide more discrimination between data points lying in this interval, which includes the vast majority (approx 97.5\%) of the data.

\vspace{1ex}
We ran the MCMC for $150000$ iterations for each binning, discarding $10000$ iterations as burn-in and retaining one in every twenty of the remaining draws, for a total of $7000$ posterior draws.


\vspace{1ex}
The fitted distributions for $Q$ under the priors $\Pi_1$, with varying $M$, are shown in Figures \ref{pi1plots:N1000}, \ref{pi1plots:N2500}, \ref{pi1plots:N5000} and \ref{pi1plots:N10000}  and demonstrate Theorems \ref{BvMM} and \ref{BvM} as we detail below. To make an additional comparison with a typical Bayesian nonparametric approach, we also fitted a model with a prior $\Pi^\prime$, which used Dirichlet priors on the transition matrix as in $\Pi_1$ but Dirichlet process mixtures of Gaussians to model the emission densities. We defer further details of $\Pi^\prime$ to Section \ref{sec:pi2sims} as it is more relevant as a comparison with $\Pi_2$, since both have a much higher computational cost in comparison to $\Pi_1$ when implementing as described in Section \ref{sec:pi2sims}, as elaborated on in Section \ref{extrasims}. In Figure \ref{fig:full_bayes_transmat}, we compare distributions for the transition matrix under $\Pi^\prime(\cdot|Y_{1:n})$ with the distribution under $\Pi_1(\cdot|Y_{1:n})$ for a selection of values of $M$, but we emphasize the large difference in computational tractability. 

\vspace{1ex}
Even when $\kappa_M=2$, the posterior distribution under the prior $\Pi_1$, as $n$ increases, looks increasingly like a Gaussian, though its variance is quite large. For slightly larger values of $\kappa_M$, this Gaussian shape is preserved but with lower variance, demonstrating Proposition \ref{infogrows} which states that the Fisher information grows as we refine the partition. However, we can also see that taking $\kappa_M$ too large leads to erroneous posterior inference, which may simply be biased (for instance when $n=5000$ and we take $\kappa_M=64$) or lose its Gaussian shape entirely (for instance when $n=1000$ and $\kappa_M\in\{64,128\}$). This demonstrates that the requirement of Theorem \ref{BvM} that $M_n\rightarrow\infty$ sufficiently slowly has practical consequences and is not merely a theoretical artefact.

\vspace{1ex}
While the latter issue is easily diagnosed by eye, the former issue is somewhat more worrisome as the bias cannot be so easily identified when one does not have access to the true data generating process. In Figure \ref{fig:full_bayes_transmat}, we see preliminary empirical evidence that this problem of bias may also be present when taking a fully Bayesian approach based on the prior $\Pi^\prime$, indicating that our approach may work better than a fully Bayes approach if $M_n$ can be tuned well. We emphasize however that this simulation study is very limited in scope, and is only intended to demonstrate our results rather than to make conclusive comparisons.

\vspace{1ex}
For the histogram prior $\Pi_1$, we propose the following heuristic to tune $\kappa_M$ after computing the posterior for a range of different numbers of bins. For a small number of bins (say $\kappa_M=4$), take the posterior mean as a reference estimate (say $\hat{Q}_0$), which should have low bias and moderate variance. For higher $\kappa_M$ values, compare $\hat{Q}_0$ $\kappa_M$-specific posterior mean and the $1-\alpha$ credible sets $C_\alpha$, for $\alpha = 0.05,0.1$ say. We consider that $\kappa_M$ is not too large if $\hat{Q}_0$ is well within the bounds of $C_\alpha$. We have added some further markings to Figure \ref{pi1plots:N5000} which illustrate this approach.

\vspace{1ex}
We emphasize that it is most important not to refine the partition too quickly, so even when adopting this heuristic one may wish to favour lower values of $\kappa_M$. When fitting $\Pi_2$ as discussed in Section \ref{sec:pi2sims}, we used posterior draws based on one possible refinement, in which $\kappa_M=4$ for $n=1000$, $\kappa_M=8$ for $n\in\{2500,5000\}$ and $\kappa_M=16$ for $n=10000$ - see Figure \ref{pi2plots:main}.

\begin{figure}
    \centering
    \includegraphics[width=\linewidth]{./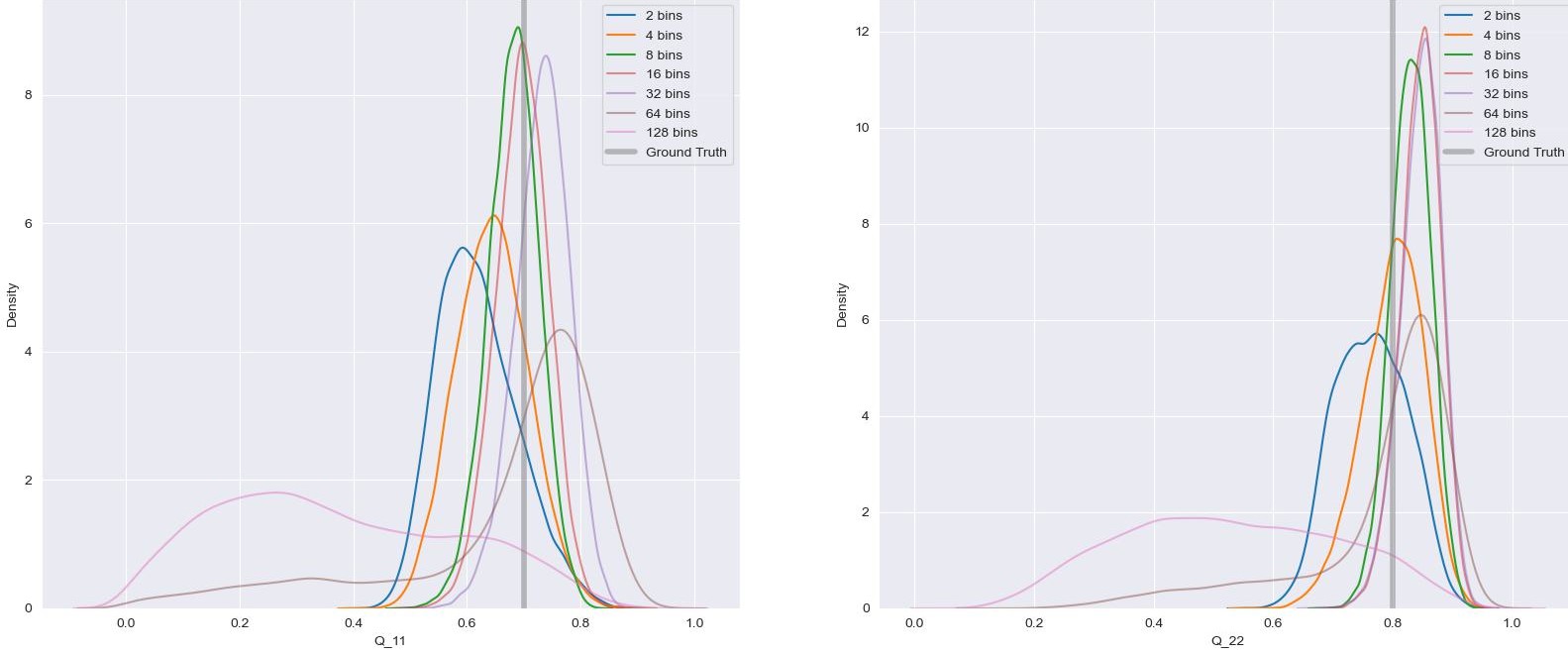}
    \caption{Posterior draws for $Q$ under priors $\Pi_1$ with $\kappa_M\in\{2,4,\dots,128\}$ bins when $n=1000$.}
    \label{pi1plots:N1000}
\end{figure}
\begin{figure}
    \centering
    \includegraphics[width=\linewidth]{./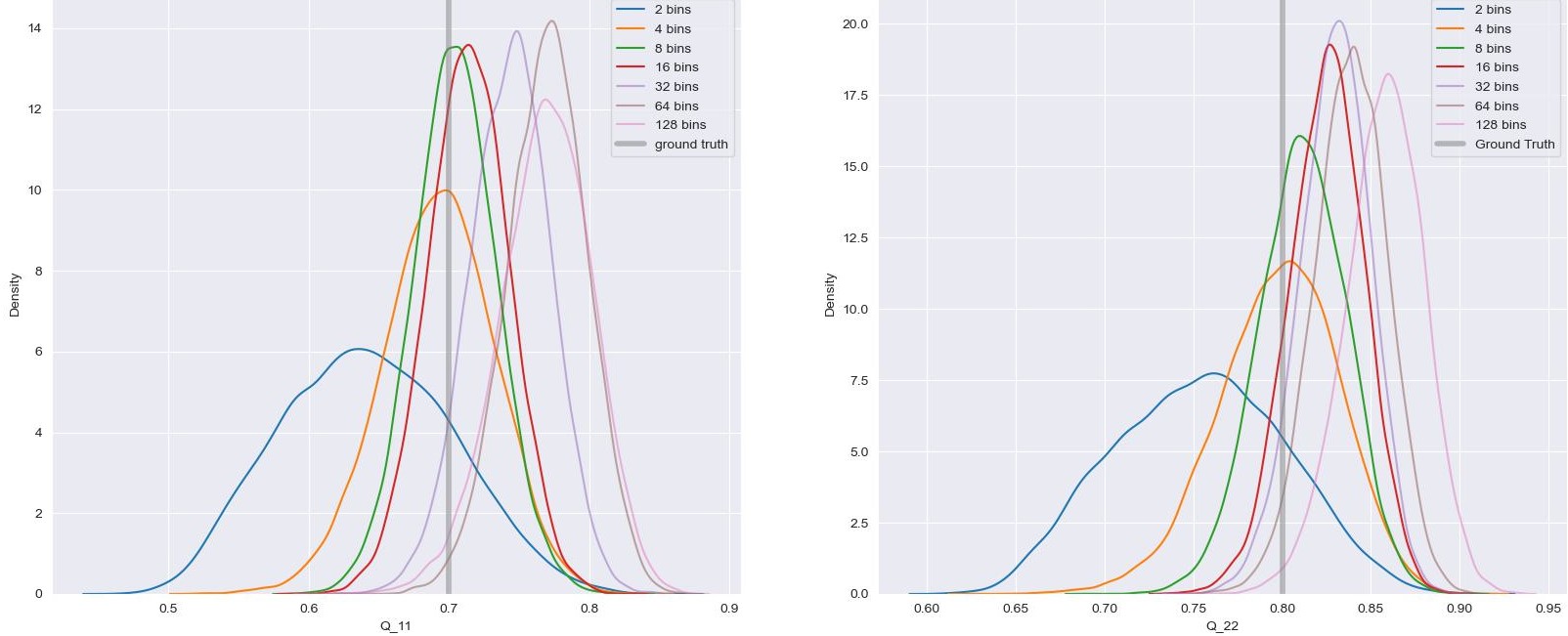}
    \caption{Posterior draws for $Q$ under priors $\Pi_1$ with $\kappa_M\in\{2,4,\dots,128\}$ bins when $n=2500$.}
    \label{pi1plots:N2500}
\end{figure}
\begin{figure}
    \centering
    \includegraphics[width=\linewidth]{./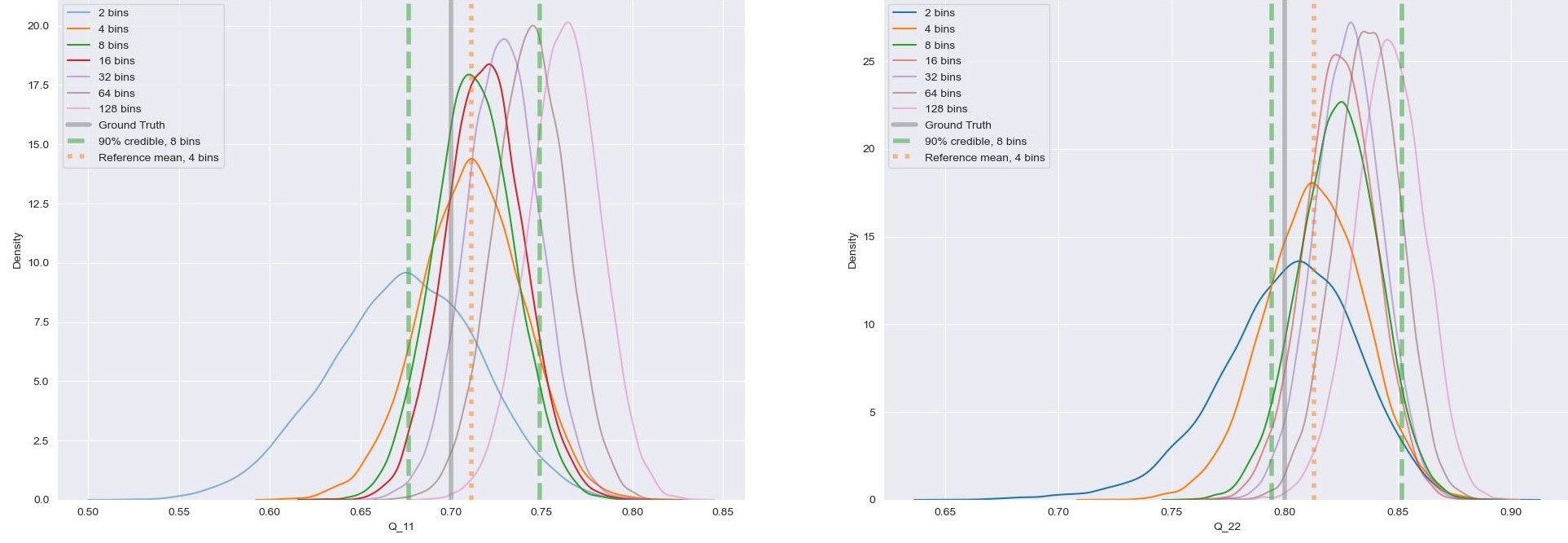}
    \caption{Posterior draws for $Q$ under priors $\Pi_1$ with $\kappa_M\in\{2,4,\dots,128\}$ bins when $n=5000$. The dotted orange line is the reference mean, taken when $\kappa_M=4$. The dashed green lines are the bounds of the 90\% credible set when $\kappa_M=8$.}
    \label{pi1plots:N5000}
\end{figure}
\begin{figure}
    \centering
    \includegraphics[width=\linewidth]{./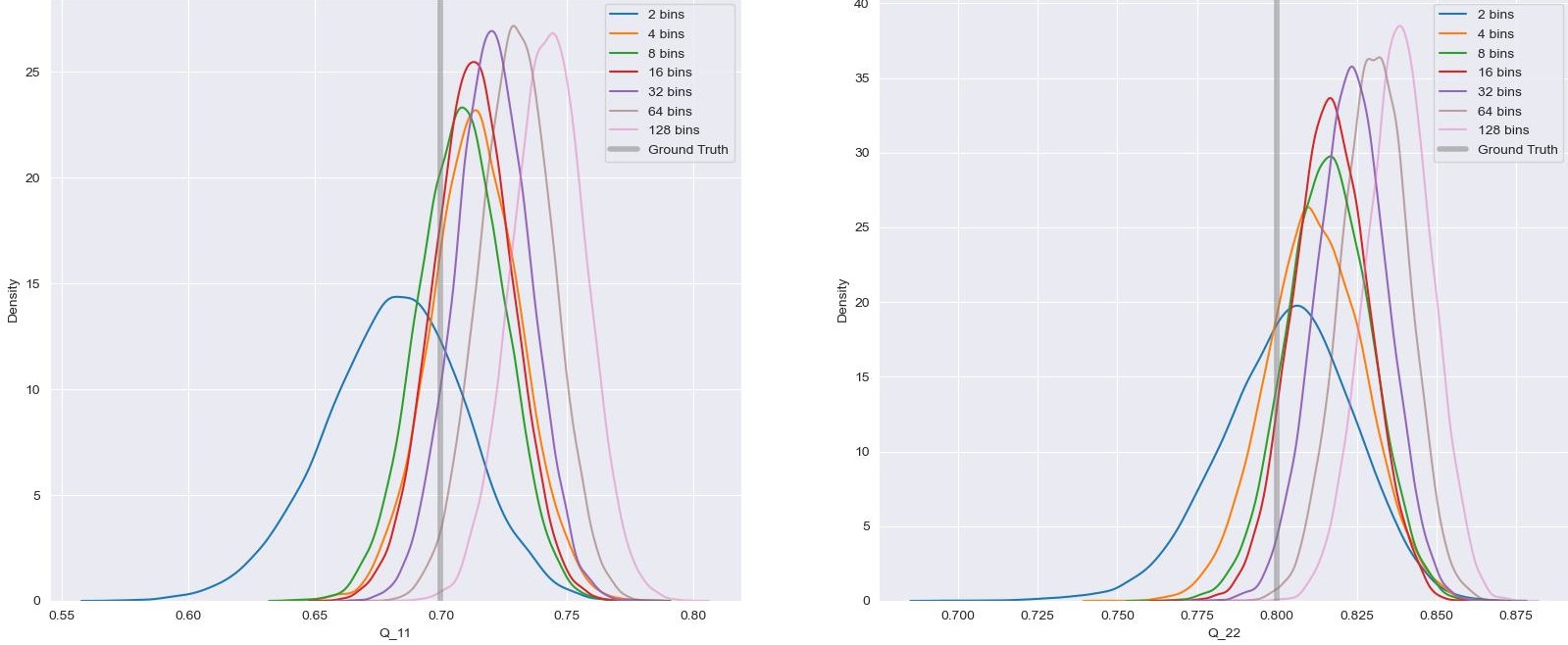}
    \caption{Posterior draws for $Q$ under priors $\Pi_1$ when $n=10000$, when using $\kappa_M\in\{2,4,\dots,128\}$ bins.}
    \label{pi1plots:N10000}
\end{figure}
\begin{figure}
    \centering
    \includegraphics[width=\linewidth]{./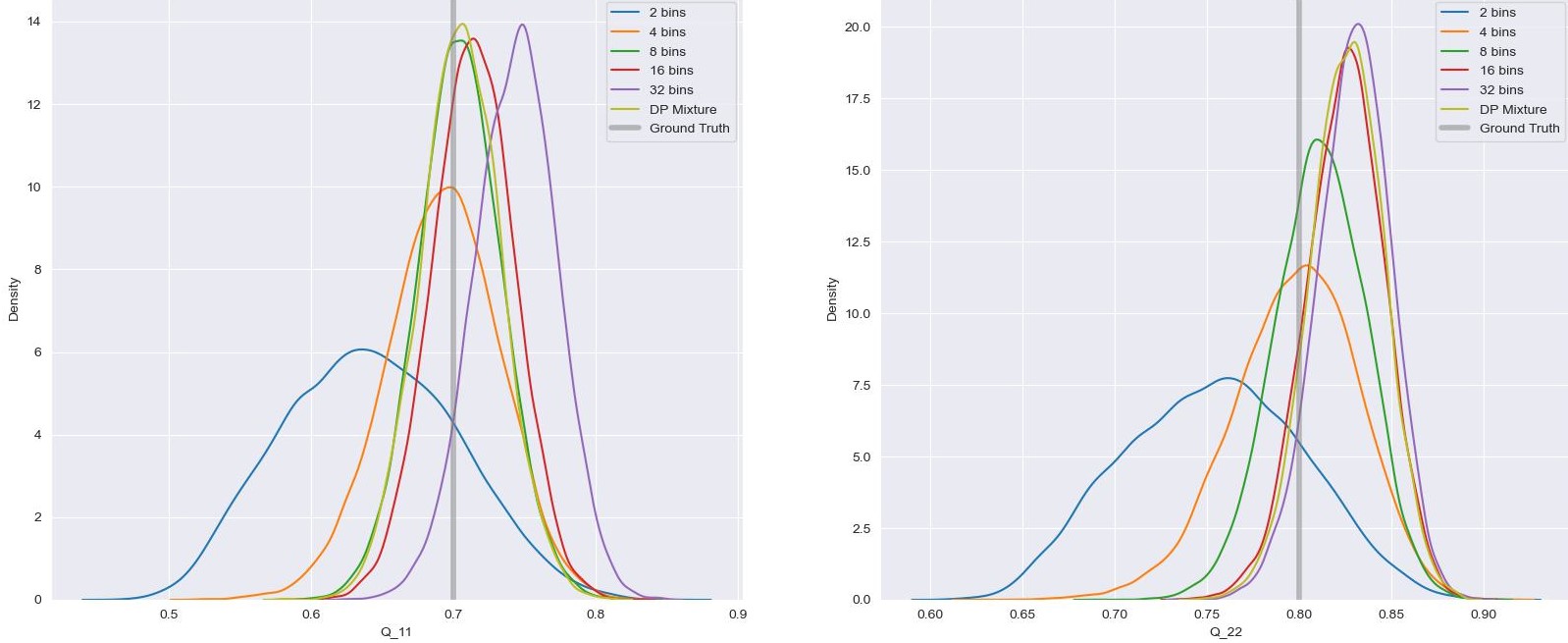}
    \centering
    \includegraphics[width=\linewidth]{./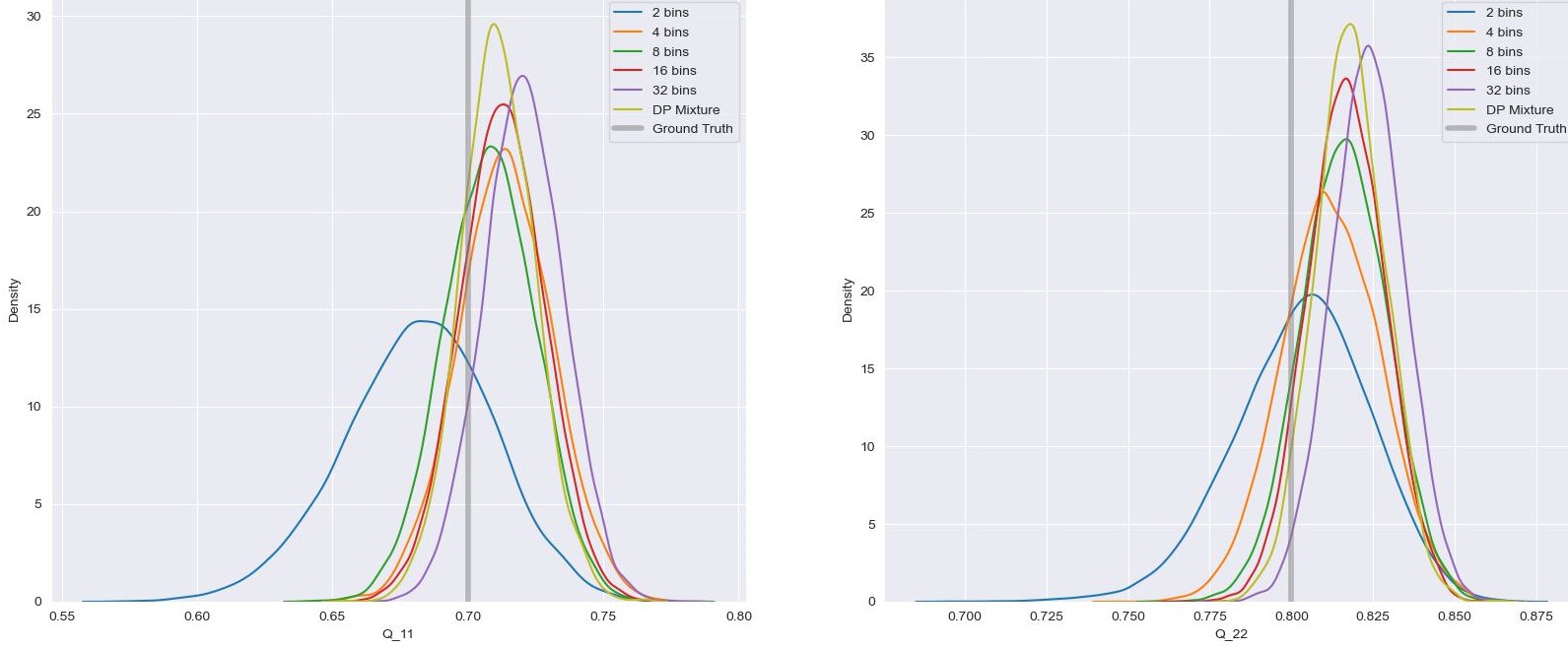}
    \caption{Posterior draws for $Q$ under priors $\Pi_1$ (for $\kappa_M\in\{2,4,8,16,32\}$) and $\Pi^\prime$ (DP Mixture line) when $n=2500$ (top) and $n=10000$ (bottom).}
    \label{fig:full_bayes_transmat}
\end{figure}

\subsection{MCMC algorithm for $\{f_r\}_{r=1}^R$}\label{sec:pi2sims}
After simulating draws from the marginal posterior $\Pi_1$ of $Q$, we then use a Gibbs sampler to target the cut-posterior distribution $\Pi_2(f|Q,Y)$, for which the prior $\Pi_2(f|Q)$ is a Dirichlet mixture as follows. 
For each hidden state $r$ (whose labels are fixed after relabelling in Section \ref{MCMCpi1}) we use independent dirichlet Process mixtures of normals to model $f_r$:
\begin{equation}\label{DPM}
    f_r(y)=\int \phi_{\sigma^{(r)}}(y-\mu)\dd P^{(r)}(\mu), \quad P^{(r)} \stackrel{iid}{\sim} DP(M_0, \mathcal N(\mu_c, \sigma_c^2))
\end{equation}
with $M_0=1$, $\mu_c=0$ and $\sigma^2_c=1$. The kernels $\phi_\sigma$ are centred normal distributions with variance $v=\sigma^2$, and $v^{(r)}$ is equipped with an $\text{InvGamma}(\alpha_\sigma,\beta_\sigma)$ prior\footnote{We remark that Proposition \ref{DPMNormal} verifies the conditions of Proposition \ref{L1contractmarginal} under an inverse gamma prior on the standard deviation. We have instead adopted in our simulations the common practice of placing an inverse gamma prior on the variance for computational convenience.}. Note that the scale parameter $\sigma$ is fixed across $\mu$'s for each $r$. The MCMC procedure will rely on the following stick breaking representation of (\ref{DPM}) involving mixture allocation variables $s^{(r)}=(s^{(r)}_i)_{i\leq n}$ and stick breaks $(V_j^{(r)})_{j\in\mathbb{N}}$ associated to a vector of weights $(W_j^{(r)})_{j\in\mathbb{N}}$:

\begin{align}\label{DPMstick}
    &Y_i|\mathbf{X},\mu,s,\sigma,W\stackrel{iid}{\sim} \phi_{\sigma^{(r)}}(\cdot - \mu_{s_i}^{(X_i)})\quad s_i|V,\mathbf{X},\mu,\sigma^2\stackrel{indep}{\sim}W^{(X_i)}, \quad W_j^{(r)}=V_j^{(r)}\prod_{i<j}(1-V_i^{(r)}) \nonumber \\& V_j^{(r)}\stackrel{iid}{\sim}\beta(1,M_0)\quad \mu_j^{(r)}\sim\mathcal{N}(\mu_c,\sigma_c^2)\quad v^{(r)}\sim\text{InvGamma}(\alpha_\sigma,\beta_\sigma)\quad \mathbf{X}\sim MC(Q) .
\end{align}

\vspace{1ex}
In order to approximately sample from the corresponding posterior, we replace the above prior with the Dirichlet-Multinomial process (see Theorem 4.19 of \cite{ghosal2017fundamentals}) with truncation level $S_{\max}=\lfloor  \sqrt{n} \rfloor$, in which $W$ is instead sampled from a Dirichlet distribution with parameter \begin{equation}\label{Dirdraws}
    \alpha = \left(\frac{M_0}{S_{\max}},\frac{M_0}{S_{\max}},\cdots,\frac{M_0}{S_{\max}}\right).
\end{equation}
This truncation level is suggested as a rule of thumb in the remark at the end of Section 5.2 of \cite{ghosal2017fundamentals}. Further details on implementation can be found in Section \ref{extrasims}.

\vspace{1ex}

\paragraph{Nested MCMC}
For the $i^{th}$ draw from the cut posterior, we would ideally sample first $Q_i$ from $\Pi_1(\cdot|Y)$, and then $f_i=(f_{ir})_r$ from $\Pi_2(\cdot|Q_i,Y)$. For the first step, we used Algorithm \hyperref[AlgoS1]{SA1} with burn in and thinning. However, we encounter difficulty when simulating from $\Pi_2(\cdot|Q_i,Y)$, as the $Q_i$ changes when $i$ changes, and so an MCMC approach effectively needs to mix $i$ by $i$. In order to achieve this, we run a nested MCMC approach (see \cite{plummer2015cuts}) in which, for each $i$, an interior chain of length $C$ is run, taking the final draw from the interior chain as our $i^{th}$ global draw.
\vspace{2ex}

Concerns about the computational cost of nested MCMC have been expressed, especially when there is strong dependence between the two modules (in our case, representing the transition and emissions) as in Section 4.2 of \cite{liu2022stochastic}. However, we found little improvement beyond $C=10$ interior iterations, see Figure \ref{pi2plots:minichain}. We expect that this is down to the well localised posterior $\Pi_1(\cdot|Y)$ on $Q$ which means that the $Q_i$, and hence the targets $\Pi_2(\cdot|Q_i,Y)$, don't vary too much $i$ by $i$. Plots of the posterior mean when $C=10$ are detailed in Figure \ref{pi2plots:main}. Since we used the thinned draws of $Q$ from $\Pi_1$, we ran 7000 such interior chains.

\vspace{1ex}
When using such a small number of iterations for the interior chain, nested MCMC is not costly compared to other cut sampling schemes (see e.g. Table 1 of \cite{liu2022stochastic}). We further suggest that the computational cost compared to fully Bayesian approaches with fixed targets is not as high as it may seem, given that the use of such interior chains should, at least partially, subsume the need for thinning. Indeed running such interior chains with a fixed target for each $i$ would be precisely the same as thinning. A more detailed development of this idea can be found in Section 4.4 of the supplement to \cite{carmona2020semi}.

\paragraph{Comparison to fully Bayesian approach}
As mentioned in Section \ref{MCMCpi1}, we also consider the fully Bayesian model with prior $\Pi^\prime$ as a means of comparison. The prior $\Pi^\prime$ independently places a Dirichlet prior on the rows of the transition matrix as discussed beforehand $\Pi_1$, as well as a Dirichlet process mixture prior over the emission densities as discussed earlier in this section for $\Pi_2$.

\vspace{1ex}
In the implementation of $\Pi^\prime$, we ran the MCMC for 70000 iterations, discarding the first 10000 as burn-in and thinning at a rate of one in ten observations. This was chosen so that we had an approximate matching with the computational cost of 7000 iterations of the cut posterior, each with 10 interior iterations. We plot the resulting emission densities in Figure \ref{emissionsfullbayes}. In comparison to Figure \ref{pi2plots:main}, the pointwise bands seem to capture the ground truth more accurately. We remark however that Theorem \ref{L1contractemission} only provides guarantees on $L^1$ concentration and so the plots will not entirely reflect the theory; $L^1$ credible sets are rather less easily visualised. We also emphasize our earlier comment from Section \ref{MCMCpi1} that the simulation study is limited in scope, and is not intended to provide conclusive comparisons with other approaches.


\vspace{1ex}





\begin{figure}
  \begin{minipage}[b]{\linewidth}
    \centering
    \includegraphics[width=\linewidth]{./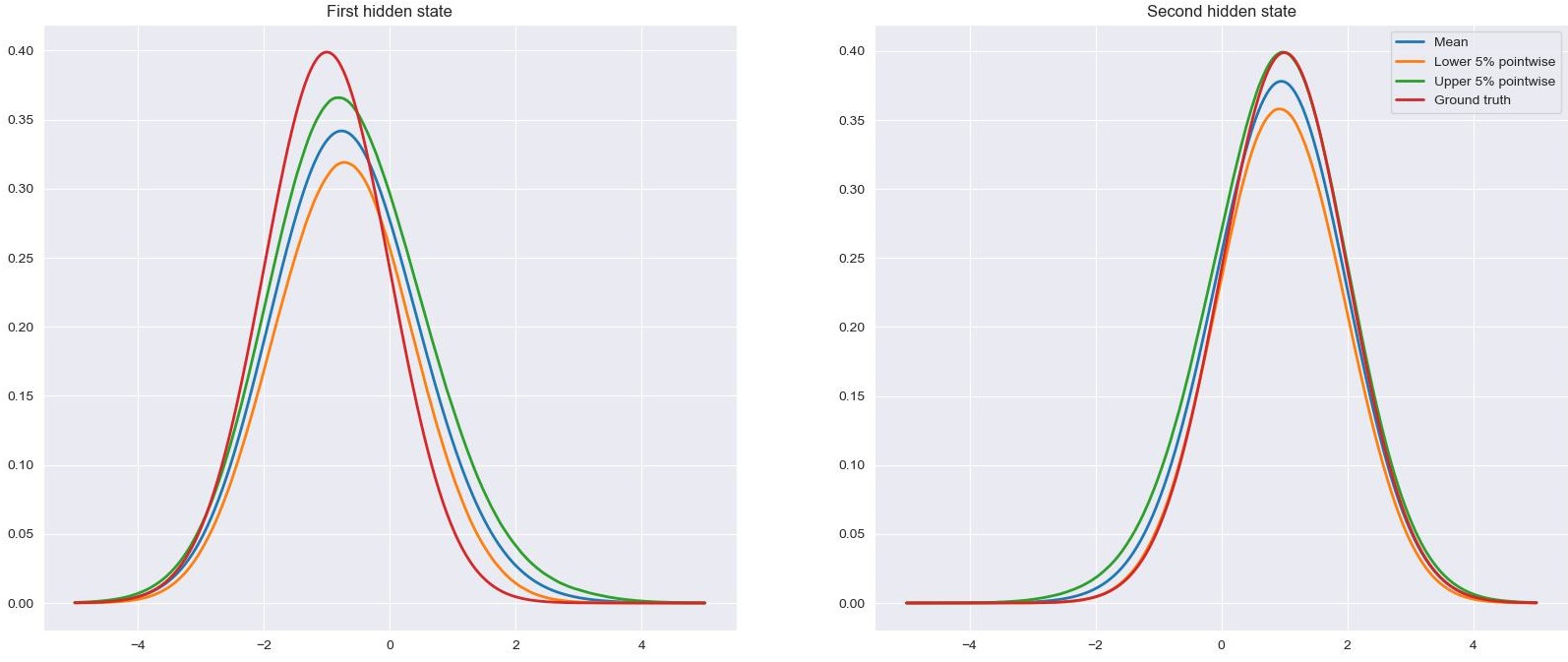} 
    \vspace{2ex}
  \end{minipage} 
  \begin{minipage}[b]{\linewidth}
    \centering
    \includegraphics[width=\linewidth]{./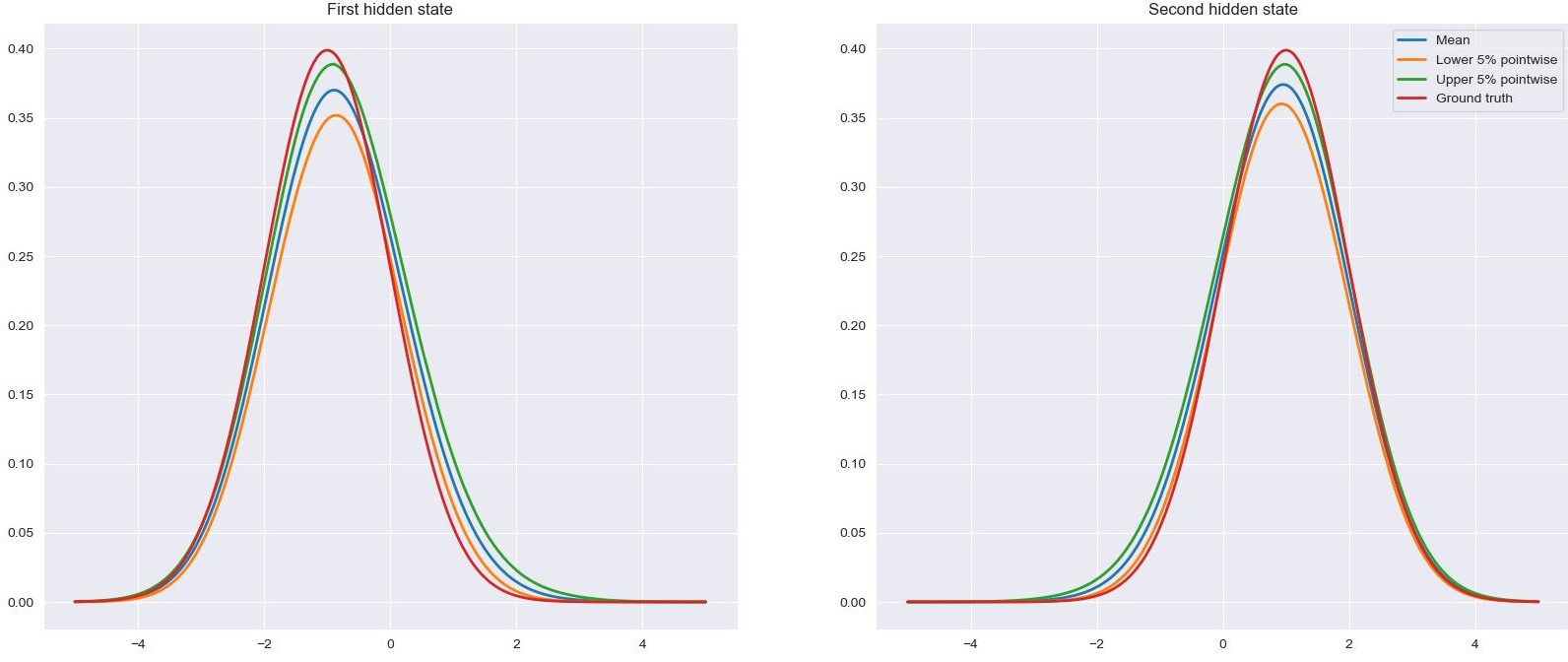} 
  \end{minipage}
  \begin{minipage}[b]{\linewidth}
    \centering
    \includegraphics[width=.95\linewidth]{./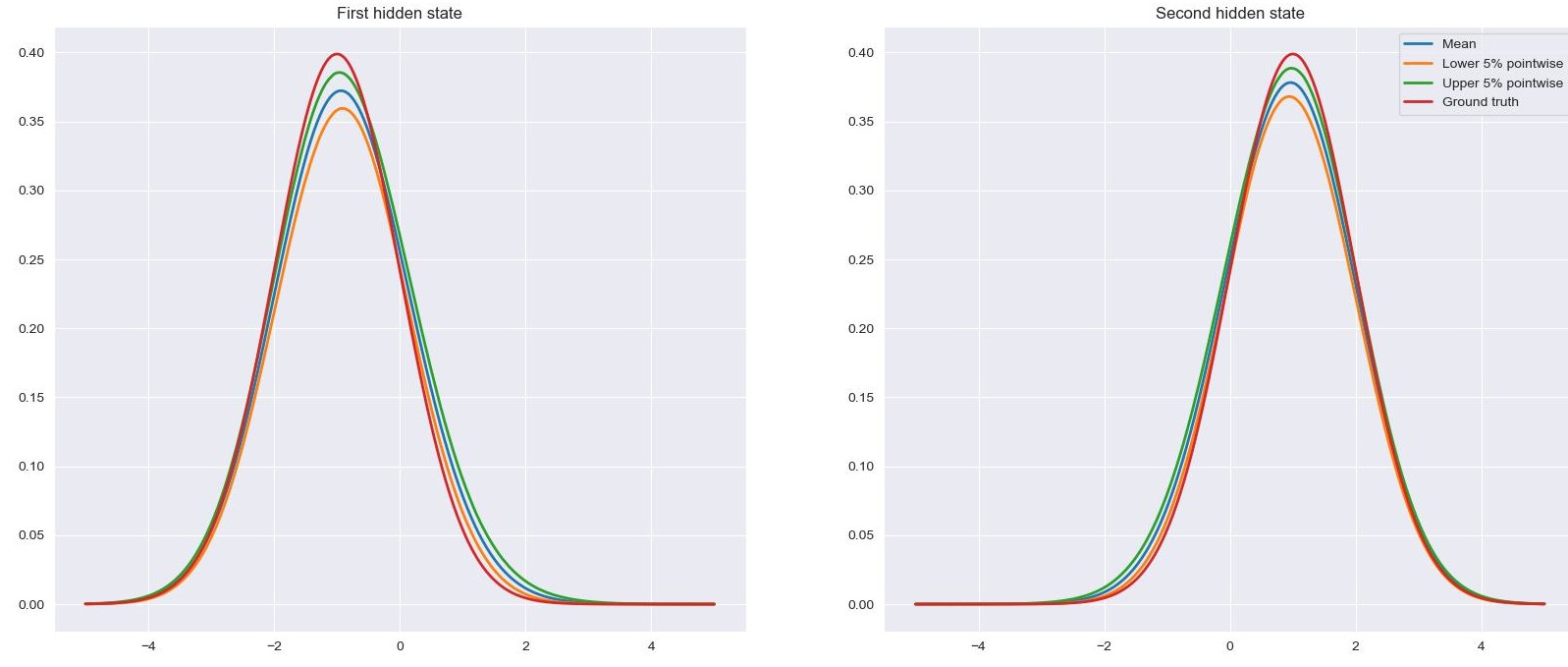} 
  \end{minipage}
  \caption{Cut posterior mean and pointwise 90\% credible bands for emissions when refining the partition with $n=2500$ (top), $n=5000$ (middle) and $n=10000$ (bottom), each with $C=10$ interior iterations. See top plot of Figure \ref{pi2plots:minichain} for $n=1000$.}
  \label{pi2plots:main} 
\end{figure}

\begin{figure}
\centering
\includegraphics[width=\linewidth]{./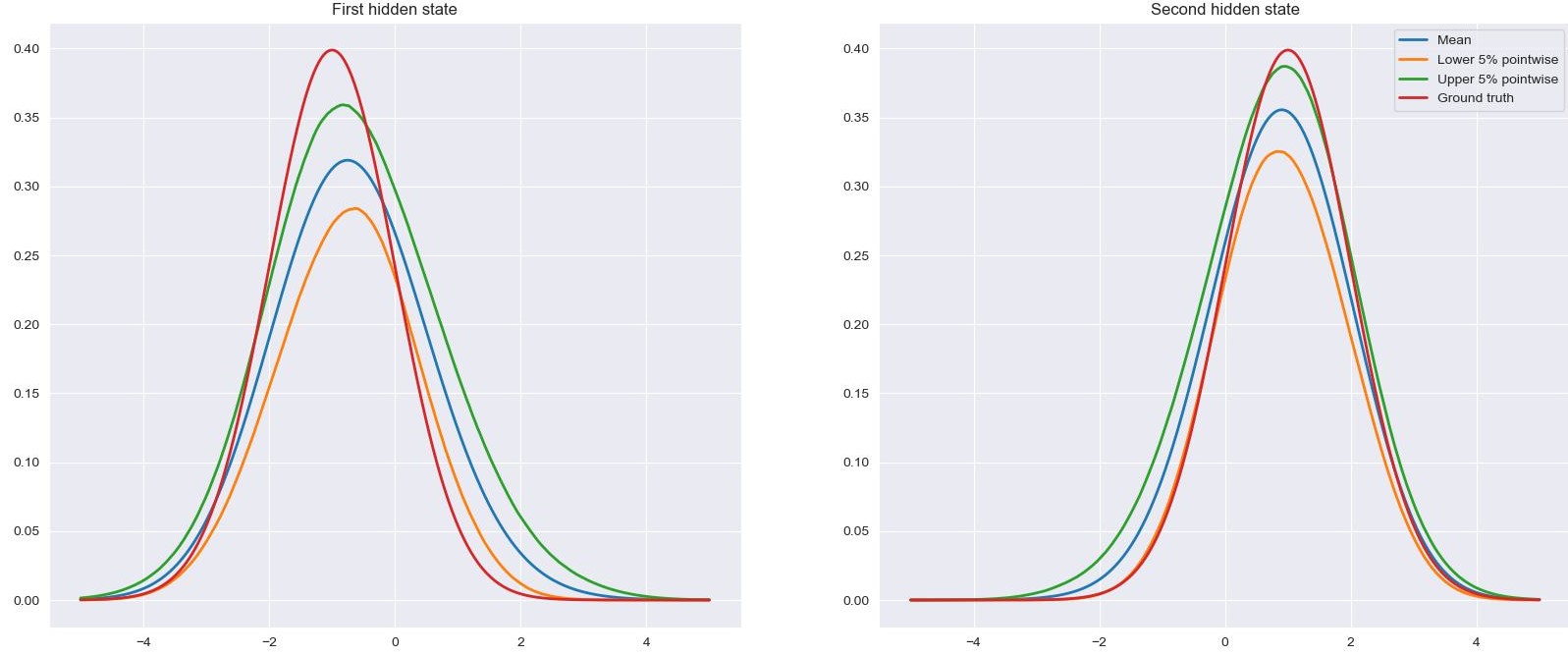} 
\includegraphics[width=\linewidth]{./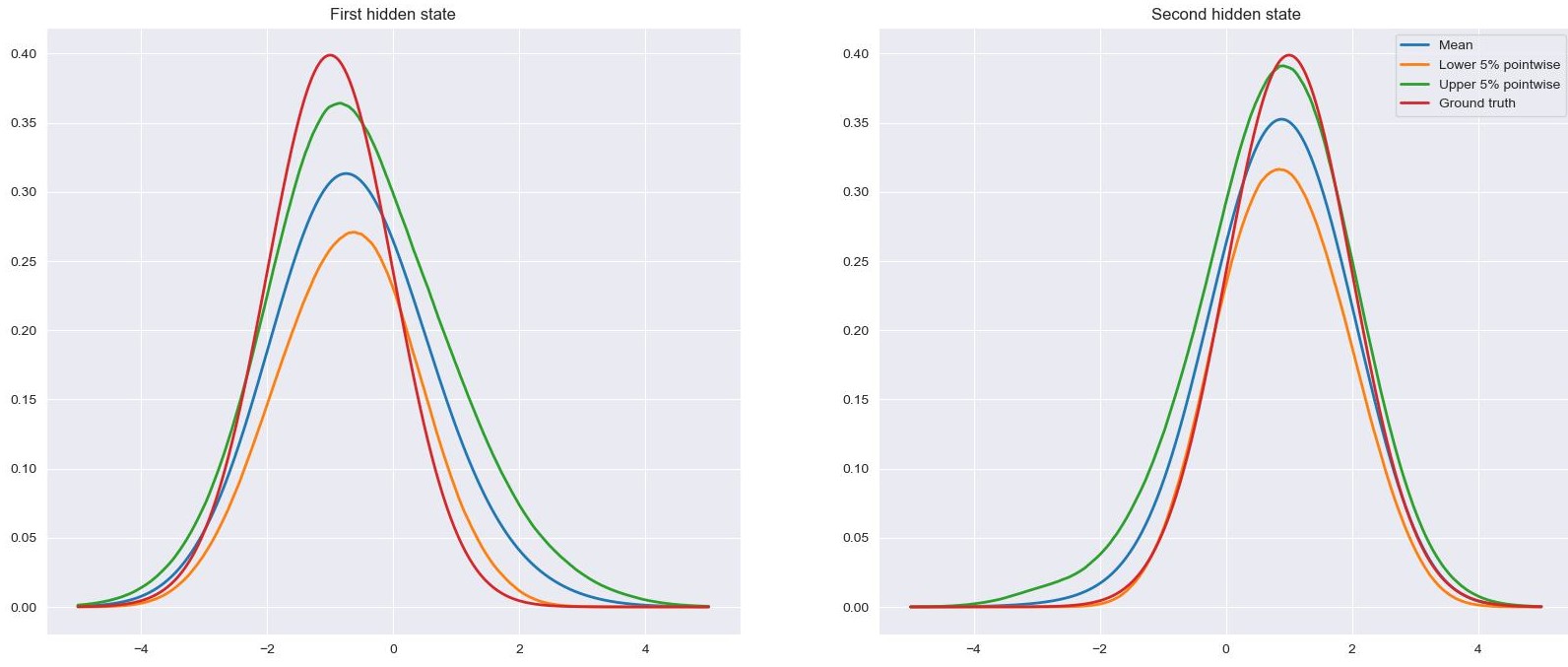} 
  \caption{Cut posterior mean and pointwise 90\% credible bands for emissions with $n=1000$ and four bins. Top: $C=10$ interior iterations; Bottom: $C=100$ interior iterations.}
  \label{pi2plots:minichain} 
\end{figure}

\begin{figure}
\centering
\includegraphics[width=\linewidth]{./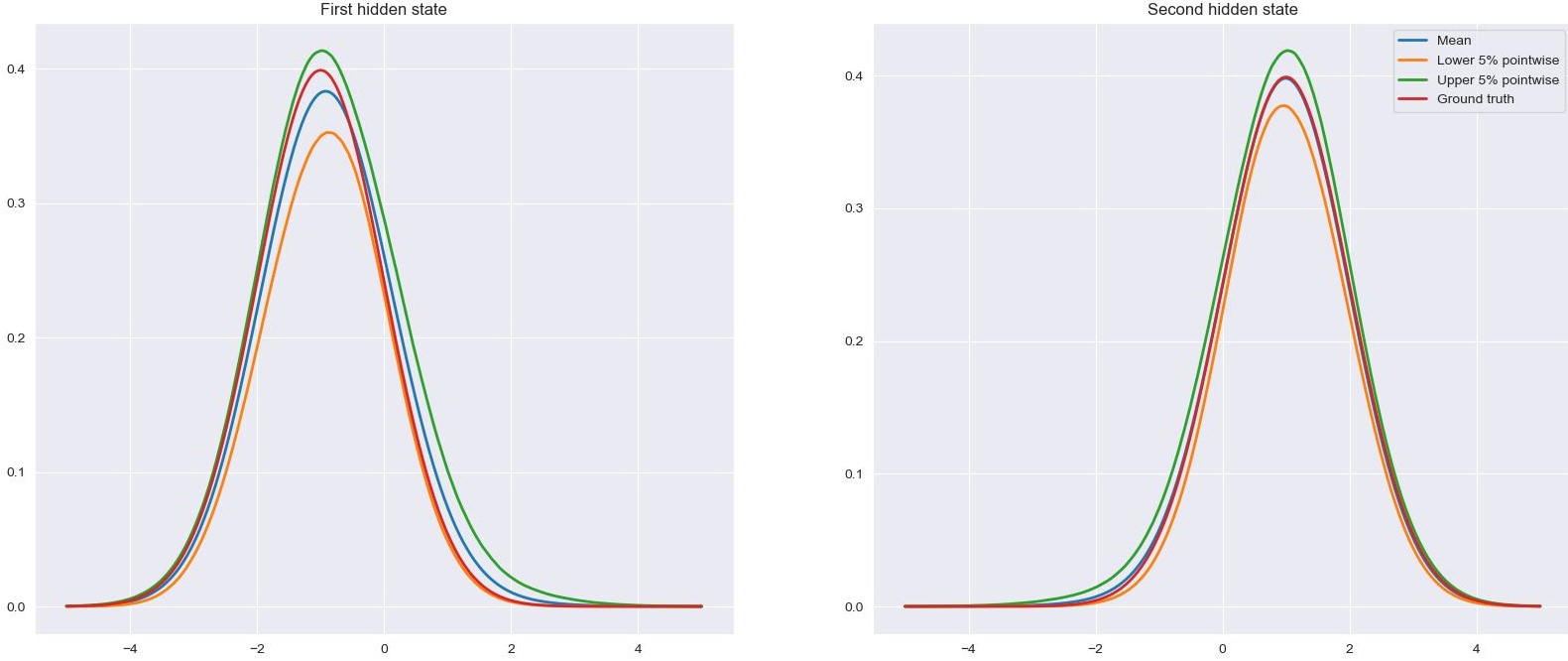} 
\includegraphics[width=\linewidth]{./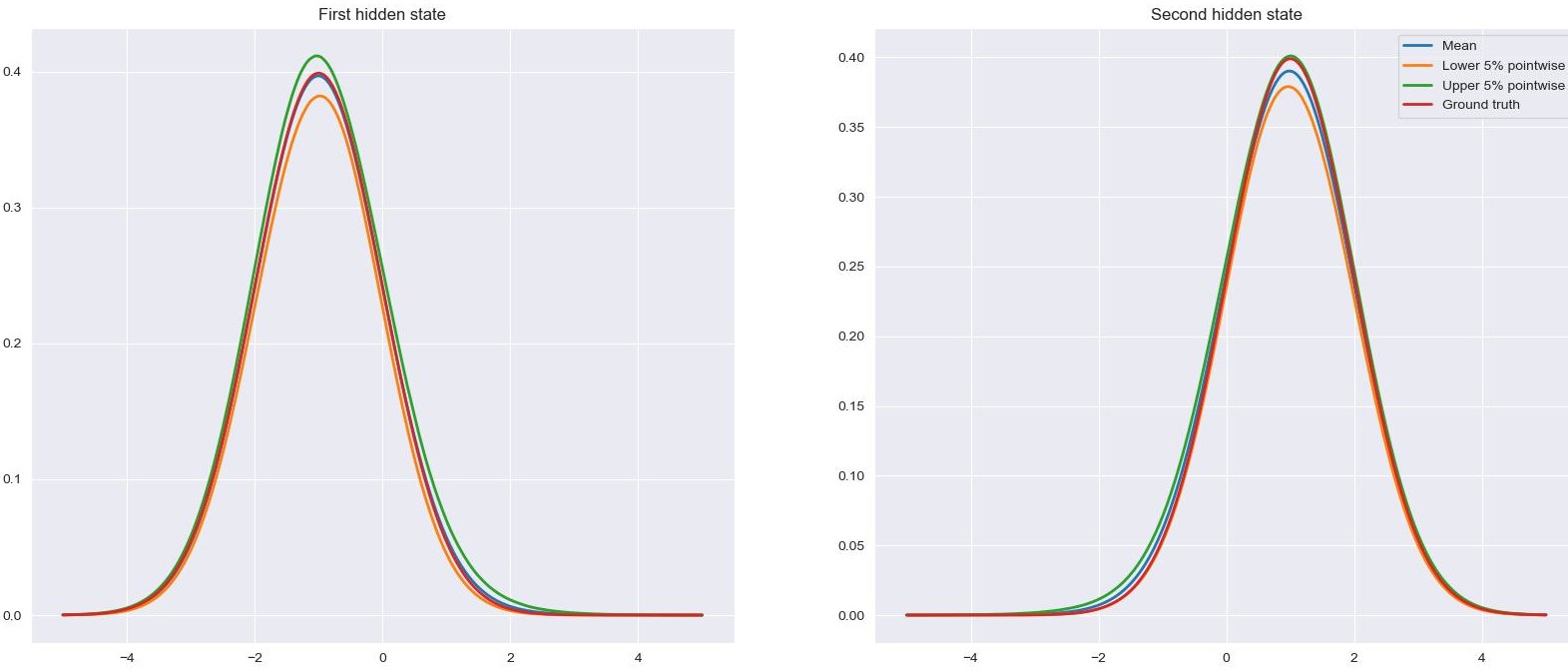} 
  \caption{Posterior mean and pointwise 90\% credible bands for emissions, with $n=2500$ (top) and $n=10000$ (bottom) under fully Bayesian posterior with prior $\Pi^\prime$.}
  \label{emissionsfullbayes} 
\end{figure}

\FloatBarrier

\section{Proofs of main results}\label{proofs}

We first prove Proposition \ref{scoreconvergence}.
\subsection{Proof of Proposition \ref{scoreconvergence}: Convergence of the scores}\label{scoreconvproof}

Throughout this section, \textbf{Assumptions} \ref{identifiability}-\ref{binsrefine} are assumed to hold.

\vspace{1ex}
Recall that $$\mathcal{H}_r = \{ h \in L^2(f^*_r\dd x),\, \int h(x) f^*_r\dd x=0\}$$ and $$\mathcal H_{r,M} =\{ h  = \sum_{m=1}^{\kappa_M}  \alpha_m \mathds{1}_{I_m}(y), \, \alpha_m \in \mathbb R, \, \int h f_{r,M}^*(y)dy =0 \}.$$ Define $\mathcal{P}$ to be the space spanned by the nuisance scores $\{H(h):h=(h_r)_{r\leq R},h_r\in \mathcal{H}_r \}$ in the full semiparametric model, and also $\mathcal{P}_M$ to be the space spanned by the nuisance scores $\{H_M(h_M):h_M=(h_{r,M})_{r\leq R},h_{r,M}\in\mathcal{H}_{r,M}\}$ in the $\mathcal{M}(\mathcal{I}_M)$. Here $H(h) = (H_{r}(h_r):r\in[R])$ with $H_{r}$ as in Equation \eqref{Scoreh}, and also $H_M(h_M) = (H_{r,M}(h_r):r\in[R])$ with $H_{r,M}$ as in Equation \eqref{ScorehM}. Denote also $\mathcal G_M^l = \sigma( Y^{(M)}_t, t\leq l)$  and $\mathcal F^j = \sigma( Y_t, t\leq l)$, for $l\in \mathbb Z$.

\vspace{1ex}
To prove Proposition \ref{scoreconvergence}, we prove convergence of the scores in  $\mathcal{P}_M$ to scores in $\mathcal{P}$ and we prove convergence of the score functions $S_{Q^*}^{(M)}$ to $S_{Q^*}$. Recall that the scores $S_{Q^*}$ in the model with known emissions are given by \eqref{ScoreQ}, and by analogy the $S_{Q^*}^{(M)}$ associated to data $Y^{(M)}$ are given by \eqref{ScoreQM}:
   For $l\in\mathbb{Z}$, we define the $\sigma$- algebras $\mathcal{G}^l=\sigma(Y_{-\infty:l})$ and $\mathcal{G}_M^l=\sigma(Y^{(M)}_{-\infty:l})$. 

\begin{proof}[Proof of Proposition \ref{scoreconvergence}]

Write $\mathcal{A}_M$ for the orthogonal projection onto $\mathcal{P}_M$ and $\mathcal{A}$ for the orthogonal projection onto $\mathcal{P}$. 
Define
$$\Tilde{S}_{Q^*}^{(M)}:=S_{Q^*}^{(M)}-\mathcal{A}_MS_{Q^*}^{(M)}\quad, \text{and} \quad \Tilde{S}_{Q^*}:={S}_{Q^*}-\mathcal{A}{S}_{Q^*}.$$
To prove Proposition \ref{scoreconvergence}, it is enough to prove that  $\Tilde{S}_{Q^*}^{(M)}$ converges to $\Tilde{S}_{Q^*}$. A major  difficulty here is to prove the convergence of $\mathcal{A}_MS$ to $\mathcal{A}S$ in $L^2(\mathbb P_*)$, with $S\in L^2(\mathbb{P}_*)$, because contrary-wise to what happens for mixture models in \cite{Gassiat2018} the sets $\mathcal P_M$ are not embedded. 

\vspace{1ex}
In Lemma \ref{knownnuisanceconv} we prove using a martingale argument that $S_{Q^*}^{(M)}$ converges to $S_{Q^*}$ in $L^2(\mathbb P_*)$ as $M$ goes to infinity. 
Then 
\begin{align*}
   \lVert \Tilde{S}_{Q^*}^{(M)}- \Tilde{S}_{Q^*} \rVert_{L_2} 
   &\leq \lVert S_{Q^*}^{(M)} - {S}_{Q^*} \rVert_{L_2} + \lVert \mathcal{A}_M({S}_{Q^*}-S_{Q^*}^{(M)})\rVert_{L^2} + \lVert (\mathcal A-\mathcal{A}_M)S_{Q^*}\rVert_{L^2}\\
   & =  \lVert (\mathcal A-\mathcal{A}_M)S_{Q^*}\rVert_{L^2}+o(1).
\end{align*}
We now prove that $ \lVert (\mathcal A-\mathcal{A}_M)S\rVert_{L^2} =  o(1)$,  for all $S \in L^2(\mathbb P_*)$. As mentioned earlier, the non trivial part of this proof comes from the fact that the sets 
$\mathcal P_M$ are not embedded. Hence we first prove that the set  $\mathcal P_M$ \textit{is close} to the set $\tilde{\mathcal P}_M$, where 
$$\tilde{\mathcal P}_M = \{H(h_M), \, h_M = (h_{r,M})_{r\leq R},h_{r,M}\in\mathcal{H}_{r,M}\}.$$
More precisely we prove in Lemma \ref{projectionconv} below  that $\mathcal{A}_MS-\tilde{\mathcal A}_M S=o_{L^2}(1)$, where $\tilde{\mathcal A}_M $ is the projection onto  
$\tilde{\mathcal P}_M$.  Since the partitions are nested, the $\tilde{\mathcal{P}}_M$ are nested, and reasoning as in  \cite{Gassiat2018}  we obtain that  $(\tilde{\mathcal{A}}_MS)_{M\in\mathbb{N}}$ is Cauchy, and so converges to some $\tilde{\mathcal{A}}S$. Then, arguing as in \cite{Gassiat2018}, we identify $\tilde{\mathcal{A}}S$ with $\mathcal{A}S$ in Lemma \ref{lemma:Ptildeconv}, by first arguing that $\tilde{\mathcal{A}}$ is a projection onto a subspace of $\mathcal{P}$, and then showing that elements of $\mathcal{P}$ are well approximated by elements of $\tilde{\mathcal{P}}$ by approximating the corresponding $h_r\in L^2_0(f_r^*)$ by histograms $(h_{r,M})_{M\in\mathbb{N}}$.  This terminates the proof of Proposition \ref{scoreconvergence}.
\end{proof}

\begin{lemma}\label{projectionconv}
For any element $S\in L^2(\mathbb{P_*})$, 
$$\lvert\mathcal{A}_MS - \tilde{\mathcal{A}}_MS\rvert_2  =o(1).$$
\end{lemma}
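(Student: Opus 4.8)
The plan is to treat $\mathcal{A}_M$ and $\tilde{\mathcal{A}}_M$ as orthogonal projections onto two subspaces $\mathcal{P}_M$ and $\tilde{\mathcal{P}}_M$ that are parametrised by the \emph{same} index set (the histogram directions $h=(h_r)_r$, $h_r\in\mathcal{H}_{r,M}$) and that differ only through the conditioning used in the state probabilities: $H_M$ uses the coarsened filtration $\mathcal{G}^k_M=\sigma(Y^{(M)}_{-\infty:k})$ whereas $H$ uses the full filtration $\mathcal{G}^k=\sigma(Y_{-\infty:k})$. Since the partitions are nested with vanishing radius (\textbf{Assumption} \ref{binsrefine}), $\mathcal{G}^k_M\uparrow\mathcal{G}^k$, and the natural strategy is to show that the two \emph{score maps} are close in operator norm and then transfer this to the projections. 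The key reduction is therefore to control
$$\delta_M:=\sup\Big\{\lVert H_M(h)-H(h)\rVert_{L^2(\mathbb{P}_*)}\ :\ h_r\in\mathcal{H}_{r,M},\ \textstyle\sum_{r}\lVert h_r\rVert^2_{L^2(f_r^*)}=1\Big\}.$$

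First I would show $\delta_M\to0$. For fixed $r$, $H_{r,M}(h_r)-H_r(h_r)$ is a sum, over the current time $k$ and the lags $i<k$, of differences of conditional state probabilities under $\mathcal{G}_M$ and $\mathcal{G}$, each weighted by $h_r(Y_i)$ (note $h_r(Y_i)$ is a function of $Y_i^{(M)}$ since $h_r$ is piecewise constant on $\mathcal{I}_M$, so every summand is genuinely $\mathcal{G}_M$- or $\mathcal{G}$-measurable as required). Exponential forgetting of the smoother, which holds uniformly in $M$ under \textbf{Assumption} \ref{ass:Qbound}, makes the lagged increments decay geometrically in $k-i$ uniformly in $M$, so the infinite sum can be truncated at a fixed lag $L$ up to a uniformly small error. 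For the finitely many remaining terms I would invoke the martingale convergence $\mathbb{E}_*[\mathds{1}\{X_i=r\}\mid\mathcal{G}^k_M]\to\mathbb{E}_*[\mathds{1}\{X_i=r\}\mid\mathcal{G}^k]$ in $L^2$. The delicate point is that these differences are weighted by $h_r(Y_i)$, which need not be bounded; here I would use that $h_r$ is constant on each bin to write the weighted expectation as $\sum_m c_m^2\,\mathbb{E}_*[\mathds{1}_{I_m}(Y_i)\,V_M^{(i)}]$, where $V_M^{(i)}$ is a conditional-variance term bounded by $1$ and tending to $0$, and then bound the per-bin conditional expectation of $V_M^{(i)}$ uniformly across bins, using \textbf{Assumption} \ref{densityratio} to pass between $\lVert\cdot\rVert_{L^2(f_r^*)}$ and the stationary marginal and to provide domination.

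Next I would record the uniform coercivity needed to pass from score maps to projections. The deconvolution argument already invoked in this section shows $\lVert h\rVert_{\mathcal{H}}\lesssim\lVert H(h)\rVert_{L^2}$, i.e. $\lVert H(h)\rVert\ge c\lVert h\rVert$ for some $c>0$; combined with $\delta_M\to0$ this yields $\lVert H_M(h)\rVert\ge\lVert H(h)\rVert-\delta_M\lVert h\rVert\ge (c-\delta_M)\lVert h\rVert$, so for $M$ large $\lVert h\rVert_{\mathcal{H}}\lesssim\lVert H_M(h)\rVert$ uniformly in $M$. Since a projection is norm-reducing, $\lVert\mathcal{A}_MS\rVert,\lVert\tilde{\mathcal{A}}_MS\rVert\le\lVert S\rVert$, and hence the optimal indices $h_M^*$ (with $\mathcal{A}_MS=H_M(h_M^*)$) and $\tilde h_M^*$ (with $\tilde{\mathcal{A}}_MS=H(\tilde h_M^*)$) are bounded in $\mathcal{H}$-norm uniformly in $M$.

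Finally I would conclude by a perturbation argument. Writing $u=\mathcal{A}_MS=H_M(h_M^*)$ and $v=H(h_M^*)\in\tilde{\mathcal{P}}_M$, boundedness of $h_M^*$ gives $\lVert u-v\rVert\le\delta_M\lVert h_M^*\rVert\to0$. Because $\tilde{\mathcal{A}}_MS$ is the orthogonal projection of $S$ onto $\tilde{\mathcal{P}}_M$, Pythagoras gives $\lVert v-\tilde{\mathcal{A}}_MS\rVert^2=\lVert v-S\rVert^2-\lVert S-\tilde{\mathcal{A}}_MS\rVert^2$. The symmetric construction $H_M(\tilde h_M^*)\in\mathcal{P}_M$ shows $\lVert S-\mathcal{A}_MS\rVert$ and $\lVert S-\tilde{\mathcal{A}}_MS\rVert$ agree up to $o(1)$, while $\lVert v-S\rVert\le\lVert u-S\rVert+o(1)$; since $\lVert u-S\rVert$ is bounded, the right-hand side is $o(1)$ and $\lVert v-\tilde{\mathcal{A}}_MS\rVert\to0$. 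Combining, $\lVert\mathcal{A}_MS-\tilde{\mathcal{A}}_MS\rVert_{L^2}\le\lVert u-v\rVert+\lVert v-\tilde{\mathcal{A}}_MS\rVert\to0$, as required. I expect the genuine obstacle to be the first step, specifically converting the $L^2$ (martingale) convergence of the smoother probabilities into control that is uniform over the refining histogram weights $h_r$: forgetting localises the problem to finitely many coordinates and \textbf{Assumption} \ref{densityratio} supplies the domination, but reconciling these uniformly in $M$ is the crux.
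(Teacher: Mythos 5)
Your proposal is correct and is essentially the paper's own argument in different packaging: your claim $\delta_M\to 0$ is Lemma \ref{lemma:HmH}, your uniform coercivity is precisely what the paper extracts from Lemma \ref{Deconvolution} via the normalisation--contradiction step (note that the deconvolution lemma controls indices for $H_M$, so coercivity of $H_M$ comes first and coercivity of $H$ on histogram directions then follows via $\delta_M$ — the reverse of the order you assert), and your final Pythagoras/distance-comparison step is equivalent to the paper's decomposition $\mathcal{A}_MS-\tilde{\mathcal{A}}_M\mathcal{A}_MS+\tilde{\mathcal{A}}_M\mathcal{A}_MS-\tilde{\mathcal{A}}_MS$ combined with the orthogonality $\langle S-\mathcal{A}_MS,\,H_M(\tilde h_M)\rangle=0$. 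The one point to tighten is that your coercivity claim must be read as restricted to $h\in\prod_{r}\mathcal{H}_{r,M}$, uniformly over large $M$ (which is all you actually use), since Lemma \ref{Deconvolution} concerns only step functions on the refining partitions and gives nothing for general $h\in\mathcal{H}$.
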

\begin{proof}

As will appear in the proof a key step  is Lemma \ref{Deconvolution} which says that for any sequence $h_M$ bounded in $L^2$, if $H_M(h_M) = o_{L^2}(1)$ then $h_M = o_{L^2}(1)$. Lemma \ref{lemma:HmH} also applies for $L^2$ bounded sequences, and implies that for any bounded sequence $h_M$, $(H_M-H)(h_M) = o_{L^2}(1)$.

\vspace{1ex}
To prove that $\mathcal{A}_MS - \tilde{\mathcal{A}}_MS = o_{L^2}(1)$, we decompose 
$$ \mathcal{A}_MS - \tilde{\mathcal{A}}_MS  = \mathcal{A}_MS -\tilde{\mathcal{A}}_M\mathcal{A}_MS + \tilde{\mathcal{A}}_M\mathcal{A}_MS  - \tilde{\mathcal{A}}_MS.$$
We first prove that 
$\tilde{\mathcal{A}}_M\mathcal{A}_MS - \mathcal{A}_MS = o_{L^2}(1)$
and then that $\tilde{\mathcal{A}}_M(\mathcal{A}_MS -S) = o_{L^2}(1)$. 
Define $h_M$ such that  $\mathcal{A}_MS=H_M(h_M)$, we have that $\|H_M(h_M)\|_2 \leq \|S\|_2$. Also, by Lemma \ref{lemma:HmH} and using the fact that orthogonal projections onto a space minimize the $L^2$ distance to that space (which includes $H(h_M)$)
$$\lVert \tilde{\mathcal{A}}_M\mathcal{A}_MS-\mathcal{A}_MS\rVert_2\leq \lVert (H-H_M)(h_M)\rVert.$$

\vspace{1ex}
Suppose now that, along a subsequence, $\lVert {h}_M \rVert_2\rightarrow\infty$. Then along this subsequence, $\frac{H_M({h}_M)}{\lVert {h}_M\rVert_2}=H_M(\frac{{h_M}}{\lVert {h}_M\rVert_2})\rightarrow 0$ in $L^2$ and we can apply Lemma \ref{Deconvolution} to conclude that $ \frac{{h_M}}{\lVert {h}_M\rVert_2} \rightarrow 0$ in $L^2$ along this subsequence, when in fact this subsequence has constant unit $L^2$ norm. We thus get a contradiction and conclude that $\sup_M\lVert {h}_M\rVert_2$ is bounded. Then by Lemma \ref{lemma:HmH}, we have that
$\mathcal{A}_M S = H_M(h_M) = H(h_M) + o_{L_2}(1)$. Thus $\lVert (H_M-H)(h_M)\rVert_2\rightarrow 0$ and $\lVert\tilde{\mathcal{A}}_M\mathcal{A}_MS-\mathcal{A}_MS\rVert_2 = o(1)$. 

\vspace{1ex}

We now control $\tilde{\mathcal{A}}_M(\mathcal{A}_MS -S)$. 
Considering $\tilde{\mathcal{A}}_M(S-\mathcal{A}_MS) = H(\tilde h_M)$ for some $\tilde h_M = (\tilde h_{r,M}, r\in [R])$ and $\tilde h_{r,M} \in L_0^2(f_r^*)$ in model $M$, we have
\begin{align*}
    \lVert\tilde{\mathcal{A}}_MS-\tilde{\mathcal{A}}_M\mathcal{A}_MS\rVert_{L^2}^2 &=\langle S-\mathcal{A}_MS,\tilde{\mathcal{A}}_M(S-\mathcal{A}_MS)\rangle\\
    & = \langle S-\mathcal{A}_MS, H(\tilde{h}_M)\rangle = \underbrace{\langle S-\mathcal{A}_MS, H_M(\tilde{h}_M)\rangle}_{=0 \textrm{   as }(S-\mathcal{A}_MS)\perp H_M(\tilde{h}_M)} +R_M
\end{align*}
where
$$\lvert R_M\rvert \leq \lVert S\rVert_2 \lVert (H-H_M)(\tilde{h}_M)\rVert_2 \rightarrow 0$$
with the convergence following as before using Lemma \ref{Deconvolution} and the boundedness of $\lVert H(\tilde{h}_M)\rVert_{L_2}$ by $\lVert S\rVert_{L_2}$. Thus
$\tilde{\mathcal{A}}_M(\mathcal{A}_MS -S) =  o_{L_2} (1)$
and Lemma \ref{projectionconv} is proved. 
\end{proof}


\begin{lemma}\label{Deconvolution}
Fix $r\in\{1,\dots, R\}$. Let $h_M = (h_{r,M}, r\in [R])$ be a sequence of step functions on the partitions of model $M$ with $\lVert h_M\rVert = \sum_{r}\lVert h_{r,M} \rVert _{L^2}\leq 1$ and $h_{r,M} \in \mathcal{H}_{r,M}$. Then
$$H_M(h_M)\stackrel{L^2(\mathbb{P}_*)}{\rightarrow}0 \implies h_M\stackrel{L^2(\mathbb{P}_*)}{\rightarrow}0.$$
\end{lemma}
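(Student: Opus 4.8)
The plan is to establish the implication in its quantitative (coercive) form: I aim to show that the squared score norm $\|H_M(h_M)\|_{L^2}^2$ is bounded below by a fixed multiple of $\|h_M\|_{L^2}^2$, uniformly in $M$. Working one coordinate $r$ at a time, I decompose $H_{r,M}(h_r) = A_{r,M} + B_{r,M}$, where $A_{r,M} = W^{(M)}_{r,k}\,h_{r,M}(Y_k)$ is the ``present'' term with filtering weight $W^{(M)}_{r,k} = \mathbb{P}_*(X_k = r\mid Y^{(M)}_{-\infty:k})$, and $B_{r,M} = \sum_{i<k}\bigl(\mathbb{P}_*(X_i=r\mid\mathcal{G}_M^k)-\mathbb{P}_*(X_i=r\mid\mathcal{G}_M^{k-1})\bigr)h_{r,M}(Y_i)$ gathers the ``past'' smoothing corrections. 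Since $h_{r,M}$ is a step function on $\mathcal I_M$, both $A_{r,M}$ and $H_{r,M}(h_r)$ are $\mathcal G_M^k$-measurable, and $\mathbb{E}_*[\mathds{1}\{X_k=r\}\mid\mathcal G_M^k]=W^{(M)}_{r,k}$, so that $\mathbb{E}_*[\mathds{1}\{X_k=r\}h_{r,M}(Y_k)\mid\mathcal G_M^k]=A_{r,M}$. This yields the workhorse identity $\langle H_{r,M}(h_r),\,\mathds{1}\{X_k=r\}h_{r,M}(Y_k)\rangle = \|A_{r,M}\|_{L^2}^2 + \langle B_{r,M},A_{r,M}\rangle$.

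The first step is a uniform lower bound on the present term. Under \textbf{Assumption}~\ref{ass:Qbound} the one-step predictive probabilities are bounded below by $\min_{ij}Q^*_{ij}>0$, and under \textbf{Assumption}~\ref{densityratio} the coarsened likelihood ratios $\omega^*_{mr}/\omega^*_{ms}=\int_{I_m}f^*_r/\int_{I_m}f^*_s$ inherit the global bounds on $f^*_r/f^*_s$; a Bayes-rule computation then produces a constant $c_0>0$, independent of $M$, with $W^{(M)}_{r,k}\geq c_0$ almost surely. Hence $\|A_{r,M}\|_{L^2}^2 = \mathbb{E}_*[(W^{(M)}_{r,k})^2 h_{r,M}(Y_k)^2]\geq c_0^2\|h_{r,M}\|_{L^2}^2$, so the present block of the emission information is uniformly coercive.

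The heart of the matter, and the step I expect to be the main obstacle, is the past block $B_{r,M}$, which does not vanish and interacts with $A_{r,M}$. I would organise this as a Fredholm-type argument: viewing $h_{r,M}\mapsto\mathbb{E}_*[H_{r,M}(h_r)^2]$ as a quadratic form $\langle h_{r,M},\Gamma_{r,M}h_{r,M}\rangle$, I split $\Gamma_{r,M}=\Gamma^0_{r,M}+\Gamma^1_{r,M}$ into the coercive present part (multiplication by $(W^{(M)}_{r,k})^2\geq c_0^2$) and a remainder $\Gamma^1_{r,M}$ collecting all present--past and past--past correlations. The decisive claim is that $\Gamma^1_{r,M}$ is Hilbert--Schmidt with a bound uniform in $M$: its kernel is a doubly-indexed sum of correlations weighted by the smoothing increments $\mathbb{P}_*(X_i=r\mid\mathcal G_M^k)-\mathbb{P}_*(X_i=r\mid\mathcal G_M^{k-1})$, which decay geometrically in $k-i$ by the exponential forgetting of the filter (Section~\ref{sup:sec:forgetfulness}), uniformly over the refining partitions; summing the geometric series controls the Hilbert--Schmidt norm. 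Establishing this summable kernel bound uniformly in $M$ is where I anticipate the most work.

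With coercivity of $\Gamma^0_{r,M}$ and compactness of $\Gamma^1_{r,M}$ in hand, the implication follows by a compactness argument. Suppose toward a contradiction that $H_M(h_M)\to0$ in $L^2$ while $\|h_M\|\not\to0$; along a subsequence $\|h_M\|\to c>0$ and, using the norm bound, $h_{r,M}\rightharpoonup h_r$ weakly in $L^2(f_r^*)$ for each $r$. Cauchy--Schwarz in the workhorse identity gives $\|A_{r,M}\|^2 = -\langle B_{r,M},A_{r,M}\rangle + o(1)$, whose right-hand side is a present--past correlation that converges under the weak convergence by compactness of $\Gamma^1_{r,M}$, while coercivity pins $\|A_{r,M}\|^2\geq c_0^2\|h_{r,M}\|^2$. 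A Radon--Riesz-type argument, exploiting that the present block is multiplication by $(W^{(M)}_{r,k})^2\geq c_0^2$, upgrades the weak convergence to strong convergence $h_{r,M}\to h_r$; passing to the limit via Lemma~\ref{lemma:HmH} then yields $H(h)=0$ in the full semiparametric model. Injectivity of the emission information there, a consequence of the linear independence in \textbf{Assumption}~\ref{identifiability} and the ratio bounds of \textbf{Assumption}~\ref{densityratio}, forces $h=0$; but strong convergence then gives $\|h_M\|\to0$, contradicting $\|h_M\|\to c>0$. This proves the lemma.
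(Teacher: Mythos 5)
Your proposal contains two genuine gaps, and they sit exactly at the points that make the lemma hard. The first is structural: the ``coercive present block plus compact remainder'' decomposition does not hold for the vector-valued problem. Your bound $\lVert A_{r,M}\rVert_{L^2}^2\geq c_0^2\lVert h_{r,M}\rVert_{L^2}^2$ is correct \emph{per coordinate} $r$ (the filter weights are indeed uniformly bounded below under Assumptions \ref{ass:Qbound} and \ref{densityratio}), but the hypothesis only controls the sum $H_M(h_M)=\sum_{r'}H_{r',M}(h_{r',M})$, so testing against $\mathds{1}\{X_k=r\}h_{r,M}(Y_k)$ produces $\sum_{r'}\langle A_{r',M}+B_{r',M},A_{r,M}\rangle\to0$, not your single-coordinate workhorse identity: all cross terms in $r'\neq r$ enter. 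The present--present cross terms $\langle A_{r',M},A_{r,M}\rangle$ are multiplication-type forms, hence not Hilbert--Schmidt, so they cannot be absorbed into $\Gamma^1_{r,M}$; and they genuinely cancel the diagonal: already for $R=2$, taking $h_{1,M}=-h_{2,M}=g$ gives a present contribution $\mathbb{E}_*[(2W^{(M)}_{1,k}-1)^2g(Y_k)^2]$, which per-coordinate coercivity does not bound below (the filter can concentrate near $1/2$). In other words, $\lVert\sum_r A_{r,M}\rVert^2$ is not bounded below by $c_0^2\sum_r\lVert h_{r,M}\rVert^2$, and the Fredholm scheme collapses at its first step. A further (smaller) issue is that even for the geometrically decaying past terms, compactness of each $\Gamma^1_{r,M}$ individually does not allow passage to the limit along a weakly convergent subsequence of $h_M$, since the operators vary with $M$; one would need operator-norm convergence or collective compactness, which you do not establish.

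The second gap is circularity. Your endgame invokes ``injectivity of the emission information'' in the full model, i.e.\ $H(h)=0\implies h=0$, as a consequence of Assumptions \ref{identifiability} and \ref{densityratio}. That implication is not a routine consequence of linear independence: in the iid mixture analogue (where the filter weight is a function of $Y_k$ alone) the corresponding map has a large kernel, e.g.\ $h_1\propto p_2f_2^*\,g$, $h_2\propto -p_1f_1^*\,g$; injectivity here relies on the three-observation dependence structure and $\det Q^*\neq0$, and proving it is precisely the content of the lemma (in its $M=\infty$ form). The paper's proof is devoted to exactly this: it tests $H_M(h_M)\to0$ against finite windows (Lemma \ref{deconvolvehelp}) to obtain $\sum_r\sum_{k'}h_{r,M}(y_{k'})\mathbb{P}_*(X_{k'}=r|Y_{-k:0})=o_{L^2}(1)$, converts the $k=1$ and $k=2$ statements into deterministic $L^2(\dd y)$ equations using boundedness of $g^*$, integrates against a rank-$R$ partition to reduce to finite-dimensional matrix identities, extracts a convergent subsequence $B_M\to B_0$, and then uses linear independence of $\mathbf f^*$ together with invertibility of $Q^*$ and $D_p$ in a chain of matrix manipulations to force $B_0=0$, hence $h_M\to0$. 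Your argument assumes the output of that chain as a black box, so even if the operator-theoretic frame were repaired, the decisive step would remain unproven. To fix the proposal you would have to supply an independent proof of the injectivity of $h\mapsto H(h)$, at which point you would in effect be reproducing the paper's deconvolution argument.
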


\begin{proof}
Since $H_M(h_M)\rightarrow 0$,  by Lemma \ref{deconvolvehelp}, we have for all $k \geq 0$, 
$$D_k(h_M):=\sum_{r=1}^R\sum_{k'= -k}^0h_{r,M}(y_{k'})\mathbb{P}_*(X_{k'}=r|Y_{-k:0}) = o_{L_2}(1).$$
For shortness sake we write $g^* = g_{Q^*,\mathbf F^*}^{(2)}$ and we will abuse notation and write $y_j:=y_{-j}$ for $j\in\mathbb{N}$, so $y_1$ is $y_{-1}$ etc. Expanding $D_1(h_M)$, we thus have that
\begin{align*}
    &\sum_{r}h_{r,M}(y_0)f_r^*(y_0)\sum_s\dfrac{p_s^*Q_{sr}^*f_s^*(y_1)}{g^*(y_0,y_1)}+\sum_r\dfrac{h_{r,M}(y_1)f_r^*(y_1)}{g^*(y_0,y_1)}p_r^*\sum_sf_s^*(y_0)Q_{rs}^* \stackrel{L^2(\mathbb P_*)}{\longrightarrow} 0.
\end{align*}

\vspace{1ex}
Since for any function $H$, 
$$ \mathbb E_*\left[ \frac{ H^2(Y_{0}, Y_{1})}{g^*(Y_{0},Y_1)^2}\right] = \int_{[0,1]^2}\frac{ H^2(y_{0}, y_{1})}{g^*(y_{0},y_1)} dy_1dy_0 \geq \frac{ \|H\|_2^2}{ \|g^*\|_\infty} $$
 and since $\|g^*\|_\infty\leq \max_r \|f_r^*\|_\infty<\infty$ by continuity, we obtain
\begin{equation}\label{condexpconv}
    \sum_{r}h_{r,M}(y_0)f_r^*(y_0)\sum_s p_s^*Q_{sr}^*f_s^*(y_1)+\sum_r h_{r,M}(y_1)f_r^*(y_1) p_r^*\sum_sf_s^*(y_0)Q_{rs}^* \stackrel{L^2(\dd y^2)}{\rightarrow} 0 .
\end{equation}
Note further that the family of functions of $y_1$ given by
$y_1\mapsto (\sum_s p_s^*Q_{sr}^*f_s^*(y_1))_{r=1}^k$
is linearly independent.
Consider now a partition $\bar{I}_1,\bar{I}_2,\dots,\bar{I}_R$ of $[0,1]$ such that
$$F_{rj}= \int_{\bar I_j}f_r^*(y_1) \dd y_1 ,\quad r,j\in\{1,\dots R\},$$
defines a matrix $F=(F_{rj}^*)$ of rank $R$. Such a partition always exists by Lemma \ref{lemma:partition}. Define also
$$A_{rj}=\int_{I_j}\sum_s p_s^*Q_{sr}^*f_s^*(y_1) \dd y_1,\quad r,j\in\{1,\dots R\},$$
with $A=(A_{rj})$, define $D_p=\text{diag}(p_1^*,\dots,p_R^*)$ (which is invertible under \textbf{Assumption} \ref{ass:Qbound}) and recall $Q^*=(Q_{sr}^*)$. Then we can write $A^T=F^TD_pQ^*$ and note that $A$ also has rank $R$. Now, integrating the $y_1$ coordinate out of Equation \eqref{condexpconv}, we get for all $j=1,\dots k$
\begin{align*}
    \sum_r \underbrace{h_{r,M}(y_0)f_r^*(y_0)}_{\tilde{h}_{r,M}}A_{rj} &= -\sum_r \underbrace{\int_{\bar I_j}h_{r,M}(y_1)f_{r}^*(y_1) p_r^*\dd y_1}_{B_M(r,j)} \sum_{s} f_s^*(y_0)Q_{rs}^* \\ &\hspace{5ex}+ o_{L^2(\dd y_0)}(1) . \nonumber
\end{align*}
Let $\Tilde{h}_M=(\tilde{h}_{r,M})_{r=1}^R$, $h_M=(h_{r,M})_{r=1}^R$ and $B_M=(B_M(r,j))_{rj}$. Then the preceding display may be rewritten as
\begin{equation}\label{tildeh}
    \Tilde{h}_M(y_0)=\underbrace{-(A^T)^{-1}B_M^TQ^*}_{B_{0,M}} \mathbf f^*(y_0) + o_{L^2(\dd y_0)}(1).
\end{equation}


\vspace{1ex}
Now from the definition, $(B_M)_{M}$ is a bounded sequence in a finite dimensional space and so converges to some $B$ along a subsequence. Working on this subsequence, we then get the limit 
$$\Tilde{h}_M(y_0)\stackrel{L^2(\dd y_0)}{\rightarrow} \underbrace{-(A^T)^{-1}B^TQ^*}_{B_0}\mathbf f^*(y_0) := \Tilde{h}(y_0). $$

\vspace{1ex}
Consider now $D_2(h_M)$, wish again vanishes by assumption and Lemma \ref{deconvolvehelp}, and take the limit in $M$ along the subsequence where we just established convergence. As before, abuse notation by replacing $Y_{-k}$ by $Y_{k}$ for $k\in\mathbb{N}$. Expanding and multiplying through by joint marginals we get at the limit

\begin{align}\label{3obs}
    &\sum_{r} (B_0\mathbf f^*(y_0))_r \sum_{s_1,s_2}p_{s_2}^*Q_{s_2s_1}^*Q_{s_1r}^*f_{s_2}^*(y_2)f_{s_1}^*(y_1) \nonumber
    \\& +\sum_{r} (B_0\mathbf f^*(y_1))_r \sum_{s_1,s_2}p_{s_2}^*Q_{s_2 r}^*Q_{r s_1}^*f_{s_1}^*(y_0)f_{s_2}^*(y_2)\nonumber\\ 
    &\quad+\sum_{r} (B_0\mathbf f^*(y_2))_r \sum_{s_1,s_2}p_r^*Q_{r s_1}^*Q_{s_1s_2}^*f_{s_1}^*(y_1)f_{s_2}^*(y_0) =0.
\end{align}
 Rewriting \eqref{3obs} into matrix products we obtain

\begin{align*}
&\sum_{s_1}[\mathbf f^*(y_2)^T D_p Q^*]_{s_1} f_{s_1}^*(y_1)(Q^*B_0\mathbf f^*(y_0))_{s_1} \\ & +\sum_{s_1}[B_0 \mathbf f^*(y_1)]_{s_1}(Q^*\mathbf f^*(y_0))_{s_1}(\mathbf f^*(y_2)^TD_pQ^*)_{s_1} \\
&\quad +\sum_{s_1}[\mathbf f^*(y_2)^T B_0^TD_p Q^*]_{s_1} f_{s_1}^*(y_1)(Q^*\mathbf f^*(y_0))_{s_1} =0 .
\end{align*}
By linear independence of the the set of functions $\{f_{s}^*(y_1)\}_{s=1,\dots,R}$ we have that for all $s_1$ the coefficients vanish, so that for all $s_1$ we get
\begin{align*}
&[\mathbf f(y_2)^T D_p Q^*]_{s_1} (Q^*B_0\mathbf f(y_0))_{s_1}
+\sum_{r'}B_{0r's_1}(Q^*\mathbf f^*(y_0))_{r'}(\mathbf f^*(y_2)^TD_pQ^*)_{r'}  \\
 & \qquad +[\mathbf f^*(y_2)^T B_0^T D_pQ^*]_{s_1} (Q^*\mathbf f^*(y_0))_{s_1} =0.
\end{align*}
We can now repeat the argument but instead viewing the expression as a linear combination of the linearly independent set of functions $\{f_{s}^*(y_2)\}_{s=1,\dots,R}$. Expanding what precedes for each $s_1$, we get
\begin{align*}
&\sum_rf_r^*(y_2) p_r^* Q_{rs_1}^* (Q^*B_0\mathbf f^*(y_0))_{s_1}+\sum_{r,r'} B_{0r's_1}(Q^*\mathbf f^*(y_0))_{r'}(f_r^*(y_2))p_r^*Q_{rr'}^* \\
 & \qquad + \sum_r  f_r^*(y_2) (B_0^T D_pQ^*)_{r s_1} (Q^*\mathbf f^*(y_0))_{s_1} =0, 
\end{align*}
so that for each $r,s_1$ we get that
\begin{align*}
& p_r^* Q_{rs_1}^* (Q^*B_0\mathbf f^*(y_0))_{s_1}
+\sum_{r'}B_{0r's_1}(Q^*\mathbf f^*(y_0))_{r'}p_r^*Q_{rr'}^* +  (B_0^T D_pQ^*)_{r s_1} (Q^*\mathbf f^*(y_0))_{s_1} =0.
\end{align*}
Expanding in terms of the linearly independent functions $\{(Q^*\mathbf f^*)_r(y_0)\}_{r=1,\dots,R}$ we get

\begin{align*}
    & \sum_{r'} p_r Q_{rs_1}^* (Q^*B_0Q^{*-1})_{s_1r'}(Q^*\mathbf f^*)_{r'}(y_0)
+\sum_{r'}B_{0r's_1}(Q^*\mathbf f^*(y_0))_{r'}p_r^*Q_{rr'}^*\\
&\hspace{5ex}+  \sum_{r'}\mathds{1}_{s_1=r'}(B_0^TD_p Q^*)_{r r'} (Q^*\mathbf f^*(y_0))_{r'} =0,
\end{align*}
which then gives for each $r,r',s_1$ that

$$p_r^* Q_{rs_1}^* (Q^*B_0Q^{*-1})_{s_1r'}+B_{0r's_1}p_r^*Q_{rr'}^* + \mathds{1}_{s_1=r'}(B_0^TD_p Q^*)_{r r'} = 0. $$

In particular, setting $s_1=r'$ we get that for all $r,r'$,
$$p_r^*Q_{rr'}^*(Q^*B_0Q^{*-1})_{r'r'}+B_{0r'r'}p_r^*Q_{rr'}^* + (B_0^TD_p Q^*)_{r r'} =0.$$
By \textbf{Assumption} \ref{ass:Qbound}, $Q_{ij}^*>0 \hspace{1ex}\forall i,j$ and so we can divide through to get $$\frac{(B_0^TD_p Q^*)_{r r'}}{Q_{rr'}^*} =p_r^*a(r') , $$
for some $a$ depending on $r'$ alone. Write $D_a$ for the diagonal matrix whose diagonals are the $a(r')$. Then
$$(B_0^TD_p Q^*)_{r r'} = p_r^*Q_{rr'}^*a(r') =(D_pQ^*D_a)_{rr'}.$$


Equivalently we can write $B_0^T=(D_pQ^*)D_a(D_pQ^*)^{-1}$. Recalling the earlier definition of $B_0$, we had
$$B_0=-(A^T)^{-1}B^TQ^*,$$
with each column of $B$ expressible as the integral over $\bar I_j$, with respect to $y_1$, of $D_p\Tilde{h}(y_1)=D_pB_{0}\mathbf f^*(y_1)$, and thus expressible as
$ B = D_pB_0F.$ We can then substitute into the previous display to see that
$${B_0=-(A^T)^{-1}F^TB_0^TD_pQ^*},$$
but the expression of $A$ as $A^T=F^TD_pQ^*$ allows us to write
$$B_0=-Q^{*-1}D_p^{-1}(F^T)^{-1}F^TB_0^TD_pQ^*=-Q^{*-1}D_p^{-1}B_0^TD_pQ^*=-D_a.$$
This means that $B_0^T=-D_a=(D_pQ^*)D_a(D_pQ^*)^{-1}$. So
$Q^{-1}D_aQ^*=-D_a$ which gives
$$a(r)Q_{rs}^*= -Q_{rs}^*a(s).$$
Taking $r=s$ and dividing through by $Q_{rr}^*$ gives $a_r=0$ for all $r$, hence $B_0=0$ and $\Tilde{h}=0$. This means $\tilde{h}_m(y_0)\rightarrow 0$ in $L^2(\dd y_0)$ and so $h_{r,M}\rightarrow 0$ in $L^2(f_r(y_0)\dd y_0 )$ and so in $L^2(\mathbb{P}_*)$ under \textbf{Assumption} \ref{densityratio}, which is what we wanted to show.
\end{proof}


\begin{lemma}\label{knownnuisanceconv}
For $S_{Q^*}^{(M)},S_{Q^*} $ as in \eqref{ScoreQM} and \eqref{ScoreQ}, for any choice of $j$, we have that
$$S_{Q^*}^{(M)}(Y_{-\infty:j}^{(M)})\rightarrow S_{Q^*}(Y_{-\infty:j}) $$
in $L^2(\mathbb{P}_*)$.
\end{lemma}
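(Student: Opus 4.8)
The plan is to exploit three facts: (i) the coarsened observations $Y^{(M)}$ are deterministic functions of the full observations $Y$; (ii) under \textbf{Assumption} \ref{binsrefine} the partitions are embedded with vanishing radius, so the associated filtrations increase to the full one; and (iii) the smoother forgets its past geometrically under \textbf{Assumption} \ref{ass:Qbound}. Writing each summand of \eqref{ScoreQM} and \eqref{ScoreQ} as a conditional expectation of a bounded indicator, I would reduce the claim to the convergence of conditional expectations $\mathbb{E}_*[\mathds{1}\{X_{i-1}=r,X_i=s\}\mid\mathcal{G}_M^l]\to\mathbb{E}_*[\mathds{1}\{X_{i-1}=r,X_i=s\}\mid\mathcal{G}^l]$, for $l\in\{k-1,k\}$ and each $i\leq k$ (here $k$ is the fixed time index, denoted $j$ in the statement).

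The first step concerns the $\sigma$-algebras. Because $\mathcal{I}_{M+1}$ refines $\mathcal{I}_M$, the indicators $\mathds{1}_{I_m^{(M)}}(Y_i)$ defining $Y_i^{(M)}$ generate an increasing sequence of $\sigma$-algebras, so $\mathcal{G}_M^l$ is increasing in $M$; and the vanishing radius in \textbf{Assumption} \ref{binsrefine} forces the nested partitions to separate points, whence $\bigvee_M\mathcal{G}_M^l=\mathcal{G}^l$ up to $\mathbb{P}_*$-null sets. By Lévy's upward martingale convergence theorem, for each fixed $i$ every bounded random variable satisfies $\mathbb{E}_*[\cdot\mid\mathcal{G}_M^l]\to\mathbb{E}_*[\cdot\mid\mathcal{G}^l]$ in $L^2(\mathbb{P}_*)$. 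This delivers termwise $L^2$-convergence of every summand of $S^{(M)}_{Q^*}$ to the matching summand of $S_{Q^*}$.

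Termwise convergence is insufficient because \eqref{ScoreQM} is an infinite sum over $i$, so the second step is to split it at a cut-off $-N$. The finite block $i\geq -N$ is handled by the termwise $L^2$-convergence above (a finite sum of convergent terms), while the tail $i<-N$ is controlled by exponential forgetting. The crucial point is that the forgetting rate depends only on $\min_{ij}Q^*_{ij}>0$ and is therefore \emph{uniform in} $M$: the coarsened process $(X_t,Y_t^{(M)})$ is itself a stationary HMM with the same transition matrix $Q^*$, only the emissions changing, so the smoother-forgetting estimates of Section \ref{sup:sec:forgetfulness} (see also \cite{Douc2004}) apply with constants independent of $M$. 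Concretely, the increments $\mathbb{P}_*(X_{i-1}=r,X_i=s\mid Y_{-\infty:l})-\mathbb{P}_*(X_{i-1}=r,X_i=s\mid Y_{-\infty:l-1})$ and their $Y^{(M)}$ analogues are bounded in $L^2(\mathbb{P}_*)$ by $C\rho^{\,l-i}$ with $C>0$, $\rho\in(0,1)$ depending only on $Q^*$ and $R$, so each tail $\bigl\lVert\sum_{i<-N}\cdots\bigr\rVert_{L^2}$ is at most $C'\rho^{\,k+N}$ uniformly in $M$.

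Combining, for any $\varepsilon>0$ I would first choose $N$ large so that both tail contributions (for $S^{(M)}_{Q^*}$ and for $S_{Q^*}$) fall below $\varepsilon/3$ for every $M$, then take $M$ large so that the finite block differs from its limit by at most $\varepsilon/3$, yielding $\lVert S^{(M)}_{Q^*}-S_{Q^*}\rVert_{L^2}\to 0$. The main obstacle is precisely the uniform-in-$M$ tail control: verifying that the per-term geometric decay holds with constants not depending on $M$, which is what legitimizes interchanging $\lim_M$ with the infinite summation. Once this uniform summability is secured, the rest is the routine Lévy argument applied to finite initial blocks.
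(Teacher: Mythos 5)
Your proposal is correct and follows essentially the same route as the paper's proof: uniform-in-$M$ truncation of the infinite sum via the exponential forgetting bounds of Lemma \ref{lemma:scoretails} (whose constants depend only on $Q^*$, hence not on $M$, precisely because the coarsened process is an HMM with the same transition matrix), combined with termwise $L^2$ convergence obtained by showing the $\mathcal{G}_M^l$ form a filtration increasing to $\mathcal{G}^l$ and applying the $L^2$ martingale convergence theorem. The only difference is cosmetic ordering: the paper truncates first and then runs the martingale argument, whereas you establish termwise convergence first and then truncate.
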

\begin{proof}
Without loss of generality we take $j=0$. We first show that we can, uniformly in $M$, truncate the series in \eqref{ScoreQM} to an arbitrary degree of accuracy. we prove that uniformly in $M$, 
\begin{equation}\label{eq:probconv:trunc}
    \sum_{j \leq -J} \mathbb{P}_*({X_{j-1}=a},{X_j=b}|\mathcal{G}^{0}_M) - \mathbb{P}_*({X_{j-1}=a},{X_j=b}|\mathcal{G}^{-1}_M)=o_{L^2(\mathbb P_*)}(1),
\end{equation}
as $J$ goes to infinity. Using Lemma \ref{lemma:scoretails}, we have  that there exists $\rho<1$ and $C>0$  such that for all $j\leq -1$, for all $M$ 
\begin{align*}
\Bigg\lVert \mathbb{P}_*({X_{j-1}=a},{X_j=b}|\mathcal{G}^{0}_M)
    -\mathbb{P}_*({X_{j-1}=a},{X_j=b}|\mathcal{G}^{-1}_M)
    \Bigg\rVert_{L^2(\mathbb{P}_*)} \leq C\rho^{-j}.
\end{align*}
and similarly with $\mathcal{G}^0,\mathcal{G}^1$ in place of $\mathcal{G}^0_M,\mathcal{G}^1_M$. Therefore for all $\epsilon >0$ there exists $J(\epsilon)>0$ for which
\begin{align*} \sum_{j\leq -J(\epsilon)} &\Bigg\lVert \mathbb{P}_*({X_{j-1}=a},{X_j=b}|\mathcal{G}^{0}_M)
   -\mathbb{P}_*({X_{j-1}=a},{X_j=b}|\mathcal{G}^{-1}_M)
    \Bigg\rVert_{L^2(\mathbb{P}_*)}\\
    & \quad + \Bigg\lVert\mathbb{P}_*({X_{j-1}=a},{X_j=b}|\mathcal{G}^{0})- \mathbb{P}_*({X_{j-1}=a},{X_j=b}|\mathcal{G}^{-1})\Bigg\rVert_{L^2(\mathbb{P}_*)} < \epsilon .
\end{align*}

\vspace{1ex}
It now suffices to prove for all $j$ that 
\begin{equation}\label{eq:probconv}
    \mathbb{P}_*({X_{j-1}=a},{X_j=b}|\mathcal{G}^{l}_M)\stackrel{L^2(\mathbb{P}_*)}{\rightarrow}\mathbb{P}_*({X_{j-1}=a},{X_j=b}|\mathcal{G}^{l}).
\end{equation}
Firstly, we show that, for $l=-1,0$, the $\mathcal{G}_M^l$ form an increasing sequence of $\sigma$-algebras $\mathcal{G}_M^l\subset\mathcal{G}_{M^\prime}^l$ for $M<M^{\prime}$, and that the limiting $\sigma$-algebra $\mathcal{G}_{\infty}^l:=\sigma\left(\bigcup_{M\in\mathbb{N}}\mathcal{G}_M^l\right)$ is equal to $\mathcal{G}^l$.
We suppress the dependence on $l$ since the proofs are the same. For the sake of presentation we code  the coarsened observations $Y_i^{(M)}$  as projection onto the left endpoint of whichever interval of the partition $\mathcal{I}_M$ the $Y_i$ is contained: $Y_i^{(M)}=e_{j}$ if $Y_i\in I_j^{(M)}=(e_j,e_{j+1})$.

\vspace{1ex}
To see that the $\mathcal{G}_M$ form a filtration, we wish to show that every set in $\mathcal{G}_M$ is also in $\mathcal{G}_{M+1}$. Every set in these $\sigma-$algebras is generated by the preimages of the left endpoints of intervals under $Y_i^M$ for some $i$. Suppose the partition $\mathcal{I}_{M+1}$ is obtained from $\mathcal{I}_M$ by splitting one bin. The events $\{Y_i^M=e_j\}$ for $j=1,\dots, M$ and $e_j$ the endpoint of bin $j$ generate $\mathcal{G}_M$. For the split bin, if we split bin $M$ in $\mathcal{I}_M$ into bins $M$ and $M+1$ in $\mathcal{I}_{M+1}$, then we can express $\{Y_i^M=e_j\}=\{Y_i^{M+1}=e_j\}$ for $j<M$ and $\{Y_i^M=e_M\}=\{Y_i^{M+1}=e_M\}\cup \{Y_i^{M+1}=e_{M+1}\}$. Thus we can express these events as events in $\mathcal{G}_{M+1}$, and by induction the argument remains true for any nested sequence of intervals.

\vspace{1ex}
To see that each $\mathcal{G}_M\subset\mathcal{G}$ it suffices to note that $\{Y_i^M=e_j\}=\{Y_i\in[e_j,e_{j+1})\}\in\mathcal{G}$. This shows that $\mathcal{G}_{\infty}\subset\mathcal{G}$. To show that $\mathcal{G}\subset\mathcal{G}_\infty$, note that the Borel $\sigma-$algebra on $[0,1]$ is generated by the sets $(t,1]$. So it suffices to check that the sets $\{Y_i>t\}$ are in $\mathcal{G}_\infty$, as these sets generate $\mathcal{G}$. But for all $t\in (0, 1)$, there exists a sequence $e_{j(t,M)}\leq t$ converging to $t$ and such that $e_{j(t,M)+1}> t$ also converges to $t$ thus $\{Y_i^M>t\} = \{Y_i \geq e_{j(t,M)+1}\}$ increases to $\{Y_i>t\}$ and $\{Y_i>t\}=\lim_M\{Y_i^M>t\} \subset \mathcal{G}_\infty$. Therefore the conditional probabilities appearing in the histogram score expression (\ref{ScoreQM}) form  a bounded martingale in $L^2$ with $\mathcal{G}_\infty^l = \mathcal{G}^l$, $l=-1,0$. 
The $L^2$ martingale convergence theorem them implies that \eqref{eq:probconv} holds for any $j,l$, and Lemma \ref{knownnuisanceconv} is proved. 
\end{proof}

The proofs of the following results, which are used in the proofs of Lemmas \ref{projectionconv} and \ref{Deconvolution} are deferred to Section \ref{sup:techproofs} of the supplement.

\begin{lemma}\label{lemma:HmH}
If  $h_M=(h_{r,M})_{r\leq R}$ with $\sup_M\sum_r\lVert h_{r,M}\rVert_{L^{2}(f_r^*)}\leq L<\infty$, then
$$H_{M}(h_M) = H(h_M) +o_{L_2(\mathbb{P}_*)}(1).$$
\end{lemma}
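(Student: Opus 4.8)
The plan is to exploit that $H_M(h_M)$ and $H(h_M)$ are built from the \emph{same} direction $h_M=(h_{r,M})_r$ and differ only through the conditioning $\sigma$-algebras. Since each $h_{r,M}\in\mathcal{H}_{r,M}$ is constant on the bins of $\mathcal{I}_M$, its evaluation satisfies $h_{r,M}(Y_i)=h_{r,M}(Y_i^{(M)})$ and is therefore measurable with respect to both $\mathcal{G}_M^k$ and $\mathcal{G}^k$ for every $i\le k$. Comparing \eqref{ScorehM} with \eqref{Scoreh} term by term, the $r$-th component of $H_M(h_M)-H(h_M)$ is
\[
\bigl(\mathbb{P}_*(X_k{=}r\mid\mathcal{G}_M^k)-\mathbb{P}_*(X_k{=}r\mid\mathcal{G}^k)\bigr)h_{r,M}(Y_k)+\sum_{i=-\infty}^{k-1}\bigl(\Delta_{i,M}-\Delta_i\bigr)h_{r,M}(Y_i),
\]
where $\Delta_{i,M}=\mathbb{P}_*(X_i{=}r\mid\mathcal{G}_M^k)-\mathbb{P}_*(X_i{=}r\mid\mathcal{G}_M^{k-1})$ and $\Delta_i$ is its analogue for the full filtration. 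First I would record the norm equivalence $\lVert h_{r,M}\rVert_{L^2(\mathbb{P}_*)}\asymp\lVert h_{r,M}\rVert_{L^2(f_r^*)}\le L$, which holds under \textbf{Assumption} \ref{densityratio} (bounded density ratios), so that the weights are uniformly $L^2(\mathbb{P}_*)$-bounded.

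Next I would truncate the sum over $i$. By Lemma \ref{lemma:scoretails} the increments $\Delta_{i,M}$ and $\Delta_i$ have $L^2(\mathbb{P}_*)$-norm bounded by $C\rho^{\,k-i}$ with $\rho<1$, \emph{uniformly in} $M$, and they are bounded by $2$ almost surely. Combining the geometric decay with this almost sure bound and the uniform $L^2$-control of the weights, I can choose for any $\eta>0$ a truncation level $J=J(\eta)$ independent of $M$ such that the total $L^2(\mathbb{P}_*)$-contribution of the terms with $i<k-J$ is at most $\eta$. It then suffices to treat the $i=k$ term together with the finitely many terms $k-J\le i\le k-1$.

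For each such index the term is a product of a weight $h_{r,M}(Y_i)$ with a defect of the form $\mathbb{P}_*(X_i{=}r\mid\mathcal{G}_M^l)-\mathbb{P}_*(X_i{=}r\mid\mathcal{G}^l)$ (or a difference of two such defects at $l=k,k-1$). Conditioning on $\mathcal{G}_M^k$, using that $h_{r,M}(Y_i)^2$ is $\mathcal{G}_M^k$-measurable and the tower property, reduces the $i=k$ contribution to $\mathbb{E}_*\bigl[h_{r,M}(Y_k)^2\,\mathrm{Var}_*\!\bigl(\mathbb{P}_*(X_k{=}r\mid\mathcal{G}^k)\mid\mathcal{G}_M^k\bigr)\bigr]$, and the increment terms reduce to analogous weighted quantities. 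By Lemma \ref{knownnuisanceconv}, via $\mathcal{G}_M^l\uparrow\mathcal{G}^l$ and $L^2$-martingale convergence, each defect tends to $0$ in $L^2(\mathbb{P}_*)$ while staying bounded by $1$ almost surely, so the conditional variances vanish almost surely and in $L^1$. The main obstacle is precisely that I must pass from $L^2$-convergence of these defects to $L^2$-convergence of their products with the weights $h_{r,M}(Y_i)$, whose squares are only bounded in $L^1(\mathbb{P}_*)$ and need \emph{not} be uniformly integrable: a tall, thin step function could place its mass exactly where the defect is largest. I would resolve this by thresholding $h_{r,M}^2$ at a level $T$. On $\{h_{r,M}^2\le T\}$ the weight is bounded and the contribution vanishes as $M\to\infty$ directly from the $L^2$-convergence of the defects; on $\{h_{r,M}^2>T\}$ I would exploit \textbf{Assumption} \ref{densityratio} again, continuity and bounded ratios of the $f_r^*$ forcing the conditional law of $X_i$ given the exact observations to differ little from that given only the bins once the partition is fine, so that the defect is itself uniformly small on the (shrinking) support of the large values of $h_{r,M}$, while the bounded total mass $\mathbb{E}_*[h_{r,M}^2]\le CL^2$ keeps this contribution under control.

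Letting $M\to\infty$ with $J$ fixed and then $\eta\to0$, and summing the finitely many components over $r\in[R]$, yields $H_M(h_M)-H(h_M)=o_{L^2(\mathbb{P}_*)}(1)$, as claimed. I would be careful to carry out the finite part using only Lemma \ref{knownnuisanceconv}, Lemma \ref{lemma:scoretails} and \textbf{Assumption} \ref{densityratio}, and in particular \emph{not} to invoke Lemma \ref{Deconvolution}, since the present lemma is one of its prerequisites.
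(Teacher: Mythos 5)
Your overall architecture (bin-measurability of the $h_{r,M}$, norm equivalence $\lVert h_{r,M}\rVert_{L^2(\mathbb{P}_*)}\asymp\lVert h_{r,M}\rVert_{L^2(f_r^*)}$ under \textbf{Assumption} \ref{densityratio}, and the uniform-in-$M$ truncation of the sum over $i$ via Lemma \ref{lemma:scoretails}) matches the paper's proof, and you correctly isolate the real difficulty: the defects $D_{i,M}:=\mathbb{P}_*(X_i{=}r\mid\mathcal{G}_M^l)-\mathbb{P}_*(X_i{=}r\mid\mathcal{G}^l)$ converge to zero in $L^2(\mathbb{P}_*)$ by martingale convergence, but this cannot directly be multiplied against weights $h_{r,M}(Y_i)$ whose squares are only bounded in $L^1(\mathbb{P}_*)$. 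However, your resolution of that difficulty contains a genuine gap. On the bad set $\{h_{r,M}^2>T\}$ you assert that the defect is "uniformly small on the (shrinking) support of the large values of $h_{r,M}$", justified only by continuity and bounded ratios of the $f_r^*$. This does not follow from the bad set having small measure: $D_{i,M}$ is a functional of the entire trajectory $Y_{-\infty:l}$, not of $Y_i$ alone, so there is no reason it should be small precisely on those trajectories whose $i$-th coordinate falls in the bins where $h_{r,M}$ is large. What your argument actually requires is smallness of $D_{i,M}$ uniformly over trajectories, i.e.\ in $L^\infty(\mathbb{P}_*)$ --- and once that is available the thresholding and conditional-variance bookkeeping become superfluous, since H\"older gives $\lVert h_{r,M}(Y_i)D_{i,M}\rVert_{L^2}\leq\lVert D_{i,M}\rVert_{\infty}\lVert h_{r,M}\rVert_{L^2}$ directly.

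Establishing this $L^\infty$ convergence is exactly where the paper spends its effort, and it is not a one-line consequence of \textbf{Assumption} \ref{densityratio}, because the conditioning is on the infinite past. The paper argues in two steps: first, the exponential forgetting bound of Lemma \ref{lemma:forgetting} is invoked a second time (separately from the truncation of the sum over $i$) to replace $\mathbb{P}_*(X_i{=}r\mid\mathcal{G}_M^l)$ and $\mathbb{P}_*(X_i{=}r\mid\mathcal{G}^l)$ by $\mathbb{P}_*(X_i{=}r\mid Y^{(M)}_{-K:l})$ and $\mathbb{P}_*(X_i{=}r\mid Y_{-K:l})$ up to a uniform multiplicative error $1+O(\rho^{i+K})$; second, on the finite window the conditional probabilities are ratios of finite sums of products of emission densities, and uniform continuity of the $f_r^*$ on the compact set $[0,1]$ makes the histogram densities $f_{\omega_r^*}$ converge to $f_r^*$ in $L^\infty$, whence these ratios converge uniformly over the window. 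Your proof would be complete --- and would in fact simplify, dropping the thresholding entirely --- if you replaced the unsupported claim about the bad set with this forgetting-plus-finite-window uniform approximation argument.
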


\begin{lemma}\label{lemma:Ptildeconv}
Let $\tilde{\mathcal{A}}_M$ $\mathcal{\tilde{P}}_M$, $\mathcal{A}$, $\mathcal{P}$ be as in the statement of Lemma \ref{projectionconv}. Then, if $\tilde{\mathcal{A}}:=\lim_{M\rightarrow\infty}\tilde{\mathcal{A}}_M$, we have $\tilde{\mathcal{A}} = \mathcal{A}$.
\end{lemma}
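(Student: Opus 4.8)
The plan is to identify the two relevant closed subspaces of $L^2(\mathbb{P}_*)$. Writing $\tilde{\mathcal{P}}_\infty := \overline{\bigcup_M \tilde{\mathcal{P}}_M}$, I would first show that $\tilde{\mathcal{A}}$ is the orthogonal projection onto $\tilde{\mathcal{P}}_\infty$ and that $\tilde{\mathcal{P}}_\infty \subseteq \mathcal{P}$, and then establish the reverse inclusion $\mathcal{P} \subseteq \tilde{\mathcal{P}}_\infty$; together these give $\tilde{\mathcal{P}}_\infty = \mathcal{P}$ and hence $\tilde{\mathcal{A}} = \mathcal{A}$.

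For the first part I would record that, for any step function $h$ on $\mathcal{I}_M$, $\int h f_{r,M}^* = \int h f_r^*$, since $f_{r,M}^* = \sum_m \frac{F_r^*(I_m)}{|I_m|}\mathds{1}_{I_m}$ is the $\mathcal{I}_M$-measurable density with $\int_{I_m} f_{r,M}^* = F_r^*(I_m)$, i.e. the conditional expectation of $f_r^*$ given $\mathcal{I}_M$. Consequently $\mathcal{H}_{r,M}\subseteq\mathcal{H}_r$ (step functions are bounded, hence in $L^2(f_r^*)$, and the two mean-zero constraints coincide), so that $\tilde{\mathcal{P}}_M\subseteq\mathcal{P}$ for every $M$; moreover, since the partitions are nested (\textbf{Assumption} \ref{binsrefine}), $\mathcal{H}_{r,M}\subseteq\mathcal{H}_{r,M+1}$ and hence $\tilde{\mathcal{P}}_M\subseteq\tilde{\mathcal{P}}_{M+1}$. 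Each $\tilde{\mathcal{P}}_M$ is finite-dimensional, being the image under the linear map $H$ of the finite-dimensional space $\prod_r\mathcal{H}_{r,M}$, hence closed. For an increasing sequence of closed subspaces the orthogonal projections converge strongly to the projection onto the closure of their union — this is exactly the Cauchy/convergence statement already invoked in the proof of Proposition \ref{scoreconvergence} — so $\tilde{\mathcal{A}} = P_{\tilde{\mathcal{P}}_\infty}$. Taking closures in $\tilde{\mathcal{P}}_M\subseteq\mathcal{P}$ yields $\tilde{\mathcal{P}}_\infty\subseteq\mathcal{P}$.

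It then remains to prove $\mathcal{P}\subseteq\tilde{\mathcal{P}}_\infty$. Since $\tilde{\mathcal{P}}_\infty$ is closed and $\mathcal{P}$ is the closed linear span of the generators $H(h)$, this reduces to showing $H(h)\in\tilde{\mathcal{P}}_\infty$ for each $h=(h_r)_{r\leq R}$ with $h_r\in L^2_0(f_r^*)$. Given such an $h$, I would take $h_{r,M}:=\mathbb{E}_{f_r^*}[h_r\mid\mathcal{I}_M]$, the $L^2(f_r^*)$-projection of $h_r$ onto $\mathcal{I}_M$-step functions; this lies in $\mathcal{H}_{r,M}$ (it is a step function with $\int h_{r,M}f_r^* = \int h_r f_r^* = 0$), satisfies $\lVert h_{r,M}\rVert_{L^2(f_r^*)}\leq\lVert h_r\rVert_{L^2(f_r^*)}$, and converges to $h_r$ in $L^2(f_r^*)$. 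Indeed, by \textbf{Assumption} \ref{binsrefine} the nested partitions have vanishing radius, so $\sigma\left(\bigcup_M\mathcal{I}_M\right)$ is the Borel $\sigma$-algebra (by the same generating-set argument used in the proof of Lemma \ref{knownnuisanceconv}), and the $L^2$ martingale convergence theorem applies to $(h_{r,M})_M$. Writing $h_M=(h_{r,M})_r$, we have $H(h_M)\in\tilde{\mathcal{P}}_M$, and by continuity of the score map $H$ we obtain $H(h_M)\to H(h)$ in $L^2(\mathbb{P}_*)$, so $H(h)\in\tilde{\mathcal{P}}_\infty$. This gives the reverse inclusion and closes the argument.

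The main obstacle is the continuity (boundedness) of the linear operator $H:\bigoplus_r L^2_0(f_r^*)\to L^2(\mathbb{P}_*)$, which is what transfers convergence of the directions $h_{r,M}\to h_r$ in $L^2(f_r^*)$ to convergence of the scores $H(h_M)\to H(h)$ in $L^2(\mathbb{P}_*)$. The diagonal contribution $\mathbb{P}_*(X_k=r\mid Y_{-\infty:k})h_r(Y_k)$ is controlled by $\lVert h_r(Y_k)\rVert_{L^2(\mathbb{P}_*)}\lesssim\lVert h_r\rVert_{L^2(f_r^*)}$, using the bounded density ratios of \textbf{Assumption} \ref{densityratio}; the remaining terms $\sum_{i<k}\left(\mathbb{P}_*(X_i=r\mid Y_{-\infty:k})-\mathbb{P}_*(X_i=r\mid Y_{-\infty:k-1})\right)h_r(Y_i)$ are summable in $L^2$ because the increments decay geometrically in $k-i$ by the exponential forgetting of the latent chain (cf. Lemma \ref{lemma:scoretails} and Section \ref{sup:sec:forgetfulness}), yielding $\lVert H(h)\rVert_{L^2(\mathbb{P}_*)}\lesssim\sum_r\lVert h_r\rVert_{L^2(f_r^*)}$. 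One final routine check is that $h_{r,M}$ is well defined on every bin: under \textbf{Assumption} \ref{densityratio} the $f_r^*$ are almost-everywhere positive, so each $I_m$ has positive $F_r^*$-mass.
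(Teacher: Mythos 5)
Your proof is correct and follows essentially the same route as the paper's: both establish the two inclusions $\tilde{\mathcal{P}}_\infty\subseteq\mathcal{P}$ and $\mathcal{P}\subseteq\tilde{\mathcal{P}}_\infty$ (using nestedness of the $\tilde{\mathcal{P}}_M$ for convergence of the projections), with the reverse inclusion obtained by approximating each direction $h_r$ by step functions and transferring $L^2(f_r^*)$ convergence of directions to $L^2(\mathbb{P}_*)$ convergence of scores via the geometric decay in Lemma \ref{lemma:scoretails}. Your two refinements --- taking $h_{r,M}$ to be the conditional expectation of $h_r$ so that martingale convergence and the contraction property replace the paper's ad hoc approximating sequence with $\lVert h_M\rVert\leq 2\lVert h\rVert$, and packaging the transfer step as boundedness of the linear operator $H$ rather than the paper's explicit truncation argument --- rest on exactly the same underlying estimates, so they streamline rather than change the argument.
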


\begin{lemma}\label{deconvolvehelp}
Let $h_M$ be a sequence of step functions on the partitions of model $m$ with $\lVert h_M\rVert = \sum_{r}\lVert h_{r,M} \rVert _{L^{2}}\leq 1$, such that $h_{r,M}\in L^{2}_0(f^*_r)$. For any $k\geq 0$,
$$H_M(h_M)\stackrel{L^2(\mathbb{P}_*)}{\rightarrow}0 \implies \sum_{r=1}^R\sum_{k'=-k}^0h_M(y_{k'})\mathbb{P}_*(X_{k'}=r|Y_{-k:0}) \stackrel{L^2(\mathbb{P}_*)}{\rightarrow}0.$$
\end{lemma}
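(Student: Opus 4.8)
The plan is to exploit that, thanks to the mean-zero constraint $\int h_{r,M}f_r^*=0$, the nuisance score $H_M(h_M)$ is a \emph{martingale increment}, and then to recover the finite-window smoother by summing a growing block of such increments and conditioning on the window. First I would set $U_i:=\sum_{r=1}^R\mathds{1}\{X_i=r\}h_{r,M}(Y_i)$, so that $\mathbb{E}_*[U_i\mid X_i]=\sum_r\mathds{1}\{X_i=r\}\int h_{r,M}f_r^*=0$ and the target is $D_k(h_M)=\mathbb{E}_*[\sum_{i=-k}^0 U_i\mid Y_{-k:0}]$. Writing $\Delta_j$ for the time-$j$ version of $\sum_r H_{r,M}(h_{r,M})$ (that is, $H_M(h_M)$ along $Y^{(M)}_{-\infty:j}$), the mean-zero property gives $\mathbb{E}_*[U_j\mid\mathcal{G}_M^{j-1}]=0$ and hence $\mathbb{E}_*[\Delta_j\mid\mathcal{G}_M^{j-1}]=0$; thus $(\Delta_j)_{j\in\mathbb{Z}}$ is a stationary family of orthogonal $L^2$ martingale increments with common norm $\delta_M:=\lVert H_M(h_M)\rVert_{L^2}\to0$.

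Summing a block of length $k+G+1$ and collecting by index, the sum telescopes as $\sum_{j=-k-G}^0\Delta_j=\mathbb{E}_*[\sum_{i=-k-G}^0 U_i\mid\mathcal{G}_M^0]+\mathcal{T}$, where $\mathcal{T}:=\sum_{i\le-k-G-1}\big(\mathbb{E}_*[U_i\mid\mathcal{G}_M^0]-\mathbb{E}_*[U_i\mid\mathcal{G}_M^{-k-G-1}]\big)$ collects the far-past contributions and lies in $L^2$ by the forgetting bound of Lemma \ref{lemma:scoretails}. Orthogonality yields the crucial estimate $\lVert\sum_{j=-k-G}^0\Delta_j\rVert_{L^2}=\sqrt{k+G+1}\,\delta_M$. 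I would then condition on the coarse window $\mathcal{H}_M:=\sigma(Y^{(M)}_{-k:0})\subset\mathcal{G}_M^0$. The key point is that for $i\notin[-k,0]$ the emission $Y_i$ is unobserved, conditionally independent of the window given the latent path, and of conditional mean $\int h_{r,M}f_r^*=0$, so $\mathbb{E}_*[U_i\mid\mathcal{H}_M]=0$. Hence conditioning collapses the main term exactly onto the target: $\mathbb{E}_*[\mathbb{E}_*[\sum_{i=-k-G}^0 U_i\mid\mathcal{G}_M^0]\mid\mathcal{H}_M]=\sum_{i=-k}^0\mathbb{E}_*[U_i\mid\mathcal{H}_M]=D_k^{(M)}(h_M)$, the coarse-window version of $D_k(h_M)$, leaving $\lVert D_k^{(M)}(h_M)\rVert_{L^2}\le\sqrt{k+G+1}\,\delta_M+\lVert\mathbb{E}_*[\mathcal{T}\mid\mathcal{H}_M]\rVert_{L^2}$.

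The inflated gap $G$ between the tail $\{i\le-k-G-1\}$ and the window $[-k,0]$ is precisely what lets me control the residual: by forgetting the far-past score is nearly independent of $\mathcal{H}_M$, giving $\lVert\mathbb{E}_*[\mathcal{T}\mid\mathcal{H}_M]\rVert_{L^2}\lesssim\rho^{G}$ for some $\rho<1$. Choosing $G=G_M\to\infty$ slowly enough that $\sqrt{G_M}\,\delta_M\to0$ (for instance $G_M\asymp\log(1/\delta_M)$, so that $\rho^{G_M}\to0$ simultaneously) yields $D_k^{(M)}(h_M)=o_{L^2}(1)$. Finally the fine-window statement follows on refining the partition: since $\mathcal{G}_M^l\uparrow\mathcal{G}^l$ (Lemma \ref{knownnuisanceconv}) and $D_k^{(M)}(h_M)=\mathbb{E}_*[D_k(h_M)\mid\mathcal{H}_M]$ is the window-conditional projection of the fine smoother, the orthogonal remainder $\lVert D_k(h_M)-D_k^{(M)}(h_M)\rVert_{L^2}$ is the within-bin oscillation of the smoothing probabilities, which vanishes as the bins shrink.

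The main obstacle is the bound $\lVert\mathbb{E}_*[\mathcal{T}\mid\mathcal{H}_M]\rVert_{L^2}\lesssim\rho^{G}$: a naive term-by-term application of mixing fails because $\mathcal{T}$ still depends on the window and the window is adjacent to the gap region, so no single mixing gap is available. I would resolve this by splitting each summand of $\mathcal{T}$ into a window-induced part $\mathbb{E}_*[U_i\mid\mathcal{G}_M^0]-\mathbb{E}_*[U_i\mid\mathcal{G}_M^{-k-1}]$, which after summation is directly $O(\rho^{G})$ in $L^2$ by Lemma \ref{lemma:scoretails}, and a gap-induced part measurable with respect to observations up to time $-k-1$; for the latter I would condition on the interface state $X_{-k-G-1}$, use the Markov decoupling of past and future given this state, and exploit that its smoothing law given the far-away window $\mathcal{H}_M$ is within $O(\rho^{G})$ of the invariant law. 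Balancing the diffusive growth $\sqrt{k+G}$ of the orthogonal block sum against this geometric gap decay is the delicate quantitative step.
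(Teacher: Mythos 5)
Your skeleton, namely summing time-shifted copies of the score (your martingale increments $\Delta_j$), telescoping into a window term plus a far-past correction, conditioning on the window, and exploiting the conditional mean-zero property of the $h_{r,M}$ together with exponential forgetting, is essentially the paper's own argument. However, there are two genuine gaps. The first is your final coarse-to-fine step. The lemma's conclusion conditions on the \emph{fine} observations $Y_{-k:0}$, while your argument runs entirely in the coarse filtration and only yields control of the coarse-window quantity $D_k^{(M)}(h_M)=\sum_{r}\sum_{k'=-k}^0 h_{r,M}(Y_{k'})\mathbb{P}_*(X_{k'}=r|Y^{(M)}_{-k:0})$. Your claim that the orthogonal remainder $D_k(h_M)-D_k^{(M)}(h_M)$ vanishes "as the bins shrink" does not follow from $\mathcal{G}_M^l\uparrow\mathcal{G}^l$ and martingale convergence (Lemma \ref{knownnuisanceconv}): that theorem controls $\mathbb{P}_*(X_{k'}=r|Y^{(M)}_{-k:0})-\mathbb{P}_*(X_{k'}=r|Y_{-k:0})$ for a \emph{fixed} integrand, whereas here this difference is multiplied by $h_{r,M}$, which changes with $M$ at the same time as the $\sigma$-algebra. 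Since $\lVert h_{r,M}\rVert_{L^2}$ does not decay and $h_{r,M}$ may be large on small sets, you need the difference of conditional probabilities to be small uniformly in $\omega$, not merely in $L^2$. That uniform control is exactly the content of the paper's Lemma \ref{lemma:HmH} (truncation plus uniform continuity of the $f_r^*$ on $[0,1]$, Assumption \ref{densityratio}), and the paper sidesteps the whole issue by invoking that lemma \emph{at the outset}, replacing $H_M(h_M)$ by $H(h_M)$ so that the entire argument, including the conditioning, takes place in the fine filtration and directly produces the stated conclusion. You should do the same; as written, your last sentence quietly assumes the hardest auxiliary lemma.

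The second gap is that your resolution of what you call the main obstacle fails as stated. Your gap-induced part $\mathbb{E}_*[U_i|\mathcal{G}_M^{-k-1}]-\mathbb{E}_*[U_i|\mathcal{G}_M^{-k-G-1}]$ depends on the gap observations $Y^{(M)}_{-k-G:-k-1}$, which lie on the \emph{same side} of the interface state $X_{-k-G-1}$ as the window; so conditioning on $X_{-k-G-1}$ does not decouple it from $\mathcal{H}_M$, while the interface $X_{-k}$ (which does decouple) sits adjacent to the window, where its smoothing law is not within $O(\rho^G)$ of stationarity. A workable route is different: apply the mean-zero identity $\mathbb{E}_*[U_i|\mathcal{H}_M]=0$ (valid for every $i<-k$, by the same conditional-independence computation as Lemma \ref{sup:zeromeans}) to the \emph{whole} tail, which reduces $\mathbb{E}_*[\mathcal{T}|\mathcal{H}_M]$ to $-\sum_{i}\mathbb{E}_*\bigl[\mathbb{E}_*[U_i|\mathcal{G}_M^{-k-G-1}]\,\bigm|\,\mathcal{H}_M\bigr]$. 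These summands are measurable with respect to observations up to time $-k-G-1$, so decoupling at $X_{-k-G-1}$ and the forgetting bound $\max_s\lvert\mathbb{P}_*(X_{-k-G-1}=s|\mathcal{H}_M)-p_s\rvert\lesssim\rho^{G}$ now apply; but you must additionally show that $\sum_i\max_s\bigl\lvert\mathbb{E}_*\bigl[\mathbb{E}_*[U_i|\mathcal{G}_M^{-k-G-1}]\bigm|X_{-k-G-1}=s\bigr]\bigr\rvert$ is bounded uniformly in $M$ and $G$, which requires combining $\int h_{r,M}f_r^*=0$ with the geometric decay of the filter's dependence on the remote coordinate $Y_i$ (each term is then $O(\rho^{\lvert i+k+G+1\rvert})$). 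Finally, the growing block $G_M$ and the orthogonality bound $\sqrt{k+G+1}\,\delta_M$ are unnecessary: as in the paper, take a fixed block length $J$, note that a sum of finitely many $o_{L^2}(1)$ terms is $o_{L^2}(1)$, obtain a bound $o_{L^2}(1)+O(\rho^{J})$ uniform in $M$ (via Lemma \ref{lemma:scoretails}), and send $M\to\infty$ followed by $J\to\infty$; this removes the delicate balancing you flag at the end.
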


\subsection{Proof of Theorem \ref{inversion}}

The proof of  Theorem \ref{inversion} works by contradiction. If there exist no $C>0$ such that for all $\textbf{f}$ with $\lVert g^{(3)}_{Q,\textbf{f}}-g^{(3)}_{Q,\textbf{f}^*}\rVert_{L^1}$ sufficiently small, 
$$\frac{\lVert g^{(3)}_{Q,\textbf{f}}-g^{(3)}_{Q,\textbf{f}^*}\rVert_{L^1}}{\sum_{r=1}^R\lVert f_r-f_r^*\rVert_{L^1}} \geq C$$
then there exists a sequence of emission densities $\textbf{f}^{(n)}$ such that $\lVert g^{(3)}_{Q,\textbf{f}}-g^{(3)}_{Q,\textbf{f}^*}\rVert_{L^1}= o(1)$ and 
$$\liminf_n\frac{\lVert g^{(3)}_{Q,\textbf{f}^n}-g^{(3)}_{Q,\textbf{f}^*}\rVert_{L^1}}{\sum_{r=1}^R\lVert f_r^n-f_r^*\rVert_{L^1}}=0.$$
For $j=2,3$, $r\in[R]$  and $\textbf{f}^{(n)}$  as above,  set
$$\Delta_{n,j} = \frac{\lVert g^{(j)}_{Q,\textbf{f}^{(n)}}-g^{(j)}_{Q,\textbf{f}^*}\rVert_{L^1}}{\sum_{r=1}^R\lVert f_r^n-f_r^*\rVert_{L^1}}; \quad h_r^n=\frac{f^n_r-f_r^*}{\sum_{r=1}^R\lVert f_r^n-f_r^*\rVert_{L^1}},$$
and write $\mathbf{h}^n=(h^n_r:r\in[R])$.


\vspace{1ex}
Since $\liminf_n\Delta_{n,3}=0$, there exists a subsequence $\phi_1(n)$ such that $\Delta_{\phi_1(n),3}\rightarrow 0$, which implies also that $\Delta_{\phi_1(n),2}\rightarrow 0$ since $\Delta_{n,2}\leq\Delta_{n,3}$.  We have by assumption that $\lVert g^{(j)}_{Q,\textbf{f}^{(\phi_1(n))}}-g^{(j)}_{Q,\textbf{f}^*}\rVert_{L^1}\rightarrow 0$, and by applying Lemma \ref{consistencyemissions} we can find a further subsequence $\phi_2(\phi_1(n))$ such that $\textbf{f}^{(\phi_2(\phi_1(n)))}\rightarrow \textbf{f}^*$ in $L^1$. For notational convenience we write $\Delta_{n,j}$ for the terms $\Delta_{\phi_2(\phi_1(n)),j}$ along this subsequence, and in general any index with $n$ is now interpreted as being along this subsequence. Since $\sum_{r=1}^R\lVert f_r^n-f_r^*\rVert_{L^1}\rightarrow 0$,  $\Delta_{n,2}\rightarrow 0$ implies that
$$ \sum_{r,s}(PQ^*)_{rs}(f_{r}^*(y_1)h_s^n(y_2)+h_{r}^n(y_1)f_{s}^*(y_2)) = o_{L^1}(1),$$
where $P=\diag(p_1^*,\dots,p_R^*)$. Partition $I_1,\dots,I_R$ such that $\textbf{F}^*$, as defined in Lemma \ref{consistencyemissions}, has rank $R$. Define also $\mathbf{H}^n=(H^{n}_{ir})_{ir}=\int_{I_i}h^n_r(y)\dd y$. We have for any integrable function $G$ any any $i\in[R]$ that
$$\int_{\mathbb{R}}\int_{\mathbb{R}} \lvert G(y_1,y_2) \rvert \dd y_1 \dd y_2 \geq \int_{\mathbb{R}}\int_{I_i} \lvert G(y_1,y_2) \rvert \dd y_1 \dd y_2 \geq \int_{\mathbb{R}}\left\lvert \int_{I_i}  G(y_1,y_2) \dd y_1 \right\rvert \dd y_2 , $$
which implies in particular that
$$\sum_{r,s} (PQ^*)_{rs}(F_{ir}^*h_s^n(y_2)+H^n_{ir}f_{s}^*(y_2)) =o_{L^1}(1).$$
This gives us
$$(\mathbf{F}^*)^T(PQ^*)\mathbf{h}^n = -\mathbf{H}_n^T(PQ)\mathbf{f}^* + o_{L^1}(1).$$
The sequence $\mathbf{H}_n$ is bounded and so converges to some $\mathbf{H}_0$ along a subsequence $\phi_3(n)$ which we pass to. Then since $\mathbf{F}^*$ is of rank $R$, we may write
$$\mathbf{h}^n = \underbrace{-((\mathbf{F}^*)^T(PQ))^{-1}(\mathbf{H}^{*})^T(PQ)}_{=B_1}\mathbf{f}^* + o_{L^1}(1),$$
and so $h_{r}^n\rightarrow (B_1\mathbf{f}^*)_{r}$ in $L^1$ for each $l=1,\dots, R$. Next, since we have that $\Delta_{n,3}\rightarrow 0$ in $L^1$, we get
$$\sum_{r,s,t}(PQ)_{rs}Q_{st}\left(f_{r}^*(y_1)f_{s}^*(y_2)h_t^n(y_3) + f_{r}^*(y_1)h_s^n(y_2)f_{t}^*(y_3)+h_{r}^n(y_1)f_{s}^*(y_2)f_{t}^*(y_3)\right)=o_{L_1(1)}.$$
Replacing $h_{r}^n$ by its limit $(B_1\mathbf{f}^*)_{r}$, we get that for Lebesgue-almost-all $y_1,y_2,y_3$
$$\sum_{r,s,t}(PQ)_{rs}Q_{st}\left(f_{r}^*(y_1)f_{s}^*(y_2)(B_1\mathbf{f}^*)_t(y_3) + f_{r}^*(y_1)(B_1\mathbf{f}^*)_s(y_2)f_{t}^*(y_3)+(B_0\mathbf{f}^*)_r(y_1)f_{s}^*(y_2)f_{t}^*(y_3)\right)=0$$
Under \textbf{Assumption} \ref{densityratio}, we have continuity of $f^*$ and so in fact the above display holds for all $y_1,y_2,y_3$.  This expression is of the form of Equation (\ref{3obs}) in the proof of Lemma \ref{Deconvolution}, and the same proof techniques show that $B_1=0$ and hence $\lVert \mathbf{h}^n\rVert = o_{L^1}(1)$. This yields the desired contradiction as $\sum_{r=1}^R\lVert {h}^n_r\rVert_{L^1}=1$ for all $n\in\mathbb{N}$. We conclude that no subsequence can exist for which $\Delta_{n,3}\rightarrow 0$, and hence that $\liminf_{n\rightarrow\infty} \Delta_{n,3}>0$, which terminates the proof of Theorem \ref{inversion}.

\vspace{1ex}
The proof of Theorem \ref{inversion} uses the following result which we prove in Section \ref{sup:sec:smoothproof} of the supplement.

\begin{lemma}\label{consistencyemissions}
Consider a sequence of densities $\mathbf f^{(n)}$ such that $g^{(3)}_{Q^*,\mathbf f^{(n)}}\stackrel{L^1(\dd y^3)}{\rightarrow} g^{(3)}_{Q^*,\mathbf f^*}$, then $\mathbf f^{(n)}\stackrel{L^1(\dd y)}{\rightarrow}\mathbf f^*$ along a subsequence.
\end{lemma}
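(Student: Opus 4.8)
The plan is to establish strong $L^1$ convergence in three stages: extract weak $L^1$ subsequential limits of the $f_r^{(n)}$, identify these limits as $\mathbf f^*$ via the identifiability theorem of \cite{gassiat2016inference}, and then upgrade weak convergence to strong $L^1$ convergence by exploiting the bilinear structure of $g^{(2)}_{Q^*,\mathbf f}$.

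First I would marginalise: $L^1$ convergence of $g^{(3)}_{Q^*,\mathbf f^{(n)}}$ implies $L^1$ convergence of $g^{(2)}_{Q^*,\mathbf f^{(n)}}\to g^{(2)}_{Q^*,\mathbf f^*}$ and of the one-dimensional marginal $g^{(1)}_n:=\sum_r p^*_r f^{(n)}_r\to g^{(1)}_*$. Since $p^*_r f^{(n)}_r\le g^{(1)}_n$ pointwise and an $L^1$-convergent sequence is uniformly integrable, each family $\{f^{(n)}_r\}_n$ is uniformly integrable, and tight, by domination. By the Dunford--Pettis theorem, along a subsequence $f^{(n)}_r\rightharpoonup g_r$ weakly in $L^1$ for every $r$, where each $g_r$ is again a probability density.

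Testing against $\psi\in C_b$ shows weak $L^1$ convergence gives weak convergence of measures $F^{(n)}_r\Rightarrow G_r$, whence $\mathbb P^{(3)}_{Q^*,\mathbf F^{(n)}}\Rightarrow\mathbb P^{(3)}_{Q^*,\mathbf G}$ because $\mathbb P^{(3)}$ is a finite sum of products of the $F_r$. On the other hand $g^{(3)}_{Q^*,\mathbf f^{(n)}}\to g^{(3)}_{Q^*,\mathbf f^*}$ in $L^1$ gives $\mathbb P^{(3)}_{Q^*,\mathbf F^{(n)}}\to\mathbb P^{(3)}_{Q^*,\mathbf F^*}$ in total variation, hence weakly, so the limits coincide: $\mathbb P^{(3)}_{Q^*,\mathbf G}=\mathbb P^{(3)}_{Q^*,\mathbf F^*}$. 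Applying the identifiability result of \cite{gassiat2016inference} under \textbf{Assumption} \ref{identifiability} then yields $\mathbf G=\mathbf F^*$ up to label swapping, i.e. $g_r=f^*_r$.

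The hard part will be upgrading this weak convergence to strong $L^1$ convergence, since in $L^1$ weak convergence of densities to a density does not imply norm convergence (for instance $1+\cos(2\pi n\,\cdot)\rightharpoonup 1$ but is not Cauchy in $L^1$). Here I would use the bilinearity of $g^{(2)}$. For $\psi\in L^\infty$, integrating $g^{(2)}_{Q^*,\mathbf f^{(n)}}(y_1,y_2)\psi(y_2)$ over $y_2$ equals $\sum_r c_{r,n}(\psi)\,f^{(n)}_r(y_1)$ with $c_{r,n}(\psi)=\sum_s p^*_r Q^*_{rs}\int f^{(n)}_s\psi$, and this converges in $L^1(\dd y_1)$ to $\sum_r c_r(\psi)\,f^*_r$ with $c_r(\psi)=\sum_s p^*_r Q^*_{rs}\int f^*_s\psi$. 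Since $c_{r,n}(\psi)\to c_r(\psi)$ by the weak convergence and $\lVert f^{(n)}_r\rVert_{L^1}=1$, this gives $\sum_r c_r(\psi)(f^{(n)}_r-f^*_r)\to 0$ in $L^1$ for every $\psi$. Writing the coefficient vector as $c(\psi)=D_pQ^*v(\psi)$ with $D_p=\diag(p^*_1,\dots,p^*_R)$ and $v(\psi)=(\int f^*_s\psi)_s$, the linear independence of $f^*_1,\dots,f^*_R$ makes $\psi\mapsto v(\psi)$ surjective onto $\mathbb R^R$, so one can choose $\psi_1,\dots,\psi_R$ with $v(\psi_j)=e_j$; as $D_pQ^*$ is invertible under \textbf{Assumptions} \ref{identifiability} and \ref{ass:Qbound}, the matrix $C=(c_r(\psi_j))_{r,j}=D_pQ^*$ is invertible. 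Inverting the resulting linear system of $L^1$ limits then forces $f^{(n)}_r-f^*_r\to 0$ in $L^1$ for each $r$ along the subsequence, which completes the proof.
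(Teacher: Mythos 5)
Your proof is correct, and it shares the same two-stage architecture as the paper's own argument — first obtain a weak form of convergence of the emissions, then upgrade to strong $L^1$ convergence by exploiting the bilinear structure of $g^{(2)}_{Q^*,\mathbf f}$ and inverting a full-rank $R\times R$ coefficient matrix — but you implement both stages with genuinely different ingredients. For the first stage, the paper simply cites Theorem 2.3 of \cite{vernet2015posterior} to obtain weak convergence of the emission measures, whereas you derive subsequential weak $L^1$ limits from scratch (domination of $p_r^*f_r^{(n)}$ by the one-dimensional marginal, uniform integrability and tightness, Dunford--Pettis) and then identify the limit through the identifiability theorem of \cite{gassiat2016inference}; this is longer, but self-contained, and it makes explicit where \textbf{Assumption} \ref{identifiability} actually enters. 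For the second stage, the paper integrates the $y_1$ variable over the cells of an admissible partition (Lemma \ref{lemma:partition}), so that the matrix to be inverted is $(\mathbf F^*)^T(PQ^*)$ with $\mathbf F^*$ the rank-$R$ matrix of bin probabilities; your dual-basis choice of test functions $\psi_j\in L^\infty$ with $\int f_s^*\psi_j=\delta_{sj}$ (available by linear independence of the $f_r^*$, via the annihilator argument you sketch) produces the coefficient matrix $D_pQ^*$ directly, whose invertibility is immediate from \textbf{Assumptions} \ref{identifiability} and \ref{ass:Qbound}, so you bypass the admissible-partition construction entirely. Bin indicators are of course one particular choice of bounded test functions, so the two inversion steps are the same trick in different bases; yours is arguably cleaner since the matrix does not depend on a choice of partition. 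One shared caveat, which is an imprecision of the lemma's statement rather than a gap in your argument: identifiability only yields ${}^\tau\mathbf G=\mathbf F^*$ for some permutation $\tau$ fixing $Q^*$, so (exactly as in the paper's proof, where the same issue is absorbed into the citation of \cite{vernet2015posterior}) the subsequential limit is $\mathbf f^*$ only up to label swapping, in line with the paper's global convention.
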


\subsection{Proof of Theorem \ref{smoothingcontract}}\label{smoothingproof}
To prove Theorem \ref{smoothingcontract}, we make use of Lemma \ref{MLEinfluence} which allows us to approximate the MLE by an estimator which does not depend on the observation for which we control the respective smoothing probability. Intuitively, this result says that a single observation does not influence our MLE too much.

\begin{proof}[Proof of Theorem \ref{smoothingcontract}]
The starting point of the proof is Proposition 2.2 of \cite{de2017consistent}. Let $\theta = (Q, \mathbf f)$ and $\| Q - Q^*\| \leq M_n/\sqrt{n}$ so that $Q_{i,j} \geq \bar q/2>0$ with $\bar q = \min_{i,j}Q^*_{i,j}<\frac{1}{2}$, under \textbf{Assumption} \ref{ass:Qbound}. Set $\rho = 1 - \bar q/(1 - \bar q) $, then 
\begin{align*}
  \Delta_k &:=  \|\mathbb P_\theta( X_k = \cdot| Y_{1:n})  - \mathbb P_*( X_k = \cdot | Y_{1:n}) \|_{TV} \leq \frac{C^*}{ \bar q }   \left[ 
\rho^k\|\mu_Q - \mu^*\|  + \|Q - Q^*\|\right. \\ 
 & \quad \left. + \sum_{\ell=1}^n \frac{\bar q \rho^{|\ell-k|}}{c_*(y_\ell)}\max_s | f_s(y_\ell) - f_s^*(y_\ell)|  \right],
\end{align*} 
where  $c^*(y)=\min_{r\in[R]}\sum_{s\in[R]}Q^*_{rs}f^*_s(y)\geq \bar{q}\sum_s f_s^*(y)$  and $C^*$ is a constant depending only on $Q^* , \mathbf f^*$. Throughout the proof we use $C^*$ for a generic constant depending only on $Q^*, \mathbf f^*$.
 First for any $K_n \rightarrow \infty$, if 
$$B_n= \{\theta,\,  \lVert Q^* - \hat{Q}\rVert\leq \frac{K_n}{\sqrt{n}}; \, \max_s\lVert f_s^* - \hat{f}_s\rVert_{L^1} \leq K_n\epsilon_n \}\cap \{ \max_r\|f_r\|_2 \leq e^{ \gamma n \epsilon_n^2 } \}$$
for $\gamma$ defined in  \eqref{smoothingcond}, then Theorem \ref{L1contractemission} together with condition \eqref{smoothingcond} gives $\Pi_{cut}(B_n^c|Y_{1:n} ) = o_{\mathbb P_*}(1)$. Recall also that $n\epsilon_n^2 \rightarrow \infty$ so that for $z_n>0$
$$ \Pi_{cut}( \Delta_k > z_n \epsilon_n | Y_{1:n} ) \leq \Pi_{cut}( \{\Delta_k > z_n \epsilon_n\}\cap B_n | Y_{1:n} ) +o_{\mathbb P_*}(1).$$

Since $\lVert \dd\Pi_1(Q|Y_{1:n}) -\phi_n(Q|Y_{1:n})\dd Q \rVert_{TV}\rightarrow 0$ where $\phi_n(Q|Y_{1:n})$ is a Gaussian density centred at the MLE of variance $\Tilde{J}^{-1}/n$, we have 
$ \|\Pi_{cut} - \Tilde{\Pi}_{cut}\|_{TV} = o_{\mathbb P_*}(1) $, where
 $\Tilde{\Pi}_{cut}$ is the measure $\phi_n(Q|Y_{1:n})\dd Q\dd\Pi_2(\textbf{f}|Q,Y_{1:n})$.
Moreover on $B_n$,
\begin{equation}\label{smoothingG}
\Delta_k \leq \frac{C^* K_n }{\sqrt{n} } 
 + \sum_{\ell=1}^n \frac{\bar q \rho^{|\ell-k|}}{c_*(y_\ell)}\max_s | f_s(y_\ell) - f_s^*(y_\ell)| .
 \end{equation}
To prove Theorem \ref{smoothingcontract}, we thus need to control the  sum in \eqref{smoothingG} under the posterior distribution.
 In \cite{abraham2021multiple}, the authors use either a control in sup-norm of $\hat f_s- f_s^*$ or a split their data into a \textit{training set} used to estimate  $\mathbf f^*$, and a \textit{test set} used to estimate the smoothing probabilities. We show here that with the Bayesian approach we do not need to split the data in two parts nor do we need a sup-norm control. We believe that our technique of proof might still be useful for other approaches.

\vspace{1ex}
 We split the  sum in \eqref{smoothingG}  into $|\ell-k| \leq T_{n}$ and $ |\ell -k| > T_{n}$  where $T_{n}\epsilon_n  $ converges arbitrarily slowly to 0. 
We first control for all $r\in [R]$ and any $z_n>0$ going to infinity
\begin{align}\label{eq:control_smooth_sum}
    \tilde \Pi_{cut}&\left[ \left. B_n \cap \left\{\sum_{|\ell-k|\leq T_{n}} \frac{ \rho^{|\ell-k|} | f_r(y_\ell) - f_r^*(y_\ell)| }{c_*(y_\ell)} \geq z_n\epsilon_n \sum_{|\ell-k| \leq T_n} \rho^{|\ell-k|/2} \right\} \right|Y_{1:n} \right] \nonumber\\
    &
     \leq \sum_{|\ell-k|\leq T_{n}}  \tilde \Pi_{cut}\left[\left.B_n \cap \left\{  \frac{\ | f_r(y_\ell) - f_r^*(y_\ell)| }{c_*(y_\ell)} \geq z_n\epsilon_n  \rho^{-|\ell-k|/2}\right\} \right|Y_{1:n} \right] .
\end{align}
To bound \eqref{eq:control_smooth_sum}, we need to study the quantities $ \pi_{n,\ell}(z) = \Pi_{cut}\left[\left. B_n \cap A_n(z; \ell)  \right|Y_{1:n} \right] $ where  $z>0$ and
$$A_n(z; \ell) =\left\{ \frac{\lvert f_r(y_\ell)-f^*_r(y_\ell)\rvert}{\sum_sf^*_s(y_\ell)}>z \epsilon_n\right\}. $$
Write
$L_n(Y_{1:n},\theta) = \sum_s f_s(y_\ell)L_{-\ell}(\theta,s), $
where $L_{-\ell}(\theta,s)$ depends on $Y^{-\ell}_{1:n}$ and is given by
$$L_{-\ell}(\theta,s)=L(y_{1:l-1},\theta)\mathbb{P}_{\theta}(X_l=s|Y_{1:l-1})L(y_{\ell+1:n}|X_l=s,\theta).$$
Note that
\begin{align*}
    L(y_{\ell+1:n}|X_{\ell}=s,\theta) & =\sum_{s'}Q_{ss'}L(y_{\ell+1:n}|X_{\ell+1}=s',\theta) \\
    &\in \left[\frac{\bar{q}}{2}\sum_{s'}L(y_{\ell+1:n}|X_{\ell+1}=s',\theta),\sum_{s'}L(y_{\ell+1:n}|X_{\ell+1}=s',\theta)\right],
\end{align*}
and also that $\mathbb{P}_\theta(X_l=s | Y_{1:l-1})\in [\frac{\bar{q}}{2},1]$, which together implies, denoting $\bar{L}_{-\ell}(\theta)=\max_s L_{-\ell}(\theta,s)$, that
$$L_{-\ell}(\theta,s)\in \left[\frac{\bar{q}^2}{4}\bar{L}_{-\ell}(\theta), \bar{L}_{-\ell}(\theta)\right].$$
This implies that the cut posterior mass of $A_n(z,\ell)\cap B_n$ is bounded as
\begin{equation}
   \Pi_{cut}(A_n(z; \ell)\cap B_n | Y_{1:n}) \leq \dfrac{4}{\bar{q}^2}\dfrac{\int_{B_n}1_{A_n(z; \ell)}(f_r)\bar{L}_{-\ell}(\theta)(\sum_sf_s(y_\ell))\dd\Pi_2(\textbf{f}|Q)\phi_n(Q|Y_{1:n})dQ}{\int_{B_n}\bar{L}_{-\ell}(\theta)(\sum_sf_s(y_\ell))\dd\Pi_2(\textbf{f}|Q)\phi_n(Q|Y_{1:n})dQ}. 
\end{equation}

\vspace{1ex}
Applying Lemma \ref{MLEinfluence}, we also have uniformly in $\ell \in [k-T_n, k+T_n]\cap \mathbb N $, $\lVert\phi_n(Q|Y_{1:n})\dd Q-\phi_n(Q|Y^{-\ell}_{1:n})\dd Q\rVert_{TV}\rightarrow 0$ where $\phi_n(Q|Y^{-\ell}_{1:n})$ is a Gaussian centred at the estimator defined in Lemma \ref{MLEinfluence} which is a function of $Y^{-\ell}_{1:n}$ alone, also of variance $\Tilde{J}^{-1}/n$. Hence, 
with $$\tilde{\Pi}_{cut}^{(\ell)}(A|Y_{1:n})=\dfrac{\int_AL_n(Y_{1:n},\theta)\dd\Pi_2(\textbf{f})\phi(Q|Y_{1:n}^{(-\ell)})\dd Q}{D_n(\ell)}$$
with  $D_n(\ell)=\int_{B_n}\bar{L}_{-\ell}(\theta)(\sum_sf_s(y_\ell))\dcut$,
we have 
$$\max_{|\ell-k|\leq T_n} \|\tilde{\Pi}_{cut}^{(\ell)}(\cdot |Y_{1:n})- \tilde{\Pi}_{cut}(\cdot|Y_{1:n})\|_{TV} = o_{\mathbb P_*}(1).$$

\vspace{1ex}
Hence, it suffices to control each summand of \eqref{eq:control_smooth_sum} with $\tilde{\Pi}_{cut}$ replaced by $\tilde{\Pi}^{(\ell)}_{cut}$. We first focus on lower bounding the denominator $D_n(\ell)$. Writing $\Omega_n=\{\sum_sf_s(y_\ell)\geq\sum f_s^*(y_\ell)/2\}$, it is immediate that
$$D_n(\ell) \geq \int_{B_n}\bar{L}_{-\ell}(\theta)1_{\Omega_n}\dcut \left(\sum_s f_s^*(y_\ell)/2\right). $$
We will show that we can replace the above bound with a bound in probability where the integrand does not feature the indicator $1_{\Omega_n}$. This is similar in spirit to concentration results based around the framework of \cite{ghosal2007convergence} (including our own Theorem \ref{cutcontract}), where a key ingredient of the proof is to lower bound the denominator by a suitable quantity. We have that
\begin{align*}
&\mathbb{P}_*\left(\int_{B_n}\bar{L}_{-\ell}(\theta)1_{\Omega_n}\dcut) < \frac{1}{2}\int_{B_n}\bar{L}_{-\ell}(\theta)\dcut\right) \\
& = \mathbb{P}_*\left(\int_{B_n}\bar{L}_{-\ell}(\theta)1_{\Omega_n^c}\dcut \geq \frac{1}{2}\int_{B_n}\bar{L}_{-\ell}(\theta)\dcut\right) \\
& \leq 2\mathbb{E}_*\left[\mathbb{E}_*\left[\dfrac{\int_{B_n}\bar{L}_{-\ell}(\theta)1_{\Omega_n^c}\dcut }{\int_{B_n}\bar{L}_{-\ell}(\theta)\dcut}\bigg|Y^{-\ell}_{1:n}\right]\right] \\
& \leq 2\mathbb{E}_*\left[\dfrac{\int_{B_n}\bar{L}_{-\ell}(\theta)\mathbb{P}({\Omega_n^c}|Y^{-\ell}_{1:n})\dcut }{\int_{B_n}\bar{L}_{-\ell}(\theta)\dcut}\right].
\end{align*}
Now we can bound the probability in the integrand by
\begin{align*}
    \mathbb{P}_*(\Omega_n^c | Y^{-\ell}_{1:n}) & = \mathbb{P}_*\left(\sum_sf_s(y_\ell)<\frac{1}{2}\sum_s f_s^*(y_\ell)|Y^{-\ell}_{1:n}\right) \\ &=\mathbb{P}_*\left(\sum_s(f_s^*(y_\ell)-f_s(y_\ell))\geq \sum_s f_s^*(y_\ell)/2 \bigg |Y^{-\ell}_{1:n}\right) \\ & \leq 2\int\dfrac{\sum_s\lvert f_s^*(y_\ell)-f_s(y_\ell)\rvert}{{\sum_s}f_s^*(y_\ell)}f^*(y_\ell|Y^{-\ell}_{1:n})\dd y_\ell.
\end{align*}
We can bound the conditional density by
$$f^*(y_\ell|Y^{-\ell}_{1:n})\leq \sum_sf_s^*(y_\ell) \quad \text{so that}   \quad  \mathbb{P}_*(\Omega_n^c | Y^{-\ell}_{1:n}) \leq 2\sum_s\lVert f_s^*-f_s\rVert_{L_1} \leq 2R K_n\epsilon_n,
$$
which in turns implies that
\begin{align*}
\mathbb{P}_*\bigg(&\int_{B_n}\bar{L}_{-\ell}(\theta)1_{\Omega_n}\dcut < \\
& \hspace{10ex}\frac{1}{2}\int_{B_n}\bar{L}_{-\ell}(\theta)\dcut\bigg) \leq 4R K_n\epsilon_n.
\end{align*}
Thus, on this event of probability at least $1-4RK_n\epsilon_n$, we can bound the denominator as
\begin{equation}\label{Dnl}
D_n(\ell) \geq \frac{1}{4}\sum_sf_s^*(y_\ell)\underbrace{\int_{B_n}\bar{L}_{-\ell}(\theta)\dcut}_{=:M(Y^{-\ell}_{1:n})},
\end{equation}
and so, on this event, 
\begin{align*}
    \tilde{\Pi}^{(\ell)}_{cut}(A_n(z;\ell)\cap B_n | Y_{1:n}) & \leq \dfrac{16}{\bar{q}^2}\dfrac{\int_{B_n}1_{A_n(z;\ell)}(\sum_sf_s(y_\ell))\bar{L}_{-\ell}(\theta)\dcut}{\sum_sf_s^*(y_\ell)M(Y^{-\ell}_{1:n})} \\
    & \leq \dfrac{16}{\bar{q}^2}\Bigg[\underbrace{\dfrac{\int_{B_n}1_{A_n(z;\ell)}(\sum_s|f_s(y_\ell)-f_s^*(y_\ell)|)\bar{L}_{-\ell}(\theta)\dcut}{\sum_sf_s^*(y_\ell)M(Y^{-\ell}_{1:n})}}_{J_1}\\
    &+\underbrace{\dfrac{\int_{B_n}1_{A_n(z;\ell)}\bar{L}_{-\ell}(\theta)\dcut}{M(Y^{-\ell}_{1:n})}}_{J_2} \Bigg].
\end{align*}
and using $\mathbb{E}_*[J_1] = \mathbb{E}_*[\mathbb{E}_*[J_1|Y^{-\ell}_{1:n}]]$, together with the fact that $f(y_\ell|Y_{1:n}^{(-\ell)})\dd y_\ell\leq \sum_sf_s^*(y_\ell) \dd y_\ell$, as well as the bound on $\lVert f_s-f_s^*\rVert_{L^1}$ on $B_n$, we obtain 
$$\mathbb{E}_*[J_1]\leq \mathbb{E}_*\left[ \dfrac{\int_{B_n}\sum_s\lVert f_s-f_s^*\rVert_1\bar{L}_{-\ell}(\theta)\dcut}{M(Y^{-\ell}_{1:n})}\right]\leq RK_n\epsilon_n.$$
To bound $J_2$ note that $1_{A_n(z;\ell)} \leq |f_r(y_\ell)-f_r^*(y_\ell)| /(z\epsilon_n\sum_rf_r^*(y_\ell) )$ so that 
\begin{align*}
    J_2&\leq\dfrac{\int_{B_n}\lvert f_r(y_\ell)-f^*_r(y_\ell)\rvert\bar{L}_{-\ell}(\theta)\dcut}{z\epsilon_n M(Y^{-\ell}_{1:n})\sum_sf^*_s(y_\ell)} ,
\end{align*}
and bounding as before with $J_1$, we obtain 
 $\mathbb{E}_*[J_2]\leq\frac{RK_n}{z}.$
 This implies that 
 \begin{align*}
\mathbb P_* &\left[ \sum_{|\ell-k|\leq T_{n}} \Tilde{\Pi}_{cut}^{(\ell)}(A_n(\rho^{-|\ell-k|/2} z_n;\ell)\cap B_n |Y_{1:n} ) > \epsilon \right] \\
&\leq \sum_{|\ell-k|\leq T_{n}}\mathbb P_*\left[ D_n(\ell) \leq \frac{c_*(y_\ell) M(Y_{1:n}^{-\ell})}{4}\right]  + T_{n}\dfrac{16RK_n\epsilon_n}{ \epsilon\bar{q}^2} + \sum_{|\ell-k|\leq T_{n}} \frac{\rho^{|\ell-k|/2} K_n}{\epsilon z_n} \\& \lesssim \frac{T_{n} K_n\epsilon_n}{ \epsilon} + \frac{ K_n}{ z_n\epsilon } .
 \end{align*}
Recall that  $T_n \leq \epsilon^2[K_n\epsilon_n]^{-1} $ and $z_n \geq K_n/\epsilon^2$ leads to 
 \begin{equation} \label{small_ell}
 \mathbb P_* \left[ \sum_{|\ell-k|\leq T_{n}} \Tilde{\Pi}_{cut}^{(\ell)}(A_n(\rho^{-|\ell-k|/2} z_n;\ell)\cap B_n |Y_{1:n} ) > \epsilon \right]\leq c_1 \epsilon ,
 \end{equation}
 for some constant $c_1$ independent on $n$ and $\epsilon$.

\vspace{1ex}
Returning to the original goal of bounding \eqref{smoothingG}, and having now bounded the sum over ${|\ell-k| \leq T_{n}}$, we study the sum over ${|\ell-k| > T_{n}}$. For this, we will bound the sum under $\Tilde{\Pi}_{cut}$ defined previously. First note that 
\begin{align*}
    \mathbb P_*\left[ \exists \ell, \,  c_*(y_\ell) \leq e^{-L_n n\epsilon_n^2} \right] &\leq  n \int_{c_*(y)\leq e^{-L_n n\epsilon_n^2}}\sum_{s=1}^Rp^*_sf_s^*(y)dy \\ & \leq n \int_{c_*(y)\leq e^{-L_n n\epsilon_n^2}} \sqrt{c^*(y)}e^{_-L_nn\epsilon_n^2/2} \dd y \\  &\leq C^* n e^{-L_n n\epsilon_n^2/2} ,
\end{align*}
where we use that $c_*(y)\geq \sum_sp^*_sf_s^*(y) $, and that $\sqrt{c_*}$ is $L^1$ bounded by some $C^*$ from condition \eqref{tail:frtrue}. 
As soon as $L_n n\epsilon_n^2 \geq L_0 \log n$ with $L_0$ large enough, possible since $n\epsilon_n^2\gtrsim \log n$ by assumption, we have $\mathbb{P}_*\left[ \exists \ell, \,  c_*(y_\ell) \leq e^{-L_n n\epsilon_n^2} \right]=o(1)$ and we can bound $c_*(y_\ell) \geq e^{-L_n n\epsilon_n^2}$ for all $\ell$.

\vspace{1ex}
Next, we define
$$\bar A_n(\ell) =\left\{ \theta; \sum_{|\ell-k|> T_{n}}  \rho^{|\ell-k|} | f_r(y_\ell) - f_r^*(y_\ell)| \leq z_n \epsilon_n e^{-L_nn\epsilon_n^2} \right\}.$$
We write
$$ \tilde{\Pi}_{cut}(B_n\cap \bar A_n^c (\ell)|Y_{1:n}) = \frac{ \int_{B_n\cap  \bar A_n^c (\ell)} e^{\ell_n(\theta) - \ell_n(\theta^*) }\dd\Pi_2(\mathbf f) \phi_n(Q|Y_{1:n})dQ }{\int_{B_n} e^{\ell_n(\theta) - \ell_n(\theta^*) }\dd\Pi_2(\mathbf f) \phi_n(Q|Y_{1:n})dQ  } =: \frac{ N_n(\bar A_n(\ell)^c)}{ D_n }, $$
where we write the denominator as $D_n$ and write $N_n(\Bar{A}_n(\ell)^c)$ for the numerator.

\vspace{1ex}
Using Lemma \ref{ELBO} so that $\mathbb P_*[ D_n < e^{-L_nn\epsilon_n^2} ] \lesssim L_n^{-1}$ together with $\phi_n(Q|Y_{1:n}) \lesssim n^{R(R-1)/2} $, we obtain with probability greater than $1-O(1/L_n)$, with $L_n$ going arbitrarily slowly to infinity, 
\begin{align*}
    \tilde{\Pi}_{cut}(B_n\cap \Bar{A}_n(\ell)^c |Y_{1:n}) &\leq 
    e^{L_n n\epsilon_n^2} \int_{B_n}\mathds{1}_{\Bar{A}_n(\ell)^c}e^{\ell_n(\theta) - \ell_n(\theta^*) }\dd\Pi_2(\mathbf f) \phi_n(Q|Y_{1:n})dQ \\
    &\hspace{-8ex}\leq \frac{C_*  e^{2L_nn\epsilon_n^2}n^{R(R-1)/2}}{ z_n\epsilon_n } \sum_{|\ell-k|>T_n} \rho^{|\ell-k|}\int_{B_n}| f_r(y_\ell) - f_r^*(y_\ell)| e^{\ell_n(\theta)- \ell_n(\theta^*) }\dd\Pi_2(\textbf f ) dQ\\
    &\hspace{-8ex} \leq C_*  e^{3L_nn\epsilon_n^2}\sum_{|\ell-k|>T_n} \rho^{|\ell-k|}\int_{B_n}| f_r(y_\ell) - f_r^*(y_\ell)| e^{\ell_n(\theta)- \ell_n(\theta^*) }\dd\Pi_2(\mathbf f) dQ,
\end{align*}
where in the final line we again use that $\log n\lesssim n\epsilon_n^2$ to consolidate the exponential. This implies that $\tilde{\Pi}_{cut}(B_n\cap \Bar{A}_n(\ell)^c |Y_{1:n})$ is controlled under $\mathbb{P}_*$ as
\begin{align}\label{large_ell}
\mathbb P_*&\left[ \tilde{\Pi}_{cut}(B_n\cap \Bar{A}_n(\ell)^c |Y_{1:n}) \geq 2/L_n \right] \leq o(1)\nonumber \\
&+ \mathbb P_*\left[ \sum_{|\ell-k|>T_n}\rho^{|\ell-k|}\int_{B_n} | f_r(y_\ell) - f_r^*(y_\ell)|e^{\ell_n(\theta)- \ell_n(\theta^*)}d\Pi_2(\mathbf f)dQ >\frac{ e^{-3L_n n\epsilon_n^2}}{ L_n} \right]\nonumber \\
& \leq \mathbb P_*\left[ \sum_{|\ell-k|>T_n}\int_{B_n} | f_r(y_\ell) - f_r^*(y_\ell)|e^{\ell_n(\theta)- \ell_n(\theta^*)}d\Pi_2(\mathbf f)dQ > \frac{\rho^{-T_n} e^{-3L_n n\epsilon_n^2}}{ L_n} \right] + o(1) \nonumber \\
&  \leq \frac{ C_* L_n \rho^{T_n}e^{3L_nn\epsilon_n^2} }{L_n} \sum_{|\ell-k|>T_n} \int_{B_n}\mathbb E_{Q,\mathbf f}[| f_r(y_\ell) - f_r^*(y_\ell)|]\dd\Pi_2(\mathbf f) \dd Q + o(1) \nonumber \\
    &\leq  C_* L_n \rho^{T_n}e^{3L_nn\epsilon_n^2}  \sum_{s=1}^R \int_{B_n}\int f_s(y)(f_r(y) +f_r^*(y)) \dd y  \dd\Pi_2(\mathbf f)\nonumber + o(1)  \\
    & \leq C_* L_n \rho^{T_n}e^{3L_nn\epsilon_n^2}  \sum_s \int_{B_n} \|f_s\|_2(\|f_r\|_2 +\|f_r^*\|_2)d\Pi_2(\mathbf f) + o(1) \nonumber \\ & \leq C_* \rho^{T_n}e^{4L_nn\epsilon_n^2}  + o(1) = o(1),
\end{align}
with the final equality holding as soon as $T_n\log(1/\rho)\geq  5 n L_n\epsilon_n^2$. We recall that we required earlier in the proof, in order to establish Equation \eqref{small_ell}, that $T_n\epsilon_n\rightarrow 0$ arbitrarily slowly. We thus use the assumption $n\epsilon_n^3\rightarrow 0$ to choose $T_n$ so that both conditions may hold simultaneously.

\vspace{1ex}
Finally combining \eqref{large_ell} with \eqref{small_ell} and \eqref{smoothingG}, and since $K_n$ and $L_n$ can be chosen to go arbitrarily slowly to infinity and $n\epsilon_n^2 \gtrsim \log n$, we obtain that for any $z_n$ going to infinity, 
$$ \Pi_{cut}( \Delta_k > z_n \epsilon_n| Y_{1:n}) = o_{\mathbb P_*}(1),$$
which terminates the proof. 
\end{proof}

\section{Conclusion and discussion}\label{conclusion}
In this paper we use the cut posterior approach of \cite{jacob2017better} for inference in semiparametric models, which we apply to the nonparametric Hidden Markov models. A difficulty with the Bayesian approach in high or infinite dimension is that it is difficult (if not impossible) to construct priors on these complex models which allow for \textit{good} simultaneous inference on a collection of parameters of interest, where \textit{good} means having good frequentist properties (and thus leading to some robustness with respect to the choice of the prior). As mentioned in Section \ref{sec:intro}, a number of examples have been exhibited in the literature where \textit{reasonable} priors lead to poorly behaved posterior distribution for some functionals of the parameters. We believe that the cut posterior approach is an interesting direction to pursue in order to address this general problem, and we demonstrate in the special case of semiparametric Hidden Markov models that it leads to interesting properties of the posterior distribution and is computationally tractable.
\vspace{1ex}

Our approach is based on a very simple prior $\Pi_1$ on $Q, \mathbf f$ used for the estimation of $Q$ based on finite histograms with a small number of bins. This enjoys a Bernstein-von Mises property, so that credible regions for $Q$ are also asymptotic confidence regions. Moreover by choosing  $M$ large (but not too large) we obtain efficient estimators. Proving efficiency for semiparametric HMMs is non trivial and our proof has independent interest.  Another original and important contribution of our paper is an inversion inequality (stability estimate) comparing the $L^1$ distance between $g_{Q, \mathbf f_1}^{(3)}$ and $g_{Q, \mathbf f_2}^{(3)}$ and the $L^1$ distance between $ \mathbf f_1$ and $ \mathbf f_2$. Finally another interesting contribution is our control  of the error of the estimates of  the smoothing probabilities using a Bayesian approach, which is based on a control under the posterior distribution  of 
$$\frac{ 1}{n}\sum_{i=1}^n \frac{ |f_r(y_i) - f_r^* (y_i) | }{  \sum_r f_r^*(y_i)}, $$ 
despite the double use of the data (in the posterior on $f_r$ and in the empirical distance above). It is a rather surprising result, which does not hold if $f_r$ is replaced with $\hat f_r$ constructed using $y_1, \dots, y_n$, unless a sup-norm bound on $\hat f_r-f_r^*$ is obtained. 

\vspace{1ex}
Section \ref{simulation} demonstrates clearly the estimation procedures and highlights the importance of choosing a small number of bins in practical situations. Here, there remains open the question of how to choose this number in a principled way, and an interesting extension of the work would follow along the lines of the third section of \cite{Gassiat2018}, in which the authors produce an oracle inequality to justify a cross-validation scheme for choosing the number of bins. 

\vspace{1ex}
Finally, the paper deals with the case where $R$ is known. This assumption is rather common both in theory (see for instance the works of \cite{Gassiat2016}, \cite{de2017consistent} or \cite{Anandkumar2012}) and in practice (for instance in genomic applications as in \cite{yau2011bayesian}). The identifiability results of \cite{alexandrovich2016nonparametric} and \cite{gassiat2016inference} do show that $R$ can be identified as well, leaving open the possibility to jointly estimate $R$, together with $Q$ and $\textbf{f}$. We leave this direction of research for future work.

\section*{Acknowledgements}


The project leading to this work has received funding from the European Research Council
(ERC) under the European Union’s Horizon 2020 research and innovation programme (grant agreement No 834175).
The project is also partially funded by the EPSRC via the  CDT StatML. 

\pagebreak

\setcounter{section}{0}

\renewcommand{\thepage}{S\arabic{page}}

\renewcommand{\thesection}{S\arabic{section}}

\renewcommand{\thetable}{S\arabic{table}}

\renewcommand{\thefigure}{S\arabic{figure}}

\renewcommand{\thetheorem}{S\arabic{section}.\arabic{theorem}}

\renewcommand{\theprop}{S\arabic{section}.\arabic{prop}}

\renewcommand{\thelemma}{S\arabic{section}.\arabic{lemma}}

\renewcommand{\theassume}{S\arabic{assume}}

\renewcommand{\theequation}{S\arabic{section}.\arabic{equation}}

\newcommand{\thealgorithm}{S\arabic{section}.\arabic{algorithm}}

\renewcommand{\therk}{S\arabic{rk}}

\renewcommand{\thedefn}{S\arabic{section}.\arabic{defn}}

\begin{frontmatter}
\title{Supplement to ``Efficient Bayesian estimation and use of cut posterior in semiparametric hidden Markov models"}
\runauthor{Moss \& Rousseau}
\runtitle{Efficiency and cut posterior in semiparametric HMMs}

\begin{aug}
\author[A]{\fnms{Daniel} \snm{Moss}\ead[label=e1]{daniel.moss@stats.ox.ac.uk}},
\author[A]{\fnms{Judith} \snm{Rousseau}\ead[label=e2]{judith.rousseau@stats.ox.ac.uk}}
\address[A]{Department of Statistics,
University of Oxford, United Kingdom.
\printead{e1,e2}}
\end{aug}

\begin{abstract}
The supplementary material contains a number of proofs, a presentation of the general contraction theory for cut posteriors, and some further details on simulations. In Section \ref{sup:sec:mainproofs} we present the proofs of Theorem \ref{BvM} and Proposition \ref{L1contractmarginal}. In Section \ref{sup:sec:cutcontract}, we develop the general contraction theory for cut posteriors which is a straightforward adaptation \cite{ghosal2007convergence}, and has general interest beyond the setting of hidden Markov models. In Section \ref{sup:techproofs}, we collect a number of technical results. In Section \ref{sup:sec:dirmix}, we document the assumptions required for the application of our contraction theory when the emissions are modelled as Dirichlet process mixtures of Gaussians. In Section \ref{sup:sec:keyresults}, we gather some key results from the literature. In Section \ref{extrasims}, we provide some further details of our computations.
\end{abstract}

\begin{keyword}
\kwd{Bernstein-von Mises}
\kwd{Contraction rates}
\kwd{Cut posterior}
\kwd{Efficiency}
\kwd{Hidden Markov Models}
\kwd{Inversion inequality}
\kwd{Semiparametric estimation}
\end{keyword}

\end{frontmatter}
All sections, theorems, propositions, lemmas, definitions, remarks, assumptions, algorithms, figures and equations presented in the supplement are designed with a prefix S. Regarding the others, we refer to the material of the main text \cite{main}.

\section{Proofs of main results}\label{sup:sec:mainproofs}
In the following section we detail the proofs of Theorem \ref{BvM} and Proposition \ref{L1contractmarginal}.

\subsection{Proof of Theorem \ref{BvM}}

The proof of Theorem \ref{BvM} follows from Proposition \ref{scoreconvergence} and Lemma \ref{infogrows}.

\begin{lemma}\label{infogrows}
Let $(\mathcal{I}_M)_M$ be a sequence of embedded partitions, let $M$ be such that $I_M$ is admissible for $\mathbf{f}^*$, and grant Assumptions \ref{identifiability}, \ref{ass:Qbound} and \ref{densityratio}. Then the matrix

$$\Tilde{J}_{M+1}-\Tilde{J}_M$$
is positive semi-definite.
\end{lemma}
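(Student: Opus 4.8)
The plan is to reduce this Loewner-order comparison of \emph{efficient} informations to an ordinary information inequality, for which operator monotonicity of the Schur complement finishes the job. The obstacle, as already stressed in the proof of Proposition \ref{scoreconvergence}, is that the nuisance score spaces $\mathcal{P}_M$ are \emph{not} nested, so one cannot simply write $\tilde S^{(M)}_{Q^*} = \mathbb{E}_*[\tilde S^{(M+1)}_{Q^*}\mid\mathcal{G}_M]$ and invoke the law of total covariance: the efficient score is a projection off a space that genuinely changes with $M$. I would circumvent this by reparametrising rather than comparing efficient scores directly.

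First I would reparametrise. Since the partitions are embedded (\textbf{Assumption} \ref{binsrefine}), each coarse bin $I^{(M)}_m$ is a union of bins of $\mathcal{I}_{M+1}$ and $Y^{(M)}$ is a deterministic function of $Y^{(M+1)}$. Correspondingly the fine weights $\underline{\omega}_{(M+1)}$ decompose bijectively (on the interior) as $(\underline{\omega}_{(M)},\psi)$, where $\underline{\omega}_{(M)}$ are the coarse-bin weights (linear aggregates) and $\psi$ encodes the within-coarse-bin conditional weights. This change of variables fixes the $Q$ coordinates, hence leaves the efficient information for $Q$ unchanged, so in the new coordinates $\tilde{J}_{M+1}$ is the Schur complement of the joint $(\underline{\omega}_{(M)},\psi)$ block of the fine Fisher information $J_{M+1}$. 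Profiling out the nuisance in stages (first $\psi$, then $\underline{\omega}_{(M)}$), I would write $\tilde{J}_{M+1}$ as the Schur complement with respect to $\underline{\omega}_{(M)}$ of the matrix $\bar{J}$, the efficient information for $(Q,\underline{\omega}_{(M)})$ in the fine model \emph{after $\psi$ is profiled out}. In exactly the same way, $\tilde{J}_{M}$ is the Schur complement with respect to $\underline{\omega}_{(M)}$ of the coarse information $J_M$ for $(Q,\underline{\omega}_{(M)})$. As $J_M$ and $\bar J$ are information matrices for the \emph{same} parameter $(Q,\underline{\omega}_{(M)})$, they are conformally partitioned, and by the operator monotonicity of the Schur complement (immediate from the variational identity $v^\top(A-BD^{-1}B^\top)v=\min_w (v,w)\,X\,(v,w)^\top$ for $X=\left(\begin{smallmatrix}A&B\\ B^\top&D\end{smallmatrix}\right)$), it suffices to prove the single Loewner inequality $J_M\preceq\bar{J}$.

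To establish $J_M\preceq\bar{J}$ I would use scores and the law of total covariance. Let $\bar S$ be the efficient score for $(Q,\underline{\omega}_{(M)})$ in the fine model (the fine score with the $\psi$-nuisance scores projected out), so $\mathrm{Cov}(\bar S)=\bar J$, and let $S^{\mathrm c}$ be the coarse score for $(Q,\underline{\omega}_{(M)})$, with $\mathrm{Cov}(S^{\mathrm c})=J_M$. I claim $S^{\mathrm c}=\mathbb{E}_*[\bar S\mid\mathcal{G}_M]$, from which $J_M=\mathrm{Cov}(\mathbb{E}_*[\bar S\mid\mathcal{G}_M])\preceq\mathrm{Cov}(\bar S)=\bar J$ follows at once. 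Because $Y^{(M)}$ is a function of $Y^{(M+1)}$, the standard identity ``score of a coarsening $=$ conditional expectation of the score'' gives $\mathbb{E}_*[S^{\mathrm{fine}}_{Q,\underline{\omega}}\mid\mathcal{G}_M]=S^{\mathrm c}$, so the claim reduces to showing that the $\mathcal{G}_M$-conditional expectation of every $\psi$-nuisance score vanishes.

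This last point is the crux, and is where the within-bin structure enters decisively. A $\psi$-direction is an emission perturbation $h=(h_r)$ that is mean-zero \emph{within each coarse bin}, i.e. $\int_{I^{(M)}_m}h_r f^*_r=0$ for all $m,r$, and its nuisance score is $H(h)$ (equivalently its truncation $H_{M+1}(h)$). Invoking the same coarsening identity, $\mathbb{E}_*[H(h)\mid\mathcal{G}_M]$ equals the score of $Y^{(M)}$ along the path $f^*_r(1+t h_r)$; but the law of $Y^{(M)}$ depends on the emissions only through the coarse-bin probabilities, and $\int_{I^{(M)}_m}f^*_r(1+t h_r)=F^*_r(I^{(M)}_m)+t\int_{I^{(M)}_m}h_r f^*_r=F^*_r(I^{(M)}_m)$ for all $t$, so the perturbation is invisible to the coarse data and its coarse score is identically zero. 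Hence $\mathbb{E}_*[H(h)\mid\mathcal{G}_M]=0$, giving $\mathbb{E}_*[\bar S\mid\mathcal{G}_M]=S^{\mathrm c}$, then $J_M\preceq\bar J$, and finally $\tilde J_M\preceq\tilde J_{M+1}$ by the operator monotonicity of the Schur complement. The remaining work is to justify carefully the two invocations of the coarsening identity in the dependent HMM setting (using the conditional independence of $Y_i$ given $X_i$ together with the forgetting/stationarity already exploited for these scores) and to check nondegeneracy of the nuisance blocks so that all Schur complements are defined, which holds under admissibility of $\mathcal{I}_M$ and \textbf{Assumption} \ref{ass:Qbound}.
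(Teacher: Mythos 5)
Your proposal is correct, but it takes a genuinely different route from the paper. The paper's proof is estimator-theoretic rather than score-theoretic: it notes that the MLE in $\mathcal{M}(\mathcal{I}_M)$ is a \emph{regular} estimator (a remark made after Theorem \ref{MLEM}), that likelihood-based estimators depend on the data only through the bin counts so this regularity transfers to laws from the finer model with matching bin probabilities, and that the coarse MLE can be viewed as a \emph{constrained} MLE inside $\mathcal{M}(\mathcal{I}_{M+1})$ (maximisation subject to the fine-bin densities being constant across siblings of each coarse bin). It then invokes Lemma \ref{CR} — an asymptotic Cram\'er--Rao bound proved via the van Trees inequality, valid for any regular estimator — in the fine model to conclude $\tilde{J}_M^{-1}\succeq \tilde{J}_{M+1}^{-1}$, whence $\tilde{J}_{M+1}\succeq\tilde{J}_M$ by anti-monotonicity of matrix inversion. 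Your argument instead stays entirely at the level of information matrices: the reparametrisation $\underline{\omega}_{(M+1)}\leftrightarrow(\underline{\omega}_{(M)},\psi)$, the quotient property and operator monotonicity of Schur complements, and the exact finite-$n$ coarsening identities (coarse score $=$ conditional expectation of fine score; within-coarse-bin perturbations have identically vanishing coarse score) combined with the law of total covariance. Both proofs must circumvent the same obstacle — the non-nestedness of the nuisance score spaces $\mathcal{P}_M$ — the paper by passing through estimators, you by reparametrising so that the decisive comparison $J_M\preceq\bar{J}$ is between two informations for the \emph{same} parameter $(Q,\underline{\omega}_{(M)})$. What your route buys: it is self-contained and elementary (no appeal to regularity of constrained MLEs or to van Trees), it isolates exactly where embeddedness of the partitions enters, and it yields the stronger intermediate inequality $J_M\preceq\bar{J}$ for the full $(Q,\underline{\omega}_{(M)})$-information, not only for its $Q$-Schur complement. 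What the paper's route buys: in context it is shorter, since MLE regularity and Lemma \ref{CR} are already in hand, and it avoids the bookkeeping you correctly flag as remaining work — passing the finite-$n$ score identities to the stationary limit (routine for these finite multinomial HMMs via the Bartlett identity and Lemma 1 of \cite{bickel1998asymptotic}) and checking invertibility of the $\psi$-nuisance block, which does hold here since refinement preserves admissibility and hence the invertibility of $J_{M+1}$ established in Section \ref{invertibleFI}.
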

\begin{proof}
Consider the MLE for $\theta=(Q(\theta),\underline{\omega}_M(\theta))$ for parameters in $\mathcal{Q}\times\Omega_M$ in the model with partition $\mathcal{I}_M$.

\vspace{1ex}
By the remarks at the end of the proof of Theorem \ref{MLEM}, the MLE is regular. This means that, for all parameter sequences $\theta_n$ of the form $\theta_0+n^{-\frac{1}{2}}h$ with $h\in[-H,H]$ for some fixed $H$, $\sqrt{n}(\hat{\theta}^{(n)}-\theta_n)$ converges in distribution under $P_{\theta_n}$ to a variable, say $Z$, of fixed distribution, say $\mu_Z$. Explicitly, this means that for Borel sets $A$ we have
$$ \mathbb{P}_{\theta_n}\left(\sqrt{n}(\hat{\theta}^{(n)}-\theta_n)\in A\right)\rightarrow \mu_Z(A) \text{ as }n\rightarrow\infty.$$

Here $\theta_n$ is a sequence of parameters in $\mathcal{Q}\times\Omega_M$. However, any likelihood-based estimator depends on the observations only through the probabilities of each bin assignment, by considering the multinomial model with the count data. Thus, we can replace $\mathbb{P}_{\theta_n}$ with any law from the full semiparametric model for which the transition matrix $Q_n$ is the same and the functions $f_n$ have the corresponding $\underline{\omega}_M(\theta_n)$ as the bin assignment probabilities, and the above convergence remains true.

\vspace{1ex}

Since $\mathcal{I}_{M+1}$ is made by splitting bins in $\mathcal{I}_M$, a collection of weights $\omega^{(M+1)}=(\omega_m^{(M+1)}:m\leq\kappa_{M+1})$ for the partition $\mathcal{I}_{M+1}$ is identified with weights $\omega^{(M)}$ for the partition $\mathcal{I}_M$. We set $\omega_m^{(M+1)}=\sum_{m^\prime\in\mathrm{Ch}_{M}(m)}\omega_{m^\prime}^{(M)}$, where $\mathrm{Ch}_M(m)$ satisfies $I^{(M)}_m=\bigcup_{m^\prime\in \mathrm{Ch}_M(m) }I^{(M+1)}_{m^\prime}$. This implies that the MLE for the model $\mathcal{M}(\mathcal{I}_M)$ may be obtained by maximising the likelihood in model $\mathcal{M}(\mathcal{I}_{M+1})$ subject to the constraint that the quantities
$ \dfrac{\omega_{m^\prime}^{(M+1)}}{\lvert I_{m^\prime}^{(M+1) }\rvert}$ coincide for all $m^\prime\in\mathrm{Ch}(m)$. Regularity of the constrained MLE within $\mathcal{I}_{M+1}$ is the inherited from its regularity as a global MLE in $\mathcal{\mathcal{I}_{M}}$ by the preceding discussion.

\vspace{1ex}
Moreover, its asymptotic variance, which is given by the inverse Fisher information $\tilde{J}_M^{-1}$ for the model with partition $\mathcal{I}_M$, is lower bounded by the inverse Fisher information for the model with partition $\mathcal{I}_{M+1}$ by Lemma \ref{CR}, hence $ \Tilde{J}_M^{-1} - \Tilde{J}_{M+1}^{-1} $ is positive semi definite.
\end{proof}

\begin{proof}[Proof of Theorem \ref{BvM}]
We have by inspecting the part of the proof of Lemma \ref{infogrows} concerning regularity, that under any sequence of the form $\mathbb{P}^n_*=\mathbb{P}_{\theta^*(a,\mathbf{h})}$, where $\theta^*(a,\mathbf{h})=(Q^*+\frac{a}{\sqrt{n}},f_r^*(1+\frac{1}{\sqrt{n}}h_r):r\in[R])$ with $a\in\mathbb{R}^{R(R-1)}$ and $h_r\in L^2_0(f_r^*)$,
$$ R_{M,n} := d_{BL} \left( \mathbb{P}^n_*\Big(\sqrt{n}\big(\hat{Q}^{(M)}_n-(Q^*+\frac{a}{\sqrt{n}})\big)\in \cdot \Big) , \mu_M \right) \rightarrow 0 ,$$
where $\mu_M$ is Gaussian of covariance equal to $\tilde{J}_{M}^{-1}$ by Theorem \ref{MLEM}. Here $d_{BL}$ is the bounded Lipschitz metric which metrizes weak convergence. By taking $M_n\rightarrow\infty$ sufficiently slowly, we have $R_{M_n,n}\rightarrow 0$ also. Since for any $t\in\mathbb{R}^{R(R-1)}$, $(t^T\tilde{J}_{M_n}t)^{-1}$ is decreasing by Lemma \ref{infogrows} and bounded below by $(t^T\tilde{J}t)^{-1}$ the efficient variance, it converges and the limit is equal to $(t^T\tilde{J}t)^{-1}$ by Proposition \ref{scoreconvergence}. This implies weak convergence of the measures $\mu_{M_n}$ to $\mu$, the centred Gaussian measure whose variance is $\Tilde{J}^{-1}$, and applying the triangle inequality in the metric $d_{BL}$ proves the first claim.

\vspace{1ex}
For the Bernstein-von Mises result, we have from Theorem \ref{BvMM} that

$$\left\lVert \Pi_M(\sqrt{n}(Q-\hat{Q}_n) | Y_{1:n}) - \mathcal{N}(0,\Tilde{J}_M^{-1}) \right\rVert_{TV} =\tilde{R}_{M,n},$$
where $\tilde{R}_{M,n}\rightarrow 0$ in $\mathbb{P}_*-$probability as $n\rightarrow\infty$. Refining the sequence from before so that $M_n\rightarrow\infty$ sufficiently slowly that we additionally have $\tilde{R}_{M_n,n}\rightarrow 0$, and noting that $\tilde{J}_{M_n}\rightarrow \tilde{J}$, we get
\begin{align*}
    &\left\lVert \Pi_{M_n}(\sqrt{n}(Q-\hat{Q}_n) | Y_{1:n}) - \mathcal{N}(0,\Tilde{J}_{\infty}^{-1}) \right\rVert_{TV} \\
    &\leq \left\lVert \Pi_{M_n}(\sqrt{n}(Q-\hat{Q}_n) | Y_{1:n}) - \mathcal{N}(0,\Tilde{J}_{M_n}^{-1}) \right\rVert_{TV} + \left\lVert \mathcal{N}(0,\Tilde{J}^{-1}) - \mathcal{N}(0,\Tilde{J}_{M_n}^{-1}) \right\rVert_{TV} \\
    & = o_{\mathbb{P}_*}(1).
\end{align*}

\end{proof}

\subsection{Proof of Proposition \ref{L1contractmarginal}}
The proof of Proposition \ref{L1contractmarginal} is a direct consequence of the following result, which shows that Theorem \ref{cutcontract} applies to semi-parametric HMMs of the type described in Section \ref{sec:cut:gene}.  For the construction of $\Pi_1$, a monotonic transformation is implicitly used (if necessary) in order to consider histograms on $[0,1]$

\begin{prop}
Let $(Y_t)_{t\geq 1}$ be observations from a finite state space HMM with transition matrix $Q$ and emission densities $\mathbf f=(f_r)_{r=1,\dots,R}$. Consider the cut posterior based on $\Pi_1$ associated to the partition $\mathcal I _M$ and $\Pi_2$. 

\vspace{1ex}
$(i)$ Under \textbf{Assumptions} \ref{identifiability}-\ref{binsrefine}, \textbf{Assumption} \ref{pi1control} is satisfied with $\mathcal{T}_n = \{ Q; \|Q - Q^*\| \leq z_n/\sqrt{n}\}$ with $z_n\rightarrow\infty$ sufficiently slowly and $\phi_n $ the restriction to $\mathcal{T}_n$ of the Gaussian density centered at $\hat Q_{n,M}$ with variance $\tilde J_M^{-1}/n$.

\vspace{1ex}
$(ii)$ Choosing $\Pi_2$ such that \textbf{Assumption} \ref{emissioncontractcondition} is verified for suitable $\epsilon_n,\tilde{\epsilon}_n$ with $n\epsilon_n^2\gtrsim\log n$, the assumptions of Theorem \ref{cutcontract} are verified for the same $\epsilon_n$ and any $K_n\rightarrow\infty$.
\end{prop}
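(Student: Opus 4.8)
The plan is to verify, in turn, the two families of hypotheses required by Theorem \ref{cutcontract}: the condition on the marginal $Q$-posterior encoded in Assumption \ref{pi1control} (part $(i)$), and the Kullback--Leibler and entropy/testing conditions on the conditional emission posterior (part $(ii)$). The unifying idea is that the slab $\mathcal{T}_n$ shrinks at the parametric rate $z_n/\sqrt n$, which can be forced to be $o(\epsilon_n)$ and $o(\tilde\epsilon_n)$ by letting $z_n\to\infty$ slowly (recall $\sqrt n\,\tilde\epsilon_n\to\infty$); since $\mathbf f\mapsto g^{(3)}_{Q,\mathbf f}$ is $L^1$-Lipschitz in $Q$ uniformly over $\mathbf f$, every quantity that must be controlled at $Q^*$ then transfers to all $Q\in\mathcal{T}_n$ up to a negligible perturbation. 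This reduces the cut-posterior problem to the standard \cite{ghosal2007convergence} machinery applied at $Q^*$.

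For part $(i)$ I would combine the Bernstein--von Mises statement of Theorem \ref{BvMM} with the asymptotic normality of Theorem \ref{MLEM}. Write $\psi_n$ for the unrestricted Gaussian density centred at $\hat Q_{n,M}$ with covariance $\tilde J_M^{-1}/n$, and let $\phi_n=\psi_n\mathbf{1}_{\mathcal{T}_n}/\psi_n(\mathcal{T}_n)$ be its renormalised restriction to $\mathcal{T}_n=\{Q:\|Q-Q^*\|\le z_n/\sqrt n\}$. Since $\sqrt n(\hat Q_{n,M}-Q^*)=O_{\mathbb P_*}(1)$ by Theorem \ref{MLEM} and $\psi_n$ has scale $n^{-1/2}$, a Gaussian-tail estimate gives $\psi_n(\mathcal{T}_n^c)=o_{\mathbb P_*}(1)$ as soon as $z_n\to\infty$, whence $\|\psi_n-\phi_n\|_{TV}=o_{\mathbb P_*}(1)$. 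Combining with $\|\Pi_1(\cdot|Y_{1:n})-\psi_n\|_{TV}=o_{\mathbb P_*}(1)$ from Theorem \ref{BvMM} and the triangle inequality yields $\|\Pi_1(\cdot|Y_{1:n})-\phi_n\|_{TV}=o_{\mathbb P_*}(1)$, which is the substance of Assumption \ref{pi1control}; the supplementary bound $\phi_n\lesssim n^{R(R-1)/2}$ required there is immediate from the peak value of a Gaussian density with covariance of order $n^{-1}$ in $R(R-1)$ dimensions.

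For part $(ii)$ the task is to deduce the Kullback--Leibler and sieve/entropy hypotheses of Theorem \ref{cutcontract} from Assumption \ref{emissioncontractcondition}, following \cite{vernet2015conc} and \cite{ghosal2007convergence}. Condition A of Assumption \ref{emissioncontractcondition} is tailored, via the constant $C_{R,Q}$, to the Kullback--Leibler lemma for HMM likelihoods in \cite{vernet2015conc}: it guarantees that $\Pi_2$ charges, with mass at least $e^{-Cn\tilde\epsilon_n^2}$, the set of $\mathbf f$ for which the relevant Kullback--Leibler and variance functionals of the likelihood at $(Q^*,\mathbf f)$ relative to $(Q^*,\mathbf f^*)$ are $O(\tilde\epsilon_n^2)$. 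Condition B supplies a sieve $\mathcal F_n$ with $\Pi_2(\mathcal F_n^c)=o(e^{-Cn\tilde\epsilon_n^2})$ whose $\epsilon_n$-covering number in $d(f,\tilde f)=\max_i\|f_i-\tilde f_i\|_{L^1}$ is at most $e^{C'n\epsilon_n^2}$. Because $\mathbf f\mapsto g^{(3)}_{Q,\mathbf f}$ is $L^1$-Lipschitz uniformly in $Q$, a $d$-cover of $\mathcal F_n$ induces an $L^1$-cover of the corresponding marginals; together with the standard existence of exponentially powerful $L^1$ tests this delivers the testing hypothesis of Theorem \ref{cutcontract} at rate $\epsilon_n$ (the additional entropy from the finite-dimensional slab $\mathcal{T}_n$ is $O(\log n)\lesssim n\epsilon_n^2$ and so negligible). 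Finally I would transfer these conditions, naturally stated at $Q^*$, to every $Q\in\mathcal{T}_n$: since $\|g^{(3)}_{Q,\mathbf f}-g^{(3)}_{Q^*,\mathbf f}\|_{L^1}\lesssim\|Q-Q^*\|\le z_n/\sqrt n=o(\tilde\epsilon_n\wedge\epsilon_n)$, the prior-mass lower bound, the sieve-complement bound, and the tests all persist with constants unchanged.

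The main obstacle is precisely this last transfer: unlike a genuine posterior, the conditional emission posterior $\Pi_2(\mathbf f\mid Q,Y_{1:n})$ is evaluated at a random $Q\sim\Pi_1$, so the evidence (normalising constant) and the Kullback--Leibler and testing quantities must be controlled simultaneously over the whole slab $\mathcal{T}_n$ rather than at the single point $Q^*$. The resolution, and the reason the argument of \cite{ghosal2007convergence} adapts, is that $\mathcal{T}_n$ has been arranged to shrink strictly faster than $\tilde\epsilon_n$, so the uniform Lipschitz dependence of $g^{(3)}_{Q,\mathbf f}$ on $Q$ reduces the uniform-in-$Q$ statements to their $Q=Q^*$ counterparts with an $o(1)$ loss; this is exactly the interface that Theorem \ref{cutcontract} is built to exploit. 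With both $(i)$ and $(ii)$ in hand, the hypotheses of Theorem \ref{cutcontract} hold, and its conclusion—hence Proposition \ref{L1contractmarginal}—follows.
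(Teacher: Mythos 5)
Your part $(i)$ is essentially the paper's own argument: restrict the Bernstein--von Mises Gaussian $\psi_n$ centred at $\hat Q_{n,M}$ to $\mathcal{T}_n$, use $\sqrt n(\hat Q_{n,M}-Q^*)=O_{\mathbb{P}_*}(1)$ (Theorem \ref{MLEM}) plus Gaussian tails to see that the restriction costs $o_{\mathbb{P}_*}(1)$ in total variation, then conclude by the triangle inequality with Theorem \ref{BvMM}; the peak bound $\sup_{\mathcal T_n}\phi_n\lesssim n^{R(R-1)/2}=o(e^{Kn\epsilon_n^2})$ uses $n\epsilon_n^2\gtrsim\log n$. You omit the remaining display of \textbf{Assumption} \ref{pi1control}, namely $\mu(\mathcal{T}_n)\,\mathbb{E}_*^n\int_{\mathcal T_n}\phi_n^2\,\dd\mu=O(z_n)$, but this is the same elementary Gaussian computation recorded in the remark following that assumption ($\mu(\mathcal{T}_n)\lesssim (z_n/\sqrt n)^{R(R-1)}$, $\int\phi_n^2\,\dd\mu\lesssim n^{R(R-1)/2}$), so this is only a minor omission.

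The genuine gap is in part $(ii)$, at exactly the step you flag as the main obstacle. You propose to transfer the prior-mass/Kullback--Leibler condition \eqref{KLcut} from $Q^*$ to all $Q\in\mathcal{T}_n$ via the bound $\lVert g^{(3)}_{Q,\mathbf f}-g^{(3)}_{Q^*,\mathbf f}\rVert_{L^1}\lesssim\lVert Q-Q^*\rVert=o(\tilde\epsilon_n\wedge\epsilon_n)$. This cannot work: the neighbourhoods $V_n(\mathbb{P}^n_*,\epsilon_n)$ in \eqref{KLcut} are defined through the Kullback--Leibler divergence between the full $n$-observation joint laws, and that divergence is not controlled by the $L^1$ distance between three-dimensional marginals (it can be infinite while the marginal $L^1$ distance is arbitrarily small). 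Even granting smoothness, the scaling of your argument is off: what must be shown is $\mathcal{K}(\mathbb{P}^n_*|\mathbb{P}^n_{Q,\mathbf f})<n\epsilon_n^2$ uniformly over $Q\in\mathcal{T}_n$ and $\mathbf f\in S_n$, and the contribution of the $Q$-perturbation to this \emph{joint} KL is of order $n\lVert Q-Q^*\rVert^2\asymp z_n^2$ (KL is locally quadratic), which is $o(n\epsilon_n^2)$ precisely because $z_n\rightarrow\infty$ slowly — this is where ``sufficiently slowly'' enters. A bound that is linear in $\lVert Q-Q^*\rVert$, which is all your marginal Lipschitz estimate could ever give, accumulates to $n\cdot z_n/\sqrt n=z_n\sqrt n$ over $n$ observations, and this in general exceeds $n\epsilon_n^2$. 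The paper closes this step by invoking Lemma 3.2 of \cite{vernet2015conc}, the KL decomposition for HMM joint laws, whose hypotheses are verified by the choice of $\mathcal{T}_n$, in combination with condition \textbf{A} of \textbf{Assumption} \ref{emissioncontractcondition} (the constant $C_{R,Q}$ there is tailored to that lemma). Your Lipschitz reasoning is legitimate for the entropy and testing hypotheses, since those genuinely concern $L^1$ distances of $3$-marginals; there you additionally need the paper's observation that $\mathcal{T}_n\subset\{Q:Q_{ij}\geq Q^*_{ij}/2\}$ for large $n$, so that the test construction for dependent observations (Theorem 3.1 of \cite{vernet2015conc}) applies uniformly over the slab.
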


\begin{proof}
$(i)$ The idea of the proof is as follows: We know from Theorem \ref{BvMM} that $\Pi_1(\cdot|Y_{1:n})$ is close in total variation distance to the normal distribution centred at the estimator $\hat{Q}$. Then the restriction of this distribution to a ball of radius $z_n$ centred at $Q^*$, where $M_n\rightarrow\infty$ slowly, will be close in total variation to the original normal distribution with high probability as $n\rightarrow\infty$.

\vspace{1ex}
Let $p_{1,n}$ be the normal density of variance $\tilde{J}_M^{-1}/n$ centred at $\hat{Q}$, where $\hat{Q}$ is the estimator from Theorem \ref{BvMM}. Let $p_{2,n}$ be the restriction of this normal density to $\{Q:\lVert Q-Q^*\rVert \leq z_n/\sqrt{n} \}$ where $z_n\rightarrow\infty$. Fix $\epsilon>0$ and choose $n$ large enough that $\mathbb{P}_*(\lVert \hat{Q}-Q^*\rVert>cx^\prime_n/\sqrt{n})<\epsilon$ where $1>c>0$ is a constant. Let $p_{3,n}$ be the density obtained by restriction of $p_{1,n}$ to $\{Q:\lVert \hat{Q}-Q^*\rVert \leq (1-c)z_n /\sqrt{n}\}$. Then for sufficiently large $n$, we have with probability exceeding $1-\epsilon$ that the support of $p_{3,n}$ is a subset of the support of $p_2$, in which case
$$\lVert p_{1,n} - p_{2,n}\rVert_{L^1} \leq \lVert p_{1,n} - p_{3,n} \rVert_{L^1}.$$
But the variance of $p_{1,n}$ is of the order $O(n^{-1})$ and $z_n\rightarrow\infty$ so, as $n\rightarrow\infty$, the support of $p_{3,n}$ approaches the support of $p_{1,n}$ and $\lVert p_{1,n} - p_{3,n} \rVert_{L^1}\rightarrow 0$. But then $\lVert p_{1,n} - p_{2,n}\rVert_{L^1}\rightarrow 0$. Thus, with probability exceeding $1-\epsilon$, we have that $\lVert p_{1,n} - p_{2,n}\rVert_{L^1}\rightarrow 0$ as $n\rightarrow\infty$. Since $\epsilon>0$ is arbitrary we conclude $\lVert p_{1,n} - p_{2,n}\rVert_{L^1}=o_{\mathbb{P}_*}(1)$.

\vspace{1ex}
Taking $\phi_n=p_{2,n}$ as above for any $z_n\rightarrow\infty$, we verify Assumption \ref{pi1control} for any $\epsilon_n$ for which $n\epsilon_n^2 \gtrsim \log n$.

\vspace{1ex}
$(ii)$ This is a consequence of Theorem 3.1 in \cite{vernet2015conc}. The choice of $\mathcal{T}_n$ also provides verification of the hypothesis of Lemma 3.2 of \cite{vernet2015conc} and establishes the required control on KL divergence described in \eqref{KLcut}. To satisfy the testing assumption \eqref{testcut}, it suffices to note that for large enough $n$, $\mathcal{T}_n$ is a subset of $\{Q:Q_{i,j}\geq \frac{Q^*_{ij}}{2}\hspace{1ex}\forall i,j\leq R\}$ under \textbf{Assumption} \ref{ass:Qbound}. 
\end{proof}
\section{General theorem for cut posterior contraction}\label{sup:sec:cutcontract}
In this section, we present a general theory for contraction of cut posteriors which is developed in the style of the usual theory for Bayesian posteriors of \cite{ghosal2007convergence}. The main result of this section is Theorem \ref{cutcontract}, from which Proposition \ref{L1contractmarginal} follows.

\vspace{1ex}
Consider a general semiparametric model in which there is a finite dimensional parameter $\vartheta$ and an infinite-dimensional parameter $\eta$. Suppose we wish to estimate pair $(\vartheta,\eta)$ governing the law $\mathbb{P}_{\vartheta,\eta}^n$ of a random sample $Y_{1:n}\in\mathcal{Y}^n$. Assume the data is generated by some true distribution $\mathbb{P}_*^n=\mathbb{P}^n_{\vartheta^*,\eta^*}$ and consider the following two models:

\vspace{1ex}
\textbf{Model 1:} Consider a model $\mathcal{T} \times \mathcal{F}_1$ on pairs $(\vartheta,\eta)$ such that $\vartheta^*\in \mathcal{T}$ but for which we do not require that $\eta^*\in\mathcal{F}_1$. Consider a joint prior $\Pi_1$ over this space yielding a marginal posterior on $\vartheta$ given by
$$\Pi_1(\dd\vartheta | Y_{1:n})=\int_{\mathcal{F}_1}\Pi_1(\dd\vartheta, \dd\eta | Y_{1:n}).$$

\vspace{1ex}
\textbf{Model 2:} Consider a model on $\eta$, conditional on $\vartheta$, with a prior $\Pi_{2}(\cdot|\vartheta)$. We denote the parameter set for $\eta$ by $\mathcal{F}_{2}$ and we assume that $\eta^*\in \mathcal{F}_2$. We obtain in this model, a conditional posterior distribution $\Pi_2(\dd\eta|Y_{1:n},\vartheta)$. 

\begin{defn}
The \textit{cut posterior} on $(\vartheta,\eta)$ is given by
$$\Pi_{\text{cut}}(\dd\vartheta,\dd\eta|Y_{1:n})=\Pi_1(\dd\vartheta |Y_{1:n})\Pi_2(\dd \eta | \vartheta, Y_{1:n}).$$

\end{defn}
Write $\mathbb{P}_{\vartheta,\eta}^n$ for the law of the observations $Y_{1:n}$. Define for some $\ell>0$ and some loss function  $d(\cdot,\cdot)$ on $\mathbb{P}_{\vartheta,\eta}^\ell$, $$B_\ell(\epsilon)=\{ (\vartheta,\eta) : d(\mathbb{P}_{\vartheta,\eta}^\ell, \mathbb{P}_{\vartheta^*,\eta^*}^\ell) <  \epsilon\}, \quad M,\epsilon>0,$$ with $B_\ell^c(\epsilon)$ its complement. Define also the Kullback-Leibler neighbourhoods of $  \mathbb{P}^n_* = \mathbb{P}_{\vartheta^*,\eta^*}^n$ as
$$V_{n}(\mathbb{P}^n_*,\epsilon)=\{ (\vartheta,\eta) : \mathcal{K}( \mathbb{P}_{\vartheta^*,\eta^*}^n |\mathbb{P}^n_{\vartheta,\eta} ) < n\epsilon^2\},$$
where $\mathcal{K}( \mathbb{P}_{\vartheta^*,\eta^*}^n |\mathbb{P}^n_{\vartheta,\eta} ) = \mathbb{E}^n_*\log\frac{\dd  \mathbb{P}_{\vartheta^*,\eta^*}^n}{\dd \mathbb{P}_{\vartheta,\eta}^n}$ is the Kullback-Leibler divergence.



We now present a general theorem to characterize cut-posterior contraction result in the spirit of the now classical result of \cite{ghosal2007convergence}. 
 Our main additional assumption for the cut setup is that we have sufficiently good control over $\Pi_1$, similar to the kind established in Section \ref{efficiency} in the HMM setting, which we detail now.

\vspace{1ex}
Denote by $\pi_1( \cdot | Y_{1:n})$ the marginal posterior density of $\vartheta$ with respect to some measure $\mu_1$ on $\mathcal{T}$, associated to the prior $\Pi_1$ on $\vartheta, \eta$. 
\begin{assume}\label{pi1control}
 For all sequences $z_n\rightarrow\infty$ there exist $\mathcal{T}_n\subset\mathcal{T}$ with $\vartheta^*\in\mathcal{T}_n$, $\epsilon_n = o(1)$ and  a non-negative, random function $\phi_n$ on $\mathcal{T}_n$ with $\mathrm{supp}(\phi_n)\subset\mathcal{T}_n$, such that    $\lVert\pi_1(\cdot|Y_{1:n})-\phi_n(\cdot|Y_{1:n})\rVert_{L^1(\mu)}=o_{\mathbb{P}_*^n}(1)$,
$$\mu(\mathcal{T}_n)\mathbb E_*^n\left(\int_{\mathcal{T}_n}\phi_n^2(\vartheta)\dd\mu(\vartheta)\right) = O(z_n);\quad \exists K>0  \textrm{ }\mathrm{ s.t. }\textrm{ }  \mathbb P^n_*[\sup_{\vartheta \in \mathcal T_n}\phi_n(\vartheta)> e^{Kn\epsilon_n^2}] = o(1) $$
\end{assume}
Assumption \ref{pi1control} is mild, for instance if $\phi_n$ is a Gaussian distribution with mean $\hat \vartheta$ and variance $\mathbf i_*/n$, for some semi definite matrix $\mathbf i_*$ where $\hat \vartheta -  \vartheta^* = O_{\mathbb P_*^n}(1/\sqrt{n})$, then $\mathcal{T}_n = \{ \|\vartheta -  \vartheta^*\| \leq K_n/n\}$ with $K_n\rightarrow\infty $ arbitrarily slowly
$$\mathbb{E}^n_*\left(\int_{\mathcal{T}_n}\phi_n^2(\vartheta)\dd\mu(\vartheta)\right) \lesssim  n^{d/2}, \quad \mathrm{vol}(\mathcal{T}_n)\lesssim n^{-d/2} K_n^{d/2}, \quad \sup_{\vartheta \in \mathcal T_n}\phi_n(\vartheta) \lesssim n^{d/2} = o( e^{K_nn\epsilon_n^2}) $$
as soon as $n\epsilon_n^2 \gtrsim \log n$.

\begin{theorem}\label{cutcontract}
Let \textbf{Assumption} \ref{pi1control} hold with $n\epsilon_n^2\rightarrow\infty$ and assume that there exist $C>0$ such that for any  $\vartheta\in \mathcal{T}_n$,  sets $S_{n}(\vartheta) \subset \mathcal F_2$ satisfying
\begin{equation}\label{KLcut}
\bigcup_{\vartheta\in\mathcal{T}_n}\{\vartheta\}\times S_{n}(\vartheta)\subset V_{n}(\mathbb{P}^n_*,\epsilon_n), \quad \inf_{\vartheta \in \mathcal T_n} \Pi_2(S_n(\vartheta)|\vartheta)\geq e^{-Cn\epsilon_n^2}.
\end{equation}
 Assume also that there exist  $L_n\rightarrow\infty$,  $\mathcal{F}_{2,n}(\vartheta)\subset\mathcal{F}_2$ and $\psi_n:\mathcal{Y}^n\longrightarrow\{0,1\}$   such that
    $$\sup_{\vartheta \in \mathcal T_n}\Pi_2(\mathcal{F}_{2,n}(\vartheta)^c |\vartheta) \leq e^{-L_nn\epsilon_n^2} , \quad \text{set} \quad \mathcal{U}_n=\bigcup_{\vartheta\in\mathcal{T}_n}\{\vartheta\}\times\mathcal{F}_{2,n}(\vartheta),$$
   \begin{equation}\label{testcut}\mathbb{E}^n_*\psi_n =o(1), \hspace{3ex} \sup_{\stackrel{(\vartheta,\eta)\in \mathcal{U}_n}{d(\mathbb P_{\vartheta,\eta}^n, \mathbb{P}_{\vartheta^*,\eta^*}^n) >  K_n\epsilon_n}}\mathbb{E}_{\vartheta,\eta}^n(1-\psi_n) \leq C^\prime e^{-2L_nn\epsilon_n^2}, \quad 
   \end{equation}
for some $K_n,K>0$. Then, as $n\rightarrow\infty$,
$$\Pi_{cut}(B_\ell^c(K_n\epsilon_n) | Y_{1:n}) = o_{\mathbb{P}_*^n}(1).$$

\end{theorem}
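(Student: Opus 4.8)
The plan is to adapt the testing-and-evidence scheme of \cite{ghosal2007convergence} to the cut posterior, the two new features being the data-dependent weight $\phi_n$ of Assumption \ref{pi1control} in place of the genuine marginal posterior, and the family of $\vartheta$-indexed normalising constants $D_n(\vartheta)=\int_{\mathcal{F}_2}\frac{\dd\mathbb{P}^n_{\vartheta,\eta}}{\dd\mathbb{P}^n_*}\dd\Pi_2(\eta|\vartheta)$ attached to the conditional posteriors $\Pi_2(\cdot|\vartheta,Y_{1:n})$. First I would write, with $B(\cdot)_\vartheta$ denoting the $\vartheta$-section of a set, $\Pi_{cut}(B_\ell^c(K_n\epsilon_n)|Y_{1:n})=\int_{\mathcal{T}}\Pi_2(B_\ell^c(K_n\epsilon_n)_\vartheta|\vartheta,Y_{1:n})\,\pi_1(\vartheta|Y_{1:n})\dd\mu(\vartheta)$. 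Since the inner factor is bounded by $1$, Assumption \ref{pi1control} lets me replace $\pi_1(\cdot|Y_{1:n})$ by $\phi_n$ at the cost of $\lVert\pi_1-\phi_n\rVert_{L^1(\mu)}=o_{\mathbb{P}^n_*}(1)$, so it suffices to control $\int_{\mathcal{T}_n}\Pi_2(B_\ell^c(K_n\epsilon_n)_\vartheta|\vartheta,Y_{1:n})\,\phi_n(\vartheta)\dd\mu(\vartheta)$. Because $\mathbb{E}^n_*\psi_n=o(1)$ gives $\psi_n=o_{\mathbb{P}^n_*}(1)$, I may insert the factor $(1-\psi_n)$ throughout.

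For each fixed $\vartheta\in\mathcal{T}_n$ I would obtain an evidence lower bound from the KL condition \eqref{KLcut}: applying Jensen's inequality to $\int_{S_n(\vartheta)}(\dd\mathbb{P}^n_{\vartheta,\eta}/\dd\mathbb{P}^n_*)\dd\Pi_2(\eta|\vartheta)$, using $\{\vartheta\}\times S_n(\vartheta)\subset V_n(\mathbb{P}^n_*,\epsilon_n)$ and $\Pi_2(S_n(\vartheta)|\vartheta)\geq e^{-Cn\epsilon_n^2}$, yields $D_n(\vartheta)\geq e^{-c'n\epsilon_n^2}$ (with $c'$ depending on $C$) off an event $A_n(\vartheta)^c$ whose probability $\delta_n=o(1)$ is controlled by the standard argument. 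I split the $\vartheta$-integral over $A_n(\vartheta)$ and $A_n(\vartheta)^c$. On the bad-evidence part I bound the posterior factor by $1$ and estimate $\int_{\mathcal{T}_n}\mathds{1}_{A_n(\vartheta)^c}\phi_n\dd\mu$ by Cauchy--Schwarz, using the quadratic control $\mathbb{E}^n_*\int_{\mathcal{T}_n}\phi_n^2\dd\mu=O(z_n/\mu(\mathcal{T}_n))$ from Assumption \ref{pi1control}; this gives an expectation of order $(z_n\delta_n)^{1/2}$, which is $o(1)$ once the free sequence $z_n\to\infty$ is taken slowly enough that $z_n\delta_n\to0$, whence the term is $o_{\mathbb{P}^n_*}(1)$ by Markov.

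On the good-evidence part I replace $D_n(\vartheta)$ by $e^{-c'n\epsilon_n^2}$, bound $\phi_n\leq e^{Kn\epsilon_n^2}$ on the high-probability event of Assumption \ref{pi1control}, and split the numerator $\int_{B_\ell^c(K_n\epsilon_n)_\vartheta}(\dd\mathbb{P}^n_{\vartheta,\eta}/\dd\mathbb{P}^n_*)\dd\Pi_2(\eta|\vartheta)$ over the sieve $\mathcal{F}_{2,n}(\vartheta)$ and its complement. After Fubini and the change of measure $\mathbb{E}^n_*[(1-\psi_n)\,\dd\mathbb{P}^n_{\vartheta,\eta}/\dd\mathbb{P}^n_*]=\mathbb{E}^n_{\vartheta,\eta}(1-\psi_n)$, the on-sieve piece is handled by the type-II bound $\sup_{(\vartheta,\eta)\in\mathcal{U}_n,\,d>K_n\epsilon_n}\mathbb{E}^n_{\vartheta,\eta}(1-\psi_n)\leq C'e^{-2L_nn\epsilon_n^2}$ of \eqref{testcut}; here I use that the loss is not increased under marginalisation, so that $B_\ell^c(K_n\epsilon_n)\subset\{d(\mathbb{P}^n_{\vartheta,\eta},\mathbb{P}^n_*)>K_n\epsilon_n\}$ and the test indeed bites. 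The off-sieve piece is bounded by the prior-mass estimate $\sup_\vartheta\Pi_2(\mathcal{F}_{2,n}(\vartheta)^c|\vartheta)\leq e^{-L_nn\epsilon_n^2}$. Each piece then carries a factor $e^{(c'+K)n\epsilon_n^2}\mu(\mathcal{T}_n)$ that is overwhelmed because $L_n\to\infty$ and $n\epsilon_n^2\to\infty$, so both are $o_{\mathbb{P}^n_*}(1)$ by Markov; collecting the estimates finishes the proof.

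I expect the main obstacle to be precisely the interaction between the random, $\vartheta$-dependent weight $\phi_n$ and the continuum of per-$\vartheta$ normalising constants $D_n(\vartheta)$: unlike in an ordinary posterior there is no single evidence term, and a naive uniform-in-$\vartheta$ lower bound on $D_n(\vartheta)$ is not available. The device that makes the argument work is to keep the evidence lower bound \emph{pointwise} in $\vartheta$ (so that only $\delta_n=o(1)$, rather than exponential decay, is required) and to absorb the resulting bad-evidence set through the quadratic moment control on $\phi_n$ via Cauchy--Schwarz. This is exactly why Assumption \ref{pi1control} is phrased in terms of $\int\phi_n^2\dd\mu$, $\mu(\mathcal{T}_n)$ and a free rate $z_n$ rather than a crude supremum bound alone.
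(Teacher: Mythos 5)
Your overall architecture coincides with the paper's proof: replace $\pi_1(\cdot|Y_{1:n})$ by $\phi_n$ via the $L^1$ bound, insert $(1-\psi_n)$ and the event $\{\sup_{\mathcal{T}_n}\phi_n\leq e^{Kn\epsilon_n^2}\}$, split according to whether the per-$\vartheta$ evidence $D_n(\vartheta)$ is small, control the bad-evidence set by Cauchy--Schwarz against $\int_{\mathcal{T}_n}\phi_n^2\dd\mu$, and treat the rest by Fubini, the change of measure, the tests and the sieves (your observation that $d$ does not increase under marginalisation, so that $B_\ell^c(K_n\epsilon_n)$ is contained in the set where the test bites, is also the right way to reconcile the $\ell$-marginal loss with condition \eqref{testcut}). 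The one step that fails as written is your evidence lower bound. You claim that \eqref{KLcut} yields $D_n(\vartheta)\geq e^{-c'n\epsilon_n^2}$ with $c'$ a \emph{constant} depending on $C$, outside an event of probability $\delta_n=o(1)$, ``by the standard argument''. But \eqref{KLcut} controls only the first moment of the log-likelihood ratio: writing $\nu$ for the renormalised restriction of $\Pi_2(\cdot|\vartheta)$ to $S_n(\vartheta)$ and $Z=\int\bigl(l_n(\vartheta,\eta)-l_n(\vartheta^*,\eta^*)\bigr)\dd\nu(\eta)$, Jensen gives $\log D_n(\vartheta)\geq Z-Cn\epsilon_n^2$, the KL condition gives $\mathbb{E}_*^n[(-Z)_+]=O(n\epsilon_n^2)$, and Markov's inequality then bounds $\mathbb{P}_*^n\bigl(Z<-Mn\epsilon_n^2\bigr)$ only by $O(1/M)$. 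With a constant margin $M$ this is a constant, not $o(1)$; and since Assumption \ref{pi1control} forces $z_n\to\infty$, no choice of $z_n$ can then rescue your bad-evidence term $(z_n\delta_n)^{1/2}$, which diverges. A vanishing failure probability cannot be extracted from a first-moment condition with a fixed exponent margin.

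This is precisely what the paper's Lemma \ref{ELBO} is for: it takes a \emph{growing} margin, proving $\sup_{\vartheta\in\mathcal{T}_n}\mathbb{P}_*^n\bigl(D_n(\vartheta)\leq \Pi_2(S_n(\vartheta)|\vartheta)e^{-\tilde{L}_nn\epsilon_n^2}\bigr)=O(\tilde{L}_n^{-1})$ for an arbitrary sequence $\tilde{L}_n\to\infty$. The price is that the good-evidence term then carries the factor $e^{(K+C+\tilde{L}_n)n\epsilon_n^2}$ rather than $e^{(K+c')n\epsilon_n^2}$, so three sequences must be threaded together: $z_n=o(\tilde{L}_n)$, so that the Cauchy--Schwarz bound $(z_n/\tilde{L}_n)^{1/2}$ vanishes, and $\tilde{L}_n=o(L_n)$, so that the test and sieve bounds $e^{-2L_nn\epsilon_n^2}$ and $e^{-L_nn\epsilon_n^2}$ still dominate. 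This interplay is the reason the theorem hypothesises a sequence $L_n\to\infty$ rather than a fixed constant; as the paper remarks, the constant-exponent evidence bound you invoke is only available under an additional assumption on higher-order Kullback--Leibler variations, which is not among the theorem's hypotheses. With this repair your proof matches the paper's.
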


\begin{rk}
Typically $K_n=O(\sqrt{L_n}$), when the tests (\ref{testcut}) are constructed as a supremum of local tests. Given some $L_n\rightarrow\infty$ for which the conditions are satisfied, we can choose $L_n\rightarrow\infty$ arbitrarily slowly, and consequently we can choose $K_n\rightarrow\infty$ arbitrarily slowly.
\end{rk}

The proof of Theorem \ref{cutcontract} which we present below is a rather simple adaptation of \cite{ghosal2007convergence}. As in  \cite{ghosal2017fundamentals}, we have simplified the common Kullback-Leibler neighbourhood assumption involving variances of the log-likelihood ratio using the technique of  Lemma 6.26 therein.

\begin{rk}
By placing an additional assumption on neighbourhoods for higher-order Kullback-Leibler variations, the assumption on the existence of the sequence $L_n\rightarrow\infty$ can be replaced with an assumption that $L_n=L$ for a constant $L>1+C$ - the proof is similar but uses a different (but standard) technique for proving the evidence lower bound. In this case we can choose $K_n$ a constant.
\end{rk}

\begin{proof}[Proof of Theorem \ref{cutcontract}]
The proof is an adaptation of  the  proof on posterior contraction rates, as in  \cite{ghosal2007convergence}. 
Under \textbf{Assumption} \ref{pi1control}, it suffices to prove the claim when we replace the cut posterior $\Pi_{cut}$ with the distribution $$\tilde{\Pi}(\dd\vartheta,\dd\eta)=\phi_n(\vartheta)\dd\mu(\vartheta) \Pi_2(\dd\eta | \vartheta, Y_{1:n} ).$$ Write also $\bar{B}_\ell=B_\ell^c(K_n\epsilon_n)\cap(\mathcal{T}_n\times\mathcal{F}^R)$, the subset of $B^c_\ell(K_n\epsilon_n)$ on which $\tilde{\Pi}$ is supported, and $\delta_n=\mu(\mathcal{T}_n)$.

\vspace{1ex}
Writing $J_{n}(\vartheta)$ for the random variable $\Pi_2(\bar{B}_\ell|\vartheta, Y_{1:n})$, we have
$J_{n}(\vartheta)=\dfrac{N_n(\vartheta)}{D_n(\vartheta)}$ where $$N_n(\vartheta)=\int_{\bar{B}_\ell}e^{l_n(\vartheta,\eta)-l_n(\vartheta^*,\eta^*)}\dd\Pi_2(\eta|\vartheta);\hspace{2ex}D_n(\vartheta)=\int_{\mathcal{F}_2}e^{l_n(\vartheta,\eta)-l_n(\vartheta^*,\eta^*)}\dd\Pi_2(\eta|\vartheta).$$

Write also $\Tilde{\Pi}(\Bar{B}_\ell | Y_{1:n}) = \Tilde{\Pi}(\bar{B}_\ell | Y_{1:n})\psi_n+\Tilde{\Pi}(\bar{B}_\ell | Y_{1:n})(1-\psi_n)1_{\Omega_n^c} + \Tilde{\Pi}(\bar{B}_\ell | Y_{1:n})(1-\psi_n)1_{\Omega_n},$
where $\Omega_n=\{\sup_{\vartheta \in \mathcal T_n}\phi_n(\vartheta)\leq e^{Kn\epsilon_n^2}\}$. By the testing assumption, the first term vanishes in probability, while the second term vanishes under Assumption \ref{pi1control}. For the remaining term, we have for $\tilde{L}_n\rightarrow\infty $ to be chosen later, that
\begin{align*}
   1_{\Omega_n}(1-\psi_n)\int_{\mathcal{T}_n}J_n(\vartheta)\phi_n(\vartheta)\dd\mu(\vartheta) \leq & \int_{\mathcal{T}_n}1_{\{D_n(\vartheta)\leq e^{-(C+\tilde{L}_n)n\epsilon_n^2}\}}\phi_n(\vartheta)\dd\mu(\vartheta) &(I_1) \\
   +&(1-\psi_n)\int_{\mathcal{T}_n}1_{\Omega_n}N_n(\vartheta)e^{(C+\tilde{L}_n)n\epsilon_n^2}\phi_n(\vartheta)\dd\mu(\vartheta)  , &(I_2)
\end{align*}
since $J_n(\vartheta)\leq 1$. We can bound $\mathbb{E}_*[I_1]$ as
\begin{align*}
  \mathbb{E}_*[I_1]&\leq \left(\int_{\mathcal{T}_n}\mathbb{P}_*{\{D_n(\vartheta)\leq e^{-(C+\tilde{L}_n)n\epsilon_n^2}\}}\dd\mu(\vartheta) \right)^{\frac{1}{2}} \left[E_*\left(\int_{\mathcal{T}_n}\phi_n^2(\vartheta)\dd\mu(\vartheta)\right)\right]^{\frac{1}{2}},
\end{align*}
by an application of Cauchy-Schwartz. Now, noting that by Lemma \ref{ELBO} and under the assumption of sufficient prior mass on the $S_n(\vartheta)$, we have for any $\tilde{L}_n\rightarrow\infty$ that

$$\mathbb{P}_*{\{D_n(\vartheta)\leq e^{-(\tilde{L}_n +C)n\epsilon_n^2}\}}= O({\tilde{L}_n}^{-1}),$$
and so $$\mathbb{E}_*[I_1]\lesssim \sqrt{\delta_n}\tilde{L}_n^{-1/2}\left(\int_{\mathcal{T}_n}\phi_n^2(\vartheta)\dd\mu(\vartheta)\right)^{\frac{1}{2}}=o(1)$$  under \textbf{Assumption} \ref{pi1control} by taking $z_n=o(\tilde{L}_n)$.
We bound $\mathbb{E}_*[I_2]$ by
\begin{align*}
  \mathbb{E}_*[I_2] &\leq  e^{(C+\tilde{L}_n)n\epsilon_n^2}  \mathbb{E}_*\left(\int_{\mathcal{T}_n} \int_{\bar{B}_\ell}[e^{l_n(\vartheta,\eta)-l_n(\vartheta^*,\eta^*)}(1-\psi_n)\phi_n]\dd\Pi_2(\eta|\vartheta) \dd\mu(\vartheta) \right)\\
   &\leq e^{(C+\tilde{L}_n)n\epsilon_n^2}   \left(\int_{\mathcal{T}_n} \int_{\bar{B}_\ell}\mathbb{E}_{\vartheta,\eta}[(1-\psi_n)\phi_n1_{\Omega_n}]\dd\Pi_2(\eta|\vartheta) \dd\mu(\vartheta) \right) .
\end{align*}
Using Fubini's theorem again alongside the assumption on type II errors and on the sieves, and the deterministic bound on $\phi_n(\vartheta)$ over $\Omega_n$ from \textbf{Assumption} \ref{pi1control}, we bound what precedes by 
\begin{align*}
\leq & e^{(K+C+\tilde{L}_n)n\epsilon_n^2}  \left(\int_{\mathcal{T}_n} \int_{\bar{B}_\ell} \mathbb{E}_{\vartheta,\eta}[(1-\psi_n) (1_{\eta\in\mathcal{F}_{2,n}(\vartheta)}+1_{\eta\in\mathcal{F}_{2,n}(\vartheta)^c})] \dd\Pi_2(\eta|\vartheta) \dd\mu(\vartheta) \right)\\
\leq \hspace{1ex} & e^{(K+C+\tilde{L}_n)n\epsilon_n^2} \Bigg\{\left(\int_{\mathcal{T}_n} \int_{\bar{B}_\ell}C^\prime e^{-L_nn\epsilon_n^2}   \dd\Pi_2(\eta|\vartheta) \dd\mu(\vartheta) \right) + \int_{\mathcal{T}_n}  e^{-L_nn\epsilon_n^2}\dd\mu(\vartheta) \Bigg\} \\
\leq \hspace{1ex} & e^{(K+C+\tilde{L}_n)n\epsilon_n^2}  \left(C^\prime e^{-L_nn\epsilon_n^2} + e^{-L_nn\epsilon_n^2} \right).
\end{align*}
Choosing $\tilde{L}_n=o(L_n)$, we get the required convergence.
\end{proof}

Lemma \ref{ELBO} provides the lower bound on the denominator used in the proof of Theorem \ref{cutcontract}. The proof is standard but we include it for completeness, it follows almost exactly the proof of Lemma 6.26 in \cite{ghosal2017fundamentals}.

\begin{lemma}\label{ELBO}
Let $A_n(\vartheta)=\{D_n(\vartheta)\geq \Pi_2(S_{n}(\vartheta)|\vartheta)e^{-n\tilde{L}_n\epsilon_n^2}\}$ where $\tilde{L}_n\rightarrow\infty$ and $S_{n}(\vartheta)$ is such that there exists $\mathcal{T}_n$ with
$$\bigcup_{\vartheta\in\mathcal{T}_n}\{\vartheta\}\times S_{n}(\vartheta)\subset V_{0}(P_0,\epsilon_n).$$ Then  $$\sup_{\vartheta\in\mathcal{T}_n}\mathbb{P}_*({A}_n(\vartheta)^c)=O({\tilde{L}_n}^{-1})=o(1)$$ as $n\rightarrow\infty$.
\end{lemma}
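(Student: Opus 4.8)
The plan is to follow the evidence-lower-bound argument of Lemma 6.26 in \cite{ghosal2017fundamentals}, the one subtle point being to produce an upper tail bound on an averaged log-likelihood ratio from the Kullback--Leibler \emph{mean} condition alone, with no variance control. Writing $l_n(\vartheta,\eta)-l_n(\vartheta^*,\eta^*) = \log(p^{(n)}_{\vartheta,\eta}/p^{(n)}_*)(Y_{1:n})$ with $p^{(n)}_*=p^{(n)}_{\vartheta^*,\eta^*}$, I would first restrict the integral defining $D_n(\vartheta)$ to $S_n(\vartheta)$ and renormalise the prior, writing $\bar\Pi_2(\cdot|\vartheta)$ for $\Pi_2(\cdot\cap S_n(\vartheta)|\vartheta)/\Pi_2(S_n(\vartheta)|\vartheta)$. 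Jensen's inequality applied to the convex map $x\mapsto e^x$ then gives
$$ D_n(\vartheta) \ge \Pi_2(S_n(\vartheta)|\vartheta)\,\exp\bigl(-Z_n(\vartheta)\bigr), \qquad Z_n(\vartheta) := \int_{S_n(\vartheta)} \log\frac{p^{(n)}_*}{p^{(n)}_{\vartheta,\eta}}\,\dd\bar\Pi_2(\eta|\vartheta), $$
so that $A_n(\vartheta)^c \subseteq \{Z_n(\vartheta) > \tilde{L}_n n\epsilon_n^2\}$. It therefore suffices to bound $\mathbb{P}_*(Z_n(\vartheta) > \tilde{L}_n n\epsilon_n^2)$ uniformly over $\vartheta\in\mathcal{T}_n$.

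Next I would record two facts about $Z_n(\vartheta)$. By Fubini's theorem and the inclusion $S_n(\vartheta)\subset V_n(\mathbb{P}_*^n,\epsilon_n)$ (i.e. $\mathcal{K}(\mathbb{P}_*^n|\mathbb{P}^n_{\vartheta,\eta})<n\epsilon_n^2$ on $S_n(\vartheta)$), the mean satisfies $\mathbb{E}_*[Z_n(\vartheta)] = \int_{S_n(\vartheta)} \mathcal{K}(\mathbb{P}_*^n\,|\,\mathbb{P}^n_{\vartheta,\eta})\,\dd\bar\Pi_2 < n\epsilon_n^2$, uniformly in $\vartheta\in\mathcal{T}_n$. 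Second, a further application of Jensen's inequality together with $\mathbb{E}_*[p^{(n)}_{\vartheta,\eta}/p^{(n)}_*]=1$ yields the exponential moment bound $\mathbb{E}_*[e^{-Z_n(\vartheta)}] \le \int_{S_n(\vartheta)} \mathbb{E}_*[p^{(n)}_{\vartheta,\eta}/p^{(n)}_*]\,\dd\bar\Pi_2 = 1$. This last bound is exactly the device that replaces the usual variance condition on the log-likelihood ratio.

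Finally I would convert these two facts into the desired upper tail bound by a truncation. Since $e^{-x}\ge -x$ for all real $x$, the negative part obeys $(-Z_n)_+ \le e^{-Z_n}$, whence $\mathbb{E}_*[(-Z_n(\vartheta))_+] \le \mathbb{E}_*[e^{-Z_n(\vartheta)}] \le 1$. Writing $Z_n^+ = \max(Z_n,0) = Z_n + (-Z_n)_+ \ge 0$, its mean is controlled by $\mathbb{E}_*[Z_n^+(\vartheta)] \le n\epsilon_n^2 + 1 \lesssim n\epsilon_n^2$, using $n\epsilon_n^2\to\infty$. Because the threshold is positive we have $\{Z_n > \tilde{L}_n n\epsilon_n^2\} = \{Z_n^+ > \tilde{L}_n n\epsilon_n^2\}$, and Markov's inequality applied to the non-negative variable $Z_n^+$ gives $\mathbb{P}_*(Z_n(\vartheta) > \tilde{L}_n n\epsilon_n^2) \le \mathbb{E}_*[Z_n^+(\vartheta)]/(\tilde{L}_n n\epsilon_n^2) = O(\tilde{L}_n^{-1})$, the bound being uniform in $\vartheta\in\mathcal{T}_n$ since every estimate above was uniform. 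Taking the supremum over $\mathcal{T}_n$ and recalling $\tilde{L}_n\to\infty$ then completes the proof.

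The main obstacle, and the only genuinely non-routine point, is precisely the control of the upper tail of $Z_n(\vartheta)$ from its mean alone: a mean bound by itself is not enough, since a heavy upper tail can be offset by a heavy lower tail. The crux is to exploit the likelihood-ratio structure, which forces $\mathbb{E}_*[e^{-Z_n}]\le 1$ and thereby tames the lower tail, allowing the truncated variable $Z_n^+$ to be handled by a one-line Markov estimate. Everything else (the two Jensen steps, Fubini, and the uniformity over $\mathcal{T}_n$) is routine.
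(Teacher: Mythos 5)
Your proof is correct and follows essentially the same route as the paper's: restrict $D_n(\vartheta)$ to $S_n(\vartheta)$, renormalise the prior, apply Jensen to reduce to a lower-tail bound on the averaged log-likelihood ratio, and then control that tail using only the Kullback--Leibler mean condition. The only difference is that the paper compresses the final step into ``arguing as in \cite{ghosal2017fundamentals}'' (Lemma 6.26 therein), whereas you spell out that argument explicitly --- the exponential-moment bound $\mathbb{E}_*[e^{-Z_n}]\leq 1$ taming the lower tail of $-Z_n$, followed by Markov applied to $Z_n^+$ --- which is precisely the technique being cited.
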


\begin{proof}
We show that for each $\vartheta$ that, with probability tending to one,
$$\int_{\mathcal{F}_2}e^{l_n(\vartheta,\eta)}-e^{l_n(\vartheta^*,\eta^*)}\dd\Pi_2(\eta|\vartheta) \geq \Pi_2(S_{n}(\vartheta)|\vartheta) e^{-nM_n\epsilon_n^2},$$
for any $M_n\rightarrow\infty$. It suffices to show the above equation holds when we restrict the integral to $S_{n}(\vartheta)\subset\mathcal{F}_2$, for which we have $\{\vartheta\}\times S_{n}(\vartheta)\subset V_n(\mathbb{P}_*,\epsilon_n)$. By dividing both sides by $\Pi_2(S_{n}(\vartheta)|\vartheta)$ we see that it suffices to show that, for any probability measure $\nu$ supported on $S_n=S_n(\vartheta)$, that
$$\int_{S_n}e^{l_n(\vartheta,\eta)}-e^{l_n(\vartheta^*,\eta^*)}\dd\nu(\eta) \geq  e^{-nM_n\epsilon_n^2}.$$
By applying the logarithm to both sides and using Jensen's inequality, it suffices to show that with high probability
$$Z:= \int_{S_n}l_n(\vartheta,\eta)-l_n(\vartheta^*,\eta^*)\dd\nu(\eta) \geq  -nM_n\epsilon_n^2.$$
Arguing as in \cite{ghosal2017fundamentals}, we obtain
$$\mathbb{P}_*(Z < -nM_n\epsilon_n^2) = O(M_n^{-1}) = o(1) $$
as $n\rightarrow\infty$ for any $M_n\rightarrow\infty$. Since the bound does not depend on $\vartheta$ and uses only that $\{\vartheta\}\times S_n(\vartheta)\subset V_0(P_0,\epsilon_n)$, we obtain the desired bound on the supremum.

\end{proof}

\section{Proofs of technical results}\label{sup:techproofs}
Section \ref{sup:techproofs} is devoted to a number of technical proofs which are required the main results, but are reasonably standard in their approach.

\vspace{1ex}
In Section \ref{invertibleFI}, we prove that the Fisher information matrix is invertible for general discrete state-space, discrete observation HMMs. This is necessary to apply the results of \cite{bickel1998asymptotic} and \cite{DeGunst2008} in the proofs of Theorems \ref{MLEM} and \ref{BvMM}.

\vspace{1ex}
In Section \ref{sup:sec:filowerbound}, we gather some properties of the Fisher information matrix, showing a Cramer-Rao bound for estimation in HMMs, and showing a local uniform convergence result for the expected information from $n$ observations.

\vspace{1ex}
In Section \ref{TechnicalLemmas}, we collect technical lemmas used for the deconvolution argument which is the key part of the proof of Theorem \ref{BvM}.

\vspace{1ex}
In Section \ref{sup:sec:admisspart}, we state a result which implies the existence of an admissible partition.

\vspace{1ex}
In Section \ref{sup:sec:smoothproof}, we collect technical lemmas which feature in the proofs of Theorems \ref{inversion} and \ref{smoothingcontract}

\subsection{Non-singularity of Fisher Information}\label{invertibleFI}
In what follows, we establish invertibility of the Fisher Information matrix for general multinomial Hidden Markov models.
\begin{prop}\label{prop:invertibleFI}
Consider a multinomial HMM with latent states $X_t\in\{1,\dots,R\}$ and discrete observations $Y_t$ taking values in the set of basis vectors of $\mathbb{R}^{\kappa}$, which we denote $\{e_1,\dots,e_M\}$. Denote $Q\in\mathbb{R}^{R\times R}$ the transition matrix and $\Omega\in\mathbb{R}^{\kappa\times R}$ be the matrix whose columns $\omega_r=(\omega_{mr})_{m=1}^M$ are the emission probabilities for the $r^{th}$ state.

\vspace{1ex}
Denote $\theta=(Q,\Omega)\in\mathbb{R}^p$ the HMM parameter and write $J(\theta)$ for the Fisher Information matrix with entries given by

$$ J(\theta)_{ij} = -\lim_{n\rightarrow\infty}\underbrace{\mathbb{E}_{\theta}\left[\dfrac{\partial^2}{\partial\theta_i\partial\theta_j}\log p_\theta(Y_1,\dots,Y_n)\right]}_{{-J_n(\theta)}_{ij}}. $$
Then, if $Q,\Omega$ have rank $R$, $J$ is non-singular.
\end{prop}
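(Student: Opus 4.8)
The plan is to identify $J$ as the covariance matrix of the stationary score and then rule out null directions. Since a multinomial HMM with $\mathrm{rank}(Q)=\mathrm{rank}(\Omega)=R$ (and, as in the ambient setting of Theorem \ref{MLEM}, with $Q^*$ ergodic so that the stationary law $p^*$ is positive and $\omega^*_{mr}>0$) is a regular parametric HMM, Theorem 1 and Lemma 1 of \cite{bickel1998asymptotic} give that $J(\theta)=\lim_n J_n(\theta)/n$ equals the covariance of the stationary score increment $\Delta_0$ constructed as in \eqref{eq:submodscore}. Hence for a direction $v=(a,b)$, with $a$ the transition direction and $b$ the emission direction ($\sum_m b_{mr}=0$), one has $v^{T}Jv=\lVert v^{T}\Delta_0\rVert_{L^2(\mathbb P_*)}^2$, so it suffices to show that $v^{T}\Delta_0=0$ a.s. forces $v=0$. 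Specialising \eqref{eq:phiscore}--\eqref{Scoreh} to multinomial emissions, the one-step directional score is $\phi_v(X_{k-1},X_k,Y_k)=\sum_{r,s}1\{X_{k-1}=r,X_k=s\}a_{rs}/Q^*_{rs}+\sum_r 1\{X_k=r\}h_r(Y_k)$ with $h_r(e_m)=b_{mr}/\omega^*_{mr}$, and the increment reads $v^{T}\Delta_0=a^{T}S_{Q^*}+\sum_{r}H_r(h_r)$.

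I would then exploit the block structure $J=\bigl(\begin{smallmatrix}J[Q,Q]&J[Q,\omega]\\ J[\omega,Q]&J[\omega,\omega]\end{smallmatrix}\bigr)$. As a covariance matrix $J$ is positive semidefinite, so it is nonsingular precisely when the emission block $J[\omega,\omega]$ and the Schur complement $\tilde J=J[Q,Q]-J[Q,\omega]J[\omega,\omega]^{-1}J[\omega,Q]$ are both nonsingular; note that $\tilde J$ is exactly the efficient information of Theorem \ref{MLEM}. For the first block, $J[\omega,\omega]$ is the covariance of the family $\{H_r(h_r)\}$, so its nonsingularity is equivalent to the implication $\sum_r H_r(h_r)=0\Rightarrow h_r=0$ for all $r$. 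This is the content of the deconvolution argument of Lemma \ref{Deconvolution}, specialised to a single fixed partition (the atoms $e_1,\dots,e_\kappa$): conditioning and telescoping reduce the vanishing to identities of the form \eqref{3obs}, and linear independence of the emission vectors $\{\omega_r\}$ together with invertibility of $Q^*$ forces every $h_r=0$, hence $b=0$ on this block.

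The remaining and harder task is nonsingularity of $\tilde J$, i.e. showing that $a^{T}S_{Q^*}=\sum_r H_r(h_r)$ for some emission directions $h_r$ can hold only when $a=0$; geometrically, the transition-direction score cannot be reproduced by any emission perturbation. I would establish this by the same peeling engine: from $a^{T}S_{Q^*}-\sum_r H_r(h_r)=0$ a.s., the conditioning/telescoping identity underlying Lemma \ref{Deconvolution} (cf. Lemma \ref{deconvolvehelp}) yields $\sum_{k'=-k}^0\mathbb E_*[\phi_v(X_{k'-1},X_{k'},Y_{k'})\mid Y_{-k:0}]=0$ for every $k\ge0$; expanding the cases $k=0,1,2$ and multiplying through by the relevant marginal densities gives polynomial identities in the observed atoms of exactly the type appearing in \eqref{3obs} and in the proof of Theorem \ref{inversion}. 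Peeling off coordinates by linear independence of $\{\omega_r\}$ (rank $\Omega=R$) and invertibility of $Q^*$ and of $D_p=\mathrm{diag}(p^*)$ then collapses the system to a linear-algebraic relation forcing both the emission coefficients and $a$ to vanish.

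The main obstacle is precisely this last step: disentangling the transition contribution, which enters through the consecutive pair $(X_{k-1},X_k)$, from the emission contribution, which enters through $(X_k,Y_k)$. In the pure-emission setting of Lemma \ref{Deconvolution} only single-site terms appear, whereas here the pair term coming from $a$ must be separated before the rank conditions can be applied. I expect this to be the crux, resolved by carrying the extra pair term through the same cascade of conditional expectations and invoking the linear-independence/invertibility hypotheses one observation at a time, after which $a=0$ follows exactly as $B_0=0$ is obtained at the end of the proof of Lemma \ref{Deconvolution}.
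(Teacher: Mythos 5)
Your reduction is correctly formulated: writing $J$ in blocks, it suffices (given positive semi-definiteness) to show that the emission block is nonsingular and that the Schur complement $\tilde J_M$ is nonsingular, i.e.\ that no nonzero $a^T S_{Q^*}$ lies in the span of the emission scores $H_r(h_r)$; and the first half can plausibly be handled by rerunning the argument of Lemma \ref{Deconvolution} on the discrete alphabet, with $\mathrm{rank}\,\Omega=R$ playing the role of linear independence of the $f_r^*$. The gap is in the second half, exactly where you locate "the main obstacle", and your proposed resolution fails concretely. The cascade of Lemma \ref{deconvolvehelp} is powered by the zero-mean property of Lemma \ref{sup:zeromeans}: for an emission direction, $\mathbb{E}_*[h_r(Y_j)\mathds{1}\{X_j=r\}\mid Y_{k:0}]=0$ whenever $j<k$, because $Y_j$ is conditionally independent of the window given $X_j$ and $h_r$ integrates to zero against $f_r^*$. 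The transition pair terms have no such property: for $j<k$ the Markov property gives
$$\mathbb{E}_*\Big[\sum_{r,s}\tfrac{a_{rs}}{Q^*_{rs}}\mathds{1}\{X_{j-1}=r,X_j=s\}\,\Big|\,Y_{k:0}\Big]=\frac{1}{\mathbb{P}_*(Y_{k:0})}\sum_{s}\Big(\sum_{r}p^*_r\,a_{rs}\Big)\,\mathbb{P}_*\big(Y_{k:0}\,\big|\,X_j=s\big),$$
which is nonzero in general (it decays geometrically in $k-j$ but does not vanish, since the likelihoods $\mathbb{P}_*(Y_{k:0}\mid X_j=s)$ separate the states when $Q,\Omega$ have full rank).

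Consequently, running the conditioning/telescoping cascade on $a^TS_{Q^*}+\sum_r H_r(h_r)=0$ does \emph{not} produce identities "of exactly the type appearing in \eqref{3obs}". Each window length $k$ yields, in addition to the single-site emission terms, (i) within-window pair posteriors $\mathbb{P}_*(X_{j-1}=r,X_j=s\mid Y_{-k:0})$, which have no counterpart in \eqref{3obs}, and (ii) an infinite-past correction obtained by summing the display above over $j<-k$, which after multiplying by the marginal involves the deviation matrix $\sum_{i\geq 1}(Q^{*i}-\mathbf{1}p^{*T})$ of the chain and couples $a$ to $\mathbf{h}$ in every identity (already at $k=0$ one gets $\sum_r[(b+Z^Tb)_r+h_r(e_m)p_r^*]\,\omega_{mr}=0$ for all $m$, with $b_s=\sum_r p_r^*a_{rs}$, from which neither $b=0$ nor $h=0$ can be peeled off). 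Disentangling $a$ from $\mathbf{h}$ in this enlarged system is precisely what is missing, and nothing in Lemmas \ref{Deconvolution}--\ref{deconvolvehelp} supplies it; flagging the obstacle is not resolving it. Note also that the paper does not attempt this algebraic route at all: its proof is statistical, constructing spectral (tensor-power) estimators with uniform local quadratic risk $O(1/n)$ (Proposition \ref{estimators}, following \cite{Anandkumar2014}) and invoking the van Trees/local asymptotic minimax inequality of \cite{gassiat2013revisiting}, so that a null direction of $J$ would force the local minimax risk in that direction to explode, a contradiction. As written, your proposal therefore has a genuine gap at its central step.
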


The idea of the proof is to exhibit estimators with $L^2$ risk of order $\frac{1}{\sqrt{n}}$. We then show that the local asymptotic minimax result of \cite{gassiat2013revisiting} implies that the existence of such estimators guarantees a non-singular Fisher information. We use the spectral estimators proposed in \cite{Anandkumar2014}, the control of which is established in Section \ref{sec:spectral}.

\begin{proof}[Proof of Proposition \ref{prop:invertibleFI}]
By an application of the van Trees inequality, analogously to Equation (12) in Theorem 4 of \cite{gassiat2013revisiting}, we obtain for the HMM that (for $p$ the parameter dimension)

\begin{align*}
    &\int_{B_p(0,1)}\mathbb{E}^{(n)}_{\theta^*+ch/\sqrt{n}}\left[\left(U^T(\hat{\theta}^{(n)}_{}-(\theta^*+\frac{ch}{\sqrt{n}}))\right)^2\right]q(h)\dd h \\
    & \geq U^T\left(\dfrac{n}{c^2}J_q+n\int_{B_p(0,1)}\frac{1}{n}J_n(\theta^*+\frac{ch}{\sqrt{n}})q(h)\dd h\right)^{-1}U,
\end{align*}
which holds for any vector $U$. Here $J_n$ is the joint Fisher information for $n$ observations as in Proposition \ref{uniformFI} and $q$  is a density on $\mathbb{R}^p$ such that $J_q:=\int_{\mathbb{R}^P}\nabla q \nabla q^T\dfrac{1_{\{q>0\}}}{q}\dd x$ is non-singular. Rescaling we get that, for any vector $U$,
\begin{align*}
    &\int_{B_p(0,1)}\mathbb{E}^{(n)}_{\theta^*+ch/\sqrt{n}}\left[\left(\sqrt{n}U^T(\hat{\theta}^{(n)}_{}-(\theta^*+\frac{ch}{\sqrt{n}}))\right)^2\right]q(h)\dd h \\
    & \geq U^T\left(\dfrac{1}{c^2}J_q+\int_{B_p(0,1)}\frac{1}{n}J_n(\theta^*+\frac{ch}{\sqrt{n}}) q(h) \dd h\right)^{-1}U.
\end{align*}
Taking a limit in $n$ and applying Lemma \ref{uniformFI} stated below gives that the limit inferior of the left hand side is at least

$$\lim_nU^T\left(\dfrac{1}{c^2}J_q+\int_{B_p(0,1)}\frac{1}{n}J_n(\theta^*+\frac{ch}{\sqrt{n}}) q(h) \dd h\right)^{-1}U=U^T\left(\dfrac{1}{c^2}J_q+J(\theta^*)\right)^{-1}U.$$

Call the matrix on the right hand side which we invert $J_c$. It is indeed invertible for sufficiently large $c$ as $J_q$ is invertible, and the set of invertible matrices is open. Denote its matrix square root by $J_c^{\frac{1}{2}}$. Now suppose $\exists V^*$ such that $(V^*)^TJ(\theta^*)V^*=0$. Then by writing $V=J_c^{-\frac{1}{2}}U$ we get
\begin{align*}
    \sup_{\lVert U\rVert =1}U^TJ_c^{-1}U&=\sup_{\lVert U\rVert\neq 0}\dfrac{U^TJ_c^{-1}U}{U^TU} \\
    &=\sup_{\lVert V\rVert\neq 0}\dfrac{V^TV}{V^TJ_cV}\geq \dfrac{{V^*}^TV^*}{{V^*}^TJ_cV^*}=Ac^2.
\end{align*}
With $A$ a fixed constant not depending on $c$.
Taking the limit as $c\rightarrow\infty$ gives that (upper bounding also the averaging over the law $q\dd h$ by the supremum over $h$)

$$\liminf_{c\rightarrow\infty}\sup_{\lVert U\rVert =1}\liminf_{n\rightarrow\infty}\sup_{\lVert h\rVert <1}\mathbb{E}^{(n)}_{\theta^*+ch/\sqrt{n}}\left[\left(\sqrt{n}U^T(\hat{\theta}^{(n)}_{}-(\theta^*+\frac{ch}{\sqrt{n}}))\right)^2\right]= +\infty,$$
or equivalently,

$$\liminf_{c\rightarrow\infty}\sup_{\lVert U\rVert =1}\liminf_{n\rightarrow\infty}\sup_{\lVert \theta-\theta^*\rVert <\frac{c}{\sqrt{n}}}\mathbb{E}^{(n)}_{\theta}\left[\left(\sqrt{n}U^T(\hat{\theta}^{(n)}_{}-\theta)\right)^2\right]= +\infty,$$

which contradicts the local uniform bound of Proposition \ref{estimators}.
\end{proof}

\vspace{1ex}
The following proposition establishes the existence of an estimator with suitable risk, as required for the arguments of Proposition \ref{invertibleFI}.

\begin{prop}\label{estimators} Let $\theta=(Q,\Omega)$ be the parameter for the HMM described in Proposition \ref{prop:invertibleFI} and let $\theta^*$ be such that the identifiability conditions of \textbf{Assumption} \ref{identifiability} hold. Then there exists an estimator $\hat{\theta}$ such that, for any $\epsilon>0$ sufficiently small, we have, as $n\rightarrow\infty$,
$$\sup_{\lVert \theta-\theta^*\rVert < \epsilon } \mathbb{E}_\theta\lVert\hat{\theta}-\theta\rVert^2_2 \leq \frac{C}{n}(1+o(1)),$$ up to label-swapping.
\end{prop}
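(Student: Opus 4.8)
The plan is to take $\hat\theta$ to be the spectral (method-of-moments) estimator of \cite{Anandkumar2014} and to show it attains the parametric $L^2$ rate uniformly on a small ball around $\theta^*$. The starting point is that, for three consecutive observations of the multinomial HMM, the population cross-moments factorise through the latent structure: writing $M_2(\theta)=\mathbb E_\theta[Y_1\otimes Y_2]$ and $M_3(\theta)=\mathbb E_\theta[Y_1\otimes Y_2\otimes Y_3]$, one has $M_2=\Omega\,\diag(p_Q)\,Q\,\Omega^\top$ (with the appropriate orientation of $Q$ for the transition convention of \eqref{model}) and an analogous trilinear expression for $M_3$, each a low-degree polynomial in the entries of $(Q,\Omega,p_Q)$. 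Under the rank conditions---full rank of $Q$ and linear independence of the emissions, i.e.\ rank $R$ of $\Omega$, which are exactly \textbf{Assumption} \ref{identifiability} in this discrete setting---the whitening-plus-tensor-decomposition procedure inverts these moment maps and recovers $(Q,\Omega)$ up to a permutation of the $R$ latent labels; this yields a measurable map $\Phi$ with $\theta=\Phi(M_2(\theta),M_3(\theta))$ up to label-swapping.

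First I would establish concentration of the empirical moments. With $\hat M_2=\frac{1}{n-1}\sum_t Y_t\otimes Y_{t+1}$ and $\hat M_3$ the analogous triple average, each is a stationary average of bounded functionals. Under \textbf{Assumption} \ref{ass:Qbound} (positivity of $Q^*$, hence of $Q$ on a neighbourhood), the chain is geometrically ergodic and satisfies exponential forgetting (Section \ref{sup:sec:forgetfulness}), so the autocovariances of the summands decay geometrically and a standard variance computation yields $\mathbb E_\theta\lVert \hat M_k - M_k(\theta)\rVert_2^2 \lesssim 1/n$ for $k=2,3$, with a constant uniform over $\lVert\theta-\theta^*\rVert<\epsilon$ because the forgetting constants can be chosen uniformly on such a ball.

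Next I would invoke the local Lipschitz stability of the recovery map $\Phi$. The rank-$R$ conditions and the eigenvalue-separation conditions needed for the tensor decomposition are open, hence persist on a sufficiently small ball $\lVert\theta-\theta^*\rVert<\epsilon$ with uniform quantitative margins: uniform lower bounds on the smallest singular value of $\Omega$ used for whitening and on the spectral gaps of the whitened slices. The perturbation bounds for whitening and for the eigen/tensor decomposition then furnish a uniform constant $L$ with $\min_{\tau\in\mathcal S_R}\lVert {}^\tau\hat\theta-\theta\rVert \leq L\big(\lVert \hat M_2 - M_2(\theta)\rVert + \lVert \hat M_3 - M_3(\theta)\rVert\big)$ on the event $\mathcal E_n$ that the moment errors lie below the radius of validity of these bounds. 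Combined with the previous step this gives $\mathbb E_\theta\!\left[\min_\tau\lVert {}^\tau\hat\theta-\theta\rVert_2^2\,;\,\mathcal E_n\right]\lesssim 1/n$ uniformly in $\theta$.

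Finally I would assemble the $L^2$ bound. On $\mathcal E_n^c$ the parameters live in a fixed compact set (products of simplices), so $\min_\tau\lVert {}^\tau\hat\theta-\theta\rVert$ is bounded by a universal constant; since $\mathbb P_\theta(\mathcal E_n^c)\lesssim 1/n$ by Markov's inequality applied to the moment concentration, the contribution of $\mathcal E_n^c$ is also $O(1/n)$, and the two terms combine to $\sup_{\lVert\theta-\theta^*\rVert<\epsilon}\mathbb E_\theta\lVert\hat\theta-\theta\rVert_2^2\le (C/n)(1+o(1))$ up to label-swapping. The main obstacle is precisely the uniform Lipschitz stability of the spectral decomposition: one must ensure the whitening step and the subsequent eigen/tensor decomposition are simultaneously well-conditioned for every $\theta$ in the ball, which is why uniform lower bounds on the relevant singular values and spectral gaps---guaranteed by the openness of the rank conditions at $\theta^*$---are indispensable. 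The moment concentration and the tail control on $\mathcal E_n^c$ are routine, and the detailed perturbation analysis underpinning the constant $L$ is what is carried out in Section \ref{sec:spectral}.
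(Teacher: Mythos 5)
Your proposal is correct in substance and rests on the same construction as the paper: the tensor-decomposition (spectral) estimator of Anandkumar et al., with uniformity over a ball $\lVert\theta-\theta^*\rVert<\epsilon$ coming from the openness of the rank and separation conditions. Where you differ is in how the in-expectation bound is assembled. The paper propagates \emph{sub-Gaussian tail bounds} through every stage of the pipeline --- empirical tensors (Lemma \ref{Hoeffding}, via Markov-chain concentration in the style of Paulin), symmetrisation, whitening, the tensor power method, de-whitening and label fixing (Lemma \ref{labelfix}) --- so that $\sup_{\lVert\theta-\theta^*\rVert<\epsilon}\mathbb{P}_\theta\big(\lVert\hat\theta-{}^\tau\theta\rVert>Cx/\sqrt{n}\big)\le e^{-x^2}$ for all $x\ge 1$, and then obtains the $L^2$ bound by integrating this tail. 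You instead use only a second-moment bound on the empirical moments (which geometric ergodicity alone delivers, with constants uniform on the ball), transfer it through a local Lipschitz stability estimate on a good event $\mathcal{E}_n$, and dispose of $\mathcal{E}_n^c$ by Chebyshev plus boundedness. Your assembly is more elementary --- no exponential concentration for dependent data is needed --- while the paper's stronger tail control makes the final integration immediate at every deviation scale without any truncation device.

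Two details in your argument need care. First, on $\mathcal{E}_n^c$ you bound $\min_\tau\lVert{}^\tau\hat\theta-\theta\rVert$ by a constant ``since the parameters live in a compact set,'' but the raw spectral output (built from pseudo-inverses, whitening matrices and reciprocal square roots of estimated weights) need not lie in the product of simplices; you should replace $\hat\theta$ by its projection onto the parameter space, which is convex, so the projection is $1$-Lipschitz and does not degrade the good-event bound. Second, your recovery map $\Phi$ tacitly assumes the labelings produced by the two tensor decompositions (the one yielding the columns of $\Omega$ and the one yielding the columns of $\Omega Q^T$) are consistent; without aligning them --- the paper does so against an auxiliary consistent estimator, the MLE, in Lemma \ref{labelfix} --- the extracted transition matrix $\hat{\Omega}^{-1}$ times the estimate of $\Omega Q^T$ would be scrambled by a relative permutation rather than merely label-swapped. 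Both points are routine to fix and do not alter the structure of your proof.
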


\begin{proof}
Let $\hat{\theta}$ be the spectral estimator constructed in Section \ref{sec:spectral}. We have that
$$\mathbb{E}_\theta \lVert \hat{\theta}-\theta\rVert_2^2=\int_{0}^\infty t\mathbb{P}_\theta( \lvert \hat{\theta}-\theta\rVert>t) \dd t.$$
Setting $t=\frac{Cx}{\sqrt{n}}$ we get
$$\mathbb{E}_\theta \lVert \hat{\theta}-\theta\rVert_2^2=\int_{0}^{\infty} \frac{C^2x}{n}\mathbb{P}_\theta( \lVert \hat{\theta}-\theta\rVert>\frac{Cx}{\sqrt{n}}) \dd x .$$
The bound of Lemma \ref{labelfix} is valid for $x\geq 1$, and so
$$\mathbb{E}_\theta\lVert \hat{\theta}-\theta\rVert_2^2 \leq \frac{C^2}{n} + \frac{C^2}{n}\int_{0}^\infty xe^{-x^2}\dd x\leq\frac{3C^2}{2n} .$$
Since the bound is valid on a small neighbourhood around $\theta$, it holds in supremum over that neighbourhood.
\end{proof}

\subsubsection{Construction of spectral estimators}\label{sec:spectral}
Spectral estimation of HMMs has been addressed by a number of previous works \cite{hsu2012spectral,Anandkumar2012,Anandkumar2014,de2017consistent,abraham2021fundamental,abraham2021multiple}. In \cite{abraham2021multiple}, the authors exhibit estimators for emissions in parametric HMMs with a $\sqrt{n}$ rate in probability, but we require convergence in expectation. In \cite{de2017consistent}, a convergence in expectation is shown but the rate contains a logarithmic factor which is not sufficient for our use in the proof of Proposition \ref{invertibleFI}. In \cite{abraham2021fundamental}, the authors exhibit estimators for which the concentration is sub-Gaussian, which would permit integration to an in-expectation bound, but their work only concerns two-state HMMs.

\vspace{1ex}
We will construct estimators which are very similar to those constructed in these works. To get the convergence properties we require, we use as a starting point the sub-Gaussian concentration of the empirical estimator of $E_{ijk}=\mathbb{P}(Y_1=i,Y_2=j,Y_3=k)$. To extract the HMM parameters, we follow the approach set out in \cite{Anandkumar2014}, who prove guarantees for their \textit{Tensor power method} which outputs, from an estimate $\hat{T}$ of a given symmetric, orthogonal tensor $T$, estimates of the associated eigenvalues and eigenvectors whose error is of the same order as the input error $\lVert\hat{T}-T\rVert$.

\vspace{1ex}
Since $T$ is required to be symmetric and orthogonal, we cannot directly apply their algorithm to extract estimates with appropriate concentration properties from estimates $\hat{E}$ of the tensor $E$. Instead, we construct an estimate $\hat{T}$ of a symmetrised, orthogonalised version $T$ of $E$, the eigenvalues and eigenvectors of which can then be extracted through the Tensor power method and related back to the parameters of the HMM. This approach is similar to that taken in \cite{anandkumar2012spectral,hsu2013learning}; \cite{hsu2013learning} consider the setting of Spherical Gaussians, which requires a different symmetrisation step but whose orthogonalisation procedure we adopt here. Our symmetrisation step is instead based on Theorem 3.6 of \cite{Anandkumar2014}.

\paragraph*{The construction}

First define the following matrices and tensors:
$$ E_{\theta}^{(12)} = \mathbb{E}_{\theta}[Y_1\otimes Y_2],\hspace{3ex} E_{\theta}^{(13)} = \mathbb{E}_{\theta}[Y_1\otimes Y_3], \quad E_{\theta}^{(123)} = \mathbb{E}_{\theta}[Y_1 \otimes Y_2 \otimes Y_3 ] ,$$
$$\hat{E}^{(12)}=\frac{1}{n-1}\sum_{i=1}^{n-1}Y_i\otimes Y_{i+1},\quad \hat{E}^{(13)}=\frac{1}{n-2}\sum_{i=1}^{n-2}Y_i\otimes Y_{i+2},\quad
\hat{E}^{(123)}=\frac{1}{n-2}\sum_{i=1}^{n-2}Y_i\otimes Y_{i+1} \otimes Y_{i+2}.$$
Next, we will symmetrise about $Y_3$ as follows: define
$$\Tilde{Y}_1=\underbrace{E^{(21)}_\theta(E^{(12)}_\theta)^{-1}}_{A_\theta}Y_1, \quad \Tilde{Y}_2=\underbrace{E^{(31)}_\theta(E^{(21)}_\theta)^{-1}}_{B_\theta} Y_2,$$
where $E^{(ij)}_\theta$ is the transpose of $E^{(ji)}_\theta$. Define $\Tilde{E}_\theta^{(ij)}$ and $\Tilde{E}_\theta^{(123)}$ in the same way as $E_\theta^{(ij)}, E_\theta^{(123)}$, but with $Y_1,Y_2$ replaced by $\Tilde{Y}_1,\Tilde{Y}_2$ respectively; these matrices are then symmetric. Define estimators $\check{E}^{(12)}$, $\check{E}^{(13)}$ and $\check{E}^{(123)}$ of the symmetric quantities as follows: Split the sample in two and from the first sample, produce estimates  $\hat{E}^{(\cdot)} $. This yields estimates $\hat{A}$, $\hat{B}$ of $A_\theta,B_\theta$. Then define
$$\check{E}^{(12)} = \frac{1}{n-\lceil\frac{n}{2}\rceil}\sum_{i=\lceil\frac{n}{2}\rceil}^{n-1}\hat{A}Y_i\otimes\hat{B}Y_{i+1},\quad \check{E}^{(13)} = \textbf{}\sum_{i=\lceil\frac{n}{2}\rceil}^{n-1}\hat{A}Y_i\otimes Y_{i+2},$$
and
$$\check{E}^{(123)} = \frac{1}{n-\lceil\frac{n}{2}\rceil - 1}\sum_{i=\lceil\frac{n}{2}\rceil}^{n-1}\hat{A}Y_i\otimes \hat{B}Y_{i+1}\otimes Y_{i+2}.$$
We remark that Theorem 3.6 of \cite{Anandkumar2014} gives us the decomposition of the symmetric tensor $\Tilde{E}_\theta^{(123)}$ as
$$ \Tilde{E}_\theta^{(123)} = \sum_{i=1}^R p_i \mu_i\otimes\mu_i\otimes\mu_i ,$$

where $p_i=\mathbb{P}_\theta(X_2=i)$ is taken to be the stationary distribution and $\mu_i= \mathbb{P}_\theta(Y_3 = \cdot | X_2=i) = (\Omega Q^T)_i$. In a slight abuse of notation, we surpress the dependence on $\theta$ in our notation for $p_i,\mu_i$.

\vspace{1ex}
This justifies that indeed $\Tilde{E}^{(123)}_\theta$ is a symmetric tensor. However, the $\mu_i$ are generally not orthonormal, which is required for the application of Theorem 5.1 of \cite{Anandkumar2014} (the tensor power method). Define, for a three-way tensor $T$ and matrices $P,Q,R$,
$$ [T(P,Q,R)]_{ijk} = \sum_r\sum_s\sum_tT_{rst}P_{ri}Q_{sj}R_{tk}. $$

Next, define $\Tilde{E}^{(123)}_{W,\theta}$ as in Section 4.3.1 of \cite{Anandkumar2014} as follows: Take $W=W_\theta$ as the matrix of left orthonormal singular vectors of $\tilde{E}^{(12)}$ so that $W^T\Tilde{E}^{(12)}_\theta W=I$ and define
$$ \mu_i^{(W)}=\sqrt{p_i}W^T\mu_i,$$
which are orthonormal. Then take
$$ \Tilde{E}^{(123)}_{W,\theta} = \Tilde{E}^{(123)}_\theta(W,W,W) = \sum_{i=1}^R \frac{1}{\sqrt{p_i}}{\mu}_i^{W}\otimes {\mu}_i^{W}\otimes {\mu}_i^{W}. $$
Now set $\hat{W}$ as the matrix of left orthonormal singular vectors of $\check{E}^{(12)}$, with corresponding $\check{E}^{(123)}_{\hat{W}} = \check{E}^{(123)}(\hat{W},\hat{W},\hat{W})$.

\vspace{1ex}
At this point, we have at hand an estimate $\check{E}^{(123)}_{\hat{W}}$ of the orthogonal, symmetric tensor $\Tilde{E}^{(123)}_{W,\theta}$ so we apply the Tensor power method of \cite{Anandkumar2014}. This returns estimates $\hat{\mu}_I^W$, $\hat{p}_i$ of $\mu_i^W$ and $p_i$. Now set $\hat{\mu}_i=\frac{1}{\sqrt{\hat{\nu}_i}}\hat{W}^\dag\hat{\mu}_i^W$, which esimates (up to label-swapping) the columns of $\Omega Q^T$.

\vspace{1ex}
By repeating the above procedure, but instead symmetrising about $Y_2$ (so defining a corresponding $\tilde{Y}_1,\tilde{Y}_3$) we produce estimates (up to label-swapping) of the columns of $\Omega$. By permuting these estimates relative to some consistent estimator (such as the MLE, which is consistent without assuming invertibility of the Fisher Information, see e.g. \cite{Douc2004}) we finally produce estimates $(\hat{Q},\hat{\Omega})$ of $(Q,\Omega)$.

\vspace{1ex}
In the following section, we control the error introduced at each stage of this construction.

\subsubsection{Control of spectral estimators}\label{sec:spectralcontrol}
The control on the estimators constructed in \ref{sec:spectral} is ultimately proved in Lemma \ref{labelfix}; this control is established through the following sequence of technical lemmas. The estimator we will use are those constructed by the tensor power method in \cite{Anandkumar2014}. It is convenient to assume $\kappa=R$ in what follows so that certain matrices are square - the extension to the general case is straightforward using SVD and low rank approximations but is omitted for ease of exposition.

\vspace{1ex}
The next lemma makes explicit the sub-Gaussian concentration of emperical tensors when the data is generated according to a geometrically ergodic Markov chain. It resembles closely Lemma 1 of \cite{abraham2021fundamental}.

\begin{lemma}[Concentration of empirical tensors]\label{Hoeffding}
Define the matrices and tensors
$$ E_{\theta}^{(12)} = \mathbb{E}_{\theta}[Y_1\otimes Y_2],\hspace{3ex} E_{\theta}^{(13)} = \mathbb{E}_{\theta}[Y_1\otimes Y_3], \quad E_{\theta}^{(123)} = \mathbb{E}_{\theta}[Y_1 \otimes Y_2 \otimes Y_3 ] .$$
Define further $$\hat{E}^{(12)}=\frac{1}{n-1}\sum_{i=1}^{n-1}Y_i\otimes Y_{i+1},\quad \hat{E}^{(13)}=\frac{1}{n-2}\sum_{i=1}^{n-2}Y_i\otimes Y_{i+2},\quad
\hat{E}^{(123)}=\frac{1}{n-2}\sum_{i=1}^{n-2}Y_i\otimes Y_{i+1} \otimes Y_{i+2}.$$
Then there exists a constant $C=C(R,Q^*)$, such that for all $x\geq 1$, sufficiently small $\epsilon>0$, and any superscripts,

$$ \sup_{\|\theta - \theta^*\| \leq \epsilon} \mathbb{P}_{\theta}(\lVert \hat{E}^{(\cdot)} - E_{\theta}^{(\cdot)} \rVert ) > \frac{Cx}{\sqrt{n}} )\leq e^{-x^2} .$$
\end{lemma}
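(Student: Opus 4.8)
The plan is to exploit two features of the problem: the summands defining each $\hat E^{(\cdot)}$ are uniformly bounded (the $Y_i$ are basis vectors, so every entry of $Y_i\otimes Y_{i+1}$ and of $Y_i\otimes Y_{i+1}\otimes Y_{i+2}$ lies in $\{0,1\}$), and the bivariate process $V_t=(X_t,Y_t)_t$ is a uniformly ergodic Markov chain. The latter is where \textbf{Assumption} \ref{ass:Qbound} is essential: since $Q^*$ has strictly positive entries it satisfies a one-step Doeblin minorization, and this minorization persists on a small neighbourhood $\{\|\theta-\theta^*\|\leq\epsilon\}$, on which $Q_{ij}\geq \min_{ij}Q^*_{ij}/2>0$. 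Consequently the forgetting rate $\rho<1$ and the associated mixing constants can be chosen \emph{uniformly} over that neighbourhood (this is the exponential forgetting already recorded in Section \ref{sup:sec:forgetfulness}).

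First I would fix a single scalar entry of the centred difference, say the $(j,k)$ entry of $\hat E^{(12)}-E^{(12)}_\theta$, equal to $\frac{1}{n-1}\sum_{i=1}^{n-1}\big(\mathds 1\{Y_i=e_j,Y_{i+1}=e_k\}-\mathbb P_\theta(Y_1=e_j,Y_2=e_k)\big)$. To present this as a centred additive functional of a single Markov chain I would pass to the lag-augmented chain $\bar V_i=(V_i,V_{i+1})$ (respectively $(V_i,V_{i+1},V_{i+2})$ for the triple tensor), which is again a Markov chain inheriting uniform ergodicity from $(V_t)$ and for which the summand becomes a bounded function of $\bar V_i$ alone. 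A Hoeffding-type inequality for uniformly ergodic Markov chains, of the kind underlying Lemma 1 of \cite{abraham2021fundamental}, then gives, for each scalar entry, a bound $\mathbb P_\theta(|\cdot|>t)\leq 2\exp(-c\,n t^2)$ with $c$ depending only on $\|g\|_\infty\leq 1$ and the uniform mixing constants.

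To conclude I would take a union bound over the finitely many entries (at most $R^3$, a fixed number once $\kappa=R$) and bound the Frobenius/operator norm by $R^{3/2}$ times the maximal entry, absorbing these fixed factors, rescaling $t=Cx/\sqrt n$, and consolidating $c$ into a single constant $C=C(R,Q^*)$ so that the per-entry exponent becomes $-x^2$ for $x\geq 1$. Since every constant above was chosen uniformly on $\{\|\theta-\theta^*\|\leq\epsilon\}$, taking the supremum over that neighbourhood costs nothing, yielding the stated bound.

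The main obstacle will be producing genuinely \emph{sub-Gaussian} (Hoeffding-type) tails, rather than merely Bernstein or polynomial ones, under the Markov dependence, together with verifying that the constants in the Markov Hoeffding inequality are uniform in $\theta$. Both reduce to the uniform minorization/spectral-gap property of the lag-augmented chain; the positivity of $Q^*$ furnishes a Doeblin condition that survives small perturbations, which is the key input and explains why \textbf{Assumption} \ref{ass:Qbound} cannot be dispensed with here.
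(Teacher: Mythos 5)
Your proposal is correct and follows essentially the same route as the paper: the paper's proof likewise reduces to the Markov-chain concentration machinery behind Lemma 1 of \cite{abraham2021fundamental} (Paulin's inequality \cite{paulin2015concentration}, stated via the pseudo-spectral gap and mixing time) applied to the lag-augmented chain $(X_t,X_{t+1},X_{t+2},Y_t,Y_{t+1},Y_{t+2})$, with all constants taken uniformly over $\{\|\theta-\theta^*\|\leq\epsilon\}$. The only minor difference is how uniformity is justified: you invoke a Doeblin minorization inherited from the positivity of $Q^*$ that survives small perturbations, whereas the paper argues that the mixing time is uniformly bounded near $\theta^*$ because the eigenvalues of the transition matrix vary continuously in its entries --- both arguments are valid and interchangeable here.
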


\begin{proof}
The proof is essentially the same as the proof of Lemma 1 in \cite{abraham2021fundamental}. In that paper, the authors use the concentration result of \cite{paulin2015concentration} which provides a bound in terms of the pseudo-spectral gap $\gamma_{ps}$ of the chain $Z_n=(X_n,X_{n+1},X_{n+2},Y_n,Y_{n+1},Y_{n+2}$), together with Proposition 3.4 of the same reference which relates $\gamma_{ps}$ to the mixing time $t_{{mix}}$ of the same chain. Arguing as in \cite{abraham2021fundamental}, we see that $t_{mix}$ is uniformly bounded on a neighbourhood of the true parameter, since the eigenvalues of the transition matrix vary continuously in its entries. For the remainder of the proof, we follow the argument of \cite{abraham2021fundamental}.
\end{proof}

An important step in the application of the Tensor power method is to pre-process the empirical tensors controlled in Lemma \ref{Hoeffding} so that they are symmetric and orthogonal. The symmetrisation step involves a linear transformation of the first two observations so that the three-way expectation is symmetric - see Theorem 3.6 of Anandkumar et al. (2014). Note that $E^{(12)}_\theta=\sum_{i=1}^Rp_iv_i\otimes w_i$ where $p_i=\mathbb{P}_\theta(X_2=i)$, $v_i=\mathbb{P}_\theta(Y_1=\cdot|X_2=i)$ and $w_i=\mathbb{P}_\theta(Y_2=\cdot|X_2=i)$, and by writing $v_i,w_i$ in terms of $Q$ and $\Omega$, we see that $E^{(12)}_\theta$ has rank $R$ and so is indeed invertible.
\begin{lemma}[Symmetrisation preserves concentration]\label{symmetry}

Define $$\Tilde{Y}_1=\underbrace{E^{(21)}_\theta(E^{(12)}_\theta)^{-1}}_{A_\theta}Y_1, \quad \Tilde{Y}_2=\underbrace{E^{(31)}_\theta(E^{(21)}_\theta)^{-1}}_{B_\theta} Y_2,$$
where $E^{(ij)}_\theta$ is the transpose of $E^{(ji)}_\theta$. Define $\Tilde{E}_\theta^{(ij)}$ and $\Tilde{E}_\theta^{(123)}$ in the same way as $E_\theta^{(ij)}, E_\theta^{(123)}$  in Lemma \ref{Hoeffding} but with $Y_1,Y_2$ replaced by $\Tilde{Y}_1,\Tilde{Y}_2$ respectively; these matrices are then symmetric. Define estimators $\check{E}^{(12)}$, $\check{E}^{(13)}$ and $\check{E}^{(123)}$ of the symmetric quantities as follows: Split the sample in two and from the first sample, produce estimates  $\hat{E}^{(\cdot)} $. This yields estimates $\hat{A}$, $\hat{B}$ of $A_\theta,B_\theta$. Then define
$$\check{E}^{(12)} = \frac{1}{n-\lceil\frac{n}{2}\rceil}\sum_{i=\lceil\frac{n}{2}\rceil}^{n-1}\hat{A}Y_i\otimes\hat{B}Y_{i+1},\quad \check{E}^{(13)} = \textbf{}\sum_{i=\lceil\frac{n}{2}\rceil}^{n-1}\hat{A}Y_i\otimes Y_{i+2},$$
and
$$\check{E}^{(123)} = \frac{1}{n-\lceil\frac{n}{2}\rceil - 1}\sum_{i=\lceil\frac{n}{2}\rceil}^{n-1}\hat{A}Y_i\otimes \hat{B}Y_{i+1}\otimes Y_{i+2}.$$
Then there exists a constant $\Tilde{C}=\Tilde{C}(R,W^*)$ such that for all $x\geq 1$, sufficiently small $\epsilon>0$ and any superscripts,
$$\sup_{\|\theta-\theta^*\|\leq \epsilon}\mathbb{P}_\theta(\lVert \check{E}^{(\cdot)} - \Tilde{E}_\theta^{(\cdot)} \rVert  > \frac{\Tilde{C}x}{\sqrt{n}} )\leq e^{-x^2} .$$
\end{lemma}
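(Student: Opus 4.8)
The plan is to transfer the sub-Gaussian concentration of Lemma \ref{Hoeffding} through the (locally) smooth algebraic operations---matrix inversion and matrix/tensor multiplication---that define the symmetrised quantities. Since each $\check E^{(\cdot)}$ is obtained from the raw empirical tensors $\hat E^{(\cdot)}$ (computed on the two halves of the sample) by composing these operations, and a Lipschitz image of a sub-Gaussian deviation is again sub-Gaussian, the result should follow from a single good-event argument rather than a delicate martingale computation. Concretely, fix $x\ge 1$ and let $\mathcal G_x$ be the event on which every empirical tensor $\hat E^{(\cdot)}$ arising in the construction---on both the first and the second half of the data---satisfies $\lVert \hat E^{(\cdot)}-E_\theta^{(\cdot)}\rVert\le Cx/\sqrt n$. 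Applying Lemma \ref{Hoeffding} to each contiguous block (each of length of order $n$, so the rate is unchanged up to a constant) and taking a union bound over the finitely many tensors gives $\mathbb P_\theta(\mathcal G_x^c)\le Ke^{-x^2}$; the spurious factor $K$ is removed by the routine device of enlarging $\tilde C$ and rescaling $x$, restoring the exact form $e^{-x^2}$ for $x\ge 1$. Note that no independence between the halves is needed: the split serves only to present $\hat A,\hat B$ as plug-in matrices applied to a separate block, and all probabilistic content comes from Lemma \ref{Hoeffding}.

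It then remains to prove a deterministic bound on $\mathcal G_x$. Writing $\hat E_2^{(12)}$ for the second-half empirical matrix, one has $\check E^{(12)}=\hat A\,\hat E_2^{(12)}\,\hat B^{T}$ while $\tilde E_\theta^{(12)}=A_\theta E_\theta^{(12)}B_\theta^{T}$, and I would telescope
\begin{align*}
\check E^{(12)}-\tilde E_\theta^{(12)} &= (\hat A-A_\theta)\,\hat E_2^{(12)}\,\hat B^{T} + A_\theta\,\hat E_2^{(12)}\,(\hat B-B_\theta)^{T} \\
&\quad + A_\theta\,(\hat E_2^{(12)}-E_\theta^{(12)})\,B_\theta^{T}.
\end{align*}
The last term is exactly the averaging error and is $\lesssim \lVert A_\theta\rVert\,\lVert B_\theta\rVert\,x/\sqrt n$ on $\mathcal G_x$; the first two terms are controlled once $\lVert \hat A-A_\theta\rVert$ and $\lVert \hat B-B_\theta\rVert$ are shown to be $\lesssim x/\sqrt n$ while $\hat E_2^{(12)}$ and $\hat B$ are bounded (the former by $1$, being an average of outer products of basis vectors, the latter by $\lVert B_\theta\rVert+o(1)$). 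The three-way tensor $\check E^{(123)}$ is handled identically, with a three-fold telescoping and the bound on triple products of bounded factors, and $\check E^{(13)}$ is the simpler case in which only $\hat A$ appears.

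The main obstacle is the matrix-inversion step hidden in $A_\theta=E_\theta^{(21)}(E_\theta^{(12)})^{-1}$ and $B_\theta=E_\theta^{(31)}(E_\theta^{(21)})^{-1}$. Here I would first record that the rank-$R$ factorisations of $E_\theta^{(12)}$ and $E_\theta^{(21)}$ in terms of $(Q,\Omega)$ (as in the remark preceding the lemma) make these matrices invertible with smallest singular value bounded below uniformly over $\lVert\theta-\theta^*\rVert\le\epsilon$, by continuity of singular values in the entries of $\theta$. For $x/\sqrt n$ small---which holds for all $n$ large enough, given the range $x\ge 1$ of interest---the good event $\mathcal G_x$ then forces $\hat E^{(12)}$ to remain invertible, and local Lipschitz continuity of $M\mapsto M^{-1}$ away from singular matrices yields $\lVert(\hat E^{(12)})^{-1}-(E_\theta^{(12)})^{-1}\rVert\lesssim\lVert\hat E^{(12)}-E_\theta^{(12)}\rVert$ with a constant uniform in $\theta$. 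Combining this with the product bound gives the required $\lVert\hat A-A_\theta\rVert\lesssim x/\sqrt n$ (and likewise for $\hat B$), after which the deterministic estimate above closes the argument; on the complement of $\mathcal G_x$, where $\hat E^{(12)}$ could be singular, $\hat A$ may be defined arbitrarily since that event is already absorbed into the probability bound. Securing the uniform-in-$\theta$ lower bound on the singular values, and the attendant uniform Lipschitz constant, is the only genuinely delicate point; everything else is bookkeeping of bounded factors.
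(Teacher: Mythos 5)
Your proposal is correct and follows essentially the same route as the paper's own proof: concentration of the raw empirical tensors from Lemma \ref{Hoeffding}, uniform invertibility of $E^{(12)}_\theta$ (hence of $\hat{E}^{(12)}$ on the good event) over the $\epsilon$-ball around $\theta^*$, local Lipschitz continuity of matrix inversion and multiplication, a default value for $\hat{A},\hat{B}$ off the good event, and enlargement of the constant to absorb union-bound factors; your telescoping decomposition merely makes explicit what the paper summarises as ``Lipschitz continuity of matrix inversion and multiplication.'' The one loose end in your write-up --- the claim that $x/\sqrt{n}$ is small ``for all $n$ large enough, given the range $x\geq 1$'' is a quantifier slip, since the bound must hold for every $x\geq 1$ at a given $n$, including $x\gtrsim\sqrt{n}$ where the good event $\mathcal{G}_x$ no longer forces $\hat{E}^{(12)}$ to be well-conditioned --- is shared by the paper's own ``enlarging the constant'' step, and both are repaired the same way, e.g.\ by setting $\hat{A}=\hat{B}=I$ whenever $\hat{E}^{(12)}$ is ill-conditioned (not merely singular), which makes $\check{E}^{(\cdot)}-\tilde{E}^{(\cdot)}_\theta$ deterministically bounded and renders the regime $x\gtrsim\sqrt{n}$ vacuous once $\tilde{C}$ is chosen large enough.
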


\begin{proof}
Write $\mathcal{E}$ for the event that $\hat{E}^{(12)}$ is invertible. Choose $r$ such that the ball of radius $r$ around $\theta^*$ is contained in the set of $\theta$ for which the corresponding $Q=Q(\theta)$ is invertible. Then for all $\theta$ in a ball of radius at most $\frac{r}{2}$ inside the ball on which Lemma \ref{Hoeffding} holds, the ball of radius $\frac{r}{2}$ centred at any $Q(\theta)$ contains only those $\theta^\prime$ for which $Q^\prime(\theta^\prime)$ is invertible. Hence, by choosing $x=c\sqrt{n}$ where $c=c(r)$, we can apply Lemma \ref{Hoeffding} for sufficiently large $n$ to get that $\mathbb{P}_{\theta}(\mathcal{E}^c)\leq e^{-c^2n}=o(1)$ as $n\rightarrow\infty$.

\vspace{1ex}
Working on $\mathcal{E}$, Lipshitz continuity of matrix inversion and multiplication implies that $\hat{A}$ and $\hat{B}$ satisfy an exponential bound of the kind shown for $\hat{E}^{(\cdot)}$, as a corollary of Lemma \ref{Hoeffding}. This implies that $\check{E}^{(12)}$ and $\check{E}^{(13)}$ satisfy a corresponding bound when approximating the analogous quantities with $\hat{A},\hat{B}$ replaced with $A,B$. These quantities in turn approximate $\Tilde{E}^{(\cdot)}$ by the same arguments as presented in Lemma \ref{Hoeffding}.

\vspace{1ex}
For the estimates $\hat{A}$ and $\hat{B}$, we can formally take $\hat{A}=\hat{B}=I$ on the event that $\hat{E}^{(12)}$ is not invertible.

\vspace{1ex}
We deduce that for $\Tilde{C}$ suitably chosen
$$ \mathbb{P}_\theta(\lVert \check{E}^{(\cdot)} - \Tilde{E}_\theta^{(\cdot)} \rVert > \frac{\Tilde{C}x}{\sqrt{n}} )\leq e^{-x^2} +o(1), $$
and by enlarging the constant by some factor greater than one, we get that for sufficiently large $n$, any $x\geq 1$, and sufficiently small $\epsilon>0$ that
$$ \sup_{\|\theta-\theta^*\|\leq \epsilon}\mathbb{P}_\theta(\lVert \check{E}^{(\cdot)} - \Tilde{E}^{(\cdot)}_\theta \rVert  > \frac{\Tilde{C}x}{\sqrt{n}} )\leq e^{-x^2} .$$
\end{proof}
Lemma \ref{symmetry} tells us we can construct an approximation to the symmetrised tensors $\Tilde{E}^{(\cdot)}$. In the next result, we show that we can further extend this to an approximation to a symmetrised, orthogonalised tensor, at which point the Tensor power method may be applied to see that the parameter values extracted from this approximate tensor well approximate the parameter values associated to the true orthogonal, symmetric tensor (which are themselves related in an analytic sense to the parameters of interest). The orthogonalisation or `whitening' step is detailed in Lemma \ref{whitening} and follows closely the ideas of appendix C of \cite{hsu2013learning}.

\vspace{1ex}
Before we proceed, we remark that Theorem 3.6 of \cite{Anandkumar2014} gives us the decomposition of the symmetric tensor $\Tilde{E}_\theta^{(123)}$ as
$$ \Tilde{E}_\theta^{(123)} = \sum_{i=1}^R p_i \mu_i\otimes\mu_i\otimes\mu_i ,$$

where $p_i=\mathbb{P}_\theta(X_2=i)$ is taken to be the stationary distribution and $\mu_i= \mathbb{P}_\theta(Y_3 = \cdot | X_2=i) = (\Omega Q^T)_i$. In a slight abuse of notation, we surpress the dependence on $\theta$ in our notation for $p_i,\mu_i$.

\vspace{1ex}
This justifies that indeed $\Tilde{E}^{(123)}_\theta$ is a symmetric tensor. However, the $\mu_i$ are generally not orthonormal, which is required for the application of Theorem 5.1 of \cite{Anandkumar2014} (the tensor power method). First, let us define for a three-way tensor $T$ and matrices $P,Q,R$,
$$ [T(P,Q,R)]_{ijk} = \sum_r\sum_s\sum_tT_{rst}P_{ri}Q_{sj}R_{tk}. $$
We are now ready to show that the concentration of Lemma \ref{Hoeffding} is preserved on further reduction:

\begin{lemma}[Whitening preserves concentration]\label{whitening}
Define $\Tilde{E}^{(123)}_{W,\theta}$ as in Section 4.3.1 of \cite{Anandkumar2014} as follows: Take $W=W_\theta$ as a linear transformation such that $W^T\Tilde{E}^{(12)}_\theta W=I$ (made explicit in the proof) and define
$$ \mu_i^{(W)}=\sqrt{p_i}W^T\mu_i,$$
which are orthonormal. Then take

$$ \Tilde{E}^{(123)}_{W,\theta} = \Tilde{E}^{(123)}_\theta(W,W,W) = \sum_{i=1}^R \frac{1}{\sqrt{p_i}}{\mu}_i^{W}\otimes {\mu}_i^{W}\otimes {\mu}_i^{W}. $$
Then there exists an estimate $\hat{W}$, with corresponding $\check{E}^{(123)}_{\hat{W}} = \check{E}^{(123)}(\hat{W},\hat{W},\hat{W})$ such that, for some $\Tilde{C}^\prime=\Tilde{C}^\prime(R,Q^*)$, for any $x\geq 1$ and sufficiently small $\epsilon>0$

$$ \sup_{\lVert\theta-\theta^*\rVert\leq\epsilon} \mathbb{P}_\theta(\lVert \check{E}^{(123)}_{\hat{W}} - \Tilde{E}^{(123)}_{W,\theta} \rVert > \frac{\Tilde{C}^\prime x}{\sqrt{n}} )\leq e^{-x^2}. $$

\end{lemma}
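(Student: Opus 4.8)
The plan is to assemble the bound on $\lVert \check{E}^{(123)}_{\hat{W}} - \Tilde{E}^{(123)}_{W,\theta}\rVert$ from two ingredients already in reach: the sub-Gaussian concentration of the symmetrised tensors from Lemma \ref{symmetry}, and a perturbation estimate for the whitening matrix $\hat{W}$ around $W_\theta$. I would first fix a canonical whitening matrix. By Theorem 3.6 of \cite{Anandkumar2014}, $\Tilde{E}^{(12)}_\theta = \sum_{i=1}^R p_i\mu_i\mu_i^T$ with $\mu_i=(\Omega Q^T)_i$ linearly independent (since $\Omega,Q$ have rank $R$) and $p_i>0$; hence $\Tilde{E}^{(12)}_\theta$ is symmetric positive definite, and in the square case $\kappa=R$ its smallest eigenvalue is bounded away from $0$ and its largest away from $\infty$, uniformly for $\lVert\theta-\theta^*\rVert\leq\epsilon$ with $\epsilon$ small, by continuity of eigenvalues in $\theta$. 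I take the symmetric root $W_\theta=(\Tilde{E}^{(12)}_\theta)^{-1/2}$, which satisfies $W_\theta^T\Tilde{E}^{(12)}_\theta W_\theta=I$, and define $\hat{W}=(\check{E}^{(12)})^{-1/2}$ on the event that $\check{E}^{(12)}$ is positive definite (which, exactly as in the proof of Lemma \ref{symmetry}, fails only with probability $o(1)$; off this event set $\hat{W}=I$).

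Next I would transfer the concentration of $\check{E}^{(12)}$ to $\hat{W}$. On the set of symmetric matrices with spectrum in a fixed compact subinterval of $(0,\infty)$ the map $M\mapsto M^{-1/2}$ is Lipschitz, so on the good event $\lVert\hat{W}-W_\theta\rVert\lesssim\lVert\check{E}^{(12)}-\Tilde{E}^{(12)}_\theta\rVert$ with a constant uniform over the neighbourhood, and Lemma \ref{symmetry} then hands its $e^{-x^2}$ tail to $\lVert\hat{W}-W_\theta\rVert$. I would then decompose, by multilinearity,
\begin{align*}
\check{E}^{(123)}_{\hat{W}}-\Tilde{E}^{(123)}_{W,\theta}=[\check{E}^{(123)}-\Tilde{E}^{(123)}_\theta](\hat{W},\hat{W},\hat{W})+\bigl(\Tilde{E}^{(123)}_\theta(\hat{W},\hat{W},\hat{W})-\Tilde{E}^{(123)}_\theta(W_\theta,W_\theta,W_\theta)\bigr),
\end{align*}
bounding the first summand by $\lVert\check{E}^{(123)}-\Tilde{E}^{(123)}_\theta\rVert\,\lVert\hat{W}\rVert^3$ and the second, via a telescoping sum over the three slots, by $\lVert\Tilde{E}^{(123)}_\theta\rVert\,\lVert\hat{W}-W_\theta\rVert\bigl(\lVert\hat{W}\rVert^2+\lVert\hat{W}\rVert\lVert W_\theta\rVert+\lVert W_\theta\rVert^2\bigr)$. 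Since $\lVert W_\theta\rVert$ and $\lVert\Tilde{E}^{(123)}_\theta\rVert$ are bounded on the neighbourhood and $\lVert\hat{W}\rVert$ is bounded with high probability, a union bound over the finitely many events and an enlargement of the constant produce the stated uniform tail at rate $\Tilde{C}^\prime x/\sqrt{n}$.

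The main obstacle is the whitening perturbation step: one must pin down a choice of $\hat{W}$ genuinely comparable to $W_\theta$, whitening being intrinsically ambiguous up to an orthogonal rotation, and control it uniformly in $\theta$. The symmetric-root choice above sidesteps the rotational ambiguity in the square case, reducing matters to Lipschitz continuity of the inverse square root under the uniform spectral lower bound on $\Tilde{E}^{(12)}_\theta$; in the general rectangular case one would instead orthogonalise via the SVD and invoke a Davis--Kahan bound, using the same uniform gap. The remaining multilinear combination, together with the control of $\lVert\hat{W}\rVert$ on the good event, is then routine.
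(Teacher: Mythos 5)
Your proposal is correct, but it takes a genuinely different route from the paper. The paper also reduces everything to Lemma \ref{symmetry}, but it defines $W$ and $\hat{W}$ via the SVD of $\tilde{E}^{(12)}_\theta$ and $\check{E}^{(12)}$ (following the convention of \cite{Anandkumar2014}) and then outsources the entire perturbation analysis to Lemma 12 in Appendix C.6 of \cite{hsu2013learning}, which directly yields
$\lVert \check{E}^{(123)}_{\hat{W}} - \Tilde{E}^{(123)}_{W,\theta} \rVert \leq \lVert (\check{E}^{(123)}-\tilde{E}^{(123)}_\theta)(\hat{W},\hat{W},\hat{W})\rVert + \tfrac{6}{\sqrt{p_{\min}}}\lVert \check{E}^{(12)}-\tilde{E}^{(12)}_\theta\rVert/\zeta_R(\tilde{E}_\theta^{(12)})$,
with $p_{\min}$ and the $R$-th singular value $\zeta_R$ bounded uniformly on the neighbourhood; the first term is then handled by a high-probability Lipschitz bound exactly as in your term (a). Your argument instead makes the whitener canonical, $W_\theta=(\tilde{E}^{(12)}_\theta)^{-1/2}$, $\hat{W}=(\check{E}^{(12)})^{-1/2}$, so that the rotational ambiguity that the cited lemma is designed to absorb never arises, and the second term is controlled by an explicit telescoping bound together with Lipschitz continuity of $M\mapsto M^{-1/2}$ on symmetric matrices with uniformly bounded spectrum. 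What your route buys is self-containedness: one does not need to verify that a lemma proved for iid spherical Gaussian mixtures transfers to this setting, and the constants are traceable to the spectral bounds on $\tilde{E}^{(12)}_\theta$; what the paper's route buys is brevity and exact alignment with the whitening convention used downstream in the tensor power method of \cite{Anandkumar2014}. One detail you should patch: $\check{E}^{(12)}$ is an empirical average of outer products $\hat{A}Y_i(\hat{B}Y_{i+1})^T$ and is not exactly symmetric, so $(\check{E}^{(12)})^{-1/2}$ is not well defined as written; replace it by $\bigl(\tfrac{1}{2}(\check{E}^{(12)}+(\check{E}^{(12)})^T)\bigr)^{-1/2}$, noting that symmetrisation can only decrease the distance to the symmetric target $\tilde{E}^{(12)}_\theta$, so Lemma \ref{symmetry} still applies and your Lipschitz step goes through unchanged. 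With that amendment, and the same absorb-the-bad-event-into-the-constant bookkeeping the paper itself uses, your proof is complete; your remark on the rectangular case via SVD and Davis--Kahan is also consistent with the paper's stated reduction to $\kappa=R$.
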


\begin{proof}
The proof essentially follows Appendix C.6 in \cite{hsu2013learning}. In particular, we take $W,\hat{W}$ to be the matrix of left orthonormal singular vectors of $\tilde{E}^{(12)}$ and $\check{E}^{(12)}$ respectively. Then their Lemma 12 gives that

$$\lVert \check{E}^{(123)}_{\hat{W}} - \Tilde{E}^{(123)}_{W,\theta} \rVert \leq \lVert \check{E}^{(123)}(\hat{W},\hat{W},\hat{W})-\tilde{E}^{(123)}_\theta(\hat{W},\hat{W},\hat{W})  \rVert + \frac{6}{\sqrt{p_{\min}}}\frac{\lVert \check{E}^{(12)}-\tilde{E}^{(12)}_\theta\rVert}{\zeta_R(\tilde{E}_\theta^{(12)})}$$
where $p_{\min}$ is the smallest element of the stationary distribution (which under \textbf{Assumption} \ref{identifiability} is uniformly bounded below on a ball around the true parameter) and $\zeta_R(\tilde{E}^{(12)})$ is the $R^{th}$ largest singular value of $\tilde{E}_\theta^{(12)}$, which is also positive under this assumption at $\theta=\theta^*$ (and uniformly bounded below on some neighbourhood of the true parameter) as $\tilde{E}^{(12)}_{\theta^*}$ has full rank. Then on an event $\mathcal{E}$ of arbitrarily large probability the map $M\mapsto M(\hat{W},\hat{W},\hat{W})$ is Lipshitz with sufficiently large constant $K>0$; on $\mathcal{E}$ the first term on the right-hand side is then bounded by $K\lVert \check{E}^{(123)}-\tilde{E}^{(123)}_\theta\rVert$ and the result follows.
\end{proof}

Having constructed a theoretical orthogonal tensor $\Tilde{E}_{W,\theta}^{(123)}$ based on the (unknown) distribution of our observations, and having at hand an estimator $\check{E}^{(123)}_{\hat{W}}$ for this tensor with good concentration properties, Theorem 5.1 of \cite{Anandkumar2014} gives guarantees for an algorithm which recovers the $\mu_i^W$ (and the $w_i$). Note that the probability of failure (their $\eta$) for the algorithm described may be tuned by the user, and so for a theoretical estimator we may consider the algorithm in the limit $\eta\rightarrow 0$ (corresponding to running the algorithm for an arbitrarily long number of iterations). A direct application of the theorem then provides the following result:

\begin{lemma}[Recovery of orthogonalised parameters]\label{orthogparams}
Suppose Algorithm 1 of \cite{Anandkumar2014} is called iteratively, as in the statement of Theorem 5.1 of the same reference. Let $\hat{\mu}_i^W$ be the estimates of $\mu_i^W$ returned by this process and $\hat{p}_i$ the estimates of $p_i$ returned (by also applying $x\mapsto x^{-2}$ to the estimated weights). Then there exists $\Tilde{C}^{\prime\prime}=\Tilde{C}^{\prime\prime}(R,Q)$ and a permutation $\tau\in\mathcal{S}_R$ such that for all $x\geq 1$ and sufficiently small $\epsilon>0$

$$ \sup_{\lVert\theta-\theta^*\rVert\leq\epsilon}\mathbb{P}_\theta(\max_{i=1,\dots,R}\lVert \hat{\mu}_i^W - {}^\tau\mu_i^W \rVert > \frac{\Tilde{C}^{\prime\prime }x}{\sqrt{n}} , \max_{i=1,\dots,R}\lVert \hat{p}_i - {}^\tau p_i \rVert > \frac{\Tilde{C}^{\prime\prime }x}{\sqrt{n}} )\leq e^{-x^2}. $$
\end{lemma}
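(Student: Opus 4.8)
The plan is to feed the concentration bound from Lemma \ref{whitening} into the robust tensor power method guarantee of Theorem 5.1 of \cite{Anandkumar2014}, and then to translate the resulting eigenvector/eigenvalue errors into errors on $\mu_i^W$ and $p_i$. First I would recall that the population tensor is orthogonally decomposable,
$$\tilde{E}^{(123)}_{W,\theta} = \sum_{i=1}^R \frac{1}{\sqrt{p_i}}\,\mu_i^W\otimes\mu_i^W\otimes\mu_i^W,$$
with orthonormal eigenvectors $\mu_i^W$ and nonzero eigenvalues $\lambda_i = p_i^{-1/2}$. Under \textbf{Assumption} \ref{identifiability} the stationary probabilities $p_i=p_i(\theta)$ are uniformly bounded away from $0$ and $1$ on a neighbourhood $\|\theta-\theta^*\|\le\epsilon$, so the $\lambda_i$ lie in a fixed compact interval bounded away from $0$ and $\infty$; in particular $\lambda_{\min}$ is uniformly bounded below and $\lambda_{\max}$ uniformly bounded above on this neighbourhood. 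This is exactly what is needed so that the perturbation threshold and the error constants in Theorem 5.1 of \cite{Anandkumar2014} may be taken uniform in $\theta$.

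Next I would split the argument into two regimes according to the size of $x$. Let $\delta_0>0$ be the perturbation threshold below which Theorem 5.1 applies (a function of $\lambda_{\min},\lambda_{\max},R$, hence uniform over the neighbourhood), and set $x_n=\delta_0\sqrt{n}/\tilde{C}'$ with $\tilde{C}'$ the constant of Lemma \ref{whitening}. For $1\le x\le x_n$, Lemma \ref{whitening} gives that, outside an event of probability at most $e^{-x^2}$, the input error satisfies $\lVert\check{E}^{(123)}_{\hat{W}}-\tilde{E}^{(123)}_{W,\theta}\rVert\le \tilde{C}'x/\sqrt{n}\le\delta_0$. On this event, running Algorithm 1 of \cite{Anandkumar2014} with arbitrarily many random restarts (the limit $\eta\to0$, which makes the procedure deterministic given its input) and invoking Theorem 5.1 yields a permutation $\tau$ with
$$\max_i\lVert\hat{\mu}_i^W-{}^\tau\mu_i^W\rVert\le \frac{C\tilde{C}'}{\lambda_{\min}}\frac{x}{\sqrt{n}},\qquad \max_i\lvert\hat{\lambda}_i-{}^\tau\lambda_i\rvert\le C'\tilde{C}'\frac{x}{\sqrt{n}},$$
for absolute constants $C,C'$. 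The eigenvalue bound transfers to $p_i$ because $p_i=\lambda_i^{-2}$ and $x\mapsto x^{-2}$ is Lipschitz on the compact interval containing the $\lambda_i$, with constant $L$ uniform over the neighbourhood; hence $\max_i\lvert\hat{p}_i-{}^\tau p_i\rvert\le LC'\tilde{C}'x/\sqrt{n}$. Taking $\tilde{C}''$ to be the largest of these multiplicative constants bounds both errors simultaneously by $\tilde{C}''x/\sqrt{n}$ on the good event.

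For $x>x_n$ I would use the trivial a priori bounds: the power method returns unit-norm eigenvector estimates, so $\lVert\hat{\mu}_i^W-{}^\tau\mu_i^W\rVert\le 2$, and projecting the weight estimates onto $[0,1]$ (legitimate since each $p_i$ is a probability) gives $\lvert\hat{p}_i-{}^\tau p_i\rvert\le 1$. Since $x>x_n$ forces $\tilde{C}''x/\sqrt{n}>\tilde{C}''\delta_0/\tilde{C}'$, enlarging $\tilde{C}''$ once more so that $\tilde{C}''\delta_0/\tilde{C}'\ge 2$ makes both events in the probability empty, so the probability is $0\le e^{-x^2}$. Combining the two regimes, and noting that every threshold and constant was chosen uniformly on $\|\theta-\theta^*\|\le\epsilon$, yields the claimed uniform sub-Gaussian bound. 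The main obstacle is precisely this reconciliation of the ``for all $x\ge1$'' uniform tail with the fact that Theorem 5.1 of \cite{Anandkumar2014} is only valid below a fixed perturbation threshold; the two-regime device resolves it, but care is required to check that $\delta_0$, the Lipschitz constant $L$, and the eigenvalue bounds are all uniform over the neighbourhood of $\theta^*$ — a uniformity inherited, as in the preceding lemmas, from the continuity of the stationary distribution and of the whitening map in the HMM parameters.
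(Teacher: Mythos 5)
Your proposal is correct and takes the same route as the paper, which in fact omits the proof entirely: the paper merely remarks that the lemma follows from a direct application of Theorem 5.1 of \cite{Anandkumar2014} (in the limit $\eta\to 0$ of many restarts) together with the smoothness of $x\mapsto x^{-2}$, exactly the two ingredients you use. Your additional two-regime device --- invoking Theorem 5.1 only when $\tilde{C}'x/\sqrt{n}$ is below its perturbation threshold $\delta_0$, and for $x>x_n$ killing the event outright via the trivial bounds $\lVert\hat{\mu}_i^W-{}^\tau\mu_i^W\rVert\leq 2$ and $\lvert\hat{p}_i-{}^\tau p_i\rvert\leq 1$ (after projecting $\hat p_i$ onto $[0,1]$) with an enlarged constant --- together with the uniformity of $\lambda_{\min}$, $\lambda_{\max}$ and the Lipschitz constant of $x\mapsto x^{-2}$ over the neighbourhood, supplies precisely the details the paper's omitted proof glosses over, so this is a tightening of the same argument rather than a different one.
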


The previous result follows easily from the fact that the order of error in the output corresponds to the order of the error in the input tensors, together with the smoothness of $x\mapsto x^{-2}$, so the proof is omitted. It remains to show that these estimates can be `de-whitened' to produce estimates for the vector of interest $\mu_i$, which we show in the following result.

\begin{lemma}[Recovery of original parameters]\label{params}
Let $W^\dag$ be the Moore-Penrose pseudo-inverse of $W^T$. Then
$$\mu_i^{(\theta)}=\mu_i=\frac{1}{\sqrt{p_i}}W^\dag\mu_i^W, $$
and the $\hat{\mu}_i$ constructed as
$$\hat{\mu}_i=\frac{1}{\sqrt{\hat{\nu}_i}}\hat{W}^\dag\hat{\mu}_i^W $$
satisfy
$$ \sup_{\lVert\theta-\theta^*\rVert\leq\epsilon}\mathbb{P}_\theta(\max_{i=1,\dots,R}\lVert \hat{\mu}_i - {}^\tau\mu_i^{(\theta)} \rVert > \frac{Cx}{\sqrt{n}} )\leq e^{-x^2}, $$
for some $\tau\in\mathcal{S}_R$ and $C=C(R,Q^*)$, for all $x\geq 1$ and for sufficiently small $\epsilon>0$.

\end{lemma}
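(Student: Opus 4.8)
The plan is to first verify the algebraic identity, and then to observe that $\hat\mu_i$ is a locally Lipschitz function of three quantities whose sub-Gaussian concentration has already been established, so that the rate $x/\sqrt{n}$ propagates. For the identity, recall that $W$ whitens $\tilde E^{(12)}_\theta$, i.e. $W^T\tilde E^{(12)}_\theta W = I$, and that by definition $\mu_i^W = \sqrt{p_i}\,W^T\mu_i$. Since the columns of $W$ span $\mathrm{col}(\tilde E^{(12)}_\theta) = \mathrm{span}\{\mu_1,\dots,\mu_R\}$, the product $(W^T)^\dag W^T$ is the orthogonal projection onto $\mathrm{col}(W)$, which contains each $\mu_i$; hence applying $W^\dag = (W^T)^\dag$ gives $W^\dag\mu_i^W = \sqrt{p_i}\,(W^T)^\dag W^T\mu_i = \sqrt{p_i}\,\mu_i$, which rearranges to $\mu_i = \tfrac{1}{\sqrt{p_i}}W^\dag\mu_i^W$ (in the case $\kappa=R$, $W$ is invertible and $(W^T)^\dag W^T = I$ outright).

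For the concentration, I would collect three deviations: (a) $\lVert\hat W^\dag - W^\dag\rVert$, (b) $\lVert\hat\mu_i^W - {}^\tau\mu_i^W\rVert$, and (c) $\lvert\hat\nu_i - {}^\tau p_i\rvert$. Bounds (b) and (c) are exactly the content of Lemma \ref{orthogparams}, each $\lesssim x/\sqrt{n}$ with probability at least $1-e^{-x^2}$ (and with a common permutation $\tau$). For (a), since $\hat W$ and $W$ are the whitening matrices built from the SVDs of $\check E^{(12)}$ and $\tilde E^{(12)}_\theta$, and since the $R$-th singular value of $\tilde E^{(12)}_\theta$ is bounded below uniformly on a neighbourhood of $\theta^*$ (full rank under \textbf{Assumption} \ref{identifiability}, together with continuity in $\theta$), Weyl's and Wedin's perturbation bounds render the whitening-matrix-and-pseudoinverse map Lipschitz on that neighbourhood; combined with $\lVert\check E^{(12)} - \tilde E^{(12)}_\theta\rVert\lesssim x/\sqrt{n}$ from Lemma \ref{symmetry}, this yields (a) $\lesssim x/\sqrt{n}$ on an event of probability at least $1-e^{-x^2}$.

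The conclusion then follows by telescoping
$$\hat\mu_i - {}^\tau\mu_i = \Big(\tfrac{1}{\sqrt{\hat\nu_i}} - \tfrac{1}{\sqrt{{}^\tau p_i}}\Big)\hat W^\dag\hat\mu_i^W + \tfrac{1}{\sqrt{{}^\tau p_i}}(\hat W^\dag - W^\dag)\hat\mu_i^W + \tfrac{1}{\sqrt{{}^\tau p_i}}W^\dag(\hat\mu_i^W - {}^\tau\mu_i^W).$$
On the intersection of the three high-probability events each remaining factor ($\hat W^\dag$, $W^\dag$, $\hat\mu_i^W$, and $1/\sqrt{{}^\tau p_i}$) is bounded — using $\lVert\mu_i^W\rVert=1$ and the uniform positivity of the stationary probabilities $p_i$ on the neighbourhood — and $\lvert 1/\sqrt{\hat\nu_i} - 1/\sqrt{{}^\tau p_i}\rvert\lesssim\lvert\hat\nu_i - {}^\tau p_i\rvert$, so every term is $\lesssim x/\sqrt{n}$. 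Taking the maximum over the $R$ coordinates, applying a union bound over the finitely many events, and enlarging the constant exactly as at the end of Lemmas \ref{symmetry} and \ref{whitening}, gives the stated bound with $C=C(R,Q^*)$ and the same $\tau\in\mathcal{S}_R$ as in Lemma \ref{orthogparams}.

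The main obstacle I anticipate is step (a): controlling $\lVert\hat W^\dag - W^\dag\rVert$ requires the perturbation theory for singular subspaces to hold \emph{uniformly} over $\lVert\theta-\theta^*\rVert\le\epsilon$, which rests on the uniform lower bound for the $R$-th singular value of $\tilde E^{(12)}_\theta$ and the uniform positivity of the $p_i$. These uniformities are precisely what force $\epsilon$ to be taken sufficiently small, and are the same ingredients already invoked in Lemma \ref{whitening}.
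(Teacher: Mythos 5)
Your proposal is correct and follows essentially the same route as the paper: the paper cites Theorem 4.3 of \cite{Anandkumar2014} for the identity $\mu_i=\frac{1}{\sqrt{p_i}}W^\dag\mu_i^W$ (which you instead verify directly via the projection property of $(W^T)^\dag W^T$ on $\mathrm{col}(W)\supseteq\mathrm{span}\{\mu_i\}$), and then obtains the concentration exactly as you do, from the control of $\check{E}^{(12)}$ in Lemma \ref{symmetry}, the Lipschitz continuity of the singular vectors and the Moore--Penrose pseudo-inverse on a neighbourhood of $\theta^*$, and the bounds on $\hat{\nu}_i$ and $\hat{\mu}_i^W$ from Lemma \ref{orthogparams}. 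Your explicit telescoping decomposition and the Weyl/Wedin justification of the Lipschitz step are simply a more detailed write-up of the same argument.
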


\begin{proof}
The first part of the statement is given by Theorem 4.3 of \cite{Anandkumar2014}. The control over $\hat{\mu}_i$ then follows from the control over $\check{E}^{(12)}$ established in Lemma \ref{symmetry} and the Lipshitz continuity of the singular vectors and Moor-Penrose pseudo-inverse on a neighbourhood of the true parameter, as well as the control over $\hat{\nu}_i$ and $\hat{\mu}_i^W$ established in Lemma \ref{params}.
\end{proof}

The estimates produced target (up to label-swapping) the columns of $\Omega Q^T$. By adapting the procedure to symmetrise about the second view (rather than the third), we can produce estimates (up to label-swapping) of $\Omega$. Although the labelling is arbitrary, the permutation of Lemma \ref{params} may differ in the two cases.

\vspace{1ex}
To alleviate this issue, we use any consistent estimator (such as the maximum-likelihood estimator, which is consistent without assumptions on the Fisher information, see Theorem 1 of \cite{Douc2004}). Because the transition matrix $Q$ has full rank, the $\mu_i$ (and associated vectors from symmetrisation about the second view) are well-separated and so we can fix a labelling relative to this estimator.

\begin{lemma}[Fixing of labels]\label{labelfix}
Let $\hat{\Xi}_3$ be the matrix whose columns are the $\hat{\mu}_i$ as in Lemma \ref{params} (with symmetrisation about the third view as detailed) and $\hat{\Xi}_2$ be the corresponding matrix when symmetrising about the second view. Moreover, let $\Check{\Xi}_3$,$\Check{\Xi}_2$ be the corresponding estimates formed from the MLE, that is
$$ \Check{\Xi}_3 = \Check{\Omega}_{MLE}\Check{Q}^T_{MLE}, \hspace{3ex} \Check{\Xi}_2 = \Check{\Omega}_{MLE}. $$
For $i=3,2$,  pick $\hat{\tau}_i\in\mathcal{S}_R$ such that
$$\lVert {}^{\hat{\tau}_i}\hat{\Xi}_i - \check{\Xi}_i \rVert = \min_{\tau\in\mathcal{S}_R} \lVert {}^{\tau}\hat{\Xi}_i - \check{\Xi}_i \rVert, $$
where $\lVert \cdot \rVert$ is the max of the squared norms of the columns. Define
$$\hat{\Omega} = {}^{\hat{\tau}_2}\hat{\Xi}_2, $$ 
and
$$\hat{Q}^T = \hat{\Omega}^{-1}{}^{\hat{\tau}_3}\hat{\Xi}_3 .$$
Recall $\theta=(Q,\Omega)$ and define $\hat{\theta}=(\hat{Q},\hat{\Omega})$ .Then there exists a $\tau\in\mathcal{S}_R$ (the permutation accounting for the difference between the truth and the MLE) such that, for all $\epsilon>0$ small enough,
$$ \sup_{\lVert \theta-\theta^*\rVert < \epsilon}\mathbb{P}_\theta(\max_{i=1,\dots,R}\lVert \hat{\theta}_i - {}^\tau \theta_i \rVert > \frac{Cx}{\sqrt{n}} )\leq e^{-x^2}, $$
for some $C=C(R,Q)$.
\end{lemma}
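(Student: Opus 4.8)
The plan is to treat the matching step as a device that transports the unknown spectral permutations onto the single, coherent labelling supplied by the MLE. By Lemma \ref{params} and its second-view analogue, the columns of $\hat\Xi_3$ and $\hat\Xi_2$ approximate, up to permutations $\tau_3,\tau_2\in\mathcal S_R$ that may differ, the columns of $\Xi_3:=\Omega Q^\top$ and of $\Xi_2:=\Omega$ respectively, each with sub-Gaussian concentration at rate $n^{-1/2}$. The maximum-likelihood estimator $(\check Q_{MLE},\check\Omega_{MLE})$ is consistent (Theorem 1 of \cite{Douc2004}), so $\check\Xi_2$ and $\check\Xi_3$ converge in probability to ${}^{\tau}\Xi_2$ and ${}^{\tau}\Xi_3$ for a \emph{single} permutation $\tau$; it is this joint coherence across the two views that the spectral estimates lack and that the MLE restores.

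First I would establish a uniform separation gap. Under \textbf{Assumption} \ref{identifiability}, $Q^*$ and $\Omega^*$ have full rank, so the columns of $\Xi_2^*$ and of $\Xi_3^*$ are pairwise distinct; by continuity this persists with a gap $\delta_0>0$ bounded below uniformly on a ball $\lVert\theta-\theta^*\rVert<\epsilon$. On the event $\mathcal G_n$ that both the relevant spectral error and the MLE error fall below $\delta_0/2$, each minimiser $\hat\tau_i$ in the definition of the lemma is forced to be the unique permutation aligning ${}^{\tau_i}\Xi_i$ with ${}^{\tau}\Xi_i$, namely $\hat\tau_i=\tau\tau_i^{-1}$, so that ${}^{\hat\tau_i}\hat\Xi_i$ lies within $O(n^{-1/2})$ of ${}^{\tau}\Xi_i$. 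The spectral error exceeds $\delta_0/2$ only with probability at most $e^{-cn}$ (taking $x\asymp\sqrt n$ in Lemma \ref{params}), while the MLE error exceeds $\delta_0/2$ only with probability $o(1)$ by consistency; both are absorbed into the final constant exactly as in the proof of Lemma \ref{symmetry}.

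Writing $P_\tau$ for the permutation matrix of $\tau$, on $\mathcal G_n$ we have ${}^{\hat\tau_2}\hat\Xi_2=\Omega P_\tau+O(n^{-1/2})$ and ${}^{\hat\tau_3}\hat\Xi_3=\Omega Q^\top P_\tau+O(n^{-1/2})$. Hence $\hat\Omega={}^{\hat\tau_2}\hat\Xi_2\approx{}^{\tau}\Omega$, and since $\Omega$ is invertible near $\theta^*$,
$$\hat Q^\top=\hat\Omega^{-1}\,{}^{\hat\tau_3}\hat\Xi_3\approx (\Omega P_\tau)^{-1}(\Omega Q^\top P_\tau)=P_\tau^\top Q^\top P_\tau,$$
so that $\hat Q\approx{}^{\tau}Q$ with the \emph{same} $\tau$ as for $\hat\Omega$. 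Matrix inversion and multiplication are locally Lipschitz at the nonsingular $\Omega$, so the $n^{-1/2}$ sub-Gaussian rates of Lemmas \ref{symmetry}--\ref{params} propagate through these operations; combining this with the negligible probability of $\mathcal G_n^c$ yields the claimed bound for $\hat\theta=(\hat Q,\hat\Omega)$ and concludes the proof.

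The main obstacle is the label coordination: the two spectral reductions produce \emph{a priori} unrelated permutations, and the argument only works because the MLE pins down one discrete labelling that is simultaneously correct for $Q$ and $\Omega$. The two ingredients that make this rigorous are the uniform column-separation gap $\delta_0$ (which I obtain from continuity of the singular-value structure under \textbf{Assumption} \ref{identifiability}) and the observation that the MLE need only be accurate to within $\delta_0/2$ to select the right permutation, so its mere consistency rather than a $\sqrt n$-rate suffices, with the residual $o(1)$ event absorbed as in Lemma \ref{symmetry}.
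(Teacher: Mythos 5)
Your proposal reconstructs exactly the argument the paper intends: in fact the paper states this lemma with \emph{no} formal proof, offering only the preceding remark that the columns of $\Omega$ and $\Omega Q^T$ are well separated (full rank) and that a consistent estimator such as the MLE can therefore be used to fix a single labelling. Your formalization of that sketch --- the uniform column-separation gap $\delta_0$ on a ball around $\theta^*$, the observation that on the good event each minimiser $\hat\tau_i$ is forced to equal the composition of the spectral permutation with the MLE--truth alignment, and the algebra $(\Omega P_\tau)^{-1}(\Omega Q^T P_\tau)=P_\tau^T Q^T P_\tau$ showing that the \emph{same} $\tau$ serves simultaneously for $\hat\Omega$ and $\hat Q$ --- is correct and is the substantive content of the lemma.

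There is, however, one step that does not go through as you state it: absorbing the MLE-misalignment event ``exactly as in the proof of Lemma \ref{symmetry}''. In Lemma \ref{symmetry} the discarded event (non-invertibility of $\hat{E}^{(12)}$) has probability at most $e^{-c^2n}$, i.e.\ exponentially small in $n$; since the deviation event $\{\max_i\lVert\hat\theta_i-{}^\tau\theta_i\rVert>Cx/\sqrt{n}\}$ is non-trivial only for $x\lesssim\sqrt{n}$ (the parameters being bounded), an $e^{-cn}$ term can legitimately be dominated by $e^{-x^2}$ throughout that range after enlarging $C$. By contrast, Theorem 1 of \cite{Douc2004} gives consistency of the MLE with \emph{no rate}: the event that the MLE is further than $\delta_0/2$ from every permutation of the truth has probability $p_n=o(1)$ only. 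For $x$ of order $\sqrt{n}$ one has $e^{-x^2}\approx e^{-cn}\ll p_n$, so $e^{-x^2}+p_n\leq e^{-x^2}$ cannot hold uniformly over $1\leq x\lesssim\sqrt{n}$, however $C$ is enlarged. Nor can the event be argued away: if the MLE is far from every permutation of the truth, the two compositions $\hat\tau_2\circ\tau_2$ and $\hat\tau_3\circ\tau_3$ need not coincide, and then $\hat{Q}^T\approx P_{\sigma_2}^TQ^TP_{\sigma_3}$ with $\sigma_2\neq\sigma_3$ is not close to any single permutation of $Q$, so the error is genuinely large on that event. To be clear, this gap is present in the paper itself (which proves nothing here, and whose use of the lemma in Proposition \ref{estimators} inherits it); closing it requires a quantitative (e.g.\ exponential) deviation bound for the reference estimator rather than bare consistency, or else weakening the conclusion to $e^{-x^2}+o(1)$ and tracking that remainder through the risk bound of Proposition \ref{estimators}.
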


\subsection{Fisher Information and asymptotic lower bound}\label{sup:sec:filowerbound}
The following result gives a lower bound on the asymptotic variance of regular estimators and is used in the proof of Lemma \ref{infogrows}.
\begin{lemma}\label{CR}
Let $\hat{\theta}_n$ be a regular estimator in the histogram model with partition $\mathcal{I}_M$, with $M$ fixed. Let $Z$ denote the random variable of law equal to the limit distribution of the scaled and centred estimates $\sqrt{n}(\hat{\theta}_n-\theta^*)$ under $\mathbb{P}_*$ as $n\rightarrow\infty$. Then
$$\text{Cov}(Z)\geq J(\theta^*),$$
where $J(\theta)$ is the Fisher information matrix at the parameter $\theta$.
\end{lemma}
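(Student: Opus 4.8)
The plan is to treat the histogram model $\mathcal M(\mathcal I_M)$ (for fixed $M$) as a regular, finite-dimensional parametric HMM and to deduce the bound from the H\'ajek--Le Cam convolution theorem for locally asymptotically normal (LAN) families. The key observation, already made in Remark \ref{datareduction}, is that $\mathcal M(\mathcal I_M)$ is exactly a finite state space HMM with multinomial emissions, governed by the parameter $\theta=(Q,\underline\omega_{(M)})$; it is therefore covered by the classical parametric theory of \cite{bickel1998asymptotic} and \cite{Douc2004}.

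First I would record the LAN expansion at $\theta^*$. By Theorem 1 of \cite{bickel1998asymptotic} (equivalently, by specialising the score analysis of \cite{Douc2004} to this parametric submodel), there is a sequence $\Delta_n$ with $\Delta_n\Rightarrow^* N(0,J(\theta^*))$ such that, for every fixed local direction $h$,
\[
\log\frac{dP^n_{\theta^*+h/\sqrt n}}{dP^n_{\theta^*}} = h^T\Delta_n - \tfrac12 h^T J(\theta^*)h + o_{\mathbb P_*}(1),
\]
where $J(\theta^*)$ is the Fisher information obtained as the limiting covariance of the score. Invertibility of $J(\theta^*)$, needed so that $J(\theta^*)^{-1}$ is well defined, is supplied by Proposition \ref{prop:invertibleFI}, using that $Q^*$ and $\mathbf F^*(\mathcal I_M)$ have rank $R$ (the latter being admissibility of the partition).

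Next I would invoke the definition of a regular estimator: for every $h$, under the local alternatives $\theta_n=\theta^*+h/\sqrt n$ the sequence $\sqrt n(\hat\theta_n-\theta_n)$ converges under $\mathbb P_{\theta_n}$ to a fixed law, the law of $Z$, which does not depend on $h$. The convolution theorem (Theorem 8.8 of \cite{VanderVaart1998}) then applies verbatim in this LAN setting and shows that the law of $Z$ is the convolution of $N(0,J(\theta^*)^{-1})$ with some probability measure $M_0$. Consequently $Z\stackrel{d}{=}Z_0+W$ with $Z_0\sim N(0,J(\theta^*)^{-1})$ independent of $W\sim M_0$, so that
\[
\mathrm{Cov}(Z) = J(\theta^*)^{-1} + \mathrm{Cov}(W) \geq J(\theta^*)^{-1},
\]
which is the asserted lower bound by the inverse Fisher information (matching the way the conclusion is used in Lemma \ref{infogrows}, where $\tilde J_M^{-1}$ is the asymptotic variance and $\tilde J_{M+1}^{-1}$ the inverse Fisher information of the finer model).

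The main obstacle is not the convolution step, which is standard once LAN is in hand, but confirming that the dependent-data HMM genuinely fits the LAN/convolution framework; this is precisely the content imported from \cite{bickel1998asymptotic} and \cite{Douc2004}, combined with the invertibility of $J(\theta^*)$ from Proposition \ref{prop:invertibleFI}. A secondary point to address is the label-swapping ambiguity, which is harmless here because both the regularity hypothesis and the limit law $Z$ are formulated for a fixed local parametrisation of $\theta$ after the labels have been fixed.
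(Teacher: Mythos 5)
Your proof is correct, but it takes a genuinely different route from the paper's. The paper follows Gill and Levit \cite{gill1995applications} and proves the bound via the van Trees inequality: a fixed prior is rescaled to a shrinking window $[\theta^*-H/\sqrt n,\,\theta^*+H/\sqrt n]$, the Bayesian Cram\'er--Rao bound is applied to the local Bayes risk of $\lambda^T\hat\theta_n$, and the limits $n\to\infty$ then $H\to\infty$ are taken; the key technical ingredient there is the locally uniform convergence $\frac1n J_n(\theta_n)\to J(\theta)$ (Lemma \ref{uniformFI}), proved separately, with regularity entering only to identify the limiting local risk with second moments of $Z$. You instead go through the H\'ajek--Le Cam convolution theorem: LAN of the multinomial-emission HMM (from \cite{bickel1998asymptotic}, \cite{Douc2004}), invertibility of $J(\theta^*)$ (Proposition \ref{prop:invertibleFI}), and regularity give $Z\stackrel{d}{=}Z_0+W$ with $Z_0\sim N(0,J(\theta^*)^{-1})$ independent of $W$, hence the covariance bound. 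Your route yields a structurally stronger conclusion (a convolution representation, hence lower bounds for arbitrary bowl-shaped losses, not only the covariance) and dispenses with Lemma \ref{uniformFI}; the paper's route avoids the limits-of-experiments machinery and stays at the level of risk inequalities. One point of care in your version: Theorem 8.8 of \cite{VanderVaart1998} is stated for i.i.d. experiments differentiable in quadratic mean, so for the dependent HMM data you should invoke the convolution theorem for general LAN sequences --- e.g.\ the version of \cite{van1991differentiable} as presented in \cite{mcneney2000application}, which the paper itself uses in Section \ref{sec:suppconv} --- rather than claiming the i.i.d. statement applies ``verbatim''. Finally, you are right that what both arguments deliver is $\mathrm{Cov}(Z)\geq J(\theta^*)^{-1}$: the inverse is missing from the lemma's display, and it is the inverse-information bound that Lemma \ref{infogrows} actually uses.
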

\begin{proof}
Consider estimation of the one-dimensional parameter $\lambda^T\theta$. Recall from Lemma \ref{uniformFI} that for any sequence $\theta_n\rightarrow\theta$, we have $J(\theta)=\lim_{n\rightarrow\infty}\frac{1}{n}J_n(\theta_n)$, with $J_n$ the joint Fisher information for $n$ observations, as defined in Lemma \ref{uniformFI}.

\vspace{1ex}
We follow the arguments of Gill and Levit \cite{gill1995applications}. Let $\pi$ be a fixed prior density on $[-1,1]$ and $J(\pi)=\mathbb{E}[{\left(\log\pi(\theta)\right)^\prime} ^2]$. For a given $H>0$, let $\pi(H,n)$ be the rescaling of this prior to the interval $A=[\theta_0-n^{-\frac{1}{2}}H,\theta_0+n^{-\frac{1}{2}}H]$ for given $H>0$. Then, applying the van Trees inequality we get
$$\mathbb{E}[\lambda^T\hat{\theta}^n-\lambda^T\theta]^2\geq\dfrac{1}{\mathbb{E}\lambda^TJ_n(\theta)\lambda+nJ(\pi)/H^2},$$
where the left hand expectation is taken over the joint law of the parameter and the data given the parameter (with the parameter distributed according to $\pi(H,n)$) and the expectation in the denominator is over $\theta$ having that law also. Dividing through gives

$$\mathbb{E}[\sqrt{n}(\hat{\theta}^n-\theta)]^2\geq\dfrac{1}{\frac{1}{n}\mathbb{E}\lambda^TJ_n(\theta)\lambda+J(\pi)/H^2}.$$
\vspace{1ex}

Taking first $n\rightarrow\infty$ then $H\rightarrow\infty$ we obtain the result, applying Lemma \ref{uniformFI} to get convergence of the expectation term in the denominator to the Fisher information.
\end{proof}

The previous result requires the following convergence property of the Fisher information matrices. We also use it in the proof of Proposition \ref{invertibleFI}.

\begin{lemma}\label{uniformFI}
Let $\theta_n\rightarrow\theta$. Denote $J_n=J_n^{(M)}$ the Fisher information for $n$ observations in the model with $M$ bins given by $$J_n(\theta)=\mathbb{E}_\theta[-\nabla_\theta^2\log L_n(\theta)], $$ and $J$ the Fisher information for the model. Then $$\frac{1}{n}J_n(\theta_n)\rightarrow J(\theta).$$
\end{lemma}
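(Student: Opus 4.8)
The plan is to reduce the statement to two facts about parametric finite-state, finite-observation HMMs that are available from \cite{bickel1998asymptotic} and \cite{Douc2004}: pointwise convergence $\tfrac1n J_n(\theta)\to J(\theta)$ together with a \emph{locally uniform} rate, and continuity of the limit $\theta\mapsto J(\theta)$. Granting these, I split
\begin{equation*}
\frac1n J_n(\theta_n) - J(\theta) = \left(\frac1n J_n(\theta_n) - J(\theta_n)\right) + \bigl(J(\theta_n) - J(\theta)\bigr),
\end{equation*}
and control the two brackets separately: the second tends to $0$ by continuity of $J$, while the first tends to $0$ once the convergence $\tfrac1n J_n\to J$ is shown to be uniform over a compact neighbourhood $K$ of $\theta$, since $\theta_n\in K$ eventually.

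To obtain the uniform convergence I would use the predictive decomposition of the log-likelihood, $-\nabla_\theta^2\log L_n(\theta)=\sum_{k=1}^n \xi_k(\theta)$ with $\xi_k(\theta)=-\nabla_\theta^2\log p_\theta(Y_k\mid Y_{1:k-1})$, so that $\tfrac1n J_n(\theta)=\tfrac1n\sum_{k=1}^n\mathbb{E}_\theta[\xi_k(\theta)]$. Introducing the stationary version $\xi_{k,\infty}(\theta)=-\nabla_\theta^2\log p_\theta(Y_k\mid Y_{-\infty:k-1})$, stationarity gives $\mathbb{E}_\theta[\xi_{k,\infty}(\theta)]=J(\theta)$ independent of $k$, this being precisely the asymptotic Fisher information of \cite{bickel1998asymptotic}. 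Because the observation space is finite and, on $K$, the entries of $Q$ and of the emission vectors $\underline\omega$ are bounded away from $0$ (guaranteed near the parameters of interest by \textbf{Assumption} \ref{ass:Qbound} and the positivity of the $\omega^*_{mr}$), the predictive densities $p_\theta(Y_k\mid\cdot)$ and their first three $\theta$-derivatives are bounded uniformly in the observations and in $\theta\in K$; moreover the filter and its $\theta$-derivatives forget their initialisation exponentially fast, with constant $C$ and rate $\rho<1$ \emph{uniform over $K$}, by the forgetting estimates established in \cite{Douc2004}. This yields $\sup_{\theta\in K}\bigl|\mathbb{E}_\theta[\xi_k(\theta)]-J(\theta)\bigr|\le C\rho^k$, and Cesàro averaging gives $\sup_{\theta\in K}\bigl|\tfrac1n J_n(\theta)-J(\theta)\bigr|\to 0$.

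Continuity of $J$ on $K$ follows from the same ingredients: $J(\theta)$ is the stationary expectation of $\xi_{0,\infty}(\theta)$, which depends continuously on $\theta$ through the filter, while the uniform exponential tails make the dependence on the (truncated) past uniformly negligible, so the limit inherits continuity. The finiteness of the observation alphabet together with the uniform lower bounds on $Q$ and $\underline\omega$ over $K$ keep all the relevant quantities bounded and render the interchange of limit, derivative and expectation legitimate.

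The main obstacle is the uniform-in-$\theta$ control of the Hessian terms $\mathbb{E}_{\theta}[\xi_k(\theta)]$: the parameter enters both the integrand (the second derivative evaluated at $\theta$) and the law $\mathbb{P}_\theta$ under which the expectation is taken, so one must check that the forgetting constants and the bounds on the derivatives of the log predictive densities can be chosen uniformly on a neighbourhood rather than merely pointwise. This is exactly the step where the finiteness of the observation space and the interior positivity of the parameters are essential, since they reduce the delicate tangent-filter forgetting arguments of \cite{Douc2004} to estimates that are continuous, and hence locally uniformly bounded, in $\theta$.
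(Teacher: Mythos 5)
Your proposal is correct, but it takes a genuinely different route from the paper's proof, and the comparison is instructive. The paper invokes Theorem 3 of \cite{Douc2004} to get the almost sure, locally uniform convergence of the normalized \emph{observed} information $-\frac{1}{n}\nabla^2_\theta l_n(\theta_n)\to J(\theta)$ under $\mathbb{P}_\theta$, bounds the observed information via the Fisher/Louis identities, and then transfers the expectation from $\mathbb{P}_{\theta_n}$ to $\mathbb{P}_\theta$ by a Scheff\'e argument on the joint densities. You never compare the two laws on the sample space at all: in your first bracket the evaluation point of the Hessian and the law under which the expectation is taken are matched at the same $\theta$, and the entire parameter mismatch is pushed into the deterministic term $J(\theta_n)-J(\theta)$, which only needs continuity of $J$. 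The price is that you must re-derive, at the level of expectations, the uniform convergence $\sup_{\theta\in K}\lvert\frac{1}{n}J_n(\theta)-J(\theta)\rvert\to 0$ from the forgetting estimates of \cite{Douc2004}, checking that the constants are uniform on a compact neighbourhood $K$; this is legitimate here precisely because the observation alphabet is finite and the transition and emission probabilities are bounded away from zero on $K$, as you say. What your route buys is robustness: the paper's Scheff\'e step compares $n$-dimensional joint densities whose dimension grows with $n$, which is delicate (for $\theta_n-\theta\asymp n^{-1/2}$, the regime actually needed in the van Trees application, the total variation distance between $\mathbb{P}^n_{\theta_n}$ and $\mathbb{P}^n_{\theta}$ does not vanish, and a contiguity argument is needed to make that step airtight), whereas your decomposition is valid for any sequence $\theta_n\to\theta$ regardless of rate. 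Two minor remarks: the bound $C\rho^k$ you claim for $\lvert\mathbb{E}_\theta[\xi_k(\theta)]-J(\theta)\rvert$ may in fact carry polynomial prefactors such as $Ck^2\rho^k$, coming from differentiating the filter recursion twice, but this is harmless for the Ces\`aro average; and once the uniform convergence on $K$ is established, continuity of $J$ is automatic, since each $\frac{1}{n}J_n$ is a finite sum of continuous functions of $\theta$ and $J$ is their uniform limit, so your separate truncation argument for continuity can be dispensed with.
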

When $\theta_n=\theta$ the result is simply the definition of $J$. The interest is hence in establishing local uniform convergence.
\begin{proof}
Theorem 3 of \cite{Douc2004} establishes the result in the case of observed information\footnote{The result as stated in the reference conditions on the value $X_0=x_0$ - the general statement follows and can be found explicitly in Theorem 13.24 in \cite{douc2014nonlinear}.},

$$-\frac{1}{n}\nabla_\theta^2l_n(\theta_n) \rightarrow J(\theta),$$
a.s. under $\mathbb{P}_\theta$. Moreover, the Fisher and Louis identities \cite{Louis1982} show that boundedness of the complete observed information and complete scores (which take simple forms) implies boundedness of the observed information. Write $\mathcal{Y}=[M]^\mathbb{N}$ and $\mu$ for the counting measure on this space with associated densities $p_\theta(y)$. We have since the observed information is bounded and $p_{\theta_n}(y)\rightarrow p_\theta(y)$ pointwise and so in $L^1(\mu)$ by Scheff\'e's lemma \cite{scheffe1947useful},
\begin{align*}
    \frac{1}{n}J_n(\theta_n) &= \int_{\mathcal{Y}}-\frac{1}{n}\nabla^2_\theta l_n(\theta_n)p_{\theta_n}(y)\dd \mu(y) \\
    & = \int_{\mathcal{Y}}-\frac{1}{n}\nabla^2_\theta l_n(\theta_n)p_{\theta}(y)\dd \mu(y) +o(1) \\
    & = J(\theta^*) + o(1).
\end{align*}
\end{proof}

\subsection{Technical results for the deconvolution argument}\label{TechnicalLemmas}

Here, we collect the technical results required to prove Lemma \ref{Deconvolution}. 

\vspace{1ex}
Let us recall some notation for what follows. For $r\in\{1,\dots, R\}$, and $g\in L^2(f^*_r)$, define
$$H_{r,M}(g)=g(y_0)\mathbb{P}_*(X_0=r|\mathcal{G}^0_m) + \sum_{j=-\infty}^{-1}g(y_j) (\mathbb{P}_*(X_j=r|\mathcal{G}^0_M) - \mathbb{P}_*(X_j=r|\mathcal{G}^{-1}_M)),$$
and define $H_r(g)$ similarly but with the conditioning being on the sigma algebras $\mathcal{G}^0$ and $\mathcal{G}^{-1}$. Then define, for $g = (g_1, \cdots, g_r)$, $$H_M(g)=\sum_rH_{r,M}(g_r); \quad H(g)=\sum_rH_r(g_r).$$ These are the score functions in the submodels with fixed transition matrix, and emission densities varying along the path characterised by $g$.

\vspace{1ex}

Our first technical result allows us to eliminate certain terms when we make the main deconvolution argument. We recall $\mathcal{G}^l=\sigma(Y_{-\infty:l})$.
\begin{lemma}\label{sup:zeromeans}
For $g\in \mathcal{H}_r$ and $j<k\leq 0$, we have
$$\mathbb{E}_*[g(Y_j)\mathbb{P}_*(X_j=r|\mathcal{G}^0) | Y_{k:0} ] = 0 . $$
\end{lemma}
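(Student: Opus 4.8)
The plan is to reduce the statement, by a sequence of conditioning steps, to the defining mean-zero property of $g\in\mathcal{H}_r$, namely $\int g(y)f_r^*(y)\dd y = 0$. The three ingredients are the measurability of $g(Y_j)$ with respect to $\mathcal{G}^0$ (since $j\leq 0$), the inclusion $\sigma(Y_{k:0})\subseteq\mathcal{G}^0$, and the conditional independence of the emission $Y_j$ from all other coordinates given the latent state $X_j$.

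First I would note that $g(Y_j)$ is $\mathcal{G}^0=\sigma(Y_{-\infty:0})$-measurable because $j\leq 0$, so it may be pulled inside the smoothing probability: writing $\mathbb{P}_*(X_j=r|\mathcal{G}^0)=\mathbb{E}_*[\mathbf{1}\{X_j=r\}|\mathcal{G}^0]$, one obtains
$$ g(Y_j)\,\mathbb{P}_*(X_j=r|\mathcal{G}^0) = \mathbb{E}_*\bigl[g(Y_j)\mathbf{1}\{X_j=r\}\,\big|\,\mathcal{G}^0\bigr]. $$
Since $k\leq 0$ we have $\sigma(Y_{k:0})\subseteq\mathcal{G}^0$, so the tower property collapses the outer conditioning and reduces the claim to showing
$$ \mathbb{E}_*\bigl[g(Y_j)\mathbf{1}\{X_j=r\}\,\big|\,Y_{k:0}\bigr]=0. $$

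Next I would condition further on $\sigma(X_j,Y_{k:0})$ and factor out the now-measurable indicator, giving $\mathbb{E}_*\bigl[\mathbf{1}\{X_j=r\}\,\mathbb{E}_*[g(Y_j)\mid X_j,Y_{k:0}]\,\big|\,Y_{k:0}\bigr]$. The crucial point is that, by the HMM emission structure, given $X_j=r$ the observation $Y_j$ has law $F_r$ and is independent of $\{Y_t:t\neq j\}$; because $j<k$, the conditioning block $Y_{k:0}$ involves only indices $t\geq k>j$, hence none equal to $j$. Therefore on the event $\{X_j=r\}$ one has $\mathbb{E}_*[g(Y_j)\mid X_j,Y_{k:0}]=\int g(y)f_r^*(y)\dd y=0$ by the defining property of $\mathcal{H}_r$. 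The product $\mathbf{1}\{X_j=r\}\,\mathbb{E}_*[g(Y_j)\mid X_j,Y_{k:0}]$ thus vanishes identically (it is zero where $X_j=r$ and the indicator kills it where $X_j\neq r$), so its conditional expectation is $0$, which closes the argument.

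There is no substantial obstacle here; the result is a bookkeeping lemma, and the only point requiring care is the order of conditioning and the verification that $Y_j$ does not appear among $Y_{k:0}$ (guaranteed by the hypothesis $j<k$), which is exactly what allows the emission conditional independence to be invoked and the mean-zero property of $g$ to be applied cleanly.
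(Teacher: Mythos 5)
Your proof is correct and follows essentially the same route as the paper's: pull $g(Y_j)$ inside the smoothing probability using $\mathcal{G}^0$-measurability, collapse the outer conditioning via the tower property, condition on $(X_j, Y_{k:0})$, invoke the conditional independence of $Y_j$ from $Y_{k:0}$ given $X_j$ (valid precisely because $j<k$), and finish with the mean-zero property $\int g f_r^* = 0$ defining $\mathcal{H}_r$. No discrepancies worth noting.
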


\begin{proof}
We first note that
$$\mathbb{E}_*[g(Y_j)\mathbb{P}(X_j=r|\mathcal{G}^0) | Y_{k:0} ] = \mathbb{E}_*[g(Y_j)1_{X_j=r}| Y_{k:0}]=\mathbb{E}_*[\mathbb{E}_*[g(Y_j)1_{X_j=r}| Y_{k:0},X_j]|Y_{k:0}],$$
by using usual properties of conditional expectations. Given $X_j$, $Y_j$ and $X_j$ are conditionally independent of $Y_{k:0}$ and thus
$\mathbb{E}_*[g(Y_j)1_{X_j=r}| Y_0,X_j]=\mathbb{E}_*[g(Y_j)1_{X_j=r}| X_j].$
Since $X_j$ has values in the discrete set $[R]$, we can explicitly write the conditional expectation as
$$\mathbb{E}_*[g(Y_j)1_{X_j=r}| X_j]=\sum_{r^\prime=1}^R\mathbb{E}_*[g(Y_j)1_{X_j=r}| X_j=r^\prime]1_{X_j=r^\prime}=\mathbb{E}_*[g(Y_j)| X_j=r]1_{X_j=r}$$
But since $g\in L^2(f^*_r\dd x)$ has zero expectation against the emission distribution for state $r$, the right hand side vanishes and hence so does its expectation given $Y_{k:0}$.
\end{proof}

In the following section we  relate the score functions $H_M$ in the binned model to score functions $H$ in the full model where the perturbation $g$ is in the `direction' of a histogram.

\subsubsection{ On the difference $H_M - H$ } \label{sec:HMH}
\begin{lemma}\label{sup:lemma:HmH}
If  $h_M=(h_{r,M})_{r\leq R}$ with $\sup_M\sum_r\lVert h_{r,M}\rVert_{L^{2}(f_r^*)}\leq L<\infty$, then
$$H_{M}(h_M) = H(h_M) +o_{L^2}(1).$$
\end{lemma}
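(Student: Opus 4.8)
The plan is to reduce the difference $H_M(h_M)-H(h_M)$, coordinate by coordinate in $r$, to a sum of martingale-type error terms, exploiting the crucial fact that, because $h_{r,M}$ is constant on the cells of $\mathcal I_M$, the random variable $h_{r,M}(Y_j)$ is measurable with respect to the coarsened $\sigma$-algebras $\mathcal G_M^0$ and $\mathcal G_M^{-1}$ for every $j\le 0$ (resp. $j\le -1$). First I would use Lemma \ref{sup:zeromeans} together with the zero-mean property $\int h_{r,M}f_r^*=0$ to write both scores as telescoping sums of conditional expectations of $\xi_j:=h_{r,M}(Y_j)\mathbf{1}\{X_j=r\}$, namely $H_r(h_{r,M})=\sum_{j\le 0}\big(\mathbb E_*[\xi_j|\mathcal G^0]-\mathbb E_*[\xi_j|\mathcal G^{-1}]\big)$ and the same with $\mathcal G_M^l$ in place of $\mathcal G^l$. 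Since $\mathcal G_M^l\subset\mathcal G^l$ and $h_{r,M}(Y_j)$ is $\mathcal G_M^l$-measurable, the tower property gives $\mathbb E_*[\xi_j|\mathcal G_M^l]-\mathbb E_*[\xi_j|\mathcal G^l]=h_{r,M}(Y_j)\,\Delta_{j,M}^l$, where $\Delta_{j,M}^l:=\mathbb P_*(X_j=r|\mathcal G_M^l)-\mathbb P_*(X_j=r|\mathcal G^l)$. Thus it suffices to control, in $L^2(\mathbb P_*)$, the two sums $\sum_{j\le 0} h_{r,M}(Y_j)\Delta_{j,M}^l$ for $l\in\{0,-1\}$.

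Next I split each sum at a truncation level $|j|\le J$ versus $|j|>J$. For the tail I would invoke the uniform (almost sure) geometric forgetting of the filter and smoother, valid under Assumption \ref{ass:Qbound} ($\min_{ij}Q^*_{ij}>0$, a Doeblin-type condition), which gives a deterministic bound $|\mathbb P_*(X_j=r|\mathcal G_M^0)-\mathbb P_*(X_j=r|\mathcal G_M^{-1})|\le C\rho^{|j|}$ uniformly in $M$, and likewise for the un-coarsened probabilities. Combined with $\|h_{r,M}(Y_j)\|_{L^2(\mathbb P_*)}\lesssim\|h_{r,M}\|_{L^2(f_r^*)}\le L$ (using Assumption \ref{densityratio} to compare the stationary marginal of $Y_j$ with $f_r^*$), the triangle inequality bounds the tail of either sum by $C'L\sum_{|j|>J}\rho^{|j|}$, which is $o(1)$ as $J\to\infty$ uniformly in $M$. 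The almost sure (rather than merely $L^2$) forgetting is what lets this tail estimate survive the presence of the unbounded weights $h_{r,M}(Y_j)$.

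There remain finitely many head terms, for which I must show $\|h_{r,M}(Y_j)\Delta_{j,M}^l\|_{L^2}\to 0$ as $M\to\infty$ for each fixed $j$. Writing $P_j:=\mathbb P_*(X_j=r|\mathcal G^l)$ and $\Pi_M$ for the conditional expectation onto $\mathcal G_M^l$, and using again that $h_{r,M}(Y_j)$ is $\mathcal G_M^l$-measurable, the tower property yields the clean identity
$$\big\|h_{r,M}(Y_j)\Delta_{j,M}^l\big\|_{L^2}^2=\mathbb E_*\!\left[h_{r,M}(Y_j)^2\big(\Pi_M(P_j^2)-(\Pi_M P_j)^2\big)\right]=\mathbb E_*\!\left[h_{r,M}(Y_j)^2\,\mathrm{Var}(P_j|\mathcal G_M^l)\right].$$
Decomposing the histogram as $h_{r,M}=\sum_m\alpha_m\mathbf{1}_{I_m}$ and using disjointness of the cells, the right-hand side equals $\sum_m\alpha_m^2\,\mathbb E_*[\mathbf{1}\{Y_j\in I_m\}\,\mathrm{Var}(P_j|\mathcal G_M^l)]$, which is at most $\|h_{r,M}\|_{L^2(f_r^*)}^2\,\epsilon_M$ as soon as $\mathbb E_*[\mathrm{Var}(P_j|\mathcal G_M^l)\mid Y_j\in I_m]\le\epsilon_M$ uniformly in $m$ with $\epsilon_M\to 0$ (here one uses Assumption \ref{densityratio} to compare $\mathbb P_*(Y_j\in I_m)$ with $F_r^*(I_m)$). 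The main obstacle is exactly this uniform bound: it says that the within-cell conditional variance of the smoothing probability vanishes as the partition refines, uniformly over cells and strongly enough to absorb the $L^2$-normalized but only $L^2$-bounded, $M$-dependent weights — an honest smoothing-regularity statement, not a formal manipulation. I would establish it by combining the geometric forgetting above (so that $P_j$ depends, up to $o(1)$, only on finitely many observations near time $j$) with the continuity and positivity of the emission densities on the compact observation space (Assumption \ref{densityratio}), which make the filter a uniformly continuous function of those observations; perturbing each within its shrinking cell (Assumption \ref{binsrefine}) then moves $P_j$ uniformly little, giving $\epsilon_M\to 0$. Once the head is controlled, letting $M\to\infty$ for fixed $J$ and then $J\to\infty$ for the uniform tail, and finally summing over $r$, completes the proof.
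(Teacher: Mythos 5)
Your proof is correct in substance and follows the same skeleton as the paper's: rewrite both scores as sums of conditional-expectation increments, kill the tails uniformly in $M$ using the geometric forgetting bounds of Lemma \ref{lemma:scoretails}, and then show, for each of the finitely many remaining indices $j$, that coarsening the data changes the smoothing probability by $o(1)$ as the partition refines, via forgetting-based localisation to a finite window together with uniform continuity of the $f_r^*$ on the compact observation space and the shrinking cells of Assumption \ref{binsrefine}. Where you genuinely differ is the treatment of the head terms: the paper proves a sup-norm convergence $\mathbb{P}_*(X_j=r\mid\mathcal{G}_M^l)\to\mathbb{P}_*(X_j=r\mid\mathcal{G}^l)$ and concludes by H\"older (an $L^2$-bounded weight times an $L^\infty$-small difference), whereas you exploit the $\mathcal{G}_M^l$-measurability of $h_{r,M}(Y_j)$ to obtain the exact identity $\lVert h_{r,M}(Y_j)\Delta_{j,M}^l\rVert_{L^2}^2=\mathbb{E}_*\bigl[h_{r,M}(Y_j)^2\,\mathrm{Var}(P_j\mid\mathcal{G}_M^l)\bigr]$ and then bound the within-cell conditional variance. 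This is a clean device: it isolates exactly the quantity that must vanish and dispenses with stating an $L^\infty$ limit, although the regularity input needed to make it vanish (localisation by forgetting, then uniform continuity over shrinking cells) is the same as the paper's, so neither route is cheaper than the other.

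One flaw you should fix. Your reduction ``it suffices to control the two sums $\sum_{j\le 0}h_{r,M}(Y_j)\Delta_{j,M}^l$ in $L^2$, $l\in\{0,-1\}$'' is not legitimate as stated: $\Delta_{j,M}^l$ compares coarse versus fine conditioning at the \emph{same} $l$, and this difference has no decay in $j$ whatsoever (it is small in $M$, not in $j$), so these two infinite series are not shown to converge and the forgetting bounds you cite do not control their tails. Only the increments in $l$ within a fixed model ($\mathcal{G}^0$ versus $\mathcal{G}^{-1}$, or $\mathcal{G}_M^0$ versus $\mathcal{G}_M^{-1}$) are geometrically small, which is precisely what Lemma \ref{lemma:scoretails} gives. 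The fix is a reordering, and it is in effect what your tail estimate already does: truncate the original difference $H_{M}(h_M)-H(h_M)$ first, bounding the tail of the coarse score and the tail of the fine score separately by $C L\rho^{J}/(1-\rho)$ uniformly in $M$ (the paper's step), and only then split the finitely many head terms into the two single-$l$ differences to which your variance identity applies.
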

\begin{proof}
We have for each $r$,
\begin{align}\label{eq:HmHscores}
    H_{r,M}(h_{r,M})&=h_{r,M}(y_0)\mathbb{P}_*(X_0=r|\mathcal{G}^0_M) + \sum_{j=-\infty}^{-1}h_{r,M}(y_j) (\mathbb{P}_*(X_j=r|\mathcal{G}^0_M) - \mathbb{P}_*(X_j=r|\mathcal{G}^{-1}_M)); \nonumber \\
H_r(h_{r,M})&=h_{r,M}(y_0)\mathbb{P}_*(X_0=r|\mathcal{G}^0) + \sum_{j=-\infty}^{-1}h_{r,M}(y_j) (\mathbb{P}_*(X_j=r|\mathcal{G}^0) - \mathbb{P}_*(X_j=r|\mathcal{G}^{-1})).
\end{align}
We will show that $H_{r,M}(h_{r,M})-H_{r}(h_{r,M})=o_{L^2}(1)$.

\vspace{1ex}
Let $\epsilon>0$ be arbitrary and $J=J(L,\epsilon)>0$ be such that $2L\rho^{J-1}<\frac{\epsilon}{3}(1-\rho)$. Then Lemma \ref{lemma:scoretails} implies that
\begin{align*}
    \bigg\lVert\sum_{j=-\infty}^{-J}h_{r,M}(y_j) &(\mathbb{P}_*(X_j=r|\mathcal{G}^0) - \mathbb{P}_*(X_j=r|\mathcal{G}^{-1})) \\ +\sum_{j=-\infty}^{-J}& h_{r,M}(y_j) (\mathbb{P}_*(X_j=r|\mathcal{G}^0_M) - \mathbb{P}_*(X_j=r|\mathcal{G}^{-1}_M))\bigg \rVert_{L^2(\mathbb{P}_*)} \leq \frac{2\epsilon}{3} .
\end{align*}
We have now reduced the problem to the case where the sums in (\ref{eq:HmHscores}) are over $j>-J$. Since $\lVert h_M\rVert_{L^2}=O(1)$, to show convergence of the remaining difference between $H_{r,M}$ and $H_r$ it suffices to show that $\mathbb{P}_*(X_j=r|\mathcal{G}^l_M)\rightarrow \mathbb{P}_*(X_j=r|\mathcal{G}^l)$ in $L^\infty(\mathbb{P}_*)$ for $l=0,1$ and $-J\leq j \leq 0$.
\vspace{1ex}

We use Lemma \ref{lemma:forgetting}. Let $K>J$. We then obtain, for all $j\geq -J$ and $l=0,-1$,
\begin{align*}
    \mathbb{P}_*(X_j=r|\mathcal{G}^l)&=\sum_{s=1}^R\mathbb{P}_*(X_j=r|Y_{-K:l},X_K=s)\mathbb{P}_*(X_{-K}=s|\mathcal{G}^l_\infty)   \\
    &=\sum_{s=1}^R \mathbb{P}_*(X_j=r|Y_{-K:0})(1+O(\rho^{j+K})) \mathbb{P}_*(X_{-K}=s|\mathcal{G}^l_\infty) \\
    & =\left(1+O(\rho^{j+K})\right) \sum_{s=1}^R \mathbb{P}_*(X_j=r|Y_{-K:0})\mathbb{P}_*(X_{-K}=s|\mathcal{G}^l_\infty) \\
    &=\left(1+O(\rho^{j+K})\right)\mathbb{P}_*(X_j=r|Y_{-K:0}).
\end{align*}
Similarly, $\mathbb{P}_*(X_j=r|\mathcal{G}^l_M)=(1+O(\rho^{j+K}))\mathbb{P}_*(X_j=r|Y^{(M)}_{-K:0})$. Then
\begin{align*}
    &\sup_{Y_{-\infty:0}}\lvert \mathbb{P}_*(X_j=r|\mathcal{G}^l_M) - \mathbb{P}_*(X_j=r|\mathcal{G}^l)\rvert\leq \sup_{Y_{-\infty:0}} \lvert \mathbb{P}_*(X_j=r|\mathcal{G}^l_M) - \mathbb{P}_*(X_j=r|Y^{(M)}_{-K:l})\rvert \\&\quad + \sup_{Y_{-\infty:0}}\lvert \mathbb{P}_*(X_j=r|Y_{-K:l}^{(M)}) - \mathbb{P}_*(X_j=r|Y_{-K:l})\rvert
    + \sup_{Y_{-\infty:0}}\lvert \mathbb{P}_*(X_j=r|Y_{-K:l}) - \mathbb{P}_*(X_j=r|\mathcal{G}^l)\rvert.
\end{align*}
By choosing $K$ sufficiently larger than $J$, we have seen that the first and third terms are small, and bounded above by $\frac{\epsilon}{9JL}$ uniformly in $j\geq -J$. It remains to bound the second term. We have
\begin{align*}
    &\lvert \mathbb{P}_*(X_j=r|Y_{-K:l}^{(M)}) - \mathbb{P}_*(X_j=r|Y_{-K:l})\rvert \\&=\mathbb{P}_*(X_j=r)\left\lvert\dfrac{\mathbb{P}_*(Y_{-K:l}^{(M)}|X_j=r)}{\mathbb{P}_*(Y_{-K:l}^{(M)})}-\dfrac{\mathbb{P}_*(Y_{-K:l}|X_j=r)}{\mathbb{P}_*(Y_{-K:l})}\right\rvert.
\end{align*}
We write 
$$ \mathbb{P}_*(Y_{-K:l}^{(M)}|X_j=r)= \sum_{x_{-K},\cdots, x_l}\mathds{1}_{x_j=r} \prod_{t=-K}^lf_{\omega_{x_t}^*}(Y_t)\mathbb P_*(X_{-K:l}=x_{-K:l}|X_j=r),$$
and similarly for $Y_{-K:l}$. 
The true emissions $f_r^*$ are continuous functions on the compact set $[0,1]$, hence they are uniformly continuous and hence are  approximated in $L^\infty([0,1])$ by the histograms $f_{\omega_r^*}$ with $\omega_r^*=(\int_{I_m}f_r^*(y)\dd y:m\in[\kappa_M])$. Since there are a fixed number of such terms, we can choose $M$ large enough that, for each $j$, the preceding disaplay is bounded above by $\frac{\epsilon}{9JL}$. Putting everything together and using H\"{o}lder's inequality, we get that $$\lVert H_{r,M}(h_{r,M})-H_r(h_{r,M})\rVert_{L^2(\mathbb{P}_*)} \leq \epsilon .$$ The general case follows by summing over $r$, which concludes the proof.
\end{proof}

\subsubsection{Deconvolution argument } \label{sec:deconv}
The next lemma essentially provides a reduction of the deconvolution argument.
\begin{lemma}\label{sup:deconvolvehelp}
Let $h_M$ be a sequence of step functions on the partition $\mathcal{I}_M$ with $\lVert h_M\rVert = \sum_{r}\lVert h_{r,M} \rVert _{L^{2}}\leq 1$, such that $h_{r,M}\in \mathcal{H}_{r,M}$. For any $k\geq 0$,
$$H_M(h_M)\stackrel{L^2(\mathbb{P}_*)}{\rightarrow}0 \implies \sum_{r=1}^R\sum_{k'=-k}^0h_M(y_{k'})\mathbb{P}_*(X_{k'}=r|Y_{-k:0}) \stackrel{L^2(\mathbb{P}_*)}{\rightarrow}0.$$
\end{lemma}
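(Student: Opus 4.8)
My plan is to bound $\|D_k(h_M)\|_{L^2(\mathbb P_*)}$ by $\|H(h_M)\|_{L^2(\mathbb P_*)}$ up to a negligible remainder, and then use the hypothesis that this norm vanishes. The first reduction is to pass from the binned score to the full-model score: since $\sum_r\|h_{r,M}\|_{L^2(f^*_r)}\le 1$ is bounded, Lemma \ref{lemma:HmH} gives $H_M(h_M)=H(h_M)+o_{L^2}(\mathbb P_*)$, so that $H(h_M)=\sum_rH_r(h_{r,M})\to 0$ in $L^2(\mathbb P_*)$ as well, and it suffices to work in the full model with consistent $\sigma$-algebras $\mathcal G^{0},\mathcal G^{-1}$. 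The quantity $D_k(h_M)$ is naturally read, via the Fisher identity, as the score along the emission perturbation $\mathbf h_M=(h_{r,M})_r$ of the finite-window likelihood of $Y_{-k:0}$ (the stationary law at time $-k$ depends only on $Q^*$ and so contributes nothing), while $H(h_M)$ is the corresponding stationary one-step conditional score.

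To compare the two I would use the stationary increments: writing $H^{(l)}(h_M)$ for the time-$l$ shift of $H(h_M)$, these form a stationary, orthogonal martingale-difference sequence with $\|H^{(l)}(h_M)\|_2=\|H(h_M)\|_2$, so the finite sum $\sum_{l=-k}^0H^{(l)}(h_M)$ has $L^2$-norm at most $(k+1)^{1/2}\|H(h_M)\|_2\to 0$. A telescoping (innovation) rearrangement of this sum, combined with the tower property and the key cancellation of Lemma \ref{sup:zeromeans} — which uses $\int h_{r,M}f^*_r=0$ together with the Markov property to kill $\mathbb E_*[h_{r,M}(Y_j)\mathbb P_*(X_j=r\mid\mathcal G^{0})\mid Y_{-k:0}]$ for every $j<-k$ — allows one to identify $\mathbb E_*[\sum_{l=-k}^0H^{(l)}(h_M)\mid Y_{-k:0}]$ with $D_k(h_M)$, up to a remainder $R_{k,M}$ that collects only the contributions of the pre-window times $j<-k$.

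The step I expect to be the main obstacle is showing $R_{k,M}=o_{L^2}(\mathbb P_*)$. Each such term is a conditional expectation, given the window $Y_{-k:0}$, of a mean-zero quantity $h_{r,M}(Y_j)\mathbb P_*(X_j=r\mid\mathcal G^{-k-1})$ supported at a remote past time $j<-k$, so by mixing it decays geometrically in the gap $|j|-k$; I would control the sum uniformly in $M$ through the forgetting estimates of Lemmas \ref{lemma:scoretails} and \ref{lemma:forgetting}, truncating in $j$ exactly as in the martingale/$\sigma$-algebra argument of Lemma \ref{knownnuisanceconv} and exploiting the zero-mean constraint on the finitely many retained indices. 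The delicacy is that geometric forgetting alone only yields a convergent, \emph{a priori} $O(1)$ bound on $R_{k,M}$, so the vanishing must be extracted by combining the decay with the refinement $M\to\infty$ and the boundary index $l=-k$, where essentially no past is available to condition on — this is the genuine core of the argument. Granting $R_{k,M}\to 0$, one concludes $\|D_k(h_M)\|_2\le\|H(h_M)\|_2+o(1)\to 0$, which is precisely the implication feeding into the deconvolution of Lemma \ref{Deconvolution}.
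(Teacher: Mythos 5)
Your skeleton coincides with the paper's own proof up to its final step: the reduction from $H_M$ to $H$ via Lemma \ref{sup:lemma:HmH}, the summation of stationary shifts of the score, and the identification of the main term through conditioning and the cancellation Lemma \ref{sup:zeromeans} are all as in the paper (your martingale-difference bound $(k+1)^{1/2}\lVert H(h_M)\rVert_2$ is even slightly sharper than the triangle inequality used there). The genuine gap is exactly the step you flag, and as set up it cannot be closed: because you sum the shifts over the \emph{same} window $[-k,0]$ on which you condition, your decomposition reads
\begin{equation*}
D_k(h_M)\;=\;\mathbb{E}_*\Bigl[\,\sum_{l=-k}^{0}H^{(l)}(h_M)\,\Big|\,Y_{-k:0}\Bigr]\;+\;R_{k,M},\qquad R_{k,M}=\sum_{j\le -k-1}\mathbb{E}_*\bigl[h_M(Y_j)\,\mathbb{P}_*(X_j\mid \mathcal{G}^{-k-1})\,\big|\,Y_{-k:0}\bigr],
\end{equation*}
and the first term on the right is $o_{L^2}(1)$ for free, since conditional expectation is an $L^2$ contraction. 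Hence the assertion $R_{k,M}\to 0$ is \emph{equivalent} to the conclusion of the lemma: all of its content has been pushed into the remainder, for which you offer no mechanism. Forgetting cannot supply one: as you note, the term at lag $j$ decays like $\rho^{|j|-k}$, which sums to $O(1)$, and the boundary term $j=-k-1$ carries no smallness at all. Writing it as $\sum_s \mathbb{E}_*[h_M(Y_{-k-1})\mathbb{P}_*(X_{-k-1}\mid\mathcal{G}^{-k-1})\mid X_{-k-1}=s]\,\mathbb{P}_*(X_{-k-1}=s\mid Y_{-k:0})$, the inner expectation does not vanish — the smoothing weight is itself a function of $Y_{-k-1}$, so the constraint $\int h_{r,M}f_r^*=0$ gives no cancellation — and it is an object of exactly the same nature as $D_k(h_M)$ itself; nothing about sending $M\to\infty$ kills it a priori, so "extracting the vanishing from the refinement" is circular.

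The repair, which is how the paper's proof actually runs, is to decouple the two windows: sum the shifted scores over $[J,0]$ with $J\le -k$ a \emph{free} truncation parameter, not over $[-k,0]$. After conditioning on $Y_{-k:0}$, the lags $j\in[-k,0]$ produce $D_k(h_M)$; the lags $j\in[J,-k-1]$ are annihilated \emph{exactly} by Lemma \ref{sup:zeromeans} (this is where the extra room pays off); and the remainder now sits at lags $j\le J-1$, separated from the conditioning window by a gap of order $|J|-k$. Each such term then enjoys a \emph{double} geometric decay: with $a_{j,s}:=\mathbb{E}_*[h_M(Y_j)\mathbb{P}_*(X_j\mid\mathcal{G}^{J-1})\mid X_{J-1}=s]$, the zero mean of $h_{r,M}$ combined with filter stability at the past boundary gives $a_{j,s}=O(\rho^{(J-1)-j})$ uniformly in $M$, and since $\sum_s a_{j,s}\,p^*_s=0$ the term equals $\sum_s a_{j,s}\bigl(\mathbb{P}_*(X_{J-1}=s\mid Y_{-k:0})-p^*_s\bigr)$, whose bracket is $O(\rho^{|J|-k})$ by Lemma \ref{lemma:forgetting} applied across the gap. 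Summing over $j\le J-1$ yields a remainder $O\bigl(\rho^{|J|-k}/(1-\rho)\bigr)$ uniformly in $M$, so one concludes by letting $M\to\infty$ at fixed $J$ and then $|J|\to\infty$. Without enlarging the summation window beyond the conditioning window there is nothing for this second decay factor to act on, which is precisely why your version of the remainder cannot be controlled.
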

\begin{proof}
For notational simplicity, we will write for any index $j$ and $\sigma$-algebra $\mathcal{G}$ $$h_M(Y_j)\mathbb{P}_*(X_0|\mathcal{G})=\sum_rh_{r,M}(Y_j)\mathbb{P}_*(X_0=r|\mathcal{G}).$$

By Lemma \ref{sup:lemma:HmH}, we have $H_M(h_M)\stackrel{L^2(\mathbb{P}_*)}{\rightarrow}0 \iff H(h_M)\stackrel{L^2(\mathbb{P}_*)}{\rightarrow}0.$ Denote by $Y^k$ the vector $(Y_{-\infty : k})$. Note that $H(h_M)(Y_{-\infty:0}) = G_M(Y^0)$ has the same distribution as $G_M(Y^{j})$ $\forall j \leq 0$, by stationarity of the latent chain. 
Define
\begin{align*}
    D_0^{(M)}(h^{(M)},J) = \sum_{j=J}^0G_M(Y^{j}) &= \sum_{j=J}^0h_M(Y_j)\mathbb{P}_*(X_j|\mathcal{G}^0) \\ &+\sum_{j=-\infty}^{J-1}h_M(Y_j) (\mathbb{P}_*(X_j|\mathcal{G}^0) - \mathbb{P}_*(X_j|\mathcal{G}^{J-1}))=o_{L^2}(1).
\end{align*}
Using Lemma \ref{sup:zeromeans} with $k=0$ to eliminate the terms which condition on $\mathcal{G}^0$ we obtain
\begin{align*}
    \mathbb{E}_*(D_0^{(M)}(h^{(M)},J)|Y_0)  &= h_M(Y_0) \mathbb{P}_*(X_0|Y_0) - \sum_{j=-\infty}^{J-1}E(h_M(Y_j)  \mathbb{P}_*(X_j|\mathcal{G}^{J-1}) |Y_0) \\ & =o_{L^2}(1).
\end{align*}

Using Lemma \ref{lemma:scoretails} we obtain, for a particular $\rho<1$ which does not depend on $M$, that

\begin{align*}
    &\sum_{j=-\infty}^{J-1}\mathbb{E}_*(h_M(y_j)  \mathbb{P}_*(X_j|\mathcal{G}^{J-1}) |Y_0) \\ &= \underbrace{\sum_{j=-\infty}^{J-1}\mathbb{E}_*(h_M(y_j)  \mathbb{P}_*(X_j|\mathcal{G}^{J-1}) )}_{=0} + O(\rho^{J}) = O(\rho^{J}).
\end{align*}
with the right-hand sum vanishing because the $\mathbb{E}_*(h_M(y_j)  \mathbb{P}_*(X_j|\mathcal{G}^{J-1}) ) = 0$ by Lemma \ref{sup:zeromeans}.
The term $O(\rho^{J})$ is uniform in $M$ since $\|h_M\|_{L^2(\mathbb P_*)}\leq 1$ and by choosing $J$ large enough this term can be made arbitrarily small. 
We conclude that $$h_M(y_0)\mathbb{P}_*(X_0|Y_0) = o_{L^2}(1),$$ as $M\rightarrow\infty$. By instead applying Lemma \ref{sup:zeromeans} with $k<0$, we can argue similarly that, for all $k\geq 0$
$$D_k^{(M)}(h_M):=\sum_{r=1}^R\sum_{k'=-k}^0h_M(y_{k'})\mathbb{P}_*(X_{k'}=r|Y_{-k:0}) = o_{L^2}(1).$$
\end{proof}
\subsubsection{ On the convergence on $\tilde{\mathcal{A}}_M$ } \label{sec:tildeAM}
The proof of the following result is very similar to the proof of Lemma 1 in \cite{Gassiat2018}, which used that the spaces of score functions for mixture models were nested to show convergence.
\begin{lemma}
Let $\tilde{\mathcal{A}}_M$ $\mathcal{\tilde{P}}_M$, $\mathcal{A}$, $\mathcal{P}$ be as in the statement of Lemma \ref{projectionconv}. Then $\mathcal{A}=\tilde{\mathcal{A}}:=\lim_{M\rightarrow\infty}\tilde{\mathcal{A}}_M$.
\end{lemma}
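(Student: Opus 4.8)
The plan is to follow the strategy of Lemma 1 in \cite{Gassiat2018}: identify the ranges of the two families of projections and show they share a common closure. I would first establish that the spaces $\tilde{\mathcal{P}}_M$ form an increasing sequence of closed subspaces of $\mathcal{P}$, so that $\tilde{\mathcal{A}}_M$ converges strongly to the orthogonal projection $\tilde{\mathcal{A}}$ onto $\tilde{\mathcal{P}}_\infty := \overline{\bigcup_M \tilde{\mathcal{P}}_M}$; I would then prove $\tilde{\mathcal{P}}_\infty = \overline{\mathcal{P}}$, which yields $\tilde{\mathcal{A}} = \mathcal{A}$.

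For the nestedness, observe that because the partitions $(\mathcal{I}_M)_M$ are embedded (\textbf{Assumption} \ref{binsrefine}), every step function on $\mathcal{I}_M$ is also a step function on $\mathcal{I}_{M+1}$, so $\mathcal{H}_{r,M}\subset\mathcal{H}_{r,M+1}$, and since $H$ is linear this gives $\tilde{\mathcal{P}}_M\subset\tilde{\mathcal{P}}_{M+1}$. Moreover, a centered step function $h_{r,M}\in\mathcal{H}_{r,M}$ satisfies $\int h_{r,M} f_{r,M}^* = \int h_{r,M} f_r^*$, since the histogram $f_{r,M}^*$ carries the same mass $F_r^*(I_m)$ on each bin as $f_r^*$; hence $\mathcal{H}_{r,M}\subset L^2_0(f_r^*)=\mathcal{H}_r$ and therefore $\tilde{\mathcal{P}}_M\subset\mathcal{P}$. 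For an increasing sequence of closed subspaces it is standard that the orthogonal projections converge strongly to the projection onto the closure of the union, so $\tilde{\mathcal{A}}$ is well defined and projects onto $\tilde{\mathcal{P}}_\infty\subset\overline{\mathcal{P}}$.

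It then remains to prove $\overline{\mathcal{P}}\subset\tilde{\mathcal{P}}_\infty$, i.e. that every generator $H(h)$ with $h=(h_r)$, $h_r\in L^2_0(f_r^*)$, can be approximated in $L^2(\mathbb{P}_*)$ by elements of the $\tilde{\mathcal{P}}_M$. I would take $h_{r,M}=\mathbb{E}_{f_r^*}[h_r\mid\sigma(\mathcal{I}_M)]$, the $f_r^*$-conditional expectation of $h_r$ given the bin containing $y$; this is a centered step function, so $h_{r,M}\in\mathcal{H}_{r,M}$, and since the partitions have vanishing radius the generated $\sigma$-algebras increase to the Borel $\sigma$-algebra, whence $h_{r,M}\to h_r$ in $L^2(f_r^*)$ by the $L^2$ martingale convergence theorem. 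Continuity of the score map then upgrades this to $H(h_M)\to H(h)$: the bound $\lVert H_r(g)\rVert_{L^2(\mathbb{P}_*)}\lesssim\lVert g\rVert_{L^2(f_r^*)}$, which follows from the geometric forgetting of the chain established in Lemma \ref{lemma:scoretails} (exactly as in the tail estimates used for Lemmas \ref{Deconvolution} and \ref{lemma:HmH}), shows that $H$ is bounded and linear in the direction, so $H(h_M)\in\tilde{\mathcal{P}}_M$ converges to $H(h)$. Hence $H(h)\in\tilde{\mathcal{P}}_\infty$ for every generator, giving $\overline{\mathcal{P}}\subset\tilde{\mathcal{P}}_\infty$, and combined with the previous paragraph, $\tilde{\mathcal{P}}_\infty=\overline{\mathcal{P}}$ and $\tilde{\mathcal{A}}=\mathcal{A}$.

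The one delicate point is the uniform-in-$M$ control of the score operator needed to pass from convergence of the directions $h_M$ to convergence of the scores $H(h_M)$. I expect this to be the main obstacle, but it is precisely the boundedness already exploited in the deconvolution argument for Lemmas \ref{Deconvolution} and \ref{lemma:HmH}, so it should transfer with only minor bookkeeping.
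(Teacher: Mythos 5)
Your proposal is correct and takes essentially the same route as the paper's proof: nestedness of the $\tilde{\mathcal{P}}_M$ (from the embedded partitions) yields convergence of $\tilde{\mathcal{A}}_M$ to the projection onto the closure of $\bigcup_M\tilde{\mathcal{P}}_M\subset\mathcal{P}$, and the reverse inclusion is obtained by approximating each generator $H(h)$ by $H(h_M)$ with histogram directions $h_{r,M}\to h_r$ in $L^2(f_r^*)$, using the geometric forgetting bounds of Lemma \ref{lemma:scoretails} to pass this convergence through the score map. Your conditional-expectation/martingale construction of $h_{r,M}$ and the operator-norm phrasing of the continuity step are concrete instantiations of what the paper does by asserting an approximating sequence with a uniform norm bound and then truncating the score series uniformly in $M$.
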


\begin{proof}
We have by the arguments of \cite{Gassiat2018} that for any $S\in L^2(\mathbb{P}_*)$ $(\tilde{\mathcal A}_MS)$ is Cauchy, as the spaces $\tilde{\mathcal P}_M$ are nested. By completeness of $L^2(\mathbb{P}_*)$ we then establish convergence to some element $\mathcal{\tilde{A}}S\in L^2(\mathbb{P}_*)$. Since $\tilde{\mathcal A}_M$ are projections, $\tilde{\mathcal A}$ is also a projection onto its image, which is some subspace
$$\tilde{\mathcal P} \subset \text{Cl}\left(\bigcup_{M\in\mathbb{N}}\tilde{\mathcal P}_M\right). $$
Recall that $\mathcal{P}$ is defined as the closure of the linear span of the $H(h)$ functions, as in the statement of Lemma \ref{projectionconv}. By definition, $\mathcal{P}$ is closed, and since $\tilde{\mathcal{P}}_M\subset\mathcal{P}$ for all $M$, we must have $\text{Cl}\left(\bigcup_{M\in\mathbb{N}}\tilde{\mathcal P}_M\right)\subset\mathcal{P}$. We show that any element in $H(h)\in\mathcal{P}$ can be written as the $L^2$ limit of some sequence in $\bigcup_{M\in\mathbb{N}}\tilde{\mathcal P}_M$. First choose a sequence $h_M\rightarrow h$ in $L^2(\mathbb{P}_*)$ such that $\lVert h_M\rVert_{L^2}\leq 2\lVert h\rVert_{L^2}$ for all $M$, which is possible under \textbf{Assumption} \ref{binsrefine}. We now wish to show that for all $r\in[R]$
\begin{align}\label{eq:tildeconv}
    &(h_{r,M}(y_0)-h_r(y_0))\mathbb{P}_*(X_0=r|\mathcal{G}^0) \nonumber \\ & + \sum_{j=-\infty}^{-1}(h_r(y_j)-h_{r,M}(y_j))(\mathbb{P}_*(X_j=r|\mathcal{G}^0)-\mathbb{P}(X_j=r|\mathcal{G}^{-1}))
\end{align}
vanishes in $L^2$. By Lemma \ref{lemma:scoretails}, we can choose $J$ sufficiently large negative that $$\sum_{j=-\infty}^{J}(h_r(y_j)-h_{r,M}(y_j))(\mathbb{P}_*(X_j=r|\mathcal{G}^0)-\mathbb{P}_*(X_j=r|\mathcal{G}^{-1})) < \frac{\epsilon}{2}.$$
Since $\lVert h_{r,M}\rVert_{L^2}\leq 2\lVert h_r\rVert_{L^2}$, $J$ can be chosen so that the above holds uniformly in $M$.

\vspace{1ex}
To control the finitely many remaining terms, it suffices to note the $L^\infty$ boundedness of the probabilities and the $L^2$ convergence of $h_M$ to $h$, and so for $M$ sufficiently large the remaining finite sum is bounded by $\frac{\epsilon}{2}$, and so (\ref{eq:tildeconv}) has $L^2$ norm at most $\epsilon$. Since $\epsilon$ was arbitrary, we conclude that $\mathcal{P}\subset\mathcal{\tilde{P}}$, and so the two spaces coincide and $\mathcal{A}=\tilde{\mathcal{A}}$ as required.
\end{proof}

\subsection{Admissible partitions}\label{sup:sec:admisspart}
The following lemma is well-known in the identifiability literature. We recall it here for completeness.
\begin{lemma}\label{lemma:partition}
Let $F_1^*,\dots,F_R^*$ be linearly independent distribution functions on $[0,1]$. Then there exists a partition $I_{1},\dots,I_R$ of $[0,1]$ which is admissible in the sense of Definition \ref{admispartition}.
\end{lemma}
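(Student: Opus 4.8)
The plan is to argue by induction on $R$, viewing each $F_r^*$ as a Borel probability measure on $[0,1]$ and reading linear independence of $F_1^*,\dots,F_R^*$ as the statement that no nontrivial combination $\sum_r c_r F_r^*$ vanishes as a signed measure. The base case $R=1$ is immediate, since $F_1^*([0,1])=1\neq 0$, so the trivial partition $I_1=[0,1]$ already gives the rank-one matrix $(1)$.

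For the inductive step, I would first apply the induction hypothesis to the sub-collection $F_1^*,\dots,F_{R-1}^*$ (which is again linearly independent) to obtain a partition $J_1,\dots,J_{R-1}$ of $[0,1]$ for which the $(R-1)\times(R-1)$ matrix $(F_r^*(J_j))_{r,j\leq R-1}$ is invertible. Writing $v_j=(F_r^*(J_j))_{r\leq R}\in\mathbb{R}^R$ for the full columns (now including the row $r=R$), the $R\times(R-1)$ matrix $[v_1,\dots,v_{R-1}]$ then has full column rank $R-1$, because its top $(R-1)\times(R-1)$ block is the invertible matrix just produced.

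The heart of the argument is to refine this partition into $R$ cells by splitting a single cell $J_{j_0}$ into $B$ and $J_{j_0}\setminus B$, and to choose $j_0,B$ so that the resulting $R\times R$ matrix is nonsingular. A short column operation (adding the $B$-column back onto the $J_{j_0}\setminus B$-column) shows that, after such a split, the new determinant equals $\pm\det[v_1,\dots,v_{R-1},u]$ with $u=(F_r^*(B))_{r\leq R}$. Since the determinant is linear in its last column, the map $B\mapsto \det[v_1,\dots,v_{R-1},(F_r^*(B))_r]$ is exactly the signed measure $\nu:=\sum_{r} C_r F_r^*$, where the $C_r$ are the cofactors of the last column; these depend only on $v_1,\dots,v_{R-1}$ and are, up to sign, the $(R-1)\times(R-1)$ minors of $[v_1,\dots,v_{R-1}]$.

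The main obstacle — and the only real content — is then to show $\nu\not\equiv 0$. This follows by combining two facts: the cofactors $C_r$ are not all zero precisely because $[v_1,\dots,v_{R-1}]$ has full column rank $R-1$, and then linear independence of $F_1^*,\dots,F_R^*$ forces the signed measure $\nu=\sum_r C_r F_r^*$ to be nonzero. Given $\nu\neq 0$, I would pick any measurable $C$ with $\nu(C)\neq 0$ and decompose $\nu(C)=\sum_{j\leq R-1}\nu(C\cap J_j)$ to find some $j_0$ with $B:=C\cap J_{j_0}\subseteq J_{j_0}$ and $\nu(B)\neq 0$. Since $\nu(B)\neq 0=\nu(J_{j_0})$ — the latter vanishing as $\det[v_1,\dots,v_{R-1},v_{j_0}]$ has a repeated column — both $B$ and $J_{j_0}\setminus B$ are nonempty, and splitting $J_{j_0}$ along $B$ produces an $R$-cell partition whose matrix has determinant $\pm\nu(B)\neq 0$. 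This closes the induction. I note finally that the cells produced are Borel sets rather than intervals, which is all that is required at the points where Lemma \ref{lemma:partition} is invoked.
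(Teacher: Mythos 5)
Your proof is correct. It is worth pointing out that the paper itself offers no proof of this statement at all: the lemma appears in the supplement with only the remark that it is ``well-known in the identifiability literature'', so your argument supplies a complete proof where the authors left one to citation. What you have written is, in substance, the measure-theoretic version of the classical argument. The standard route works at the level of the distribution functions themselves: once points $x_1,\dots,x_{R-1}$ have been found making the matrix $(F_r^*(x_j))_{r,j\leq R-1}$ nonsingular, the map $x\mapsto\det\bigl[v_1,\dots,v_{R-1},(F_r^*(x))_r\bigr]$ is, by the same cofactor expansion, a nontrivial combination $\sum_r C_r F_r^*(x)$, hence not identically zero by linear independence of the functions; the resulting evaluation points are then converted into interval cells by the same telescoping column operations you use. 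Your variant replaces evaluation points by Borel sets and the function $\sum_r C_r F_r^*(x)$ by the signed measure $\nu=\sum_r C_r F_r^*$, which buys a cleaner splitting step (no bookkeeping about distinctness of points, atoms of the $F_r^*$, or half-open conventions), at the price of producing measurable cells rather than interval bins. As you note, this is harmless: wherever the lemma is invoked (the deconvolution argument of Lemma \ref{Deconvolution}, the proof of Theorem \ref{inversion}, and Lemma \ref{sup:consistencyemissions}), the cells enter only through integrals of densities over them, so Borel measurability suffices, even though the admissible partitions of the main text's histogram construction are interval partitions. One point worth making explicit in a final write-up: the hypothesis is linear independence of the distribution \emph{functions}, while your non-vanishing of $\nu$ uses linear independence of the associated \emph{measures}; the implication is immediate (a signed measure that vanishes identically vanishes on every set $[0,x]$), but it is the bridge between the stated hypothesis and the form you actually use.
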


\subsection{Technical results for the proofs of Theorems \ref{inversion} and \ref{smoothingcontract}}\label{sup:sec:smoothproof}
We employ the following lemma (Lemma \ref{consistencyemissions} of the main text) in the proof of Theorem \ref{inversion} to eliminate certain terms.

\begin{lemma}\label{sup:consistencyemissions}
Consider a sequence of emissions $\mathbf f^{(n)}$ such that, $g^{(3)}_{Q^*,\mathbf f^{(n)}}\rightarrow g^{(3)}_{Q^*,\mathbf f^*}$ in $L^1(\dd x^3)$. Then $\mathbf f^{(n)}\stackrel{L^1}{\rightarrow}\mathbf f^*$ along a subsequence.
\end{lemma}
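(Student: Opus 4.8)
The plan is to combine a weak-compactness argument for the emission distributions with the identifiability theorem of \cite{gassiat2016inference}, and then to upgrade the resulting weak convergence into $L^1$ convergence of the densities by a linearisation-and-inversion step of the same flavour as the one driving the proof of Theorem \ref{inversion}. First I would pass to subsequences. Since $\|g^{(3)}_{Q^*,\mathbf f^{(n)}}-g^{(3)}_{Q^*,\mathbf f^*}\|_{L^1}\to 0$, integrating out the third coordinate gives $\|g^{(2)}_{Q^*,\mathbf f^{(n)}}-g^{(2)}_{Q^*,\mathbf f^*}\|_{L^1(\dd y_1\dd y_2)}\to 0$ too, because $\int|\int(g^{(3)}_n-g^{(3)}_*)\,\dd y_3|\,\dd y_1\dd y_2\le\|g^{(3)}_n-g^{(3)}_*\|_{L^1}$. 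As the sample space is $[0,1]$ (the setting of Assumption \ref{densityratio}), each family $\{F_r^{(n)}\}_n$ of probability measures is tight, so by Prokhorov's theorem I can extract a subsequence along which $F_r^{(n)}\Rightarrow\tilde F_r$ weakly for every $r$, with each $\tilde F_r$ a probability measure.

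Next I would identify the weak limit. Since $\mathbb P^{(3)}_{Q^*,\mathbf F}$ is a fixed finite linear combination of product measures $F_{i_1}\otimes F_{i_2}\otimes F_{i_3}$ with coefficients $p_{Q^*}(i_1)Q^*_{i_1i_2}Q^*_{i_2i_3}$, weak convergence of the marginals forces $\mathbb P^{(3)}_{Q^*,\mathbf F^{(n)}}\Rightarrow\mathbb P^{(3)}_{Q^*,\tilde{\mathbf F}}$. On the other hand, $L^1$ convergence of $g^{(3)}_n$ gives $\mathbb P^{(3)}_{Q^*,\mathbf F^{(n)}}\Rightarrow\mathbb P^{(3)}_{Q^*,\mathbf F^*}$. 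By uniqueness of weak limits, $\mathbb P^{(3)}_{Q^*,\tilde{\mathbf F}}=\mathbb P^{(3)}_{Q^*,\mathbf F^*}$, and the identifiability result of \cite{gassiat2016inference}, valid under Assumption \ref{identifiability}, yields $\tilde{\mathbf F}=\mathbf F^*$ up to label swapping; fixing the labels, $\tilde F_r=F_r^*$ for all $r$.

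Finally I would upgrade this to $L^1$ convergence of the densities. Fix an admissible partition $I_1,\dots,I_R$ of $[0,1]$, which exists by Lemma \ref{lemma:partition}. Writing $M=\diag(p^*)Q^*$, which is invertible under Assumptions \ref{identifiability} and \ref{ass:Qbound}, and $\mathbf f(y)=(f_1(y),\dots,f_R(y))^T$, one has $g^{(2)}_{Q^*,\mathbf f}(y_1,y_2)=\mathbf f(y_1)^TM\,\mathbf f(y_2)$; integrating $y_1$ over $I_a$ gives
\begin{equation*}
\int_{I_a} g^{(2)}_{Q^*,\mathbf f^{(n)}}(y_1,y_2)\,\dd y_1 = \big(C^{(n)}M\,\mathbf f^{(n)}(y_2)\big)_a,\qquad C^{(n)}_{ar}:=\int_{I_a} f_r^{(n)}.
\end{equation*}
The vector on the left, call it $\Phi^{(n)}$, converges in $L^1(\dd y_2)$ to the analogous expression $B^*\mathbf f^*$ built from $\mathbf f^*$. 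Since $f_r^*$ is continuous we have $F_r^*(\partial I_a)=0$, so the portmanteau theorem and the weak convergence above give $C^{(n)}\to C^*$ with $C^*_{ar}=F_r^*(I_a)$ of rank $R$ by admissibility. Hence $B^{(n)}:=C^{(n)}M\to B^*:=C^*M$, which is invertible, so $B^{(n)}$ is invertible for large $n$ with $(B^{(n)})^{-1}\to(B^*)^{-1}$. Inverting, $\mathbf f^{(n)}=(B^{(n)})^{-1}\Phi^{(n)}$; the product of a convergent matrix sequence with an $L^1$-convergent vector sequence converges in $L^1$, so $\mathbf f^{(n)}\to(B^*)^{-1}B^*\mathbf f^*=\mathbf f^*$ in $L^1$.

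The hard part is this last step: weak convergence of the $F_r^{(n)}$ is far too weak to conclude $L^1$ convergence of the densities directly, since densities can oscillate while the measures converge weakly. The crux is therefore to linearise the bilinear map $\mathbf f\mapsto g^{(2)}_{Q^*,\mathbf f}$ by integrating one argument over an admissible partition, reducing matters to inverting an eventually-invertible $R\times R$ matrix. A secondary technical point is the tightness required for the compactness step, which is immediate on the compact space $[0,1]$ but would demand a separate argument on an unbounded sample space.
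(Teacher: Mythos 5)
Your proof is correct, and its core — reducing to $g^{(2)}$, linearising the bilinear form $\mathbf f\mapsto g^{(2)}_{Q^*,\mathbf f}$ by integrating one argument over an admissible partition, and inverting the resulting $R\times R$ system — is exactly the paper's argument. Where you genuinely differ is in how convergence of the bin-probability matrix $C^{(n)}\to C^*$ is obtained. The paper gets weak convergence of the emissions (hence of the bin probabilities, since the partition boundaries are $F^*$-null) by citing Theorem 2.3 of \cite{vernet2015posterior}, a ready-made consistency result deducing emission convergence from convergence of the three-dimensional marginal. You instead derive this from first principles: Prokhorov compactness gives a subsequence with $F^{(n)}_r\Rightarrow\tilde F_r$, weak convergence of the marginals passes to the finite mixture of product measures, uniqueness of weak limits yields $\mathbb P^{(3)}_{Q^*,\tilde{\mathbf F}}=\mathbb P^{(3)}_{Q^*,\mathbf F^*}$, and the identifiability theorem of \cite{gassiat2016inference} forces $\tilde{\mathbf F}=\mathbf F^*$. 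This buys self-containedness (no appeal to Vernet's theorem, whose proof is essentially the compactness-plus-identifiability argument you reconstruct) at the price of invoking identifiability directly and, on an unbounded sample space, a tightness argument which you correctly flag as the missing ingredient there. Two caveats, both shared with the paper's own proof rather than specific to yours: (i) identifiability only pins down $\tilde{\mathbf F}$ up to a permutation $\tau$ that also fixes $Q^*$, so if $Q^*$ has a nontrivial symmetry the conclusion is $\mathbf f^{(n)}\to{}^\tau\mathbf f^*$, i.e.\ convergence holds up to label swapping, consistent with the paper's standing convention; (ii) your portmanteau step requires $F_r^*(\partial I_a)=0$, which you justify via continuity — absolute continuity of $F_r^*$ with respect to Lebesgue measure already suffices, since $\partial I_a$ is a finite set. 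Your final inversion step is also slightly more careful than the paper's (you invert $B^{(n)}$ for large $n$ and let $(B^{(n)})^{-1}\to(B^*)^{-1}$, rather than substituting the limit matrix into the equation first), but the two are equivalent.
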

\begin{proof}
Write $P=\text{diag}(p^*)$. Since $\|g^{(3)}_{Q^*,\mathbf f^{(n)}}-g^{(3)}_{Q^*,\mathbf f^*}\|_{L^1} = o(1)$, $\|g^{(2)}_{Q^*,\mathbf f^{(n)}}-g^{(2)}_{Q^*,\mathbf f^*}\|_{L^1}=o(1)$ and
$$\sum_{r,s}(PQ^*)_{rs}(f_r^{(n)}(y_1)f_s^n(y_2)-f_{r}^*(y_1)f_{s}^*(y_2)) = o_{L^1}(1).$$
Construct a partition $I_1,\dots I_R$ of $\mathbb R$ such that $F^*_{ir}=\int_{I_i}f_{r}^*\dd y$ is of rank $R$ and define $F^{(n)}_{ir}=\int_{I_i}f_{r}^{(n)}(y)\dd y$. Then we get for all $i$,
$$\sum_{r,s}(PQ^*)_{rs}(F_{ri}^{(n)}f_s^{(n)}(y)-F_{ri}^*f_{s}^*(y)) = o_{L^1}(1).$$
Since $g^{(3)}_{Q^*,\mathbf f^{(n)}}\rightarrow g^{(3)}_{Q^*,\mathbf f^*}$, Theorem 2.3 of \cite{vernet2015posterior} establishes weak convergence of the emissions, which then implies $\mathbf F^n\rightarrow \mathbf F^*$ and so
$$(\mathbf F^*)^T(PQ^*)\mathbf f^{(n)}=(\mathbf F^*)^T(PQ^*)\mathbf f^*+o_{L^1}(1),$$ which implies $\mathbf f^{(n)}=\mathbf f^*+o_{L^1}(1).$
\end{proof}

In the proof of Theorem \ref{smoothingcontract}, we use the fact that the approximating distribution for the posterior $\Pi_1(\cdot|Y_{1:n})$ can be chosen to not depend on the observation $Y_{\ell}$. The difficulty in the proof compared to the iid setting arises from the dependence structure, but by the exponential forgetting (Lemma \ref{lemma:forgetting}) we can prove that the $1/\sqrt{n}$ scaled joint score functions coincide up to $o(1)$ terms, and hence so do the $\sqrt{n}-$scaled MLEs.

\begin{lemma}\label{MLEinfluence}
Grant \textbf{Assumptions} \ref{identifiability}-\ref{binsrefine}, let $\hat{\theta}=(\hat{Q},\hat{\omega})$ denote the MLE in the model with admissible partition $\mathcal{I}_M$ and define $\hat{\theta}^{-\ell}=(\hat{Q}^{-\ell},\hat{\omega}^{-\ell})$ by 
\begin{equation}\label{sup:eq:estimate_no_l}
    \hat{\theta^{-\ell}} = \theta^* + \frac{J_M^{-1}}{\sqrt{n}}l_\theta^{(-\ell)}(Y_{1:n}^{(-\ell)}) ,
\end{equation}
where $Y_{1:n}^{-\ell}=(Y_1,\dots,Y_{\ell-1},Y_{\ell+1},\dots,Y_n)$ and
\begin{equation}\label{sup:eq:conditionalsumlikel}
    l_\theta^{(-\ell)}(Y_{1:n}^{(-\ell)})=\sum_{i=1}^{\ell-1}l_\theta(Y_i|Y_{1:i-1}) + \sum_{i=\ell+1}^nl_\theta(Y_i|Y_{1:i-1}^{(-\ell)}).
\end{equation} Let $\mathcal{L}_n\subset[n]$ be an index set whose cardinality satisfies $\lvert\mathcal{L}_n\rvert \log \lvert\mathcal{L}_n\rvert = o(\sqrt{n})$. Then, for all $\epsilon>0$,
$$\mathbb{P}_*(\exists l\in\mathcal{L}: \hspace{0ex} \sqrt{n}\lvert \hat{Q}^{(-l)}-Q^*\rvert > \epsilon ) \rightarrow 0,$$
as $n\rightarrow\infty$
\end{lemma}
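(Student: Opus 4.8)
The plan is to prove that $\hat Q^{(-\ell)}$ is uniformly asymptotically linear about $Q^*$ with the same influence function as the full-data maximum likelihood estimator $\hat Q$ of Theorem \ref{BvMM}; this is the content that is invoked in the proof of Theorem \ref{smoothingcontract}, where it guarantees that the Gaussians $\phi_n(\cdot\mid Y_{1:n})$ and $\phi_n(\cdot\mid Y^{-\ell}_{1:n})$, which share the variance $\tilde J_M^{-1}/n$, have centres differing by $o_{\mathbb P_*}(n^{-1/2})$ uniformly over $\ell$. Recall from the expansion established in the proof of Theorem \ref{BvMM} that $\sqrt n(\hat\theta-\theta^*)=n^{-1/2}J_M^{-1}\sum_{i=1}^n \dot\ell_i(\theta^*)+o_{\mathbb P_*}(1)$, where $\dot\ell_i(\theta^*)=\nabla_\theta\log p_{\theta^*}(Y_i\mid Y_{1:i-1})$. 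The quantity \eqref{sup:eq:estimate_no_l}--\eqref{sup:eq:conditionalsumlikel} is precisely the analogous one-step object in which the observation $Y_\ell$ has been deleted both from the summation and from the conditioning sets. Writing $\dot\ell_i^{(-\ell)}(\theta^*)=\nabla_\theta\log p_{\theta^*}(Y_i\mid Y^{(-\ell)}_{1:i-1})$, the first step is to establish the same stochastic expansion for $\hat\theta^{-\ell}$, so that $\sqrt n(\hat\theta^{-\ell}-\theta^*)$ and $\sqrt n(\hat\theta-\theta^*)$ agree up to a remainder that I will control uniformly in $\ell$, after which I project onto the $Q$-block.

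Subtracting the two expansions, the scaled difference equals $n^{-1/2}J_M^{-1}\big[-\dot\ell_\ell(\theta^*)+\sum_{i>\ell}\big(\dot\ell_i^{(-\ell)}(\theta^*)-\dot\ell_i(\theta^*)\big)\big]+o_{\mathbb P_*}(1)$, since for $i<\ell$ the conditioning sets coincide and those terms cancel. Two contributions must be bounded. First, the single deleted score $\dot\ell_\ell(\theta^*)$: in the model $\mathcal M(\mathcal I_M)$ the per-step scores are bounded, because the multinomial emission scores and the entries $a_{rs}/Q^*_{rs}$ are bounded when $\omega^*_{mr}>0$ and $Q^*_{ij}>0$, while the dependence on the infinite past is controlled by Lemma \ref{lemma:scoretails}; hence $\lVert\dot\ell_\ell(\theta^*)\rVert\lesssim 1$ and this contribution is $O(n^{-1/2})$. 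Second, the conditioning-change terms: deleting $Y_\ell$ perturbs the predictive filter at times $i>\ell$, and by exponential forgetting of the filter (Corollary 1 of \cite{Douc2004}) one expects $\lVert \dot\ell_i^{(-\ell)}(\theta^*)-\dot\ell_i(\theta^*)\rVert_{L^2(\mathbb P_*)}\lesssim\rho^{\,i-\ell}$ for some $\rho<1$ uniform in $\ell$ and $M$, whence $\sum_{i>\ell}\lVert\cdot\rVert\lesssim(1-\rho)^{-1}$ and this contribution is again $O(n^{-1/2})$.

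It then remains to upgrade these per-$\ell$ bounds to a bound on $\max_{\ell\in\mathcal L_n}$. The plan is a union bound over the $\lvert\mathcal L_n\rvert$ indices: combining the tail control of the scores from Lemma \ref{lemma:scoretails} with the geometric deletion-stability estimate, the maximum over $\mathcal L_n$ of each contribution is $O_{\mathbb P_*}(n^{-1/2}\sqrt{\log\lvert\mathcal L_n\rvert})$, and the same union bound has to be applied to the $o_{\mathbb P_*}(1)$ remainder of the one-step expansion so that it holds simultaneously across all $\lvert\mathcal L_n\rvert$ leave-one-out versions. The hypothesis $\lvert\mathcal L_n\rvert\log\lvert\mathcal L_n\rvert=o(\sqrt n)$ is exactly what renders the accumulated cost of this union bound negligible, yielding $\max_{\ell\in\mathcal L_n}\sqrt n\lVert \hat Q^{(-\ell)}-\hat Q\rVert=o_{\mathbb P_*}(1)$ and hence the asserted uniform control on $\sqrt n\lVert\hat Q^{(-\ell)}-Q^*\rVert$ relative to the common centre.

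The main obstacle is this uniform-in-$\ell$ step. One needs a genuinely quantitative \emph{deletion-stability} forgetting bound for the conditional scores $\dot\ell_i^{(-\ell)}-\dot\ell_i$, that is, a forgetting estimate for the filter when a single intermediate observation is removed, rather than the standard bound for a change of initial law; and one must then propagate the per-$\ell$ remainder of the MLE expansion through the maximum over $\mathcal L_n$. Both of these are precisely where the cardinality condition $\lvert\mathcal L_n\rvert\log\lvert\mathcal L_n\rvert=o(\sqrt n)$ is indispensable, and verifying the uniform remainder control is the most delicate part of the argument.
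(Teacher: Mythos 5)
Your overall architecture coincides with the paper's: represent both estimators through their scores at $\theta^*$, reduce to the score difference (one deleted term plus conditioning-change terms for $i>\ell$), control the latter by forgetting, and finish with a union bound over $\mathcal{L}_n$. However, there is a genuine gap at exactly the point you flag as the ``main obstacle'': the deletion-stability estimate $\lVert \dot\ell_i^{(-\ell)}(\theta^*)-\dot\ell_i(\theta^*)\rVert_{L^2(\mathbb{P}_*)}\lesssim \rho^{\,i-\ell}$ is asserted (``one expects'') but never derived, and it does not follow directly from the standard forgetting statements, which compare conditionings with different starting horizons rather than conditionings with an interior observation removed. The paper's proof supplies precisely this missing idea: writing the scores via the Fisher identity as increments $\Delta_{k,0}$ and $\Delta^{(-\ell)}_{k,0}$, it compares \emph{both} to the common anchor $\Delta_{k,\ell+1}$, the increment conditioned only on $Y_{\ell+1:k}$. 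Since the anchor involves no $Y_\ell$, it is literally the same random variable for the full and the deleted data sets, and each of the two comparisons is then a standard change-of-horizon bound (equations (19) and (20) of \cite{Douc2004}, combined as in their Lemma 10), yielding $\lVert \Delta^{(-\ell)}_{k,0}-\Delta_{k,0}\rVert_{L^2(\mathbb{P}_*)}\lesssim \rho_*^{(k-\ell-1)/2}/(1-\rho_*)$. Without this (or an equivalent) argument, your central estimate is an assumption rather than a proof.

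A second problem is your plan to apply ``the same union bound \ldots to the $o_{\mathbb{P}_*}(1)$ remainder of the one-step expansion'' across all $\lvert\mathcal{L}_n\rvert$ leave-one-out versions. This rests on a misreading of \eqref{sup:eq:estimate_no_l}: $\hat\theta^{(-\ell)}$ is not an MLE of the deleted-data likelihood but is \emph{defined} as the one-step object, so $\sqrt{n}(\hat\theta^{(-\ell)}-\theta^*)=n^{-1/2}J_M^{-1}\nabla_{\theta^*}l^{(-\ell)}_{\theta^*}(Y^{(-\ell)}_{1:n})$ holds exactly, with no remainder. The only remainder in the problem is the single $R_n=o_{\mathbb{P}_*}(1)$ from the full-data MLE expansion, which is the same random variable for every $\ell$ and needs no union bound. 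This matters because the step you propose would otherwise be fatal: one cannot union-bound qualitative $o_{\mathbb{P}_*}(1)$ terms over $\lvert\mathcal{L}_n\rvert\to\infty$ indices without quantitative rates, so what you call ``the most delicate part'' would be a dead end rather than delicate. Two minor points in the same vein: your claimed maximal rate $O_{\mathbb{P}_*}(n^{-1/2}\sqrt{\log\lvert\mathcal{L}_n\rvert})$ presumes sub-Gaussian tails you have not established (Chebyshev plus a union bound gives only a $\lvert\mathcal{L}_n\rvert^{1/2}$ factor, which still suffices here); and in the paper the hypothesis $\lvert\mathcal{L}_n\rvert\log\lvert\mathcal{L}_n\rvert=o(\sqrt{n})$ enters through a near/far splitting of the indices $k$, where the $O(L_n/\sqrt{n})$ crude bound on the $L_n\asymp\log\lvert\mathcal{L}_n\rvert$ near terms is pushed through Markov's inequality and the union bound, not through the mechanism you describe.
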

\begin{proof}
We may write the log-likelihood for the parameter $\theta=(Q,\omega)$ as
\begin{equation}\label{sup:eq:conditionalsumlike}
    l_\theta(Y_1,\dots,Y_n)=\sum_{i=1}^nl_\theta(Y_i|Y_{1:i-1}),
\end{equation}
where the $i=1$ term is the unconditional likelihood. Note that \eqref{sup:eq:conditionalsumlikel} corresponds to the log-likelihood based on observing $Y_{1:n}^{(-\ell)}$. Standard arguments for showing asymptotic normality (see e.g. \cite{bickel1998asymptotic}), using Taylor expansions of the derivative of the expression \eqref{sup:eq:conditionalsumlike}, give
$$ \sqrt{n}(\hat{\theta} - \theta^*) = \frac{{J}_M^{-1}}{\sqrt{n}}\nabla_{\theta^*} l_{\theta^*}(Y_{1:n}) + R_n, $$
for $R_n=O_{\mathbb{P}_*}(\sup_{\bar{\theta}\in [\theta^*,\hat{\theta}]}\lVert \frac{1}{n}D^2_{\bar\theta}l_{\bar\theta}(Y_{1:n}) + J_M\rVert )$ the error in approximating the Fisher Information with the negative log-likelihood. The uniform law of large numbers for the observed information as given in Theorem 3 of \cite{Douc2004}, together with the consistency of the MLE (see e.g. \cite{leroux1992maximum}), then implies that $R_n=o_{\mathbb{P}_*}(1)$.

\vspace{1ex}
For the estimator $\hat{\theta}^{(-\ell)}$, we have for each $l\in\mathcal{L}$, by the definition in \eqref{sup:eq:estimate_no_l}, that
$$ \sqrt{n}(\hat{\theta}^{-\ell} - \theta^*) = \frac{{J}_M^{-1}}{\sqrt{n}}\nabla_{\theta^*} l_{\theta^*}^{-\ell}(Y_{1:n}^{-\ell}). $$
Hence
$$\sqrt{n}(\hat{\theta}^{-\ell} -\hat{\theta}) = \frac{{J}_M^{-1}}{\sqrt{n}}(\nabla_{\theta^*} l_{\theta^*}^{-\ell}(Y_{1:n}^{-\ell}) - \nabla_{\theta^*} l_{\theta^*}(Y_{1:n})) - R_n$$
Note that the $R_n$ term does not depend on $\ell$, and vanishes in $\mathbb{P}_*-$probability. We will show that, for each $\ell\in\mathcal{L}_n$,  $\frac{1}{\sqrt{n}}(\nabla_{\theta^*} l_{\theta^*}^{-\ell}(Y_{1:n}^{-\ell}) - \nabla_{\theta^*} l_{\theta^*}(Y_{1:n}))$, so that a union bound implies that
$$ \mathbb{P}_*(\exists l\in\mathcal{L} : \frac{1}{\sqrt{n}}(\nabla_{\theta^*} l_{\theta^*}^{-\ell}(Y_{1:n}^{-\ell}) - \nabla_{\theta^*} l_{\theta^*}(Y_{1:n})) > \epsilon) \leq \lvert \mathcal{L}_n\rvert o_{\mathbb{P}_*}(\lvert\mathcal{L}_n\rvert ^{-1}) = o_{\mathbb{P}_*}(1), $$
from which the result follows. The argument is based on the exponential forgetting properties, and the expansion of the score considered in \cite{Douc2004}. For the full log-likelihood \eqref{sup:eq:conditionalsumlike}, we have through the Fisher identity as in \cite{Douc2004} that the log-likelihood of $\theta$ given $Y_{1:n}$ and $X_0=x$, denoted $l_n(\theta,X_0=x)$, has gradient at $\theta=\theta^*$ given by
$$ \frac{1}{\sqrt{n}}\nabla_{\theta^*} l_n(\theta,X_0=x) = \frac{1}{\sqrt{n}}\sum_{k=1}^n\Delta_{k,0,x},$$
with
\begin{align*}
    \Delta_{k,0,x} &= \mathbb{E}_{\theta^*}\left[\sum_{i=1}^k \nabla_{\theta^*} \log (Q_{X_{i-1},X_i}\omega_{X_i,Y_i} ) | Y_{0:k},X_0=x \right] \\ &\quad- \mathbb{E}_{\theta^*}\left[\sum_{i=1}^{k-1} \nabla_{\theta^*} \log (Q_{X_{i-1},X_i}\omega_{X_i,Y_i} ) | Y_{0:k-1},X_0=x \right],
\end{align*}
which can be seen by writing the log-likelihood as a telescoping sum and using the Fisher identity to write each term as a conditional expectation of the full likelihood. We can do the same for the likelihood without $Y^\ell$: Define
\begin{align*}
    \Delta^{(-\ell)}_{k,0,x} &= \mathbb{E}_{\theta^*}\left[\sum_{i=1}^k \nabla_{\theta^*} \log (Q_{X_{i-1},X_i}\omega_{X_i,Y_i} ) | Y_{0:k}^{-\ell},X_0=x \right] \\ &\quad- \mathbb{E}_{\theta^*}\left[\sum_{i=1}^{k-1} \nabla_{\theta^*} \log (Q_{X_{i-1},X_i}\omega_{X_i,Y_i} ) | Y_{0:k-1}^{(-\ell)},X_0=x \right].
\end{align*}
We note that the term $\Delta^{(-\ell)}_{l,0,x}=0$ which corresponds to the missing contribution from $Y^l$ in the expansion of the likelihood. Then we can write
$$ \frac{1}{\sqrt{n}}\nabla_{\theta^*} l_n^{(-\ell)}(\theta,X_0=x) = \frac{1}{\sqrt{n}}\sum_{k<\ell-1}\Delta^{(-\ell)}_{k,0,x} + \frac{1}{\sqrt{n}}\sum_{k=\ell+1}^{\ell+1+L_n}\Delta^{(-\ell)}_{k,0,x} + \frac{1}{\sqrt{n}}\sum_{\ell>L_n}\Delta^{(-\ell)}_{k,0,x}.$$
The first sum remains unchanged compared to the expression without the missing data point. Lemma 8 of \cite{Douc2004} shows that we may replace the $\Delta_{k,0,x}$ by $\Delta_{k,0}$ where the latter is defined conditioning only on the $Y_{0:k}$ - the same argument made there shows the analogous result for the $\Delta_{k,0,x}^{(-\ell)}$ with respect to the analogously defined $\Delta_{k,0}^{(-\ell)}$. We note that, under \textbf{Assumptions} \ref{ass:Qbound} and \ref{densityratio}, the $\Delta_{k,0}$ and $\Delta_{k,0}^{-\ell}$ are bounded uniformly in $X_{(\cdot)},Y_{(\cdot)}$, and so for any $L_n$ to be chosen later, we have that
$$ \frac{1}{\sqrt{n}}\sum_{k=\ell+1}^{\ell+1+L_n}\Delta^{(-\ell)}_{k,0}(\theta) - \frac{1}{\sqrt{n}}\sum_{k=\ell}^{\ell+1+L_n}\Delta^{}_{k,0}(\theta) = O_{\mathbb{P}_*}(L_n/\sqrt{n}). $$
It remains to control the difference between the remaining contributions for suitably chosen $L_n$. Similarly to what is done in \cite{Douc2004}, we define $\Delta_{k,m}$ by conditioning instead on $Y_{m:k}$. Write $\Delta_{k,m}$ as a telescoping sum as
\begin{align*}
    \Delta_{k,m} &= \mathbb{E}_{\theta^*}[ \nabla_{\theta^*} \log (Q_{X_{k-1},X_k}\omega_{X_k,Y_k} ) | Y_{m:k} ] \\ &+ \sum_{i=m+1}^{k-1}\mathbb{E}_{\theta^*}[ \nabla_{\theta^*} \log (Q_{X_{i-1},X_i}\omega_{X_i,Y_i} ) | Y_{m:k} ]  - \mathbb{E}_{\theta^*}[ \nabla_{\theta^*} \log (Q_{X_{i-1},X_i}\omega_{X_i,Y_i} ) | Y_{m:k-1} ].
\end{align*}
Then their equation (19) states that, for $k>m>m^\prime$\footnote{Their result is only stated for $m<0$, but by stationarity extends to all $m\in\mathbb{Z}$}
$$ \lVert \mathbb{E}_{\theta^*}[ \nabla_{\theta^*} \log (Q_{X_{i-1},X_i}\omega_{X_i,Y_i} ) | Y_{m:k} ] - \mathbb{E}_{\theta^*}[ \nabla_{\theta^*} \log (Q_{X_{i-1},X_i}\omega_{X_i,Y_i} ) | Y_{m^\prime:k} ]\rVert \leq 2C \rho_*^{i-m-1},$$
where $\rho^*\in(0,1)$ is a constant which depends on $\theta^*$ alone. Furthermore, their equation (20) states that
$$ \lVert \mathbb{E}_{\theta^*}[ \nabla_{\theta^*} \log (Q_{X_{i-1},X_i}\omega_{X_i,Y_i} ) | Y_{m:k} ] - \mathbb{E}_{\theta^*}[ \nabla_{\theta^*} \log (Q_{X_{i-1},X_i}\omega_{X_i,Y_i} ) | Y_{m:k-1} ]\rVert \leq 2C \rho_*^{k-i-1}.$$
Combining the above two displays in the same way as Lemma 10 of \cite{Douc2004}, we have for a suitable constant $C(Q^*,\omega^*)$ that
$$ \lVert \Delta_{k,0} - \Delta_{k,\ell+1}\rVert_{L^2(\mathbb{P}_*)} \leq C(Q^*,\omega^*)\dfrac{\rho_*^{(k-\ell-1)/2-1}}{1-\rho_*}.$$
Arguing similarly for the case of $\Delta_{k,0}^{-\ell}$, we see that 

$$ \lVert \Delta_{k,0}^{-\ell} - \Delta^{-\ell}_{k,\ell+1}\rVert_{L^2(\mathbb{P}_*)} \leq C(Q^*,\omega^*)\dfrac{\rho_*^{(k-\ell-1)/2-1}}{1-\rho_*}.$$
The $\Delta^{-\ell}_{k,\ell+1}=\Delta_{k,\ell+1}$ as the conditioning excludes $Y_\ell$ in either case, and so the above displays combine to give
$$\lVert \Delta_{k,0}^{-\ell} - \Delta_{k,0}\rVert_{L^2(\mathbb{P}_*)} \leq 2 C(Q^*,\omega^*)\dfrac{\rho_*^{(k-\ell-1)/2-1}}{1-\rho_*}.$$
Applying Minkowski's inequality then gives that
$$\left\lVert\sum_{k>\ell+1+L_n} (\Delta_{k,0}^{-\ell} - \Delta_{k,0})\right\rVert_{L^2(\mathbb{P}_*)} = O(\rho_*^{L_n/2}).$$
Choosing $L_n = 2M \log 
\lvert \mathcal{L}_n \rvert$ for sufficiently large $M>0$ we have $\rho_*^{L_n/2}=e^{(\log \lvert\mathcal{L}_n\rvert)(M\log\rho_*)}= \lvert\mathcal{L}_n\rvert ^{M\log\rho_*} = o\left(\frac{1}{\lvert\mathcal{L}_n\rvert}\right)$. Under the assumption of the lemma, we have $\frac{\log\lvert\mathcal{L}_n\rvert}{\sqrt{n}}=o(1/\lvert\mathcal{L}_n\rvert)$ and so the claim is proved.
\end{proof}

\section{Assumptions required for the application of Theorem \ref{L1contractemission} to Dirichlet process mixtures}\label{sup:sec:dirmix}

In this section, we detail the assumptions required for Proposition \ref{DPMNormal}.
\vspace{1ex}
We will need the following assumptions on the behaviour of the emissions, which are assumptions (T1)-(T3) of \cite{vernet2015conc}.

\begin{assume}\label{emissionconditionDPM}
        \textbf{A.} There exist positive constants $M_0,\tau_0,\gamma_0$ such that, for all $1\leq i\leq R$ and all $y\in\mathbb{R}$,
        $$f_i^*(y) \leq M_0\exp(-\tau_0\lvert y\rvert^{\gamma_0}). $$
        
        \vspace{1ex}
        \textbf{B.} For all $1\leq i,j\leq R$ there exist constants $T_{i,j},M_{i,j},\tau_{i,j},\gamma_{i,j}<\gamma_0$ such that
        $$f_i^*(y) \leq f^*_j(y)M_{i,j}\exp(\tau_{i,j}\lvert y\rvert^{\gamma_{i,j}}),\hspace{3ex}\lvert y\rvert \geq T_{i,j} .$$
        
        \vspace{1ex}
        \textbf{C.} For all $1\leq i\leq R$, $f_i^*$ is positive and there exists $c_i>0$ and $y_i^{low} < y_i^{high}$ such that $f_I^*$ is non-decreasing on $(-\infty,y_i^{low})$, bounded below by $c>0$ on $(y_i^{low},y_i^{high})$, and non-increasing on $(y^{high},\infty)$.

\end{assume}
We will further require the following assumptions on the choice of prior, which are Assumptions (G1) and (S1)-(S3) of \cite{vernet2015conc}. These assumptions are verified by standard choices of Gaussian base measure and inverse gamma prior on the standard deviation.
\begin{assume}\label{priorcondition}
    \textbf{A.} $\alpha(\mathbb{R}\setminus [-y,y])\lesssim \exp(-C_1y^{a_1})$ for $y$ large enough and some $C_1,a_1>0$
    
    \vspace{1ex}
    \textbf{B.} $\sigma \sim \Pi_{\sigma}$ where $\Pi_\sigma $ is an Inverse Gamma distribution.
\end{assume}
Condition \textbf{B} on $\Pi_\sigma$ can be relaxed and we essentially need the density $\pi_\sigma$ to behave like an inverse - Gamma near 0 and have tails bounded by some power  of $1/\sigma$ near infinity, see for instance \cite{kruijer2010adaptive}.

\section{Key results from literature}\label{sup:sec:keyresults}
In this section, we document some of the key results from other works which are used in our contributions. Section \ref{sup:sec:forgetfulness} features the forgetfulness properties of \cite{Douc2004}, while Section \ref{sec:suppconv} expands upon the presentation of efficiency given in Section \ref{subsec:tangentspace}, recalling the relevant framework of \cite{mcneney2000application}.

\subsection{Forgetting of the hidden Markov chain}\label{sup:sec:forgetfulness}
The following results are from \cite{Douc2004}. Recall that $\bar{q}=\min_{ij}Q^*_{ij}>0$ under Assumption \ref{ass:Qbound}. The first result quantifies the exponential forgetting of the hidden chain.

\begin{lemma}[Douc et al. 2004, \cite{Douc2004}]\label{lemma:forgetting}
For all $s\in[R]$, $k,k^\prime\in\mathbb{Z}$, and $j\geq k$, we have for $\rho=\frac{1-2\bar{q}}{1-\bar{q}}$
\begin{equation*}
    \sup_{Y_{k:k^\prime}}\left\lVert \mathbb{P}_*(X_j = \cdot| X_k = s, Y_{k:k^\prime}) - \mathbb{P}_*(X_j = \cdot|Y_{k:k^\prime}) \right\rVert_{TV} \leq \rho^{j - k}. 
\end{equation*}
\end{lemma}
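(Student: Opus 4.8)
The plan is to recognize this as a filter/smoother stability statement and prove it via the Dobrushin contraction coefficient of the conditional (smoothing) transition kernels. First I would observe that, conditionally on the observation block $Y_{k:k'}$, the latent sequence $(X_j)_{j\geq k}$ remains an inhomogeneous Markov chain: this follows from the conditional independence structure of the HMM (each $Y_t$ depends only on $X_t$), and a short Bayes computation identifies the forward transition kernel as
$$K_j(a,b):=\mathbb{P}_*(X_{j+1}=b\mid X_j=a,Y_{k:k'})=\frac{Q_{ab}\,\beta_{j+1}(b)}{\sum_c Q_{ac}\,\beta_{j+1}(c)},$$
where $\beta_{j+1}(b)=\mathbb{P}_*(Y_{j+1:k'}\mid X_{j+1}=b)$ is the usual backward function (with $\beta\equiv 1$, hence $K_j=Q$, once $j\geq k'$). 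The key reduction is then to notice that the two quantities being compared are the \emph{same} conditional chain launched from two different initial laws at time $k$: by the chain rule, $\mathbb{P}_*(X_j=\cdot\mid Y_{k:k'})=\nu K_k\cdots K_{j-1}$ with $\nu(\cdot)=\mathbb{P}_*(X_k=\cdot\mid Y_{k:k'})$, while $\mathbb{P}_*(X_j=\cdot\mid X_k=s,Y_{k:k'})=\delta_s K_k\cdots K_{j-1}$. Hence the object to bound is $\|(\delta_s-\nu)\,K_k\cdots K_{j-1}\|_{TV}$.

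Next I would invoke the standard fact that total variation contracts under a Markov kernel at the rate of its Dobrushin coefficient, $\|\mu P-\nu P\|_{TV}\leq \delta(P)\,\|\mu-\nu\|_{TV}$, together with submultiplicativity $\delta(P_1\cdots P_m)\leq\prod_i\delta(P_i)$. Since $\|\delta_s-\nu\|_{TV}\leq 1$, the whole statement reduces to the single-step estimate $\delta(K_j)\leq\rho=\tfrac{1-2\bar q}{1-\bar q}$ for every $j$, after which composing $j-k$ steps yields the claimed $\rho^{j-k}$. For the ``free'' steps $j\geq k'$ the kernel is simply $Q$, whose coefficient is at most $1-R\bar q\leq 1-2\bar q\leq\rho$, so those steps cause no difficulty.

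The heart of the matter, and the step I expect to be the main obstacle, is the single-step bound $\delta(K_j)\leq\rho$ for the smoothing kernel above. Writing $\delta(K_j)=1-\min_{a,\tilde a}\sum_b\min\bigl(K_j(a,b),K_j(\tilde a,b)\bigr)$ and using the uniform lower bound $Q_{ab}\geq\bar q$ in the numerator, one obtains a minorization of the form $K_j(a,b)\geq \bar q\,\beta_{j+1}(b)/\sum_c Q_{ac}\beta_{j+1}(c)$; the delicate point is that a crude bound on the normalizing denominator only gives the weaker coefficient $1-\bar q$ (or $1-R\bar q$), whereas recovering the sharp constant $\tfrac{1-2\bar q}{1-\bar q}$ requires exploiting that the denominators $\sum_c Q_{ac}\beta_{j+1}(c)$ are themselves weighted averages of the $\beta_{j+1}(c)$ with weights bounded below by $\bar q$, so that the ratio $\max_a/\min_a$ of the denominators is controlled. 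This is precisely the quantitative comparison carried out in \cite{Douc2004}; I would reproduce their elementary but careful estimate (checking the two-state case as a sanity test, where the constant correctly degenerates to $0$ as $\bar q\to\tfrac12$ and to $1$ as $\bar q\to0$). Granting that single-step bound, the geometric forgetting follows immediately from the contraction-and-composition argument above.
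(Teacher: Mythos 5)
Your proof is correct, and it is essentially the argument behind the paper's own treatment: the paper states this lemma without proof, importing it from Douc et al. (2004), and that reference establishes it exactly by your route — the chain $(X_j)_{j\ge k}$ conditioned on $Y_{k:k'}$ is an inhomogeneous Markov chain whose forward kernels admit the Doeblin minorization $K_j(a,\cdot)\ge \tfrac{\bar q}{1-\bar q}\,\nu_j(\cdot)$, followed by Dobrushin contraction and composition. Your single-step bound is also the right one: $Q_{ab}\ge\bar q$ in the numerator together with $\max_c Q_{ac}\le 1-(R-1)\bar q\le 1-\bar q$ in the denominator gives the coefficient $1-\tfrac{\bar q}{1-\bar q}=\tfrac{1-2\bar q}{1-\bar q}=\rho$ claimed in the statement.
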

The second of these results follows from applying Equation (20) of \cite{Douc2004} to both the known-emission model and the submodels on the emissions. It in particular ensures that the score functions we consider are well-defined.
\begin{lemma}[Douc et al. 2004, \cite{Douc2004}]\label{lemma:scoretails}
For $h\in L^2(\mathbb{P}_*)$, $j\leq 0$, $r,s\in[R]$, and for $\mathbb{P}_*-$almost all $Y_{-\infty:0}$, and $\rho=\frac{1-2\bar{q}}{1-\bar{q}}$,
$$\lvert\mathbb{P}_*(X_{j-1}=s,X_j=r|Y_{-\infty:0}) - \mathbb{P}_*(X_{j-1}=s,X_j=r|Y_{-\infty:-1})  \rvert \leq \rho^{-j-1} $$
and
$$\lvert h(Y_j)\{\mathbb{P}_*(X_j=r|Y_{-\infty:0}) - \mathbb{P}_*(X_j=r|Y_{-\infty:-1})  \} \rvert \leq \lvert h(Y_j)\rvert\rho^{-j-1}.$$
In particular, by integrating the square of both sides, we have
$$\lVert h(Y_j)\{\mathbb{P}_*(X_j=r|Y_{-\infty:0}) - \mathbb{P}_*(X_j=r|Y_{-\infty:-1})\}  \rVert_{L^2(\mathbb{P}_*)} \leq \lVert h\rVert_{L^2(\mathbb{P}_*)}\rho^{-j-1}.$$
\end{lemma}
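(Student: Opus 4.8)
The plan is to recognize this statement as the specialization of the exponential-forgetting estimate of \cite{Douc2004} (their Equation (20)) to indicator test functions, and then to deduce the $L^2$ bound by an elementary stationarity argument. Concretely, I would first reduce everything to the two pathwise inequalities
$$|\mathbb{P}_*(X_{j-1}=s,X_j=r|Y_{-\infty:0}) - \mathbb{P}_*(X_{j-1}=s,X_j=r|Y_{-\infty:-1})| \le \rho^{-j-1}$$
together with the same bound where the pair event is replaced by the single-state event $\{X_j=r\}$. The second displayed inequality of the lemma is then immediate: since $h(Y_j)$ is a common measurable factor multiplying both conditional probabilities, one simply factors out $|h(Y_j)|$.

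For the pathwise bounds themselves the mechanism is the forgetting property recorded in Lemma \ref{lemma:forgetting}. I would condition on the state $X_0$: for any $j\le 0$ the HMM conditional-independence structure gives $Y_0 \perp (X_t)_{t\le 0}\mid X_0$, so that for $A=\{X_{j-1}=s,X_j=r\}$ (or $A=\{X_j=r\}$),
$$\pi_{x_0}:=\mathbb{P}_*(A\mid X_0=x_0,Y_{-\infty:0}) = \mathbb{P}_*(A\mid X_0=x_0,Y_{-\infty:-1}),$$
i.e. $\pi_{x_0}$ does not see the appended observation. Writing $w_{x_0}=\mathbb{P}_*(X_0=x_0\mid Y_{-\infty:0}) - \mathbb{P}_*(X_0=x_0\mid Y_{-\infty:-1})$ for the filter innovation at time $0$, one obtains
$$\mathbb{P}_*(A\mid Y_{-\infty:0}) - \mathbb{P}_*(A\mid Y_{-\infty:-1}) = \sum_{x_0}\pi_{x_0}w_{x_0} = \sum_{x_0}(\pi_{x_0}-\bar\pi)w_{x_0},$$
where $\sum_{x_0}w_{x_0}=0$ permits subtracting any constant $\bar\pi$. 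Bounding this by $\mathrm{osc}(\pi)\cdot\tfrac12\|w\|_1$ and using $\tfrac12\|w\|_1\le 1$, the estimate rests on controlling the oscillation of $x_0\mapsto\pi_{x_0}$, which is exactly the forgetting of the influence of the time-$0$ state on an event supported at times $\le j$. Since the chain is stationary with $Q^*_{ij}>0$ (Assumption \ref{ass:Qbound}), its time reversal is again a positive-transition HMM, so Lemma \ref{lemma:forgetting} applies in the backward direction and yields geometric decay of this oscillation in the lag $-j$.

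I expect the main obstacle to be matching the exact exponent $\rho^{-j-1}$ with unit constant. The crude triangle-inequality oscillation bound coming directly out of Lemma \ref{lemma:forgetting} gives only $2\rho^{-j}$, which, since $\rho^{-1}<2$ in general, is weaker than the claimed $\rho^{-j-1}$. Obtaining the sharp exponent $-j-1$ while absorbing the numerical factor into the oscillation (which is $\le 1$ for an indicator) requires the refined coupling recursion of \cite{Douc2004}; this is precisely their Equation (20) with $k=0$, $m=-\infty$, test function $\phi=\mathbf 1_A$ of oscillation $\le1$, and index $i=j$, giving exponent $k-i-1=-j-1$. I would therefore invoke that estimate for the sharp constant rather than re-deriving the recursion.

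Finally, the $L^2$ statement follows by squaring the pointwise bound $|h(Y_j)\{\mathbb{P}_*(X_j=r|Y_{-\infty:0})-\mathbb{P}_*(X_j=r|Y_{-\infty:-1})\}|\le |h(Y_j)|\rho^{-j-1}$, taking $\mathbb{E}_*$, and using stationarity to write $\mathbb{E}_*[h(Y_j)^2]=\|h\|_{L^2(\mathbb{P}_*)}^2$, so that $\|h(Y_j)\{\cdots\}\|_{L^2(\mathbb{P}_*)}\le \|h\|_{L^2(\mathbb{P}_*)}\rho^{-j-1}$.
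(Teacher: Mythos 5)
Your proposal is correct and follows essentially the same route as the paper: the paper does not re-derive these bounds but simply imports them by applying Equation (20) of \cite{Douc2004} to the known-emission model and the submodels, which is exactly the step you invoke for the sharp exponent $\rho^{-j-1}$ after correctly observing that a naive oscillation argument based on Lemma \ref{lemma:forgetting} only yields the weaker constant $2\rho^{-j}$. Your remaining reductions --- factoring out $\lvert h(Y_j)\rvert$ and obtaining the $L^2$ bound by squaring and using stationarity --- are the same elementary steps the paper leaves implicit.
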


Lemmas \ref{lemma:forgetting} and \ref{lemma:scoretails} remain true when replacing $Y$ by $Y^{(M)}$, the coarsened data. The choice of $\rho$ is made independently of $M$ as it only depends on the transition matrix $Q^*$, which is the same in all histogram models as it is in the semiparametric model.

\subsection{Convolution Theorem and Efficiency}\label{sec:suppconv}

The following section pertains to Section \ref{subsec:tangentspace}, in which our model is considered in the framework of \cite{mcneney2000application} to understand the relevant notions of LAN expansion and efficiency. The following sequence of definitions, and the convolution theorem which follows, are as in Section 2 of \cite{mcneney2000application}. In this section, we also give further background to the exposition in Section \ref{subsec:tangentspace}. We do not present the material in its full generality, but rather to the extent required for our application, to which we will apply the general framework.

\vspace{1ex}

Let $\Theta$ denote the parameter space and $\mathcal{P}=\{P_\theta:\theta\in\Theta\}$ be a family of probability measures governing the law of $(Y_1,Y_2,\cdots)$ and write $P_{\theta,n}$ for the marginal of $Y_{1:n}=(Y_1,\dots,Y_n)$. For $\theta_1,\theta_2\in\Theta$, write $\Lambda_n(\theta_1,\theta_2)=\log \dfrac{\dd P_{n,\theta_1}}{\dd P_{n,\theta_2}} = \ell_n(\theta_1) - \ell_n(\theta_2)$ for the log-likelihood ratio between $\theta_1$ and $\theta_2$ based on $Y_{1:n}$.

\vspace{1ex}
The first step is to exhibit a LAN expansion, formally defined below, for the semiparametric model we consider. The parameter space is $\Theta =\mathcal{Q}\times\mathcal{F}^R$ and perturbations from the true parameter $\theta=(Q^*,\mathbf{f}^*)$ can be written in the form $(Q,\mathbf{f})=(Q,f_1,\dots,f_R)=(Q^*+a^TQ, f_1^*(1+h_1), \dots f_R^*(1+h_R) )$ for appropriate $a\in\mathbb{R}^{R\times (R-1)}$ and $h_r=\frac{f_r}{f_r^*} - 1$, which has finite $L^\infty(f_r^*)$ norm by positivity of $f_r^*$ and satisfies $\int h_r(y)f_r^*(y)\dd(y) = 0 $. This motivates the choice of tangent space at the parameter $(Q^*,\mathbf{f}^*)$ as $$\mathcal{H}=\{(a,\mathbf{h}), a\in\mathbb{R}^{R\times (R-1)}, \quad \mathbf{h}=(h_r)_{r=1}^R, \quad h_r\in\mathcal{H}_r\}$$ where $\mathcal{H}_r=\mathcal{H}_r(f_r^*) = \{h_r\in L^2(f_r^*) : \int h_rf_r^* = 1, \lVert h_r\rVert_{\infty} <\infty \}$.

\vspace{1ex}
The following definition is a specialisation to our context of the corresponding definition found in \cite{mcneney2000application}. We write $$\theta_n((a,\mathbf{h}))=\left(Q^*+\frac{a}{\sqrt{n}}, f_1^*(1+\frac{h_1}{\sqrt{n}}),\dots, f_R^*(1+\frac{h_R}{\sqrt{n}})\right ).$$

\begin{defn}[Local Asymptotic Normality]
Let $\mathcal{H}$ be as above, with inner product $\langle\cdot,\cdot\rangle_{\mathcal{H}}$ and induced norm $\lVert\cdot\rVert_{\mathcal{H}}$. We say the model is locally asymptotically normal (LAN) at $\theta_0\in\Theta$, indexed by the tangent space $\mathcal{H}$, if for each $(a,\mathbf{h})\in\mathcal{H}$, there exists a sequence $\{P_{n,\theta_n(a,\mathbf{h})}\}$ of probability measures with
$$\Lambda_n(\theta_n(a,\mathbf{h}),\theta_0) = \Delta_{n,a,\mathbf{h}} - \frac{1}{2}\lVert (a,\mathbf{h})\rVert_{\mathcal{H}}^2 + o_{\mathbb{P}_{*}}(1), $$
with $\Delta_{n,a,\mathbf{h}}$ real-valued random variables, which are linear functions of $(a,\mathbf{h})$, such that $\Delta_{n,a,\mathbf{h}}\rightarrow N(0, \lVert (a,\mathbf{h})\rVert_{\mathcal{H}}^2)$ for all $(a,\mathbf{h})\in\mathcal{H}$.

\end{defn}

In essence, the $\Delta_{n,a,\mathbf{h}}$ are score functions in the `direction' $h$, since $\Lambda_n$ is the local change in the log-likelihood when we perturb along the path characterised by $h$ and $\Delta_{n,a,\mathbf{h}}$ is the first order term.



\vspace{1ex}
To show the model is LAN with tangent space $\mathcal{H}$, we seek to identify the score functions $\Delta_{n,a,\mathbf{h}}$ note that the log-likelihood ratio $\Lambda(\theta_n(a,\mathbf{h}),\theta_0)$ can be approximated by considering the score in the submodel $$\Theta_{a, \mathbf{h}} = \{ \theta_t = \left(Q^* + t a , ( f_r^*( 1 + t h_r) : r \in [R])\right), \, \lvert t \rvert \leq  \epsilon \}, $$
which, for fixed $a,\mathbf{h}$ and fixed $\epsilon>0$ sufficiently small, is a one-dimensional submodel $\Theta_{a, \mathbf{h}}\subset \Theta$. From \cite{Douc2004}, in which the authors consider score functions in parametric HMMs, we can expand the gradient of the log-likelihood $\ell_n^{(a,\mathbf{h})}(t)$ in the (parametric) model $\Theta_{a,\mathbf{h}}$, at $t_0=0$ as
$$\frac{1}{\sqrt{n}}\nabla_t\ell_n^{(a,\mathbf{h})}(t_0) = \frac{1}{\sqrt{n}}\sum_{k=1}^n\Delta_{k,\infty}^{(a,\mathbf{h})} + o_{L^2(\mathbb{P}_*)}(1)$$
where,
\begin{align}\label{eq:submodscore}
    \Delta_{k,\infty}^{(a,\mathbf{h})} = \Delta_{k,\infty}^{(a,\mathbf{h})}(Y_{-\infty:k} ) &= \mathbb{E}_*[\phi(X_{k-1},X_k,Y_k)|Y_{-\infty}^k] \nonumber \\ &+ \sum_{i=-\infty}^{k-1}\left( \mathbb{E}_*[\phi(X_{i-1},X_i,Y_i)|Y_{-\infty}^k] - \mathbb{E}_*[\phi(X_{i-1},X_i,Y_i)|Y_{-\infty}^{k-1}] \right),
\end{align}
and where, writing $a_{rR}=-\sum_sa_{rs}$ and $Q_{r,R}=1-\sum_sQ_{rs}$, \begin{equation}\label{eq:phiscore}
    \phi(X_{k-1},X_k,Y_k) = \sum_{r=1}^R\sum_{s=1}^R 1\{X_{k-1}=r,X_k=s\}\frac{a_{r,s}}{Q^*_{r,s}} + \sum_{s=1}^R 1_{X_i=s} h_s(Y_i).
\end{equation} These formulae arise as an application of the results of Section 6.1 of \cite{Douc2004} to $\Theta_{a,\mathbf{h}}$.

\vspace{1ex}
We note that the contribution of the first term of the right hand side of \eqref{eq:phiscore}, which may be rewritten as $$ \sum_{r=1}^R\sum_{s=1}^{R-1} \left(\frac{1\{X_{k-1}=r,X_k=s\}}{Q^*_{rs}}-\frac{1\{X_{k-1}=r,X_k=R\}}{Q^*_{rR}}\right)a_{rs},$$to the expression defined in \eqref{eq:submodscore}, is precisely the score function for estimation in the model $\Theta_{a,\mathbf{0}}$ in which the emission densities are fixed and known. We see that this is equal to $a^TS_{Q^*}(Y_{-\infty:k})$, for $S_{Q^*}$ the score at $Q^*$ in the $R\times(R-1)$-dimensional parametric model with known emissions and unknown $Q$, which takes the form

\begin{equation}\label{ScoreQ}
\begin{split} 
S_{Q^*}(r, s) &(Y^{}_{-\infty : k}) = 
 \dfrac{\mathbb{P}_*({X_{k-1}=r},{X_k=s}|Y^{}_{-\infty : k}) }{Q^*_{rs}}-\dfrac{\mathbb{P}_*({X_{k-1}=r},{X_k=R}|Y^{}_{-\infty : k})}{Q^*_{rR}} \\
    &+\sum_{i=-\infty}^{k-1}\Bigg\{ \dfrac{\mathbb{P}_*({X_{i-1}=r},{X_i=s}|Y^{}_{-\infty : k})
    -\mathbb{P}_*({X_{i-1}=r},{X_i=s}|Y^{}_{-\infty : k-1})}{Q^*_{rs}}
    \\
    &\hspace{6ex}-\dfrac{\mathbb{P}_*({X_{i-1}=r},{X_i=R}|Y^{}_{-\infty : k})-\mathbb{P}_*({X_{i-1}=r},{X_i=R}|Y^{}_{-\infty : k-1})}{Q^*_{rR}}\Bigg\},
    \end{split}
\end{equation}
where we index with $(r,s)\in[R]\times[R-1]$ for convenience sake, but consider $S_{Q^*}$ as a vector of length $R(R-1)$. We then rewrite \eqref{eq:submodscore}, substituting also the expression in \eqref{eq:phiscore} as
\begin{align}\label{eq:npscores}
    \Delta_{k,\infty}^{(a,\mathbf{h})} = a^TS_{Q^*} +  \sum_{r=1}^R \Bigg\{& \mathbb{P}_*(X_k=r|Y_{-\infty}^k)h_r(Y_i) \nonumber \\ &+ \sum_{i=-\infty}^{k-1} (\mathbb{P}_*(X_i=r|Y_{-\infty}^k) -  \mathbb{P}_*(X_i=r|Y_{-\infty}^{k-1}))h_r(Y_i) \Bigg\}.
\end{align}
Henceforth we will write $H_r(h_r)(Y_{-\infty:k})$ for the function of $Y$ defined inside the curly brackets. In our notation, we suppress the dependence on $k$ because the law of $H_r(h_r)(Y_{-\infty:k})$ coincides with that of $H_r(h_r)(Y_{-\infty:0})$ under a suitable re-indexing, by stationarity.  A Taylor expansion of the log-likelihood in $\Theta_{a,\mathbf{h}}$ then gives us that, for some $t_1\in[0,n^{-1/2}]$,
\begin{align*}
    \ell_n(\theta_n((a,\mathbf{h}))) - \ell_n(\theta_*) &= \frac{1}{\sqrt{n}}\sum_{k=1}^\infty \Delta_{k,\infty}^{(a,\mathbf{h})} - \frac{1}{2}\frac{1}{n}\left(-\dfrac{\dd^2}{\dd t^2}\rvert_{t=t_1}\ell_n^{(a,\mathbf{h})}(t) \right) \quad  \\
    & = \Delta_{n,(a,\mathbf{h})} - \frac{1}{2}\lVert (a,\mathbf{h}) \rVert_{\mathcal{H}}^2 + o_{\mathbb{P}_*}(1).
\end{align*}
where $\lVert (a,\mathbf{h}) \rVert_{\mathcal{H}}^2$ the Fisher information at $t=0$ in the model $\Theta_{a,\mathbf{h}}$, which is defined in \cite{Douc2004} as the $L^2=L^2(\mathbb{P}_*)$ norm\footnote{The $L^2$ norms of all $\Delta^{(a,\mathbf{h})}_{k,\infty}$ coincide by stationarity.} of $\Delta^{(a,\mathbf{h})}_{0,\infty}$, and where $\Delta_{n,(a,\mathbf{h})} = \frac{1}{\sqrt{n}}\sum_{k=1}^\infty \Delta_{k,\infty}^{(a,\mathbf{h})} $, which is linear in $(a,\mathbf{h})$ and satisfies $\Delta_{n,(a,\mathbf{h})}\rightarrow N(0, \lVert (a,\mathbf{h}) \rVert_{\mathcal{H}}^2)$, by the discussion directly preceding Theorem 2 in \cite{Douc2004}.  We also used above the local uniform convergence of the second derivative of the score to the Fisher information matrix at $t=0$ in $\Theta_{a,\mathbf{h}}$, which is guaranteed by Theorem 3 of \cite{Douc2004}.

\vspace{2ex}
The preceding discussion shows that our model is LAN, which is the first step in understanding efficient estimators. The following definition is adapted from \cite{mcneney2000application} for our specific case.

\begin{defn}[Differentiability of parameter]
Let $v_n(P_{n,\theta})$ be $\mathbb{R}^p-$valued parameters for some $p>0$. We say the sequence $(v_n)$ is \textit{differentiable} if
$$ \sqrt{n}( v_n(P_{n,\theta_n\uh )}) - v_n(P_{n,0}) ) \rightarrow \Dot{v}\uh  \quad \forall h \in \mathcal{H} $$
for some continuous linear map $\Dot{v}: \mathcal{H} \longrightarrow \mathbb{R}^p$.
\end{defn}

In our case, the parameter of interest will be $v_n(P_{n,\theta_n\uh})=Q$, so that $p=R(R-1)$. We then obtain $\dot{v}\uh= a \in \mathbb{R}^p$.

\vspace{1ex}
We finally recall the definition of a regular estimator.

\begin{defn}[Regular estimators]
A sequence of maps $T_n = T_n(Y_1,\dots,Y_n) \in \R^p$ is said to be \textit{locally regular} for $v_n$ if, under $P_{n,\theta_n\uh}$, we have
$$ \sqrt{n}(T_n - v_n(P_{n,\theta_n\uh}) \Rightarrow \mathbb{Z} $$
as $n\rightarrow\infty$ for every $\uh\in\mathcal{H}$, where $\mathbb{Z}$ is a Borel measurable tight random element in $\mathbb{R}^p$ which does not depend on $\uh\in\mathcal{H}$.

\end{defn}

\vspace{1ex}
Following what precedes, we are ready to state the convolution theorem also described in \cite{mcneney2000application}, and originally proven in \cite{van1991differentiable}. As with what precedes, we slightly simplify the statement to make clear the application to our context.

\begin{theorem}[Convolution theorem]
Suppose $\mathcal{P}=\{P_\theta : \theta\in\Theta\}$ is LAN at a point $\theta_*$ indexed by a linear subspace $(\mathcal{H}, \langle\cdot,\cdot\rangle )$ of a Hilbert space. Further suppose $(v_n)$ is a differentiable sequence of parameters. Then if $T_n$ is locally regular for $v_n$, there exist tight Borel measurable elements $\mathbb{Z}_0$ and $\mathbb{W}$ in $\mathbb{R}^p$ with
\begin{align*}
    (A)\hspace{3ex} & P(\mathbb{Z}_0 \in \overline{ \dot{v}(\mathcal{H}) } )= 1 \\
    (B)\hspace{3ex}& \mathcal{L}(\mathbb{Z}) = \mathcal{L} ( \mathbb{Z}_0 + \mathbb{W} ) \\
    (C)\hspace{3ex} & \mathbb{Z}_0 \text{ and } \mathbb{W} \text{ are independent} \\
    (D)\hspace{3ex} & \mathcal{L}(b^T \mathbb{Z}_0 ) = N(0, \lVert \Dot{v}^T_{b}\rVert^2 ) \text{ for every } b\in \R^p
\end{align*}
Here $\Dot{v}^T_{b}$ is the unique element of $\mathcal{H}$ such that
$$ (\Dot{v}^T (b) ) \uh = \langle \Dot{v}^T_{b} , \uh \rangle_{\mathcal{H}}  $$
where $\Dot{v}^T : (\mathbb{R}^p)^\dag \rightarrow\mathcal{H}^\dag $ is the adjoint of $\Dot{v}$, and where the superscript ${\cdot}^{\dag}$ indicates the dual space.
\end{theorem}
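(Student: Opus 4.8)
The stated result is the Hájek--Le Cam convolution theorem, specialised to a LAN model with Hilbertian tangent space $\mathcal{H}$; a complete proof is given in \cite{van1991differentiable} and \cite{mcneney2000application}, and the plan is to follow that route, the key ingredients being Le Cam's third lemma applied to the LAN expansion established in Section \ref{subsec:tangentspace} together with a Gaussian characteristic-function factorisation. Write $\mathbb{S}_n := \sqrt{n}(T_n - v_n(P_{n,0}))$. Taking $\uh = 0$ in the regularity hypothesis gives $\mathbb{S}_n \Rightarrow \mathbb{Z}$ under $P_{n,0}$, while differentiability of $(v_n)$ combined with the regularity of $T_n$ gives $\mathbb{S}_n \Rightarrow \mathbb{Z} + \dot{v}\uh$ under each local alternative $P_{n,\theta_n\uh}$.

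The first step is to upgrade this to the joint weak convergence, under $P_{n,0}$, of $(\mathbb{S}_n, \Delta_{n,\uh})$ to a pair $(\mathbb{Z}, \Delta_{\uh})$, where $\Delta_{\uh} \sim N(0, \|\uh\|_{\mathcal{H}}^2)$ is the Gaussian variable furnished by the LAN expansion $\Lambda_n(\theta_n\uh, \theta_0) = \Delta_{n,\uh} - \tfrac12 \|\uh\|_{\mathcal{H}}^2 + o_{\mathbb{P}_*}(1)$. Le Cam's third lemma then identifies the limit law of $\mathbb{S}_n$ under $P_{n,\theta_n\uh}$ through the tilted expectation against $e^{\Lambda_n}$, and testing with $x \mapsto \exp(\mathrm{i}\, s^T x)$ yields, for all $s \in \mathbb{R}^p$ and all $\uh \in \mathcal{H}$,
$$\mathbb{E}\big[\exp(\mathrm{i}\, s^T \mathbb{Z} + \Delta_{\uh})\big] = \exp\!\Big(\mathrm{i}\, s^T \dot{v}\uh + \tfrac12 \|\uh\|_{\mathcal{H}}^2\Big)\,\mathbb{E}\big[\exp(\mathrm{i}\, s^T \mathbb{Z})\big],$$
after comparing with the characteristic function $s \mapsto \exp(\mathrm{i}\, s^T \dot{v}\uh)\,\mathbb{E}[\exp(\mathrm{i}\, s^T \mathbb{Z})]$ read off from the previous paragraph.

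It then remains to extract the Gaussian factor. The Riesz representation of the continuous linear map $\dot{v}$ furnishes, for each $b \in \mathbb{R}^p$, a unique $\dot{v}^T_b \in \mathcal{H}$ with $b^T \dot{v}\uh = \langle \dot{v}^T_b, \uh \rangle_{\mathcal{H}}$. Substituting $\uh = t\,\dot{v}^T_b$ and $s = \tau b$ into the displayed identity, and using linearity of $\uh \mapsto \Delta_{\uh}$, reduces it to a joint transform identity for $(b^T \mathbb{Z}, \Delta_{\dot{v}^T_b})$ holding for all scalars $\tau, t$; the standard Gaussian factorisation then shows that $b^T \mathbb{Z}$ is the sum of an independent $N(0, \|\dot{v}^T_b\|^2)$ variable and a remainder. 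Realising these Gaussian pieces coherently over $b$ produces a tight $\mathbb{Z}_0$ supported on $\overline{\dot{v}(\mathcal{H})}$ with $b^T \mathbb{Z}_0 \sim N(0, \|\dot{v}^T_b\|^2)$, independent of $\mathbb{W} := \mathbb{Z} - \mathbb{Z}_0$, which is exactly conclusions (A)--(D).

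The main obstacle is the joint convergence of $(\mathbb{S}_n, \Delta_{n,\uh})$ and, more delicately, the requirement that the characteristic-function identity hold uniformly over directions $\uh$ in the infinite-dimensional tangent space $\mathcal{H}$, which is where tightness and the validity of Le Cam's third lemma must be controlled; once this is in place the factorisation is a routine manipulation of Gaussian transforms. In our application the burden is lightened because the functional of interest, $v_n = Q$, is finite-dimensional, and the score $S_{Q^*}$ together with the nuisance scores $H_r(h_r)$ spanning the pertinent subspace of $\mathcal{H}$ have already been identified in Section \ref{subsec:tangentspace}.
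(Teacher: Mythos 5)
This theorem is not proved in the paper at all: it is quoted (in a simplified form adapted to the application) from the literature, with the statement taken from \cite{mcneney2000application} and the proof attributed to \cite{van1991differentiable}, so the paper's ``proof'' consists of the citation alone. Your sketch is consistent with, and correctly fills in, exactly the route those references take---Le Cam's third lemma applied to the LAN expansion, the characteristic-function identity under local alternatives, Riesz representation of $b^T\dot{v}$, and the Gaussian factorisation---with the genuinely delicate steps (joint tightness of the estimator and the scores, and realising the Gaussian component coherently across directions $b$) being precisely the ones handled in the cited works.
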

The convolution theorem essentially states that the limiting law of a regular estimator is the convolution of the laws of a Gaussian (in this case $\mathbb{Z}_0$) and some independent random variable (in this case $\mathbb{W}$), and so in particular the variance is lower bounded by that of $\mathbb{Z}_0$. It also describes the variance of $\mathbb{Z}_0$ in terms of the tangent space, its dual space, and the parameter `derivative' $\dot{v}$. In our case, we recall that $\dot{v}(a,h)=a$, and $p=R(R-1)$. 

\vspace{1ex}
We note that we identify the dual of $\mathbb{R}^p$ with itself, through the bijection $b\leftrightarrow \langle b, \cdot \rangle_{\mathbb{R}^p}$, so $\dot{v}^T(b)$ is a shorthand for $\dot{v}^T(\langle b, \cdot \rangle_{\mathbb{R}^p})$. The adjoint $\dot{v}^T$ of $\dot{v}$ then satisfies, by definition, for $\uh\in \mathcal{H}$ and $b\in \R^p$,
\begin{equation}\label{eq:vadjoint}
    \dot{v}^T(b)\uh = b^T\dot{v}\uh = \langle b, a\rangle_{\mathbb{R}^{p}}.
\end{equation}
To find $\dot{v}^T_{b}$ , we need a matching between the usual inner product $\langle b, a\rangle_{\mathbb{R}^{p}}$ with the inner product $\langle \dot{v}^T_b,\uh\rangle_{\mathcal{H}}$, for all $\uh\in \mathcal{H}$. By considering the elements for which $a=0$, we see that $\dot{v}^T_{b}$ is in the orthogonal complement of the linear span of the scores in the models $\Theta_{0,\mathbf{h}}$, which is the span of the $H_r(h_r)$ as we vary $r$ and $h_r\in\mathcal{H}_r$. Write $\mathcal{A}$ for the projection onto this space, and write
$$\tilde{S}_Q = S_Q - \mathcal{A}S_Q  $$
for the projection of the score function $S_Q$ onto the orthogonal complement of this space. We recall from the form of \eqref{eq:npscores} that the inner product on $\mathcal{H}$ takes the form, for $\bar{h}_1=(a_1,\mathbf{h}_1)$ and $\bar{h}_2=(a_2,\mathbf{h}_2)$
\begin{align}\label{eq:orthoginnerprod}
    \langle\bar{h}_1,\bar{h}_2\rangle_{\mathcal{H}} & = \langle a_1^TS_{Q^*} + \sum_{r=1}^RH_r(h_{1r}) , a_2^TS_{Q^*} + \sum_{r=1}^RH_r(h_{2r}) \rangle_{L^2}\nonumber \\
    & = \langle a_1^T\tilde{S}_{Q^*} + a_1^T\mathcal{A}S_{Q^*}+ \sum_{r=1}^RH_r(h_{1r}) , a_2^T\tilde{S}_{Q^*} + a_2^T\mathcal{A}S_{Q^*} + \sum_{r=1}^RH_r(h_{2r}) \rangle_{L^2} \nonumber \\
    & = \langle a_1^T\tilde{S}_{Q^*}, a_2^T\tilde{S}_{Q_*} \rangle_{L^2} + \langle a_1^T\mathcal{A}S_{Q_*}+ \sum_{r=1}^RH_r(h_{1r}) ,  a_2^T\mathcal{A}S_{Q_*} + \sum_{r=1}^RH_r(h_{2r}) \rangle_{L^2}.
\end{align}
Write $\dot{v}^T_{b} = (a_{b}, \mathbf{h}_{b})$. Then \eqref{eq:orthoginnerprod} gives that
\begin{align*}
    \langle \dot{v}^T_{b} , \uh \rangle_{\mathcal{H}} &= \langle a_{b}^T\tilde{S}_{Q^*}, a^T\tilde{S}_{Q^*} \rangle_{L^2}  \\ &+ \langle a_{b}^T\mathcal{A}S_{Q^*}+ \sum_{r=1}^RH_r(h_{b,r}) ,  a^T\mathcal{A}S_{Q^*} + \sum_{r=1}^RH_r(h_{r}) \rangle_{L^2}  .
\end{align*}
We must then choose $a_{b}$ and $\mathbf{h}_{b}$ to match with \eqref{eq:vadjoint} as required in the statement of the theorem, at least insofar as to determine the efficient variance $\langle \dot{v}^T_{b}, \dot{v}^T_{b} \rangle_{\mathcal{H}}$. By considering inner products with elements in $\mathcal{H}$ of the form $(0,\mathbf{h})$, we see that $\dot{v}^T_{b}$ must be in the orthogonal complement of the space spanned by the $H_r(h_r)$ considered earlier. By choosing $a=b$, we see that 
$$ \langle a^T_{b} \tilde{S}_{Q^*} , b^T \tilde{S}_{Q^*} \rangle_{L^2} = a^T_{b}\tilde{J} b, $$
where $\tilde{J} = \mathbb{E}_*[S_{Q^*}S_{Q^*}^T]$ is the efficient Fisher information matrix. To get a matching between the above display and the usual $L^2$ inner product appearing on the right hand side of \eqref{eq:vadjoint}, we must choose $a_{b} = \tilde{J}^{-1}b$. This implies (using also the orthogonality established prior, and symmetry of $\tilde{J}$) that the efficient variance $$\langle \dot{v}^T_{b}, \dot{v}^T_{b} \rangle_{\mathcal{H}} = b^T\tilde{J}^{-1}\tilde{J}(\tilde{J}^{-1}b) = b^T\tilde{J}^{-1}b.$$
By varying $b$, this characterises the covariance of $\mathbb{Z}_0$ as $\mathrm{Cov}(\mathbb{Z}_0)=\tilde{J}^{-1}$, the inverse efficient information matrix.

\section{Details on MCMC algorithms used in Section \ref{simulation}}\label{extrasims}

\subsection{MCMC Algorithm for $\Pi_1$}
In this section we describe the algorithm (Algorithm \hyperref[AlgoS1]{SA1}) used for the simulation study in Section \ref{simulation}. $S_R$ denotes the symmetric group of order $R$ and  where we  use subscripts to refer to algorithm iterations. We use the RHmm package \cite{RHmmpackage} for the forward-backward algorithm in the simulation of the hidden states and computation of the log likelihood. We also use the gtools package \cite{gtoolspackage} for generation of permutations and Dirichlet draws.

\begin{algorithm}[h!]\label{AlgoS1}
    \SetAlgoRefName{SA1}
    \SetKwInOut{Input}{Input}
    \SetKwInOut{Output}{Output}
    \Input{Binned data $Y^{(M)}\in[\kappa_M]^n$, number of hidden states $R$, prior parameters $\gamma$,$\beta$, iterations $I$, initial hidden states value $\mathbf{X}_{init}$}
    \Output{List of $I$ draws of $Q$}
    Initialise $\mathbf{X}_1=\mathbf{X}_{init}$ \\
    \For{$i=1,\dots,I+b$}{Draw transition matrix $Q_i\sim P(Q|\mathbf{X}_i)$, histogram weights $\underline{\omega}_i\sim P(\underline{\omega}|\mathbf{X}_i,Y)$, hidden states $\mathbf{X}_{i+1}\sim P(\mathbf{X}|Y,Q_i,\underline{\omega}_i)$}
    Compute $i_{MAP}=\argmax_{i}\log\Pi_1(Q_i,\underline{\omega}_i|Y)$
    
    \For{$i=1,\dots,I$}{$\tau_i=\argmin_{\tau\in S_R} d\left(({}^{(\tau)}Q_i,{}^{(\tau)}\underline{\omega}_i),(Q_{i_{MAP}},\underline{\omega}_{i_{MAP}})\right) $
    }
    
    \Return{$\left\{{}^{(\tau_i)}Q_i\right\}_{i=1}^I$} \\
    \caption{Algorithm for MCMC draws targeting posterior of $Q$.}
\end{algorithm}

\subsection{MCMC Algorithm for $\Pi_2$}
In Algorithm \hyperref[AlgoS2]{SA2}, we detail the MCMC procedure we implement in R. Once again, we make use of the forward-backward algorithm to simulate latent states, allowing us to exploit the simple structure of the full likelihood. One key difference is that our algorithm requires allocation to a bivariate latent space (with total number of states $RS_{\max}$) involving both the HMM hidden state and the Dirichlet mixture component. Since the forward backward algorithm is $O(NK^2)$ with $K$ the number of states and $N$ the number of samples, and since we take $S_{\max}=\lfloor\sqrt{N}\rfloor$, the overall implementation is $O(N^2)$.

\subsection{MCMC Algorithm for $\Pi^\prime$}
The MCMC algorithm of $\Pi^\prime$ is obtained by modifying Algorithm \hyperref[AlgoS2]{SA2} to remove the interior loop over $c=1,\dots,C$ and sampling a transition matrix $Q$ from the conditional distribution given the latent states, as in Algorithm \hyperref[AlgoS1]{SA1}, at the beginning of each outer loop over $i$. As with Algorithm \hyperref[AlgoS2]{SA2}, the complexity is $O(N^2)$ and so using it to target the marginal posterior on $Q$ is much slower than using $\Pi_1$, as discussed in Section \ref{simulation}.

\begin{algorithm}[h!]\label{AlgoS2}
    \SetAlgoRefName{SA2}
    \SetKwInOut{Input}{Input}
    \SetKwInOut{Output}{Output}
    \Input{Data $Y\in\mathbb{R}^N$, Thinned transition matrix list $(Q_i)_{i=1,\dots,\tilde{I}}$, number of hidden states $R$, prior parameters $M_0,\alpha_\sigma,\beta_\sigma,\mu_c,\sigma_c^2$, interior iterations $C$, initial hidden state value $\mathbf{X}_{init}$, truncation level $S_{max}$.}
    \Output{List of $\tilde{I}$ draws of $\{f_r\}_{r=1}^R$}
    Initialise $\mathbf{X}_0=\mathbf{X}_{init}$; variances $v^{(r)}\stackrel{iid}{\sim}\text{InvGamma}(\alpha_\sigma,\beta_\sigma)$; weights $W^{(r)}=(W^{(r)}_{1},\dots,W^{(r)}_{S_{max}})\sim\textrm{Dir}(\alpha)$ with $\alpha=\alpha(M_0,S_{max})$ as in (\ref{Dirdraws}); allocation variables $S_0=(S_{01},\dots,S_{0n})$ as in (\ref{DPMstick}) \\
    \For{$i=1,\dots,\tilde{I}$}{\For{$c=1,\dots,C$}{ Sample locations $(\mu_{i1}^{c,(r)},\dots,\mu_{iS_{max}}^{c,(r)})_{r=1,\dots,R}$ from $P(\mu_i^c|S_i^{c-1},\mathbf{X}_i^{c-1},Y)$ \\ Sample variances $(v_i^{c,(r)})_{r=1,\dots, R}$ from $P(v_i^c|S_i^{c-1},\mathbf{X}_i^{c-1},\mu_i^{c},Y)$ \\
    Sample weights $(W_{i,1}^{c,(r)},\dots,W_{i,S_{max}}^{c,(r)})_{r=1,\dots,R}$ from $P(W_{i}^c|S_i^{c-1},\mathbf{X}_i^{c-1})$ \\ Sample latents $(S_i^c,X_i^c)$ from $P(S_{i}^c,\mathbf{X}_i^c|\mu_i^c,v_i^c,W_i^c,Y,Q_i)$ \\ }Store $(\mu_i,v_i,W_i,S_i)=(\mu_i^C,v_i^C,W_i^C,S_i^C)$}
    \For{$i=1,\dots,\tilde{I}$}{Compute $f_i^{(r)}$ from $(\mu_i,v_i,W_i,S_i)$ by evaluating (an approximation of) (\ref{DPM})
    }
    
    \Return{$\left\{f_i^{(r)}\right\}_{i=1}^{\tilde{I}}$} \\
    \caption{Algorithm for MCMC draws targeting cut posterior of emissions. Within the loop over $c$, we adopt the convention $W_{i}^0=W_{i-1}$ and $W_{i-1}^0=W_0$ etc.}
\end{algorithm}

\FloatBarrier

\end{document}